\documentclass[a4paper,reqno]{amsart}

\usepackage[T1]{fontenc}
\usepackage[utf8x]{inputenc}
\usepackage[english]{babel}
\usepackage{yfonts}
\usepackage{dsfont}
\usepackage{amscd,amssymb,amsmath,amsthm,amsfonts}
\usepackage{mathtools}
\usepackage{accents}
\usepackage{tensor}
\usepackage{graphicx}
\usepackage{tikz}
\usetikzlibrary{calc,matrix,arrows,decorations.pathmorphing}
\usepackage{calc}
\usepackage[cal=boondox,scr=boondoxo]{mathalfa}
\usepackage{enumitem}
\usepackage{marginnote}
\usepackage{booktabs}
\usepackage{scalerel}[2016/12/29]
\usepackage{url}
\usepackage{contour}
\usepackage[normalem]{ulem}

\usepackage{hyperref}
\hypersetup{colorlinks=true,pageanchor=false,
linkcolor=blue,citecolor=red,urlcolor=red}
\usepackage[all]{hypcap}

\allowdisplaybreaks

\newtheorem{theorem}{Theorem}[section]
\newtheorem{introthm}{Theorem}

\newtheorem{corollary}[theorem]{Corollary}
\newtheorem{proposition}[theorem]{Proposition}
\newtheorem{lemma}[theorem]{Lemma}

\theoremstyle{definition}
\newtheorem{definition}[theorem]{Definition}
\newtheorem{example}[theorem]{Example}

\theoremstyle{remark}
\newtheorem{remark}[theorem]{Remark}

\newcommand{\N}{\mathbb{N}}
\newcommand{\Z}{\mathbb{Z}}

\newcommand{\R}{\mathbb{R}}

\newcommand{\rmt}{\mathrm{t}}

\newcommand{\rmL}{\mathrm{L}}

\newcommand{\rmT}{\mathrm{T}}

\newcommand{\rmV}{\mathrm{V}}

\newcommand{\bfh}{\boldsymbol{h}}

\newcommand{\bfA}{\boldsymbol{A}}
\newcommand{\bfhatA}{\boldsymbol{\hat{A}}}
\newcommand{\bfB}{\boldsymbol{B}}
\newcommand{\bfC}{\boldsymbol{C}}
\newcommand{\bfD}{\boldsymbol{D}}

\newcommand{\bfF}{\boldsymbol{F}}
\newcommand{\bfhatF}{\boldsymbol{\hat{F}}}

\newcommand{\bfM}{\boldsymbol{M}}

\newcommand{\bfP}{\boldsymbol{P}}

\newcommand{\bfS}{\boldsymbol{S}}
\newcommand{\bfT}{\boldsymbol{T}}

\newcommand{\bfW}{\boldsymbol{W}}

\newcommand{\bfalpha}{\boldsymbol{\alpha}}

\newcommand{\bfeta}{\boldsymbol{\eta}}
\newcommand{\bfhateta}{\boldsymbol{\hat{\eta}}}

\newcommand{\bfmu}{\boldsymbol{\mu}}
\newcommand{\bfhatmu}{\boldsymbol{\hat{\mu}}}

\newcommand{\bfsigma}{\boldsymbol{\sigma}}
\newcommand{\bfhatsigma}{\boldsymbol{\hat{\sigma}}}
\newcommand{\bftau}{\boldsymbol{\tau}}

\newcommand{\bfchi}{\boldsymbol{\chi}}
\newcommand{\bfGamma}{\boldsymbol{\Gamma}}

\newcommand{\bfSigma}{\boldsymbol{\Sigma}}

\newcommand{\calA}{\mathcal{A}}
\newcommand{\calB}{\mathcal{B}}
\newcommand{\calC}{\mathcal{C}}
\newcommand{\calD}{\mathcal{D}}
\newcommand{\calE}{\mathcal{E}}

\newcommand{\calG}{\mathcal{G}}
\newcommand{\calH}{\mathcal{H}}

\newcommand{\calK}{\mathcal{K}}
\newcommand{\calL}{\mathcal{L}}

\newcommand{\calR}{\mathcal{R}}
\newcommand{\calS}{\mathcal{S}}

\newcommand{\calV}{\mathcal{V}}

\newcommand{\bcalC}{\mathbfcal{C}}

\renewcommand{\epsilon}{\varepsilon}
\renewcommand{\theta}{\vartheta}
\renewcommand{\phi}{\varphi}
\renewcommand{\Delta}{\varDelta}
\renewcommand{\Gamma}{\varGamma}
\renewcommand{\Sigma}{\varSigma}
\renewcommand{\Lambda}{\varLambda}

\newcommand{\id}{\mathrm{id}}

\newcommand{\lrad}{\operatorname{rad_L}}
\newcommand{\rrad}{\operatorname{rad_R}}

\newcommand{\lev}{\smash{\stackrel{\leftarrow}{\mathrm{ev}}}}
\newcommand{\lcoev}{\smash{\stackrel{\longleftarrow}{\mathrm{coev}}}}
\newcommand{\rev}{\smash{\stackrel{\rightarrow}{\mathrm{ev}}}}
\newcommand{\rcoev}{\smash{\stackrel{\longrightarrow}{\mathrm{coev}}}}

\newcommand{\ladun}{\smash{\stackrel{\leftarrow}{\eta}}}
\newcommand{\ladcoun}{\smash{\stackrel{\leftarrow}{\epsilon}}}
\newcommand{\radun}{\smash{\stackrel{\rightarrow}{\eta}}}
\newcommand{\radcoun}{\smash{\stackrel{\rightarrow}{\epsilon}}}

\DeclareMathOperator{\Forall}{\forall}

\DeclareMathOperator{\sqtimes}{\scaleobj{0.8}{\boxtimes}}

\DeclareMathOperator{\disjun}{\sqcup}

\DeclareMathOperator{\bcs}{\natural}
\newcommand{\colim}{\qopname\relax m{colim}}

\newcommand{\leqs}{\leqslant}
\newcommand{\geqs}{\geqslant}

\newcommand{\mods}[1]{\operatorname{\mathnormal{#1}-mod}}

\newcommand{\SL}{\mathrm{SL}}

\newcommand{\Hom}{\mathrm{Hom}}
\newcommand{\bfHom}{\mathbf{Hom}}

\newcommand{\End}{\mathrm{End}}

\newcommand{\Vect}{\mathrm{Vect}}
\newcommand{\FVect}{\mathrm{FVect}}

\newcommand{\op}{\mathrm{op}}

\DeclareRobustCommand{\one}{\mathbin{\text{\includegraphics[height=\heightof{$\mathbf{1}$}]{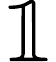}}}}

\newcommand{\Cob}{\mathrm{Cob}}
\newcommand{\bfCob}{\mathbf{Cob}}
\newcommand{\adCob}{\check{\mathrm{C}}\mathrm{ob}}

\newcommand{\bfadCob}{\check{\mathbf{C}}\mathbf{ob}}
\newcommand{\ACob}{\mathrm{ACob}}
\newcommand{\bfACob}{\mathbf{ACob}}
\newcommand{\NCob}{\mathrm{NCob}}
\newcommand{\bfNCob}{\mathbf{NCob}}
\newcommand{\CCob}{\mathrm{CCob}}
\newcommand{\SCob}{\mathrm{SCob}}

\newcommand{\TS}{\bfT \bfS}
\newcommand{\CTS}{\bfC \bfT \bfS}
\newcommand{\CTB}{\bfC \bfT \bfB}
\newcommand{\Lagr}{\calK}
\newcommand{\sig}{n}

\newcommand{\KTan}{\mathrm{KTan}}
\newcommand{\TTan}{\mathrm{TTan}}
\newcommand{\BTan}{\mathrm{BTan}}
\newcommand{\bfCat}{\mathbf{Cat}}
\newcommand{\coCat}{\mathbf{FCat}}

\newcommand{\SCat}{\mathbf{SCat}}

\newcommand{\FCat}{\mathbf{FCat}}
\newcommand{\Lex}{\mathbf{Lex}}

\newcommand{\SLex}{\mathbf{SLex}}

\newcommand{\bfLex}{\mathbf{Lex}}
\newcommand{\Nat}{\mathrm{Nat}}

\newcommand{\cmpl}{\bfC}

\newcommand{\Proj}{\mathrm{Proj}}

\makeatletter
\newcommand{\subalign}[1]{
  \vcenter{
    \Let@ \restore@math@cr \default@tag
    \baselineskip\fontdimen10 \scriptfont\tw@
    \advance\baselineskip\fontdimen12 \scriptfont\tw@
    \lineskip\thr@@\fontdimen8 \scriptfont\thr@@
    \lineskiplimit\lineskip
    \ialign{\hfil$\m@th\scriptstyle##$&$\m@th\scriptstyle{}##$\crcr
      #1\crcr
    }
  }
}
\makeatother

\def\clap#1{\hbox to 0pt{\hss#1\hss}}

\newcommand{\pic}[2][0]{\raisebox{-0.5\height + 2.5pt + #1pt}{\includegraphics{Pictures/#2.pdf}}}

\newcommand{\eend}{\mathcal{E}}
\newcommand{\coend}{\mathcal{L}}

\newcommand{\intL}{\Lambda}

\contourlength{1pt}

\DeclareRobustCommand{\myuline}[1]{
 \ifmmode \text{\uline{$#1$}}
 \else \uline{#1} \fi
}

\newcommand{\dmnsnl}[1]{$#1$-di\-men\-sion\-al}
\newcommand{\hndl}[1]{$#1$-han\-dle}

\newcommand{\mnfld}[1]{$#1$-man\-i\-fold}

\newcommand{\mrphsm}[1]{$#1$-mor\-phism}
\newcommand{\mrphsms}[1]{$#1$-mor\-phisms}
\newcommand{\ctgr}[1]{$#1$-cat\-e\-go\-ry}
\newcommand{\ctgrs}[1]{$#1$-cat\-e\-go\-ries}
\newcommand{\fnctr}[1]{$#1$-func\-tor}
\newcommand{\fnctrs}[1]{$#1$-func\-tors}
\newcommand{\trnsfrmtn}[1]{$#1$-trans\-for\-ma\-tion}
\newcommand{\trnsfrmtns}[1]{$#1$-trans\-for\-ma\-tions}
\newcommand{\mdfctn}[1]{$#1$-mod\-i\-fi\-ca\-tion}
\newcommand{\mdfctns}[1]{$#1$-mod\-i\-fi\-ca\-tions}
\newcommand{\dcrtd}[1]{$#1$-dec\-o\-rat\-ed}
\newcommand{\dcrtn}[1]{$#1$-dec\-o\-ra\-tion}
\newcommand{\dcrtns}[1]{$#1$-dec\-o\-ra\-tions}

\newcommand{\KL}{Ker\-ler--Lyu\-ba\-shen\-ko}
\newcommand{\KLRT}{Ker\-ler--Lyu\-ba\-shen\-ko--Re\-she\-ti\-khin--Tu\-ra\-ev}
\newcommand{\LRT}{Lyu\-ba\-shen\-ko--Re\-she\-ti\-khin--Tu\-ra\-ev}
\newcommand{\RT}{Re\-she\-ti\-khin--Tu\-ra\-ev}

\newcommand{\arxiv}[2]{\href{http://arXiv.org/abs/#1}{\texttt{arXiv:#1} #2}}
\newcommand{\doi}[2]{\href{http://doi.org/#1}{#2}}

\begin{document}

\raggedbottom

\title{On the Relation Between Non-Semisimple TQFTs}

\author[M. De Renzi]{Marco De Renzi}
\address{IMAG, Université de Montpellier, Place Eugène Bataillon, 34090 Montpellier, France}
\email{marco.de-renzi@umontpellier.fr}

\begin{abstract}
 The goal of this paper is to clarify the relation between the Kerler--Lyubashenko TQFT $J_\mathcal{E}$ associated with the adjoint end $\mathcal{E}$ of a (not necessarily semisimple) modular category $\mathcal{C}$ and its renormalized version $V_\mathcal{E}$ based on the modified trace supported by the ideal $\mathrm{Proj}(\mathcal{C})$ of projective objects of $\mathcal{C}$. More precisely, we construct a $3$-dimensional ETQFT $\boldsymbol{\hat{A}}_\mathcal{E}$ that contains both $J_\mathcal{E}$ and $V_\mathcal{E}$. The ETQFT $\boldsymbol{\hat{A}}_\mathcal{E}$ is given by a $2$-functor whose source is the $2$-category of admissible $3$-dimensional cobordisms, and whose target is the $2$-category of finitely complete linear categories. We improve on earlier versions of the construction by dropping the admissibility condition for surfaces. By doing so, we obtain an ETQFT whose circle category $\boldsymbol{\hat{A}}_\mathcal{E}(\boldsymbol{S}^1)$ is equivalent to the category $\mathcal{C}$, as opposed to the ideal $\mathrm{Proj}(\mathcal{C})$.
\end{abstract}

\maketitle
\setcounter{tocdepth}{2}

\section{Introduction}

TQFTs (Topological Quantum Field Theories) are a central notion in low-di\-men\-sion\-al topology. Ever since their axiomatization by Atiyah \cite{A88}, many have contributed to the development of a general framework for their construction. Several approaches are possible, according to whether cobordisms are presented by surgery, by triangulation, or by handle attachment. In this paper, we will focus on those TQFT constructions that are based on surgery presentations.

In their non-semisimple incarnation, TQFTs have existed for more than 25 years. Their first construction is due to Kerler and Lyubashenko \cite{KL01}, who deeply generalized and upgraded results of Hennings \cite{H96} and of Kauffman and Radford \cite{KR94}. More recently, a new wave of constructions has integrated the theory of modified traces, developed by Geer and Patureau with many collaborators, to yield extensions of the original \KL{} TQFTs that will be referred to, here, as renormalized \KL{} TQFTs \cite{DGGPR19}. The extent to which these deserve to be called extensions, however, has not yet been clearly explained. Indeed, there is currently no explicit discussion of the precise relation between the original construction and the renormalized one in the literature. This paper aims at providing one.

The main difficulty in comparing the two TQFTs is that both their sources and targets differ. Indeed, the \KL{} TQFT\footnote{Here, $\eend \in \calC$ denotes the adjoint end of $\calC$, that is, the end of the functor $\_ \otimes \_^* : \calC \times \calC^\op \to \calC$, see Section~\ref{S:ribbon_categories}.} $J_\eend$ associated with a modular category $\calC$ is a braided monoidal functor defined over the category $\CCob$ of \textit{connected} \dmnsnl{3} cobordisms between connected surfaces with (non-empty) connected boundary (first studied by Crane, Yetter, and Kerler, see Definition~\ref{D:connected_cobordisms_cat}), and taking values in the modular category $\calC$ itself. By contrast, the renormalized \KL{} TQFT $V_\eend$ associated with $\calC$ is a symmetric monoidal functor defined over the category of \textit{admissible} $\calC$-decorated cobordisms $\ACob_\calC$ (closer to Atiyah's framework, see Definition~\ref{D:admissible_cobordisms_cat}), and taking values in the category $\FVect_\Bbbk$ of finite-dimensional vector spaces. While it is possible to relate the two functors through natural transformations, like in Corollary~\ref{C:natural_isomorphism}, these isomorphisms can be understood as instances of an overarching construction that encompasses the two TQFTs. Indeed, we define in this paper an ETQFT (Extended TQFT) $\bfhatA_\eend$ that contains both $J_\eend$ and $V_\eend$. As a \fnctr{2}, $\bfhatA_\eend$ is defined over the \ctgr{2} $\bfACob_\calC$ of admissible $\calC$-decorated cobordisms (see Definition~\ref{D:admissible_cobordisms_2-cat}), and takes values in the \ctgr{2} $\coCat_\Bbbk$ of finitely complete linear categories (see Definition~\ref{D:sym_mon_2-cat_of_co_lin_cat}). The source $\bfACob_\calC$ contains both $\CCob$, as (a subcategory of) the category of morphisms from $\varnothing$ to $\bfS^1$, and $\ACob_\calC$, as the category of endomorphisms of $\varnothing$. Similarly, the target $\coCat_\Bbbk$ contains both $\calC$, which is abelian and thus finitely complete (and cocomplete), and $\FVect_\Bbbk$, which is the tensor unit for Kelly's box tensor product of finitely complete linear categories. Our main result, then, can be formulated as follows, by combining Theorem~\ref{T:symmetric_monoidality} and Propositions~\ref{P:KL_TQFT_inside_ETQFT} and \ref{P:renormalized_KL_TQFT_inside_ETQFT}.

\begin{introthm}\label{T:main_theorem}
 If $\calC$ is a modular category with adjoint end $\eend$ over an algebraically closed field $\Bbbk$, then there exists an ETQFT $\bfhatA_\eend : \bfACob_\calC \to \coCat_\Bbbk$ whose circle category satisfies
 \[
  \bfhatA_\eend(\bfS^1) \cong \calC.
 \]
 Furthermore, the restriction of $\bfhatA_\eend$ to $\CCob$ is naturally isomorphic to the \KL{} TQFT $J_\eend : \CCob \to \calC$, while its restriction to $\ACob_\calC$ is naturally isomorphic to the renormalized \KL{} TQFT $V_\eend : \ACob_\calC \to \FVect_\Bbbk$.
\end{introthm}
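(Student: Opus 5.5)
The construction of $\bfhatA_\eend$ goes through the universal construction: extend the renormalized TQFT $V_\eend$ to a \fnctr{2} on $\bfACob_\calC$, then identify its circle category with $\calC$. The comparison statements then follow by inspecting the resulting functors on $\CCob$ and on $\ACob_\calC$.

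\textbf{Step 1: surfaces.} For a surface $\bfSigma$, an object of $\bfACob_\calC$ (with no admissibility condition imposed on surfaces), let $\bfA_\eend(\bfSigma)$ be the linear category of admissible decorated \mnfld{3}s $M : \varnothing \Rightarrow \bfSigma$. Its morphisms are formal linear combinations of admissible cobordisms between these, taken modulo the kernel of the pairing given by $V_\eend$ on closed manifolds. The category $\bfhatA_\eend(\bfSigma)$ is then its finite completion in $\coCat_\Bbbk$, obtained by adding finite limits, for instance as a category of finitely presented presheaves.

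\textbf{Step 2: higher cells and monoidality.} Gluing sends a \mrphsm{1} $\bfSigma \to \bfSigma'$ to a linear functor, which extends uniquely to finitely complete categories by the universal property of the completion. \mrphsms{2} act by gluing in the same way. Functoriality up to coherent isomorphism is routine, since gluing is associative up to diffeomorphism. The real content is symmetric monoidality. One must show that $\bfhatA_\eend(\bfSigma \disjun \bfSigma')$ is equivalent to Kelly's box tensor product of $\bfhatA_\eend(\bfSigma)$ and $\bfhatA_\eend(\bfSigma')$. I would prove this by exhibiting generators: handlebodies containing a projective‑colored ribbon graph, whose images should generate $\bfA_\eend(\bfSigma)$ under finite limits. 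I would then compute morphism spaces between such generators as state spaces of $V_\eend$, which the earlier results identify with $\Hom$ spaces in $\calC$ involving $\eend$. Both sides then have the same generators and the same $\Hom$ spaces.

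\textbf{Step 3: the circle category and the two restrictions.} For $\bfS^1$, the generators are solid tori carrying a single colored strand, or discs with a colored point. I would define a functor $\calC \to \bfhatA_\eend(\bfS^1)$ sending $X$ to a disc with a point colored by $X$, interpreted through the admissible cobordism complement. On $\Proj(\calC)$ it is fully faithful, because morphism spaces are computed by $V_\eend$ of a solid torus with two points, which is $\Hom_\calC(P,Q)$ via modified-trace nondegeneracy. Since $\calC$ is finite abelian, every object has a projective presentation, so it is the finite completion of $\Proj(\calC)$ in the appropriate sense, namely a cokernel of maps between projectives. The functor should therefore extend to an equivalence $\calC \cong \bfhatA_\eend(\bfS^1)$.

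Once this equivalence is fixed:
\begin{itemize}
\item The restriction to $\Hom(\varnothing,\varnothing)$ is $V_\eend$ by construction, since $\bfhatA_\eend(\varnothing) \simeq \FVect_\Bbbk$.
\item For $\CCob$, a connected cobordism gives an object of $\bfhatA_\eend(\bfS^1) \cong \calC$. I would show it is isomorphic to $J_\eend$ of that cobordism by comparing $\Hom$ from projective objects. By Corollary~\ref{C:natural_isomorphism}, both sides give the same $V_\eend$ state spaces, naturally in the projective object, and Yoneda restricted to projective generators then yields the natural isomorphism.
\end{itemize}

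\textbf{Main obstacle.} I expect the hardest step to be the identification in Step 3 of the circle category with all of $\calC$ rather than with $\Proj(\calC)$. This requires showing that the finite completion really adds the non-projective objects, and that no spurious objects or relations appear. I would first check exactness on projective presentations $P_1 \to P_0 \to X \to 0$, using $\Hom$ from projectives, which is exact in $\calC$.
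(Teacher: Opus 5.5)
The comparison with $J_\eend$ is circular as you have set it up. Proposition~\ref{P:algebraic_and_skein_model} gives isomorphisms $\calC(F_\calC(P),\eend^{\otimes g})\cong V_\eend((\bfD^2(P))^\dagger\circ\bfSigma_g)$ that are natural only in $P$. A natural isomorphism on $\CCob$ also needs naturality in $g\in\KTan$, and that is exactly what Corollary~\ref{C:natural_isomorphism} adds. The paper deduces that corollary from Proposition~\ref{P:KL_TQFT_inside_ETQFT}, i.e.\ from the very claim at stake. Your Yoneda argument only converts naturality in $g$ into the desired statement; it cannot produce it. What must actually be shown is $[\bfM(g',\varnothing)]\ast[\bfD^2_{J_\eend(T)}]=[\chi(T)]\ast[\bfM(g,\varnothing)]$ for every Kirby tangle $T:g\to g'$. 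This is a genuine skein computation:
green-to-red moves trade surgery along $C(T)$ for red components evaluated through $\lambda$;
Kirby moves bring the diagram to a form where each handle of $M_g$ is traversed by a single closed component;
the outcome is matched with the top-tangle-presentation formula~\eqref{E:KL_TQFT} defining $J_\eend(T)$.

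Steps~2 and~3 also skip what carries the weight. The completion must preserve the finite limits that already exist. A free completion, e.g.\ via finitely presented presheaves, would make $\bfhatA_\eend(\bfS^1)$ a free completion of $\calC$ rather than $\calC$. With the right completion, only left exact functors extend, whereas for a surface that is not coadmissible, such as the undecorated disc, $\bfA_\eend(\bfSigma)$ is a priori only a left adjoint. Likewise, matching generators and Hom spaces does not yield $\bfhatA_\eend(\bfGamma\disjun\bfGamma')\simeq\bfhatA_\eend(\bfGamma)\sqtimes\bfhatA_\eend(\bfGamma')$. You need $\bfmu_{\bfGamma,\bfGamma'}$ to be separately left exact, and the representables $\bfA_\eend(\bfGamma\disjun\bfGamma')(\_,\bfmu_{\bfGamma,\bfGamma'}(Y))$ to be right exact for $Y$ in a limit-dense family, as in Proposition~\ref{P:fc_equivalence}. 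In the paper all of this rests on Proposition~\ref{P:end}, which identifies the annuli $\lcoev_{\bfS^1}$ and $\rcoev_{\bfS^1}$ with an end and a coend over $\calC$. The density of coadmissible target-bijective surfaces (Lemmas~\ref{L:ctb_colimit_dense} and~\ref{L:ctb_limit_dense}) follows from it; nothing in your plan replaces this.

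In Step~3, by contrast, the obstacle you single out does not arise. Since surfaces carry no admissibility condition, $\bfD^2_X$ is an object for every $X\in\calC$, and it is target surjective, hence admissible. Lemmas~\ref{L:split} and~\ref{L:adjoint} with Proposition~\ref{P:algebraic_and_skein_model} then give $\bfA_\eend(\bfS^1)(\bfD^2_X,\bfD^2_Y)\cong V_\eend((\bfD^2_X)^\dagger\circ\bfD^2_Y)\cong\calC(X,Y)$ for all $X,Y$. So nothing has to be rebuilt from $\Proj(\calC)$, and cokernel presentations are the wrong shape for a completion that adds limits. The real missing step is essential surjectivity: surfaces of higher genus, with several blue points, or with closed components must be shown isomorphic to discs. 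The paper does this with the explicit inverse pair $\bfM(g,P)$, $\bfM'(g,P)$ built from the Drinfeld map, giving $\bfSigma_g\bcs\bfD^2(P)\cong\bfD^2_{\eend^{\otimes g}\otimes F_\calC(P)}$, together with Lemma~\ref{L:connected_objects}. (Your Step~1 is also shifted by one dimension: categories are attached to closed $1$-manifolds, with surfaces as objects and $3$-cobordisms as morphisms.)
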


The statement of Theorem~\ref{T:main_theorem} closely resembles that of \cite[Theorem~1.1]{D21}. Notice however that the source \ctgr{2} $\bfACob_\calC$ considered here is actually larger than the one considered in \cite{D21}, since the admissibility condition is no longer required at the level of \mrphsms{1}. This makes it possible to recover $\calC$ as the circle category of $\bfhatA_\eend$, as opposed to $\Proj(\calC)$. In particular, Theorem~\ref{T:main_theorem} provides a positive answer to a question raised in the introduction of \cite{D21}. Furthermore, the whole renormalized \KL{} TQFT $V_\eend$ is now entirely contained in $\bfhatA_\eend$. At the same time, though, dropping the admissibility condition for \mrphsms{1} forces the use of finitely complete linear categories as target, instead of Cauchy complete ones, in order to ensure monoidality.

A similar result was already obtained by Lagiotis in \cite{L25} for the \ctgr{2} $\bfNCob$ of non-compact cobordisms, which is precisely the sub-\ctgr{2} of $\bfACob_\calC$ whose morphisms have empty decorations. It should be noted, however, that the definition of Lagiotis relies on a conjectural algebraic presentation of $\bfNCob$, which is deduced from the one of \cite{BDSV15}, whose complete proof is yet to appear, at the time of writing. Therefore, the main result of this paper can be seen as a concrete construction of an ETQFT $\bfhatA_\eend : \bfNCob \to \coCat_\Bbbk$ that is conjecturally equivalent to that of \cite[Theorem~1.1]{L25}.

\subsection{Structure of the paper}

Let us outline how the paper is organized into different sections, to help the reader navigate them.

In Section~\ref{S:algebraic_preliminaries}, we recall the algebraic setup for the construction of ETQFTs, and we fix our notation for braided monoidal categories, finitely complete linear categories, and factorizable ribbon categories. In particular, Section~\ref{S:complete_lin_cat} introduces the symmetric monoidal \ctgr{2} $\coCat_\Bbbk$ of finitely complete linear categories, which provides the target for the ETQFT of Theorem~\ref{T:main_theorem} (see Definition~\ref{D:sym_mon_2-cat_of_co_lin_cat}), and Section~\ref{S:completion} explains how to obtain a finitely complete linear category out of an arbitrary linear category by a finite completion procedure.

In Section~\ref{S:topological_preliminaries}, we describe the topological framework for the construction of ETQFTs, and we illustrate how to represent diagrammatically connected cobordisms and bichrome graphs inside them. In particular, Section~\ref{S:cobordisms_with_corners} introduces the symmetric monoidal \ctgr{2} $\bfACob_\calC$ of admissible $\calC$-decorated cobordisms associated with a finite ribbon category $\calC$, which provides the source for the ETQFT of Theorem~\ref{T:main_theorem} (see Definition~\ref{D:admissible_cobordisms_2-cat}), and Sections~\ref{S:Atiyah_cobordisms} and \ref{S:connected_cobordisms} recall several subcategories that support the different TQFTs we want to compare.

In Section~\ref{S:TQFTs}, we recall the construction of several known functorial invariants associated with a modular category $\calC$. We start from the \KL{} TQFT $J_\eend : \CCob \to \calC$ for connected cobordisms, in Section~\ref{S:KL_TQFTs}. Then, we move on to the \KLRT{} functor $F_\eend : \calB_\calC \to \calC$ for $\calC$-labeled bichrome graphs, in Section~\ref{S:KLRT_functors}. We finish with the renormalized \KL{} TQFT $V_\eend : \ACob_\calC \to \FVect_\Bbbk$ for admissible $\calC$-decorated cobordisms, in Section~\ref{S:renormalized_KL_TQFTs}. We also recall several important features of these invariants, that will be later used in the construction of the ETQFT of Theorem~\ref{T:main_theorem}. We point out that the \KL{} TQFT constructed here does not appear, as is, in the work of Kerler and Lyubashenko. Indeed, \cite{KL01} provides the construction of a double functor from a double category of connected cobordisms (between connected surfaces) that only implicitly contains the functor presented here, the framework being significantly more involved. What we refer to as the \KL{} TQFT appeared instead in \cite{BD21}, where the construction is formulated starting from an algebraic presentation of the category of connected cobordisms that first appeared in \cite{BP11}, and that is recalled here in Section~\ref{S:BPHK_Hopf_algebras}. However, since the definition of $J_\eend$ does not actually require knowledge of such an algebraic presentation, we reformulate the construction here in terms of the universal property defining the adjoint end, and prove invariance in Appendix~\ref{A:proof_KL_TQFT}.

In Section~\ref{S:ETQFTs}, we build the ETQFT $\bfhatA_\eend : \bfACob_\calC \to \coCat_\Bbbk$ through the extended universal construction, prove its symmetric monoidality in Theorem~\ref{T:symmetric_monoidality}, and show that it contains both the \KL{} TQFT and its renormalized version, in Propositions~\ref{P:KL_TQFT_inside_ETQFT} and \ref{P:renormalized_KL_TQFT_inside_ETQFT}, respectively. As a consequence, we obtain a family of natural isomorphisms parametrized by objects of the underlying modular category, in Corollary~\ref{C:natural_isomorphism}. Along the way, we recover an explicit description of all the images of generating \mrphsms{1} of the \ctgr{2} of admissible decorated cobordisms that generalizes the one of \cite[Sections~8.2 \& 8.3]{D21}, and that agrees with the one of \cite[Table~1]{L25}. In particular, up to natural isomorphisms:
\begin{enumerate}
 \item A disc $\bfD^2_X : \varnothing \to \bfS^1$ decorated with a single positive framed point labeled by $X \in \calC$ is sent to the linear functor
  \[
   \_ \otimes X : \FVect_\Bbbk \to \calC
  \]
  sending every $V \in \FVect_\Bbbk$ to the copower $V \otimes X \in \calC$ in the orbit of $X$ under the canonical action of $\FVect_\Bbbk$ on $\calC$, see Proposition~\ref{P:unit};
 \item A disc $(\bfD^2_X)^\dagger : \bfS^1 \to \varnothing$ with opposite orientation decorated with a single negative framed point labeled by $X \in \calC$ is sent to the representable linear functor
  \[
   \calC(X,\_) : \calC \to \FVect_\Bbbk
  \]
  sending every $Y \in \calC$ to the morphism vector space $\calC(X,Y)$, see Proposition~\ref{P:counit};
 \item A pair of pants $\bfP^2 : \bfS^1 \disjun \bfS^1 \to \bfS^1$ is sent to the tensor product linear functor
  \[
   \otimes : \calC \sqtimes \calC \to \calC
  \]
  sending every $X \sqtimes X' \in \calC \sqtimes \calC$ to $X \otimes X' \in \calC$, see Proposition~\ref{P:product};
 \item A pair of pants $(\bfP^2)^\dagger : \bfS^1 \to \bfS^1 \disjun \bfS^1$ with opposite orientation is sent to the adjoint linear functor
  \[
   \eend_{[1]} \sqtimes {(\eend_{[2]}^* \otimes \_)} : \calC \to \calC \sqtimes \calC
   \vspace*{6.66pt}
  \]
  sending every $Y \in \calC$ to $\displaystyle \eend_{[1]} \sqtimes {(\eend_{[2]}^* \otimes Y)} = \smash{\int_{X \in \calC}} X \sqtimes {(X^* \otimes Y)} \in \calC \sqtimes \calC$, see Proposition~\ref{P:coproduct}.
\end{enumerate}

\subsection{Acknowledgments}

The author is grateful to Benjamin Haïoun, Aaron Ho\-fer, and Cris Negron for helpful discussions during the preparation of this paper, and used ChatGPT and Claude for finding and fixing mistakes in the statement and proof of Propositions~\ref{P:fc_equivalence} and \ref{P:end}, as well as for references and proofreading.

\section{Algebraic preliminaries}\label{S:algebraic_preliminaries}

In this section, we fix our notations and conventions for the linear categories we will use to perform our constructions, and for the \ctgrs{2} we will choose as a target for our linear representations of cobordisms. We will systematically replace essentially small categories by small equivalent categories, whenever necessary. We will work over an algebraically closed field $\Bbbk$, and the terms linear, vector space, and algebra will be used as shorthand for $\Bbbk$-linear, $\Bbbk$-vector space, and $\Bbbk$-algebra, respectively. Calligraphic, roman, and Greek characters used to denote categories, functors, and natural transformations (such as $\calC$, $F$, and $\alpha$) will appear in their boldface variants to denote \ctgrs{2}, \fnctrs{2}, and \trnsfrmtns{2} (such as $\bcalC$, $\bfF$, and $\bfalpha$). Horizontal and vertical composition will be denoted by $\circ$ and by $\ast$, respectively, while left and right whiskering will be denoted by $\triangleright$ and by $\triangleleft$, respectively. Furthermore, the following prefixes will be used: S will stand for \textit{separate}, as in the \ctgr{2} of separate linear categories $\SCat_\Bbbk$, or in the category of separately left exact linear functors $\SLex_\Bbbk(\myuline{A};\myuline{A'})$ between separate linear categories $\myuline{A}$ and $\myuline{A'}$; F will stand for \textit{finite}, as in the \ctgr{2} of finitely complete linear categories $\FCat_\Bbbk$, or in the category of finite-dimensional vector spaces $\FVect_\Bbbk$.

\subsection{Braided monoidal categories and braided monoidal functors}\label{S:braided_monoidal}

Let us start by recalling some definitions. A (\textit{strict}) \textit{monoidal category} is a category $\calC$ equipped with a \textit{tensor product} $\otimes : \calC \times \calC \to \calC$ and a \textit{tensor unit} $\one \in \calC$ satisfying:
\begin{itemize}
 \item $(X \otimes Y ) \otimes Z = X \otimes (Y \otimes Z)$ for all $X,Y,Z \in \calC$;
 \item $\one \otimes X = X = X \otimes \one$ for every $X \in \calC$.
\end{itemize}
See \cite[Section~2.1]{EGNO15} for the more general notion of a (not necessarily strict) monoidal category, and \cite[Section~2.4]{EGNO15} for an explanation of why we can (and will tacitly) assume monoidal structures to be strict.

A \textit{braided monoidal category} is a monoidal category $\calC$ equipped with a natural family of \textit{braiding} isomorphisms $c_{X,Y} : X \otimes Y \to Y \otimes X$ for all $X,Y \in \calC$ satisfying:
\begin{itemize}
 \item $c_{X \otimes Y,Z} = (c_{X,Z} \otimes \id_Y) \circ (\id_X \otimes c_{Y,Z})$ for all $X,Y,Z \in \calC$;
 \item $c_{X,Y \otimes Z} = (\id_Y \otimes c_{X,Z}) \circ (c_{X,Y} \otimes \id_Z)$ for all $X,Y,Z \in \calC$.
\end{itemize}
A braided monoidal category $\calC$ is \textit{symmetric} if
\[
 c_{Y,X} \circ c_{X,Y} = \id_{X \otimes Y}
\]
for all $X,Y \in \calC$. See \cite[Section~8.1]{EGNO15} for more details.

A \textit{monoidal functor} between monoidal categories $\calC$ and $\calC'$, with tensor products $\otimes : \calC \times \calC \to \calC$ and $\otimes' : \calC' \times \calC' \to \calC'$ and with tensor units $\one \in \calC$ and $\one' \in \calC'$, is a functor $F : \calC \to \calC'$ equipped with a natural family of isomorphisms $\chi_{X,Y} : F(X) \otimes' F(Y) \to F(X \otimes Y)$ for all $X,Y \in \calC$ and with an isomorphism $\iota : \one' \to F(\one)$ satisfying:
\begin{itemize}
 \item $\chi_{X,Y \otimes Z} \circ (\id_{F(X)} \otimes' \chi_{Y,Z}) = \chi_{X \otimes Y,Z} \circ (\chi_{X,Y} \otimes' \id_{F(Z)})$ for all $X,Y,Z \in \calC$;
 \item $\chi_{\one,X} \circ (\iota \otimes' \id_{F(X)}) = \id_{F(X)} = \chi_{X,\one} \circ (\id_{F(X)} \otimes' \iota)$ for every $X \in \calC$.
\end{itemize}
A \textit{braided monoidal functor} between braided monoidal categories $\calC$ and $\calC'$, with braidings $c_{X,Y} : X \otimes Y \to Y \otimes X$ for all $X,Y \in \calC$ and $c'_{X',Y'} : X' \otimes' Y' \to Y' \otimes' X'$ for all $X',Y' \in \calC'$, is a monoidal functor $F : \calC \to \calC'$ satisfying:
\begin{itemize}
 \item $\chi_{Y,X} \circ c'_{F(X),F(Y)} = F(c_{X,Y}) \circ \chi_{X,Y}$ for all $X,Y \in \calC$.
\end{itemize}
A \textit{symmetric monoidal functor} is a braided monoidal functor $F : \calC \to \calC'$ between symmetric monoidal categories $\calC$ and $\calC'$.

Although we will not recall the specifics here, all these notions can be extended to the setting of \ctgrs{2}, see \cite[Appendix~D]{D17} for a summary, or \cite[Appendices~A \& C]{S11} for all details.

\subsection{Finitely complete linear categories}\label{S:complete_lin_cat}

Next, we recall the definition of the symmetric monoidal \ctgr{2} $\FCat_\Bbbk$ of finitely complete small linear categories, left exact linear functors, and natural transformations, which will provide the target \ctgr{2} for our ETQFT. This represents a change with respect to \cite[Section~4]{D21}, where the target was chosen to be the symmetric monoidal \ctgr{2} of Cauchy complete (meaning additive and idempotent complete) small linear categories. A reference for all the relevant definitions and notations concerning symmetric monoidal \ctgrs{2}, which were first introduced in \cite{M00}, is provided by \cite[Appendix~D]{D17}, which summarizes \cite[Appendices~A \& C]{S11}.

We denote by $\Vect_\Bbbk$ the symmetric monoidal category of vector spaces, and we use the notation $\Hom_\Bbbk(V,V')$ for the vector space $\Vect_\Bbbk(V,V')$ of linear maps between vector spaces $V,V' \in \Vect_\Bbbk$.

First, let us recall the definition of the \textit{symmetric monoidal \ctgr{2} $\bfCat_\Bbbk$ of small linear categories}.
An object of $\bfCat_\Bbbk$ is a small linear category $A$, that is, a small $\Vect_\Bbbk$-enriched category. A \mrphsm{1} of $\bfCat_\Bbbk$ between small linear categories $A$ and $A'$ is a linear functor $F : A \rightarrow A'$, that is, a $\Vect_\Bbbk$-enriched functor. A \mrphsm{2} of $\bfCat_\Bbbk$ between linear functors $F,F' : A \rightarrow A'$ is a natural transformation $\alpha : F \Rightarrow F'$. The horizontal composition of $\bfCat_\Bbbk$ is denoted $\circ$, and it is given by the standard composition of functors and by the horizontal composition of natural transformations. Similarly, the vertical composition of $\bfCat_\Bbbk$ is denoted $\ast$, and it is given by the standard vertical composition of natural transformations. We use the notation $\bfHom_\Bbbk(A,A')$ for the category $\bfCat_\Bbbk(A,A')$ of linear functors between small linear categories $A,A' \in \bfCat_\Bbbk$, and the notation $\Nat(F,F')$ for the vector space $\bfHom_\Bbbk(A,A')(F,F')$ of natural transformations between linear functors $F,F' \in \bfHom_\Bbbk(A,A')$. The tensor product of $\bfCat_\Bbbk$ is denoted $\otimes$, and it is given by the standard $\Vect_\Bbbk$-en\-riched tensor product of linear categories, linear functors, and natural transformations. The tensor unit of $\bfCat_\Bbbk$ is denoted $\Bbbk$, and it is given by any linear category having a single object, and such that its single endomorphism space is isomorphic to the base field $\Bbbk$. The symmetric braiding of $\bfCat_{\Bbbk}$ is given by the transposition of tensor factors.

Next, a \textit{separate linear category} is a finite sequence $(A_1,\ldots,A_n)$ of linear categories,
and its \textit{underlying linear category} is $A_1 \otimes \ldots \otimes A_n$. The underlying linear category of the empty sequence $\varnothing$ is defined to be $\Bbbk$. In the following, we will abusively use the notation $\myuline{A}$ to denote both a separate linear category $(A_1,\ldots,A_n)$ and its underlying linear category $A_1 \otimes \ldots \otimes A_n$.

Then, let us fix a separate linear category $\myuline{A} = (A_1,\ldots,A_n)$. For every choice of
\begin{itemize}
 \item a category $J$,
 \item an integer $1 \leqs i \leqs n$,
 \item an object $X_k \in A_k$ for every integer $1 \leqs k \leqs n$ with $i \neq k$,
 \item a functor $D_i : J \to A_i$,
\end{itemize}
we refer to the functor $\myuline{D} : J \to \myuline{A}$ given by
\[
 \myuline{D}(j) := X_1 \otimes \ldots \otimes X_{i-1} \otimes D_i(j) \otimes X_{i+1} \otimes \ldots \otimes X_n
\]
for every $j \in J$ as a \textit{separate diagram in $\myuline{A}$}. A separate diagram $\myuline{D} : J \to \myuline{A}$ is \textit{finite} if $J$ has finite set of objects and finite sets of morphisms. A (co)limit of a separate (finite) diagram in $\myuline{A}$ is called a \textit{separate} 	(\textit{finite co})\textit{limit}. We say that $\myuline{A}$ is \textit{separately} (\textit{finitely co})\textit{complete} if it admits all separate (finite co)limits. When $n=1$, separate (finite co)limits coincide with (finite co)limits, and a separately (finitely co)complete linear category is simply (\textit{finitely co})\textit{complete}.

Now, let us fix separate linear categories $\myuline{A} = (A_1,\ldots,A_n)$, $\myuline{A'} = (A'_1,\ldots,A'_{n'})$. A \textit{separate linear functor} $\myuline{F} : \myuline{A} \to \myuline{A'}$ is a linear functor that preserves separate diagrams, meaning that, if $\myuline{D}$ is a separate diagram in $\myuline{A}$, then $\myuline{F} \circ \myuline{D}$ is a separate diagram in $\myuline{A'}$. When $n'=1$, a separate linear functor is simply a linear functor. We say a separate linear functor $\myuline{F} : \myuline{A} \to \myuline{A'}$ is \textit{separately left exact} if it preserves separate finite limits, meaning that, whenever a separate finite diagram $\myuline{D}$ in $\myuline{A}$ admits a limit in $\myuline{A}$, then its image under $\myuline{F}$ is the limit of the separate finite diagram $\myuline{F} \circ \myuline{D}$ in $\myuline{A'}$. Notice that $\myuline{A}$ is not required to admit all separate finite limits, since the condition only applies to those separate finite limits that do exist in $\myuline{A}$. The analogous definition for separate finite colimits yields the notion of a \textit{separately right exact} separate linear functor, and a separate linear functor that is simultaneously separately left and right exact is \textit{separately exact}. When $n=n'=1$, separate linear functors coincide with linear functors, and separately (left/right) exact linear functors are simply (\textit{left}/\textit{right}) \textit{exact}.

\begin{remark}\label{R:adjoints_and_(co)limits}
 Let us recall a few standard examples of separately left/right exact linear functors.
 \begin{enumerate}
  \item A cone $L$ over a diagram $D : J \to A$ in a linear category $A$ is a limit if and only if, for every object $X$ of $A$, the cone $A(X,L)$ over the diagram $A(X,\_) \circ D : J \to \Vect_\Bbbk$ is a limit in $\Vect_\Bbbk$. In other words, representable linear functors $A(X,\_) : A \to \Vect_\Bbbk$ preserve and jointly reflect limits for all $X \in A$, see for instance \cite[Section~3.2]{K82}. In particular, if $\myuline{X} \in \myuline{A}$ is an object of a separate linear category $\myuline{A}$, then the representable linear functor $\myuline{A}(\myuline{X},\_) : \myuline{A} \to \Vect_\Bbbk$ is separately left exact.
  \item If $R : A \to A'$ and $L : A' \to A$ are adjoint linear functors between linear categories $A$ and $A'$, then the right adjoint $R$ preserves all limits, and the left adjoint $L$ preserves all colimits, see for instance \cite[Section~3.2]{K82}. In particular, if $\myuline{R} : \myuline{A} \to \myuline{A'}$ and $\myuline{L} : \myuline{A'} \to \myuline{A}$ are adjoint separate linear functors between separate linear categories $\myuline{A}$ and $\myuline{A'}$, then the right adjoint $\myuline{R}$ is separately left exact, and the left adjoint $\myuline{L}$ is separately right exact.
 \end{enumerate}
\end{remark}

We will focus on separate finite limits and separate left exactness, from now on, although we will sometimes make use of the dual notions too. For a pair of separate linear categories $\myuline{A} = (A_1,\ldots,A_n)$ and $\myuline{A'} = (A'_1,\ldots,A'_{n'})$, we denote by $\SLex_\Bbbk(\myuline{A};\myuline{A'}) = \SLex_\Bbbk(A_1,\ldots,A_n;A'_1,\ldots,A'_{n'})$ the category of separately left exact separate linear functors from $\myuline{A}$ to $\myuline{A'}$ and their natural transformations, which is a full subcategory of $\bfHom_\Bbbk(\myuline{A},\myuline{A'}) = \bfHom_\Bbbk(A_1 \otimes \ldots \otimes A_n,A'_1 \otimes \ldots \otimes A'_{n'})$. When $n=n'=1$, we denote the same category by $\bfLex_\Bbbk(A,A')$ instead.

The \textit{symmetric monoidal \ctgr{2} of separate linear categories} is the symmetric monoidal \ctgr{2} $\SCat_\Bbbk$ whose objects are small separate linear categories $\myuline{A}$, whose categories of morphisms $\SCat_\Bbbk(\myuline{A},\myuline{A'})$ are given by $\SLex_\Bbbk(\myuline{A};\myuline{A'})$ for all $\myuline{A}, \myuline{A'} \in \SCat_\Bbbk$, whose tensor product is given by concatenation at the level of objects and by the restriction of the tensor product of $\bfCat_\Bbbk$ at the level of morphisms, whose tensor unit is given by the empty object, and whose symmetric braiding is given by the restriction of the symmetric braiding of $\bfCat_\Bbbk$.

If $A$ and $A'$ are small finitely complete linear categories, then their \textit{box tensor product} is a small finitely complete linear category $A \sqtimes A'$ equipped with a separately left exact fully faithful linear functor $K_{(A,A')} : (A,A') \to A \sqtimes A'$ that satisfies the following universal property: for every finitely complete linear category $A''$, restriction along $K_{(A,A')}$ induces an equivalence
\begin{align*}
 \_ \circ K_{(A,A')} : \Lex_\Bbbk(A \sqtimes A',A'') &\to \SLex_\Bbbk(A,A';A'') \\*
 F &\mapsto F \circ K_{(A,A')}.
\end{align*}
This universal property determines $A \sqtimes A'$ up to (a unique up to isomorphism) equivalence. We recall a proof of its existence in Section~\ref{S:completion}.

\begin{definition}\label{D:sym_mon_2-cat_of_co_lin_cat}
 The \textit{symmetric monoidal \ctgr{2} of finitely complete small linear categories} is obtained from the sub-\ctgr{2} $\FCat_\Bbbk$ of $\bfCat_\Bbbk$ whose objects are finitely complete small linear categories $A$ and whose categories of morphisms $\FCat_\Bbbk(A,A')$ are given by $\bfLex_\Bbbk(A,A')$ for all $A,A' \in \FCat_\Bbbk$, by specifying the box tensor product $\sqtimes$ as a tensor product, by specifying the small linear category $\FVect_\Bbbk$ of finite-dimensional vector spaces as a tensor unit, and by specifying the transposition of tensor factors as a symmetric braiding.
\end{definition}

For every finitely complete linear category $A$, the \textit{canonical action of $\FVect_\Bbbk$ on $A$} is the linear functor $\rho : \FVect_\Bbbk \to \bfLex_\Bbbk(A,A)$ sending every $V \in \FVect_\Bbbk$ to the copower functor $V \otimes \_ : A \to A$ defined by the linear isomorphism
\begin{equation}\label{E:canonical_Vect-action}
 A(V \otimes X,Y) \cong \Hom_\Bbbk(V,A(X,Y))
\end{equation}
of \cite[Equation~(3.44)]{K82}, which is natural in $V \in \FVect_\Bbbk$ and $X,Y \in A$.
Every linear functor $F : A \to A'$ between finitely complete linear categories $A$ and $A'$ intertwines the canonical actions of $\FVect_\Bbbk$ on $A$ and $A'$, in the sense that we have an isomorphism
\begin{equation}\label{E:canonical_Vect-action_intertwiner}
 F(V \otimes X) \cong V \otimes F(X)
\end{equation}
in $A'$ that is natural in $V \in \FVect_\Bbbk$ and $X \in A$.

\subsection{Finite completion \texorpdfstring{$2$-functor}{2-functor}}\label{S:completion}

Let us move on to explain a finite completion procedure that preserves all the existing separate finite limits, and let us give a criterion for equivalence under finite completion.

First of all, every separate linear category $\myuline{A} = (A_1,\ldots,A_n)$ can be completed by adding all the missing separate finite limits without changing the existing ones. Indeed, we define the \textit{finite completion $\smash{\myuline{\hat{A}}}$ of $\myuline{A}$} to be the smallest replete full linear subcategory of $\SLex_\Bbbk(\myuline{A};\Vect_\Bbbk)^\op$ that is closed under finite limits and that contains all the representable linear functors $\myuline{A} (\myuline{X},\_)$ for $\myuline{X} \in \myuline{A}$. Notice that $\myuline{\hat{A}}$ is a (single) small linear category, even when $n > 1$. Then, thanks to \cite[Theorem~10.2]{K80}, the linear functor
\begin{align*}
 K_{\myuline{A}} : \myuline{A} &\to \myuline{\hat{A}} \\*
 \myuline{X} &\mapsto \myuline{A} (\myuline{X},\_)
\end{align*}
is separately left exact and fully faithful. Notice that the definition of $K_{\myuline{A}}$ explains the use of the opposite in the definition of $\smash{\myuline{\hat{A}}}$, since a morphism $\myuline{h} \in \myuline{A}(\myuline{X},\myuline{Y})$ induces a natural transformation $\myuline{A}(\myuline{h},\_) : \myuline{A}(\myuline{Y},\_) \Rightarrow \myuline{A}(\myuline{X},\_)$. Furthermore, $K_{\myuline{A}}$ satisfies the following universal property: for every finitely complete linear category $A'$, restriction along $K_{\myuline{A}}$ induces an equivalence
\begin{align*}
 \_ \circ K_{\myuline{A}} : \Lex_\Bbbk(\myuline{\hat{A}},A') &\to \SLex_\Bbbk(\myuline{A};A') \\*
 F &\mapsto F \circ K_{\myuline{A}}.
\end{align*}
This universal property determines $\myuline{\hat{A}}$ up to (a unique up to isomorphism) equivalence. If $\myuline{F} : \myuline{A} \to \myuline{A'}$ is a separately left exact separate linear functor between separate linear categories $\myuline{A}$ and $\myuline{A'}$, then we denote by $\myuline{\hat{F}} : \myuline{\hat{A}} \to \myuline{\hat{A}'}$ the unique (up to isomorphism) left exact linear functor that admits a natural isomorphism $\theta_{\myuline{F}} : \myuline{\hat{F}} \circ K_{\myuline{A}} \Rightarrow K_{\myuline{A'}} \circ \myuline{F}$. Similarly, if $\myuline{\alpha} : \myuline{F} \Rightarrow \myuline{F'}$ is a natural transformation between separately left exact separate linear functors $\myuline{F},\myuline{F'} : \myuline{A} \to \myuline{A'}$ between separate linear categories $\myuline{A}$ and $\myuline{A'}$, then we denote by $\myuline{\hat{\alpha}} : \myuline{\hat{F}} \Rightarrow \myuline{\hat{F}'}$ the unique natural transformation satisfying $\theta_{\myuline{F'}} \ast (\myuline{\hat{\alpha}} \triangleleft K_{\myuline{A}}) = (K_{\myuline{A'}} \triangleright \myuline{\alpha}) \ast \theta_{\myuline{F}}$. Then, the operation of finite completion defines a \fnctr{2} $\cmpl : \SCat_{\Bbbk} \rightarrow \coCat_\Bbbk$.

Then, the box tensor product $A \sqtimes A'$ of small finitely complete linear categories $A$ and $A'$ can be constructed as the finite completion of the pair $(A,A')$, and the tensor unit for $\sqtimes$ can be constructed as the finite completion of the empty sequence $\varnothing$, which can naturally be identified with $\FVect_\Bbbk$. For a proof of the axioms of a symmetric monoidal \ctgr{2}, compare with \cite[Section~6.5]{K82}, where the free cocompletion operation is considered instead. Then, the next statement follows directly from the definitions.

\begin{proposition}\label{P:completion}
 The \fnctr{2} $\cmpl : \SCat_\Bbbk \to \FCat_\Bbbk$ is symmetric mo\-noi\-dal.
\end{proposition}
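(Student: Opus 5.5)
To prove that $\cmpl : \SCat_\Bbbk \to \FCat_\Bbbk$ is symmetric monoidal, I will write down the structure data explicitly and verify the coherence axioms through the universal property of $K_{\myuline{A}}$. The argument will mirror \cite[Section~6.5]{K82} for the free cocompletion.

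\textbf{Structure data.} The monoidal structure consists of three pieces.
\begin{enumerate}
 \item A pseudonatural equivalence $\chi_{\myuline{A},\myuline{A'}} : \myuline{\hat{A}} \sqtimes \myuline{\hat{A}'} \to \widehat{\myuline{A}\,\myuline{A'}}$, where $\myuline{A}\,\myuline{A'}$ denotes concatenation.
 \item A unit equivalence $\iota : \FVect_\Bbbk \to \hat{\varnothing}$.
 \item The invertible modifications relating $\chi$ and $\iota$ with the associators, unitors, and braidings.
\end{enumerate}

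The unit is immediate, since $\FVect_\Bbbk$ is by definition the finite completion of $\varnothing$, so $\iota$ can be taken to be the identity.

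For $\chi$, recall that $\myuline{\hat{A}} \sqtimes \myuline{\hat{A}'}$ is the completion of the pair $(\myuline{\hat{A}},\myuline{\hat{A}'})$. By the two universal properties, $\Lex_\Bbbk(\myuline{\hat{A}} \sqtimes \myuline{\hat{A}'},B)$ is equivalent to $\SLex_\Bbbk(\myuline{\hat{A}},\myuline{\hat{A}'};B)$ for every finitely complete $B$. I then restrict separately in each variable along $K_{\myuline{A}}$ and $K_{\myuline{A'}}$. A functor that is left exact in each variable separately restricts, one variable at a time, to $\SLex_\Bbbk(\myuline{A}\,\myuline{A'};B)$. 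Applying the universal property of $\myuline{A}\,\myuline{A'}$ identifies this with $\Lex_\Bbbk(\widehat{\myuline{A}\,\myuline{A'}},B)$.

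\textbf{Key lemma.} The step from $\SLex_\Bbbk(\myuline{\hat{A}},\myuline{\hat{A}'};B)$ to $\SLex_\Bbbk(\myuline{A}\,\myuline{A'};B)$ is an equivalence. This is a parametrized version of the universal property: fixing the variable in $\myuline{\hat{A}'}$, a separately left exact functor on $\myuline{A}$ extends uniquely to a left exact functor on $\myuline{\hat{A}}$, and this extension is natural in the parameter. The same then applies in the other variable. Once the lemma is established, the Yoneda lemma for representing objects in $\FCat_\Bbbk$ produces $\chi$ as an equivalence. Concretely, $\chi$ is the unique left exact functor satisfying $\chi \circ K_{(\myuline{\hat{A}},\myuline{\hat{A}'})} \circ (K_{\myuline{A}} \otimes K_{\myuline{A'}}) \cong K_{\myuline{A}\,\myuline{A'}}$.

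\textbf{Naturality and coherence.} Pseudonaturality of $\chi$ in $\myuline{F}$, $\myuline{F'}$ follows from uniqueness up to unique isomorphism of left exact extensions: both composites restrict along the $K$'s to $K \circ (\myuline{F} \otimes \myuline{F'})$, and the isomorphisms $\theta$ supply the comparison 2-cell.

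The coherence axioms follow by the same principle. Each one asks that two composites of 2-cells between left exact functors out of a completion coincide. Since restriction along $K$ is an equivalence, hence faithful on 2-cells, it suffices to check them after whiskering with the $K$'s. After that whiskering, every cell reduces to the strict associativity of concatenation and the transposition in $\SCat_\Bbbk$, where the coherence holds on the nose.

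\textbf{Main obstacle.} I expect the parametrized extension lemma to be the only non-formal step. The point to check is that extending in one variable preserves left exactness in the other variable. I would first try to handle this through the description of $\myuline{\hat{A}}$ as the closure of the representables under finite limits, using that finite limits commute with finite limits in $B$.
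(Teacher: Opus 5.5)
Your proposal is correct and follows the route the paper takes. The paper gives no detailed proof. It realizes the box product and its unit as the finite completions of the pair and of $\varnothing$, declares the statement to follow directly from the definitions, and defers the symmetric monoidal coherence axioms to Kelly's argument for free cocompletions in Section~6.5 of his book. Your construction of $\chi$ through the universal properties, your parametrized extension lemma (correctly settled by closing the representables under finite limits and commuting finite limits), and your verification of coherence after restriction along the $K$'s are a detailed write-up of exactly that argument.
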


The operation of finite completion can be directly extended to \fnctrs{2} with target $\SCat_\Bbbk$, as well as to \trnsfrmtns{2} between them. Indeed, for every \ctgr{2} $\bcalC$, every \fnctr{2} $\bfF : \bcalC \rightarrow \SCat_\Bbbk$ automatically induces a \fnctr{2} $\bfhatF : \bcalC \rightarrow \FCat_\Bbbk$, which we call the \textit{finite completion} of $\bfF$, and which is simply defined as the composition $\cmpl \circ \bfF$. Similarly, every \trnsfrmtn{2} $\bfsigma : \bfF \Rightarrow \bfF'$ between \fnctrs{2} $\bfF,\bfF' : \bcalC \rightarrow \SCat_\Bbbk$ induces a \trnsfrmtn{2} $\bfhatsigma : \bfhatF \Rightarrow \bfhatF'$, which we call the \textit{finite completion} of $\bfsigma$, and which is simply defined as the left whiskering $\cmpl \triangleright \bfsigma$.

The finite completion \fnctr{2} $\cmpl$ allows us to work with objects of $\SCat_{\Bbbk}$, provided we consider them to be equivalent whenever their finite completions are equivalent in $\FCat_{\Bbbk}$. This motivates the next definition.

We say two separate linear categories $\myuline{A}$ and $\myuline{A'}$ are \textit{finite completion equivalent} if their finite completions $\myuline{\hat{A}}$ and $\myuline{\hat{A}'}$ are equivalent, and we say a separately left exact separate linear functor $\myuline{F} : \myuline{A} \to \myuline{A'}$ is a \textit{finite completion equivalence} if its finite completion $\myuline{\hat{F}} : \myuline{\hat{A}} \to \myuline{\hat{A}'}$ is an equivalence. Then, let us give a finite completion equivalence criterion.

We say a full linear subcategory $\myuline{B}$ of a separate linear category $\myuline{A} = (A_1,\ldots,A_n)$ is \textit{separate} (\textit{finite co})\textit{limit dense} if the smallest replete full linear subcategory of $\myuline{A}$ that is closed under separate (finite co)limits and that contains $\myuline{B}$ is $\myuline{A}$. When $n=1$, separate (finite co)limit dense subcategories are simply (\textit{finite co})\textit{limit dense}.

\begin{example}
 We say a full linear subcategory $B$ of a linear category $A$ is (\textit{finitely}) \textit{additively dense} if the smallest replete full linear subcategory of $A$ that is closed under (finite) direct sums and that contains $B$ is $A$. If $B$ is (finitely) additively dense in $A$, then it is both (finite) limit dense and (finite) colimit dense in $A$, and its objects are (\textit{finite}) \textit{additive generators} of $A$.
\end{example}

If $\myuline{A'} = (A'_1,\ldots,A'_{n'})$ is a separate linear category, we say a full separate linear functor $\myuline{F} : \myuline{A} \to \myuline{A'}$ is \textit{separate} (\textit{finite co})\textit{limit dense} if its image is a separate (finite co)limit dense linear subcategory of $\myuline{A'}$. When $n=1$, a separate linear functor is simply a linear functor, and separate (finite co)limit dense functors are simply (\textit{finite co})\textit{limit dense}.

\begin{proposition}\label{P:fc_equivalence}
 Let $\myuline{F} : \myuline{A} \to \myuline{A'}$ be a separately left exact separate linear functor between small separate linear categories $\myuline{A}$ and $\myuline{A'}$. If $\myuline{F}$ is fully faithful and separate finite limit dense, and if there exists a separate finite limit dense full linear subcategory $\myuline{B}$ of $\myuline{A}$ such that, for every $\myuline{Y} \in \myuline{B}$, the linear functor $\myuline{A'}(\_,\myuline{F}(\myuline{Y})) : (\myuline{A'})^\op \to \Vect_\Bbbk$
 is separately right exact, then $\myuline{F}$ is a finite completion equivalence.
\end{proposition}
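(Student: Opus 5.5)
The plan is to show that $\myuline{\hat{F}} : \myuline{\hat{A}} \to \myuline{\hat{A}'}$ is fully faithful and essentially surjective. Throughout I use three things: $\myuline{\hat{F}}$ is left exact; it comes with the isomorphism $\theta_{\myuline{F}} : \myuline{\hat{F}} \circ K_{\myuline{A}} \Rightarrow K_{\myuline{A'}} \circ \myuline{F}$; and $K_{\myuline{A}}$, $K_{\myuline{A'}}$ are fully faithful and preserve all existing separate finite limits (\cite[Theorem~10.2]{K80}).

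\emph{Essential surjectivity.} This is the easy half.
\begin{itemize}
 \item Since $\myuline{F}$ is separate finite limit dense, the closure of $\myuline{F}(\myuline{A})$ under separate finite limits is all of $\myuline{A'}$.
 \item Because $K_{\myuline{A'}}$ preserves the separate finite limits that exist in $\myuline{A'}$, every $K_{\myuline{A'}}(\myuline{X'})$ lies in the closure of $K_{\myuline{A'}}(\myuline{F}(\myuline{A}))$ under finite limits in $\myuline{\hat{A}'}$.
 \item By definition, $\myuline{\hat{A}'}$ is the closure of the representables $K_{\myuline{A'}}(\myuline{X'})$ under finite limits. Hence $\myuline{\hat{A}'}$ is the finite limit closure of $K_{\myuline{A'}}(\myuline{F}(\myuline{A})) \cong \myuline{\hat{F}}(K_{\myuline{A}}(\myuline{A}))$.
 \item The essential image of $\myuline{\hat{F}}$ is replete, contains this set, and is closed under finite limits. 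The last point holds once full faithfulness is known: a left exact fully faithful functor reflects cones, so a limit of objects in the image is the image of the limit of their preimages.
\end{itemize}
So essential surjectivity reduces to full faithfulness.

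\emph{Full faithfulness.} I would argue in three stages.
\begin{enumerate}
 \item \emph{Representables.} On pairs of representables, full faithfulness follows from the Yoneda lemma and full faithfulness of $\myuline{F}$:
 \[
  \myuline{\hat{A}}(K\myuline{X},K\myuline{Y}) \cong \myuline{A}(\myuline{X},\myuline{Y}) \cong \myuline{A'}(\myuline{F}\myuline{X},\myuline{F}\myuline{Y}) \cong \myuline{\hat{A}'}(K\myuline{F}\myuline{X},K\myuline{F}\myuline{Y}).
 \]
 \item \emph{Second variable in $K_{\myuline{A}}(\myuline{B})$.} Fix $\myuline{Y} \in \myuline{B}$ and let the first variable $Z$ range over $\myuline{\hat{A}}$. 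The key claim is that both
 \[
  \myuline{\hat{A}}(\_,K\myuline{Y}) \quad \text{and} \quad \myuline{\hat{A}'}(\myuline{\hat{F}}(\_),K\myuline{F}\myuline{Y})
 \]
 send finite limits in $\myuline{\hat{A}}$ to colimits in $\Vect_\Bbbk$. Given that, the comparison map $\myuline{\hat{F}}_{Z,K\myuline{Y}}$ is an isomorphism on representables $Z$ by stage 1. It then stays an isomorphism on the class of $Z$ closed under finite limits, and that class is all of $\myuline{\hat{A}}$.
 \item \emph{Second variable arbitrary.} Now let the second variable $W$ vary. Both $\myuline{\hat{A}}(Z,\_)$ and $\myuline{\hat{A}'}(\myuline{\hat{F}}Z,\myuline{\hat{F}}(\_))$ preserve finite limits, because $\myuline{\hat{F}}$ is left exact. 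So the set of $W$ for which $\myuline{\hat{F}}_{Z,W}$ is bijective for all $Z$ is closed under finite limits. It contains $K_{\myuline{A}}(\myuline{B})$ by stage 2. Since $\myuline{B}$ is separate finite limit dense in $\myuline{A}$ and $K_{\myuline{A}}$ preserves existing separate finite limits, this closure contains $K_{\myuline{A}}(\myuline{A})$, hence all of $\myuline{\hat{A}}$.
\end{enumerate}

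\emph{The key claim in stage 2.} This is where I expect the real work. Write $\myuline{\hat{A}}^\op \subset \SLex_\Bbbk(\myuline{A};\Vect_\Bbbk)$. By Yoneda,
\[
 \myuline{\hat{A}}(Z,K\myuline{Y}) \cong Z(\myuline{Y}),
\]
so the claim for $\myuline{\hat{A}}$ says that evaluation at $\myuline{Y}$ turns finite limits in $\myuline{\hat{A}}$ into colimits. However, finite limits in $\myuline{\hat{A}}$ are finite colimits inside $\SLex_\Bbbk(\myuline{A};\Vect_\Bbbk)$, and these need not be computed pointwise. The same issue arises in $\myuline{\hat{A}'}$ for $K\myuline{F}\myuline{Y}$.

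This is exactly what the right exactness hypothesis should handle, and I would try the following first.
\begin{itemize}
 \item By hypothesis, $\myuline{A'}(\_,\myuline{F}\myuline{Y})$ is separately right exact on $(\myuline{A'})^\op$. Equivalently, it is a separately left exact functor $\myuline{A'} \to \Vect_\Bbbk^\op$, landing in a finitely complete category after restricting to a small finitely complete subcategory.
 \item By the universal property of $K_{\myuline{A'}}$, it extends uniquely to a left exact functor $G : \myuline{\hat{A}'} \to \Vect_\Bbbk^\op$.
 \item I would then identify $G$ with $\myuline{\hat{A}'}(\_,K\myuline{F}\myuline{Y})$. Both agree on representables, by Yoneda and full faithfulness of $K_{\myuline{A'}}$. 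One then needs to compare them on finite limits of representables, using the explicit description of finite limits in $\myuline{\hat{A}'}$ as finite colimits of left exact functors, for instance via cokernel presentations by representables.
\end{itemize}
This identification shows that $\myuline{\hat{A}'}(\_,K\myuline{F}\myuline{Y})$ converts finite limits into colimits, which settles the claim for $\myuline{\hat{A}'}$.

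Composing with the left exact $\myuline{\hat{F}}$ gives the claim for $\myuline{\hat{A}'}(\myuline{\hat{F}}(\_),K\myuline{F}\myuline{Y})$. The fact that $\myuline{F}$ is separately left exact makes $\myuline{A}(\_,\myuline{Y}) \cong \myuline{A'}(\myuline{F}(\_),\myuline{F}\myuline{Y})$ separately right exact as well, so the same argument run in $\myuline{\hat{A}}$ settles the claim for $\myuline{\hat{A}}(\_,K\myuline{Y})$.
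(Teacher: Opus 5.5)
Your overall reduction is sound and differs from the paper's route. The paper constructs a quasi-inverse of $\hat F$ as the restriction $Q_F$ of precomposition with $F$, and checks that it is left adjoint to $\hat F$ with invertible unit and counit. You instead prove full faithfulness of $\hat F$ by a two-variable closure argument and then deduce essential surjectivity. Stages 1 and 3 and the essential-surjectivity step are correct.

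The gap is the key claim of stage 2, which carries all the content, and your proposed proof of it is circular.
\begin{itemize}
 \item The left exact extension $G : \hat A' \to \Vect_\Bbbk^\op$ exists for formal reasons.
 \item But $G \cong \hat A'(\_,K_{A'}FY)$ follows from uniqueness of left exact extensions only if you already know that $\hat A'(\_,K_{A'}FY) \cong \mathrm{ev}_{FY}$ is left exact into $\Vect_\Bbbk^\op$. That is the key claim itself.
 \item ``Comparing them on finite limits of representables'' means computing $Z(FY)$ for $Z = L_{A'}(\colim^{\mathrm{pw}}_j D(j))$, where $L_{A'}$ is the reflector of $\bfHom_\Bbbk(A',\Vect_\Bbbk)$ onto $\SLex_\Bbbk(A';\Vect_\Bbbk)$. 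There is no explicit formula for $L_{A'}$.
 \item Cokernel presentations by representables do not help. Those cokernels are taken in $\SLex_\Bbbk(A';\Vect_\Bbbk)$, so they are again reflections of pointwise cokernels.
\end{itemize}
So the hypothesis is used only to produce $G$. The step where it is genuinely needed, controlling the reflector at $FY$, is left open.

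The missing idea is to test against a right adjoint of evaluation. If $A(\_,Y)$ is separately right exact, then $R_Y(V) := \Hom_\Bbbk(A(\_,Y),V)$ is separately left exact, and it is right adjoint to evaluation at $Y$ on $\bfHom_\Bbbk(A,\Vect_\Bbbk)$. Hence for every $H \in \bfHom_\Bbbk(A,\Vect_\Bbbk)$ there are isomorphisms
\[
 \Hom_\Bbbk(L_A(H)(Y),V) \cong \Nat(L_A(H),R_Y(V)) \cong \Nat(H,R_Y(V)) \cong \Hom_\Bbbk(H(Y),V),
\]
natural in $V$ and given by precomposition with $(\eta_H)_Y$. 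Therefore $(\eta_H)_Y : H(Y) \to L_A(H)(Y)$ is invertible; this is Lemma~\ref{L:fc_equivalence_2}.

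Taking $H$ to be a pointwise finite colimit shows that evaluation at $Y$ sends finite colimits in $\SLex_\Bbbk(A;\Vect_\Bbbk)$ to colimits in $\Vect_\Bbbk$. These are exactly the finite limits in $\hat A$, so this is your key claim for $\hat A(\_,K_AY) \cong \mathrm{ev}_Y$. The same argument at $FY$ gives it for $\hat A'(\_,K_{A'}FY)$. Your transfer of right exactness from $A'(\_,FY)$ to $A(\_,Y)$ is Lemma~\ref{L:fc_equivalence_3}.

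With this lemma inserted, your argument goes through. It is then a legitimate, somewhat more direct alternative to the paper's proof. The paper uses the same lemma to show that $\gamma_F : L_A \circ H_F \Rightarrow S_F \circ L_{A'}$ is invertible, hence that $S_F$ is right exact, and then builds the adjoint quasi-inverse $Q_F$.
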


We postpone the proof of Proposition~\ref{P:fc_equivalence} to Appendix~\ref{A:fc_equivalence}.

\subsection{Factorizable ribbon categories}\label{S:ribbon_categories}

We finish this section by recalling the definition of modular categories, ends, and traces on ideals, which will provide the fundamental algebraic tools for our construction of ETQFTs.

A monoidal category $\calC$ is \textit{left rigid} if every object $X \in \calC$ admits a \textit{left dual} $X^* \in \calC$, which can be equipped with \textit{left evaluation} and \textit{coevaluation} morphisms $\lev_X : X^* \otimes X \to \one$ and $\lcoev_X : \one \to X \otimes X^*$ satisfying:
\begin{itemize}
 \item $(\id_X \otimes \lev_X) \circ (\lcoev_X \otimes \id_X) = \id_X$;
 \item $(\lev_X \otimes \id_{X^*}) \circ (\id_{X^*} \otimes \lcoev_X) = \id_{X^*}$.
\end{itemize}
See \cite[Section~2.10]{EGNO15} for more details. A \textit{ribbon category} is a left rigid braided monoidal category $\calC$ equipped with a natural family of \textit{twist} isomorphisms $\vartheta_X : X \to X$ for every $X \in \calC$ satisfying:
\begin{itemize}
 \item $\vartheta_{X \otimes Y} = c_{Y,X} \circ c_{X,Y} \circ (\vartheta_X \otimes \vartheta_Y)$ for all $X,Y \in \calC$;
 \item $(\vartheta_X)^* = \vartheta_{X^*}$ for every $X \in \calC$.
\end{itemize}
See \cite[Section~8.10]{EGNO15} for more details. A ribbon category $\calC$ is automatically \textit{rigid}, meaning that the left dual $X^* \in \calC$ of an object $X \in \calC$ is also a \textit{right dual}, which can be equipped with \textit{right evaluation} and \textit{coevaluation} morphisms $\rev_X : X \otimes X^* \to \one$ and $\rcoev_X : \one \to X^* \otimes X$ defined as:
\begin{itemize}
 \item $\rev_X = \lev_X \circ c_{X,X^*} \circ (\vartheta_X \otimes \id_{X^*})$;
 \item $\rcoev_X = (\id_{X^*} \otimes \vartheta_X^{-1}) \circ c_{X^*,X}^{-1} \circ \lcoev_X$.
\end{itemize}
Morphisms in a ribbon category $\calC$ can be represented graphically using Penrose diagrams, with structure morphisms represented as shown for all $X,Y \in \calC$.
\[
 \pic{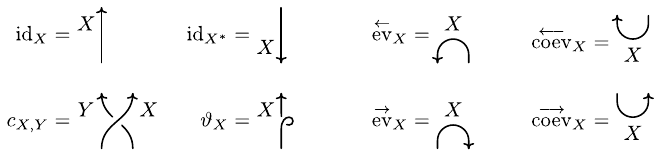}
\]
Penrose diagrams are read from bottom to top, with composition given by vertical stacking, and tensor product given by horizontal juxtaposition. The axioms satisfied by the ribbon structure of $\calC$ imply the invariance of this diagrammatic calculus under planar isotopy and framed Reidemeister moves.

Following \cite[Definition~1.8.5]{EGNO15}, a \textit{finite category} is a linear category $\calC$ that is equivalent to the category $\mods{A}$ of finite-dimensional left $A$-modules for a finite-dimensional algebra $A$. An equivalent and more explicit characterization is provided by \cite[Definition~1.8.6]{EGNO15}. In a finite category $\calC$, every object $X \in \calC$ admits a projective cover $P_X \in \calC$ equipped with an epimorphism $\varepsilon_X : P_X \to X$. A projective generator of $\calC$ is a projective object $G \in \calC$ that covers every simple object $X \in \calC$. In particular, if $I(\calC)$ is a set of representatives of isomorphism classes of simple objects of $\calC$, then
\begin{equation}\label{E:projective_generator}
 G := \bigoplus_{X \in I(\calC)} P_X
\end{equation}
is a projective generator of $\calC$.

A finite category $\calC$ is \textit{unimodular} if it is rigid and $P_{\one}^* \cong P_{\one}$, compare with \cite[Definition~6.5.7]{EGNO15}. An object $X \in \calC$ of a braided monoidal category is said to be transparent if it belongs to the Müger center $M(\calC)$ of $\calC$, which means its double braiding satisfies $c_{Y,X} \circ c_{X,Y} = \id_{X \otimes Y}$ for every object $Y \in \calC$. A unimodular ribbon category $\calC$ is \textit{factorizable} if all its transparent objects are trivial, meaning that they are isomorphic to direct sums of copies of the tensor unit $\one$. A factorizable ribbon category $\calC$ is sometimes also called a \textit{modular} category. The definition given above is equivalent to several others, as proved in \cite[Theorem~1.1]{S16}.

If $\calC$ is a finite rigid monoidal category, then the \textit{adjoint end $\calE \in \calC$} and the \textit{coadjoint coend $\calL \in \calC$} are defined as
\begin{align*}
 \eend &= \int_{X \in \calC} X \otimes X^*, &
 \coend &= \int^{X \in \calC} X^* \otimes X,
\end{align*}
that is, as the end of the functor $\_ \otimes \_^* : \calC \times \calC^\op \to \calC$ and as the coend of the functor $\_^* \otimes \_ : \calC^\op \times \calC \to \calC$, with structure morphisms
\begin{align*}
 \{ i_X : \eend \to X \otimes X^* &\mid X \in \calC \}, &
 \{ j_X : X^* \otimes X \to \coend &\mid X \in \calC \},
\end{align*}
respectively, compare with \cite[Deﬁnition~3.2]{S15} for the terminology. It follows from \cite[Corollary~5.1.8]{KL01} that, if $G$ is a projective generator of $\calC$, and $\calB$ is a basis of $\End_\calC(G)$, then $\eend$ is the kernel of the morphism
\[
 (f \otimes \id_{G^*} - \id_G \otimes f^*)_{f \in \calB} : G \otimes G^* \to \bigoplus_{f \in \calB} G \otimes G^*,
\]
while $\calL$ is the cokernel of the morphism
\[
 (f^* \otimes \id_G - \id_{G^*} \otimes f)_{f \in \calB} : \bigoplus_{f \in \calB} G^* \otimes G \to G^* \otimes G.
\]

Let $\Proj(\calC)$ denote the full subcategory of projective objects of $\calC$, which is the smallest non-zero ideal in $\calC$, in the sense that it is closed under retracts and absorbent under tensor products, see \cite[Section~4.4]{GKP10}. A \textit{trace} on $\Proj(\calC)$ is a family of linear maps
\[
 \rmt := \{ \rmt_X : \End_\calC(X) \to \Bbbk \mid X \in \Proj(\calC) \}
\]
satisfying:
\begin{enumerate}
 \item \textit{Cyclicity}: $\rmt_X \left( \pic{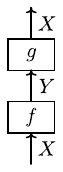} \right) = \rmt_Y \left( \pic{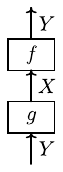} \right)$ \par \bigskip
  for all $X,Y \in \Proj(\calC)$, $f \in \calC(X,Y)$, and $g \in \calC(Y,X)$;
  \bigskip
 \item \textit{Partial trace}: $\rmt_{X \otimes Y} \left( \pic{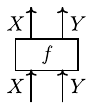} \right) = \rmt_X \left( \pic{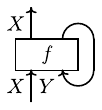} \right)$ \par \bigskip
  for all $X \in \Proj(\calC)$, $Y \in \calC$, and $f \in \End_{\calC}(X \otimes Y)$.
\end{enumerate}
A trace $\rmt$ on $\Proj(\calC)$ is \textit{non-degenerate} if, for all $X \in \Proj(\calC)$ and $Y \in \calC$, the bilinear pairing
\begin{align}
 \rmt_X(\_ \circ \_) : \calC(Y,X) \times \calC(X,Y) &\to \Bbbk \label{E:non-deg_m-trace_pairing} \\
 (g,f) &\mapsto \rmt_X(g \circ f) \nonumber
\end{align}
is non-degenerate. Thanks to \cite[Corollary~5.6]{GKP18}, for a unimodular ribbon category $\calC$ there exists a trace $\rmt$ on $\Proj(\calC)$, which is unique up to scalar and furthermore non-degenerate. By contrast, the standard categorical trace vanishes on $\Proj(\calC)$ as soon as $\calC$ is non-semisimple, since it always vanishes on every proper ideal in $\calC$, see \cite[Lemma~2.6]{BGR23}.

\section{Topological preliminaries}\label{S:topological_preliminaries}

In this section, we fix our notations and conventions for the several categories and \ctgrs{2} of \dmnsnl{3} cobordisms that we will consider in the following. To help the reader navigate the notation, let us highlight that, if $\calC$ is a finite ribbon category, the subscript $\calC$ will signal the presence of \dcrtns{\calC} on cobordisms, while its absence will mean that \dcrtns{\calC} are empty. Furthermore, the following prefixes will be used: A will stand for \textit{admissible}, as in the category of admissible cobordisms $\ACob_\calC$; N will stand for \textit{non-compact}, as in the category of non-compact cobordisms $\NCob$; C will stand for \textit{connected}, as in the category of connected cobordisms $\CCob$.

\subsection{Bichrome graphs}\label{S:bichrome_graphs}

We start by fixing our terminology for decorated graphs, so let us consider a finite ribbon category $\calC$. If $\Sigma$ is a \dmnsnl{2} cobordism from $\Gamma$ to $\Gamma'$, then a \textit{$\calC$-labeled bichrome set $P \subset \Sigma$} is a discrete subset $P$ of $\Sigma$ disjoint from the boundary of $\Sigma$ whose points are equipped with a framing, given by a non-zero tangent vector, and divided into two groups:
\begin{itemize}
 \item \textit{red points} are unoriented and unlabeled;
 \item \textit{blue points} are oriented and labeled by objects of $\calC$.
\end{itemize}
A $\calC$-labeled bichrome set $P \subset \Sigma$ is a \textit{red set} if its set of blue points is empty, and it is a \textit{$\calC$-labeled blue set} if its set of red points is empty. If $M$ is a \dmnsnl{3} cobordism with corners from $\Sigma$ to $\Sigma'$, and if $P \subset \Sigma$ and $P' \subset \Sigma'$ are $\calC$-labeled bichrome sets, then a \textit{generalized $\calC$-labeled bichrome graph $T \subset M$ from $P$ to $P'$} is a ribbon graph $T$ embedded inside $M$ that is disjoint from the vertical boundary of $M$, and that has edges and coupons of two kinds:
\begin{itemize}
 \item \textit{red edges} are unoriented and unlabeled, and they intersect the boundary of $M$ along the sets of red points of $P$ and $P'$;
 \item \textit{blue edges} are oriented and labeled by objects of $\calC$, and they intersect the boundary of $M$ along the sets of blue points of $P$ and $P'$ respecting orientations and labels;
 \item \textit{bichrome coupons} are unlabeled, and can only have the two following configurations
  \[
   \pic{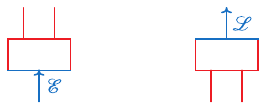}
  \]
 \item \textit{blue coupons} are labeled by morphisms of $\calC$ that are compatible with orientations and labels of edges, and can have arbitrary configurations.
\end{itemize}
A generalized $\calC$-labeled bichrome graph $T \subset M$ between $\calC$-labeled blue sets $P$ and $P'$ is called a \textit{$\calC$-labeled bichrome graph}. Here is an example of a $\calC$-labeled bichrome graph.
\[
 \pic{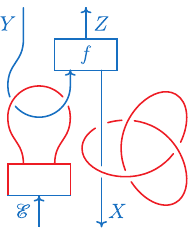}
\]
A $\calC$-labeled bichrome graph $T \subset M$ whose set of red edges is empty is called a \textit{$\calC$-labeled blue graph}, or simply a \textit{$\calC$-labeled ribbon graph}.

A $\calC$-labeled bichrome set $P \subset D^2$ is \textit{regular} if its points are distributed on the line $D^1 \times \{ 0 \} \subset D^2$, which is naturally ordered from left to right. We denote by $\calG_\calC$ the \textit{category of generalized $\calC$-labeled bichrome graphs}, whose objects are regular $\calC$-labeled bichrome sets $P \subset D^2$, and whose morphisms are isotopy classes of generalized $\calC$-labeled bichrome graphs $T \subset D^2 \times I$ relative to the boundary. We denote by $\calB_\calC$ the \textit{category of $\calC$-labeled bichrome graphs}, which is the full subcategory of $\calG_\calC$ whose objects are regular $\calC$-labeled blue sets $P \subset D^2$. Notice that $\calB_\calC$ is denoted $\calR_\Lambda$ in \cite[Section~3.1]{DGGPR19}. We denote by $\calR_\calC$ the \textit{category of $\calC$-labeled ribbon graphs}, which is the subcategory of $\calB_\calC$ whose objects coincide with those of $\calB_\calC$, and whose morphisms are isotopy classes of $\calC$-labeled ribbon graphs $T \subset D^2 \times I$ relative to the boundary.

\subsection{Cobordism \texorpdfstring{$2$-categories}{2-categories}}\label{S:cobordisms_with_corners}

We are now ready to define the symmetric mo\-noi\-dal \ctgr{2} $\bfACob_\calC$ of admissible $\calC$-decorated cobordisms, which will provide the source \ctgr{2} for our ETQFT. This represents a change with respect to \cite[Section~3]{D21}, where a stronger admissibility condition was imposed.

It is useful to start by recalling the definition of the
\textit{\ctgr{2} $\bfCob_\calC$ of \dcrtd{\calC}} (\textit{extended}\footnote{The term \textit{extended} for denoting surfaces equipped with Lagrangian subgroups of their first homology and cobordisms equipped with the signature of a \mnfld{4} goes back to Walker \cite[Section~1]{W91} and Turaev \cite[Chapter~IV, Section~6]{T94}.}) \textit{cobordisms}, which contains $\bfACob_\calC$ as a sub-\ctgr{2}, and which is defined as follows:
\begin{itemize}
 \item An object of $\bfCob_\calC$ is a closed \dmnsnl{1} manifold $\bfGamma = \Gamma$.
 \item A \mrphsm{1} of $\bfCob_\calC$ from $\bfGamma$ to $\bfGamma'$ is a triple $\bfSigma = (\Sigma,P,\Lagr)$ where:
  \begin{itemize}
   \item $\Sigma$ is a \dmnsnl{2} cobordism from $\Gamma$ to $\Gamma'$;
   \item $P \subset \Sigma$ is a $\calC$-labeled blue set;
   \item $\Lagr \subset H_1(\Sigma)$ is a Lagrangian subgroup.
  \end{itemize}
  We will refer to $\Sigma$ as the support of $\bfSigma$, and to $P \subset \Sigma$ as the \dcrtn{\calC} of $\bfSigma$. We recall that a Lagrangian subgroup of $H_1(\Sigma)$ is a maximal isotropic subgroup with respect to the transverse intersection form, which is non-degenerate if $\Gamma = \Gamma' = \varnothing$.
 \item A \mrphsm{2} of $\bfCob_\calC$ from $\bfSigma : \bfGamma \to \bfGamma'$ to $\bfSigma' : \bfGamma \to \bfGamma'$ is an equivalence class of triples $\bfM = (M,T,\sig)$, where:
  \begin{itemize}
   \item $M$ is a \dmnsnl{3} cobordism with corners from $\Sigma$ to $\Sigma'$;
   \item $T \subset M$ is a $\calC$-labeled bichrome graph from $P$ to $P'$;
   \item $\sig$ is an integer, called the signature.
  \end{itemize}
  Two triples $(M,T,\sig)$ and $(M',T',\sig')$ are defined to be equivalent if there exists an isomorphism of cobordisms with corners $f : M \to M'$ that satisfies $f(T) = T'$, and if $\sig = \sig'$. We will refer to $M$ as the support of $\bfM$, and to $T \subset M$ as the \dcrtn{\calC} of $\bfM$.
 \item The horizontal composition
  \[
   \bfSigma' \circ \bfSigma : \bfGamma \to \bfGamma''
  \]
  of \mrphsms{1} $\bfSigma : \bfGamma \to \bfGamma'$ and $\bfSigma' : \bfGamma' \to \bfGamma''$ of $\bfCob_\calC$ is
  \[
   \bfSigma' \circ \bfSigma := \left( \Sigma' \circ \Sigma,P' \cup P,(i_{\Sigma'})_*(\Lagr') + (i_\Sigma)_*(\Lagr) \right),
  \]
  where $\Sigma' \circ \Sigma$ denotes the \dmnsnl{2} cobordism from $\Gamma$ to $\Gamma''$ given by $\Sigma' \cup_{\Gamma'} \Sigma$, and where $i_\Sigma : \Sigma \hookrightarrow \Sigma' \cup_{\Gamma'} \Sigma$ and $i_{\Sigma'} : \Sigma' \hookrightarrow \Sigma' \cup_{\Gamma'} \Sigma$ denote inclusions.
 \item The horizontal composition
  \[
   \bfM' \circ \bfM : \bfSigma' \circ \bfSigma \Rightarrow \bfSigma''' \circ \bfSigma''
  \]
  of \mrphsms{2} $\bfM : \bfSigma \Rightarrow \bfSigma''$ and $\bfM' : \bfSigma' \Rightarrow \bfSigma'''$ of $\bfCob_\calC$ between \mrphsms{1} $\bfSigma,\bfSigma'' : \bfGamma \to \bfGamma'$ and $\bfSigma',\bfSigma''' : \bfGamma' \to \bfGamma''$ is (the equivalence class of)
  \[
   \bfM' \circ \bfM := \left( M' \circ M,T' \cup T, \sig' + \sig \right).
  \]
  where $M' \circ M$ denotes the \dmnsnl{3} cobordism with corners from $\Sigma' \circ \Sigma$ to $\Sigma''' \circ \Sigma''$ given by $M' \cup_{\Gamma' \times I} M$.
 \item The vertical composition
  \[
   \bfM' \ast \bfM : \bfSigma \Rightarrow \bfSigma''
  \]
  of \mrphsms{2} $\bfM : \bfSigma \Rightarrow \bfSigma'$ and $\bfM' : \bfSigma' \Rightarrow \bfSigma''$ of $\bfCob_\calC$ between \mrphsms{1} $\bfSigma,\bfSigma',\bfSigma'' : \bfGamma \to \bfGamma'$ is (the equivalence class of)
  \[
   \bfM' \ast \bfM := \left( M' \ast M,T' \ast T, \sig' + \sig - \mu(M_*(\Lagr),\Lagr',(M')^*(\Lagr'')) \right),
  \]
  where $M' \ast M$ denotes the \dmnsnl{3} cobordism with corners from $\Sigma$ to $\Sigma''$ given by $M' \cup_{\Sigma'} M$,
  where $T' \ast T$ denotes the $\calC$-labeled bichrome graph from $P$ to $P''$ given by $T' \cup_{P'} T$, where the push-forward $M_*(\Lagr)$ and the pull-back $(M')^*(\Lagr'')$ are
  \begin{align*}
   M_*(\Lagr) &:= \{ x' \in H_1(\Sigma') \mid (j_{\Sigma'})_*(x') \in (j_\Sigma)_* (\Lagr) \}, \\*
   (M')^*(\Lagr'') &:= \{ x' \in H_1(\Sigma') \mid (j'_{\Sigma'})_*(x') \in (j'_{\Sigma''})_*(\Lagr'') \}
  \end{align*}
  for the embeddings
  \[
   j_\Sigma : \Sigma \hookrightarrow M, \quad
   j_{\Sigma'} : \Sigma' \hookrightarrow M, \quad
   j'_{\Sigma'} : \Sigma' \hookrightarrow M', \quad
   j'_{\Sigma''} : \Sigma'' \hookrightarrow M'\phantom{,}
  \]
  induced by horizontal boundary identifications, and where the correction term $\mu(M_*(\Lagr),\Lagr',(M')^*(\Lagr''))$ denotes the Maslov index of the Lagrangian subspaces
  \[
   M_*(\Lagr) \otimes \R, \Lagr' \otimes \R,(M')^*(\Lagr'') \otimes \R \subset H_1(\Sigma') \otimes \R,
  \]
  see \cite[Section~C.3]{D17}.
 \item The identity
  \[
   \id_{\bfGamma} : \bfGamma \to \bfGamma
  \]
  of an object $\bfGamma$ of $\bfCob_\calC$ is
  \[
   \id_{\bfGamma} := \left( \id_\Gamma,\varnothing,H_1(I \times \Gamma) \right),
  \]
  where $\id_\Gamma$ denotes the \dmnsnl{2} cobordism from $\Gamma$ to $\Gamma$ given by $I \times \Gamma$.
 \item The identity
  \[
   \id_{\bfSigma} : \bfSigma \Rightarrow \bfSigma
  \]
  of a \mrphsm{1} $\bfSigma : \bfGamma \to \bfGamma'$ of $\bfCob_\calC$ is (the equivalence class of)
  \[
   \id_{\bfSigma} := \left( \id_\Sigma,\id_P,0 \right),
  \]
  where $\id_\Sigma$ denotes the \dmnsnl{3} cobordism with corners from $\Sigma$ to $\Sigma$ given by $\Sigma \times I$, and where $\id_P \subset \id_\Sigma$ denotes the $\calC$-labeled ribbon graph from $P$ to $P$ given by $P \times I$
\end{itemize}

The \ctgr{2} $\bfCob_\calC$ can be equipped with the following monoidal structure:
\begin{itemize}
 \item The tensor product $\bfGamma \disjun \bfGamma'$ of objects $\bfGamma$ and $\bfGamma'$ of $\bfCob_\calC$ is
  \[
   \bfGamma \disjun \bfGamma' := \Gamma \sqcup \Gamma'.
  \]
 \item The tensor product
  \[
   \bfSigma \disjun \bfSigma' : \bfGamma \disjun \bfGamma' \to \bfGamma'' \disjun \bfGamma'''
  \]
  of \mrphsms{1} $\bfSigma : \bfGamma \to \bfGamma''$ and $\bfSigma' : \bfGamma' \to \bfGamma'''$ of $\bfCob_\calC$ is
  \[
   \bfSigma \disjun \bfSigma' := (\Sigma \sqcup \Sigma',P \sqcup P',\Lagr \oplus \Lagr').
  \]
 \item The tensor product
  \[
   \bfM \disjun \bfM' : \bfSigma \disjun \bfSigma' \Rightarrow \bfSigma'' \disjun \bfSigma'''
  \]
  of \mrphsms{2} $\bfM : \bfSigma \Rightarrow \bfSigma''$ and $\bfM' : \bfSigma' \Rightarrow \bfSigma'''$ of $\bfCob_\calC$ is (the equivalence class of)
  \[
   \bfM \disjun \bfM' := (M \sqcup M',T \sqcup T',\sig + \sig').
  \]
 \item The tensor unit $\varnothing$ of $\bfCob_\calC$ is the empty object.
\end{itemize}

Furthermore, the monoidal \ctgr{2} $\bfCob_\calC$ can be equipped with the following symmetric braiding:
\begin{itemize}
 \item The braiding
  \[
   \bftau_{\bfGamma,\bfGamma'} : \bfGamma \disjun \bfGamma' \to \bfGamma' \disjun \bfGamma
  \]
  of objects $\bfGamma$ and $\bfGamma'$ of $\bfCob_\calC$ is
  \[
   \bftau_{\bfGamma,\bfGamma'} := (\tau_{\Gamma,\Gamma'} \cdot \id_{\Gamma \sqcup \Gamma'},\varnothing,H_1(I \times (\Gamma \sqcup \Gamma'))),
  \]
  where $\tau_{\Gamma,\Gamma'} : \Gamma \sqcup \Gamma' \to \Gamma' \sqcup \Gamma$ denotes the diffeomorphism induced by the transposition, see Equation~\eqref{E:left_action_1-morphism} for the left action of a diffeomorphism on a morphism of $\bfCob_\calC$, and compare with \cite[Definition~B.9]{D17}.
 \item The braiding
  \[
   \bftau_{\bfSigma,\bfSigma'} :
   \bftau_{\bfGamma'',\bfGamma'''} \circ (\bfSigma \disjun \bfSigma')
   \Rightarrow
   (\bfSigma' \disjun \bfSigma) \circ \bftau_{\bfGamma,\bfGamma'}
  \]
  of \mrphsms{1} $\bfSigma : \bfGamma \to \bfGamma''$ and $\bfSigma' : \bfGamma' \to \bfGamma'''$ of $\bfCob_\calC$ is (the equivalence class of)
  \[
   \bftau_{\bfSigma,\bfSigma'} := (\tau_{\Sigma,\Sigma'} \cdot (\tau_{\Gamma,\Gamma'} \cdot (\Sigma \sqcup \Sigma')),\id_{P \sqcup P'},0),
  \]
  where $\tau_{\Sigma,\Sigma'} : \tau_{\Gamma,\Gamma'} \cdot (\Sigma \sqcup \Sigma') \to (\Sigma' \sqcup \Sigma) \cdot \tau_{\Gamma,\Gamma'}$ denotes the compatible diffeomorphism induced by the transposition, see Equations~\eqref{E:left_action_1-morphism}--\eqref{E:left_action_2-morphism} for the left and right actions of a diffeomorphism on a morphism of $\bfCob_\calC$, and compare with \cite[Definition~B.10]{D17}.
\end{itemize}

The \textit{symmetric monoidal \ctgr{2} $\bfCob$ of} (\textit{extended}) \textit{cobordisms} is the symmetric mo\-noi\-dal sub-\ctgr{2} of $\bfCob_\calC$ whose objects coincide with those of $\bfCob_\calC$, and whose morphisms have empty \dcrtns{\calC}. We adopt the notation $(\Sigma,\Lagr) := (\Sigma,\varnothing,\Lagr)$ and $(M,\sig) := (M,\varnothing,\sig)$ for morphisms of $\bfCob$.

Let $\bfSigma = (\Sigma,P,\Lagr)$ be a \mrphsm{1} of $\bfCob_\calC$. We say $\bfSigma$ is \textit{admissible} if every connected component of the support $\Sigma$ disjoint from the outgoing boundary contains a $\calC$-labeled blue subset of the \dcrtn{\calC} $P$ that admits a label in $\Proj(\calC)$. We say $\bfSigma$ is \textit{coadmissible} if its dual \mrphsm{1} $\bfSigma^*$, which is defined by Equation~\eqref{E:dual_1-morphism}, is admissible. We say $\bfSigma$ is \textit{biadmissible} if it is simultaneously admissible and coadmissible. Notice that horizontal compositions of (co)admissible \mrphsms{1} are (co)admissible, and that identities of objects are biadmissible.

Let $\bfM = (M,T,\sig)$ be a \mrphsm{2} of $\bfCob_\calC$. We say $\bfM$ is \textit{admissible} if every connected component of the support $M$ disjoint from the outgoing horizontal boundary contains a $\calC$-labeled bichrome subgraph of the \dcrtn{\calC} $T$ that admits a label in $\Proj(\calC)$. We say $\bfM$ is \textit{coadmissible} if its dual \mrphsm{2} $\bfM^*$, which is defined by Equation~\eqref{E:dual_2-morphism}, is admissible. We say $\bfM$ is \textit{biadmissible} if it is simultaneously admissible and coadmissible. Notice that horizontal and vertical compositions of (co)admissible \mrphsms{2} are (co)admissible, and that identities of \mrphsms{1} are biadmissible.

\begin{definition}\label{D:admissible_cobordisms_2-cat}
 The \textit{symmetric monoidal \ctgr{2} $\bfACob_\calC$ of admissible \dcrtd{\calC}} (\textit{extended}) \textit{cobordisms} is the symmetric monoidal sub-\ctgr{2} of $\bfCob_\calC$ whose objects and \mrphsms{1} coincide with those of $\bfCob_\calC$, and whose \mrphsms{2} are admissible.
\end{definition}

Notice that $\bfACob_\calC$ is obviously very closely related to, but slightly different from, the \ctgr{2} $\bfadCob_\calC$ of \cite[Definition~3.1]{D21}. First of all, the convention about the admissibility condition is reversed, as it applies here to connected components that are disjoint from the outgoing boundary, rather than the incoming one. In other words, \mrphsms{2} of $\bfadCob_\calC$ are coadmissible \mrphsms{2} of $\bfCob_\calC$, in our current terminology. More importantly, though, the admissibility condition is dropped for \mrphsms{1}, here. In other words, for all objects $\bfGamma$ and $\bfGamma'$ of $\bfACob_\calC$, the morphism category $\bfadCob_\calC(\bfGamma,\bfGamma')$ can be naturally identified with a proper full subcategory of $\bfACob_\calC(\bfGamma',\bfGamma)^\op$. In this sense, the construction of this paper will extend the one of \cite{D21}.

A \mrphsm{1} $\bfSigma = (\Sigma,\Lagr)$ of $\bfCob$ is said to be \textit{non-compact} if no connected component of the support $\Sigma$ is disjoint from the outgoing boundary. Notice that horizontal compositions of non-compact \mrphsms{1} are non-compact, and that identities of objects are non-compact.

A \mrphsm{2} $\bfM = (M,\sig)$ of $\bfCob$ is said to be \textit{non-compact}\footnote{The term \textit{non-compact} for denoting cobordisms without connected components disjoint from the outgoing (horizontal) boundary goes back to Lurie \cite[Definition~4.2.10]{L09}. Notice that supports of non-compact cobordisms are still given by compact manifolds (with corners).} if no connected component of the support $M$ is disjoint from the outgoing horizontal boundary. Notice that horizontal and vertical compositions of non-compact \mrphsms{2} are non-compact, and that identities of \mrphsms{1} are non-compact.

\begin{definition}\label{D:non-compact_cobordisms_2-cat}
 The \textit{symmetric monoidal \ctgr{2} $\bfNCob$ of non-compact} (\textit{extended}) \textit{cobordisms} is the symmetric monoidal sub-\ctgr{2} of $\bfCob$ whose objects and \mrphsms{1} coincide with those of $\bfCob$, and whose \mrphsms{2} are non-compact.
\end{definition}

Notice that $\bfNCob = \bfACob_\calC \cap \bfCob$.

\subsection{Atiyah cobordism categories}\label{S:Atiyah_cobordisms}

Next, let us recall the definition of the symmetric monoidal category $\ACob_\calC$ of admissible $\calC$-decorated cobordisms, which provides the source category for the renormalized \KL{} TQFT. As we will see, the absence of the admissibility condition for \mrphsms{1} makes it possible to realize $\ACob_\calC$ as a full symmetric monoidal category of endomorphisms of $\bfACob_\calC$.

The \textit{symmetric monoidal category $\Cob_\calC$ of \dcrtd{\calC}} (\textit{extended}) \textit{cobordisms} is the category $\bfCob_\calC(\varnothing,\varnothing)$ of morphisms $\bfCob_\calC$ from $\varnothing$ to $\varnothing$.

The \textit{symmetric monoidal category $\Cob$ of} (\textit{extended}) \textit{cobordisms} is the category $\bfCob(\varnothing,\varnothing)$ of morphisms $\bfCob$ from $\varnothing$ to $\varnothing$.

\begin{definition}\label{D:admissible_cobordisms_cat}
The \textit{symmetric monoidal category $\ACob_\calC$ of admissible \dcrtd{\calC}} (\textit{extended}) \textit{cobordisms} is the subcategory of $\Cob_\calC$ whose objects coincide with those of $\Cob_\calC$, and whose morphisms are admissible.
\end{definition}

Notice that $\ACob_\calC = \bfACob_\calC(\varnothing,\varnothing)$.

\begin{definition}\label{D:non-compact_cobordisms_cat}
The \textit{symmetric monoidal category $\NCob$ of non-compact} (\textit{extended}) \textit{cobordisms} is the subcategory of $\Cob$ whose objects coincide with those of $\Cob$, and whose morphisms are non-compact.
\end{definition}

Notice that $\NCob = \ACob_\calC \cap \Cob$.

For all the symmetric monoidal categories introduced above, tensor product, tensor unit, and braiding are inherited from those of $\bfCob_\calC$.

\begin{remark}\label{R:end_vs_coend_4}
 The category of admissible cobordisms $\adCob_\calC$ defined in \cite[Section~4.3]{DGGPR19} actually uses the opposite convention for admissibility. In other words, projective labels are required for connected components disjoint from the incoming boundary, rather than the outgoing one. Of course, each of these categories is simply the opposite of the other.
\end{remark}

\subsection{Crane--Yetter--Kerler cobordism categories}\label{S:connected_cobordisms}

We move on to the definition of the braided monoidal category $\CCob$ of connected cobordisms, which provides the source category for the \KL{} TQFT. As we will see, $\CCob$ can be understood as a proper sub-category of morphisms of $\bfNCob$, although not as a monoidal one, since tensor products do not agree.

Let $\bfS^1$ be the object of $\bfCob$ given by $S^1 \subset \R^2$, and let $\bfD^2 : \varnothing \to \bfS^1$ be the \mrphsm{1} of $\bfCob$ defined as
\begin{equation}\label{E:disc}
 \bfD^2 := (D^2,\{ 0 \}),
\end{equation}
where $D^2$ denotes the \dmnsnl{2} cobordism from $\varnothing$ to $S^1$ whose support is given by $D^2 \subset \R^2$.

Let $\bfP^2 : \bfS^1 \disjun \bfS^1 \to \bfS^1$ be the \mrphsm{1} of $\bfCob$ defined as
\begin{equation}\label{E:pant}
 \bfP^2 := (P^2,H_1(P^2)),
\end{equation}
where $P^2$ denotes the \dmnsnl{2} cobordism from $S^1 \sqcup S^1$ to $S^1$ whose support is given by $D^2 \subset \R^2$ minus two open disks of radius $\frac{1}{4}$ and center $(-\frac{1}{2},0)$ and $(\frac{1}{2},0)$ respectively, and whose incoming and outgoing horizontal boundary identifications are induced by $\id_{S^1}$ through rescaling and translation.

Let $\bfP^2 \cdot \tau_{\bfS^1,\bfS^1} : \bfS^1 \disjun \bfS^1 \to \bfS^1$ be the \mrphsm{1} of $\bfCob$ determined by the right action of $\tau_{\bfS^1,\bfS^1}$ on $\bfP^2$, as defined by Equation~\eqref{E:left_action_1-morphism}. Using the terminology introduced in Appendix~\ref{A:dualities_adjunctions}, let $h : \bfP^2 \to \bfP^2 \cdot \tau_{\bfS^1,\bfS^1}$ denote the compatible diffeomorphism given by the composition of the two left-handed half-twists along the two connected components of the incoming boundary of $P^2$ with the right-handed half-twist along the outgoing boundary of $P^2$, represented by the following picture.
\[
 \pic{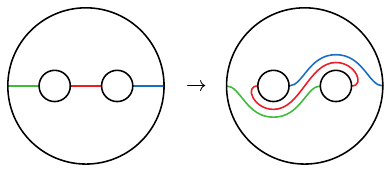}
\]
Let $h \cdot \id_{\bfP^2} : \bfP^2 \to \bfP^2 \cdot \tau_{\bfS^1,\bfS^1}$ denote the \mrphsm{2} of $\bfCob$ determined by the left action of $h$ on $\id_{\bfP^2} \subset \R^3$, as defined by Equation~\eqref{E:left_action_2-morphism}. We represent $h \cdot \id_{\bfP^2}$ graphically through the projection to $\R \times \{ 0 \} \times \R$ as
\[
 \pic{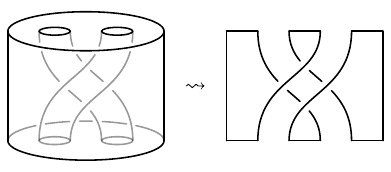}
\]

The category $\bfCob(\varnothing,\bfS^1)$ can be given a braided monoidal structure, with tensor product $\bcs : \bfCob(\varnothing,\bfS^1) \times \bfCob(\varnothing,\bfS^1) \to \bfCob(\varnothing,\bfS^1)$
defined as
\[
 \bcs := \bfP^2_* \circ \disjun = \bfP^2 \circ (\_ \disjun \_),
\]
with tensor unit $\bfD^2 \in \bfCob(\varnothing,\bfS^1)$, and with braiding $\bfh_{\bfSigma,\bfSigma'} : \bfSigma \bcs \bfSigma' \to \bfSigma' \bcs \bfSigma$ defined as
\[
 \bfh_{\bfSigma,\bfSigma'} := (h \cdot \id_{\bfP^2}) \triangleleft (\bfSigma \disjun \bfSigma')
\]
for all objects $\bfSigma$ and $\bfSigma'$ of $\bfCob(\varnothing,\bfS^1)$.

\begin{definition}\label{D:connected_cobordisms_cat}
 The \textit{braided monoidal category $\CCob$ of} (\textit{extended relative}) \textit{connected cobordisms} is obtained from the subcategory of $\bfCob(\varnothing,\bfS^1)$ whose objects and morphisms have connected support by specifying the boundary connected sum $\bcs$ as a tensor product, the disc $\bfD^2$ as a tensor unit, and the mapping cylinder $\bfh$ as a braiding.
\end{definition}

Notice that $\CCob$ is also a subcategory of $\bfNCob(\varnothing,\bfS^1)$, since every morphism of $\CCob$ is non-compact. It was first introduced and studied by Crane and Yetter in \cite{CY94} and by Kerler in \cite{K00}.

\subsection{Kirby tangles in handlebodies}

In order to explain how to represent connected cobordisms diagrammatically, let us recall the definition of the category $\KTan$ of \textit{Kirby tangles in handlebodies}. This is a generalization of the category of bottom tangles in handlebodies introduced by Habiro in \cite{H05}.

For every non-negative integer $g \geqs 0$, we specify a connected \dmnsnl{3} handlebody $M_g \subset \R^3$ of genus $g$, obtained by attaching $g$ copies $B_1, \ldots, B_g$ of the \dmnsnl{3} \hndl{1} $B = D^1 \times D^2$ to the bottom face $D^2 \times \{ 0 \}$ of the cylinder $D^2 \times I$. We represent graphically $M_g$ through the projection to $\R \times \{ 0 \} \times \R$ as
\begin{align*}
 \pic{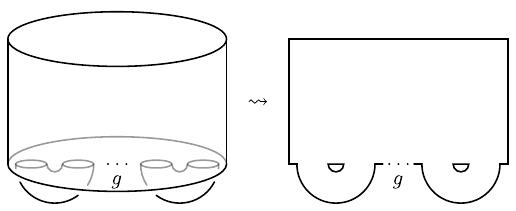}
\end{align*}
We denote by $\Sigma_g$ the connected surface of genus $g$ with one boundary component appearing as the bottom face of $M_g$, which is obtained from the disc $D^2 \times \{ 0 \}$ by performing $1$-surgery along $g$ pairs of discs. Notice that $M_g$ can be interpreted as a \dmnsnl{3} cobordism with corners from $\Sigma_g$ to $\Sigma_0$.

A \textit{Kirby $g'$-tangle in a $g$-handlebody}, sometimes simply called a \textit{Kirby tangle}, is an oriented framed tangle $T$ inside the connected handlebody $M_g$ whose boundary is composed of $2g'$ points distributed on the top line $D^1 \times \{ 0 \} \times \{ 1 \} \subset D^2 \times \{ 1 \} \subset M_g$ and such that, for every $1 \leqs j \leqs g'$, one of its components joins the $(2j)$th boundary point to the $(2j-1)$th one. If $T$ is a Kirby tangle, we denote by $A(T)$ the collection of its arc components, and by $C(T)$ the collection of its closed components, so that $T = A(T) \cup C(T)$. A \textit{top $g'$-tangle in a $g$-handlebody}, sometimes simply called a \textit{top tangle}, is a Kirby tangle $T$ such that $C(T) = \varnothing$. In particular, if $T$ is a Kirby tangle, then $A(T)$ is a top tangle, and $C(T)$ is a framed link.

Here is an example of a Kirby tangle, together with its projection:
\begin{align*}
 \pic{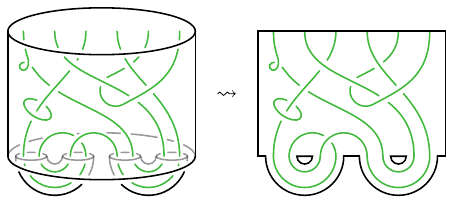}
\end{align*}
Notice that, up to isotopy, we can always represent Kirby tangles using \textit{regular diagrams}, that are diagrams in which every handle $D^1 \times D^2$ of $M_g$ intersects the tangle $T$ in $D^1 \times P \subset D^1 \times D^2$, where $P \subset D^1$ is a finite set of points that remain distinct under the projection.

We consider Kirby tangles up to \textit{signature-preserving Kirby moves} of the following type:
\begin{gather*}
 \pic{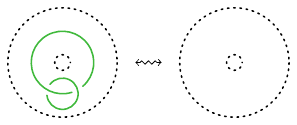} \tag{K$1$}\label{E:K1} \\*
 \pic{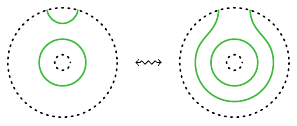} \tag{K$2$}\label{E:K2}
\end{gather*}
These operations can be performed inside any solid torus $S^1 \times D^2$ arbitrarily embedded into $M_g$ (represented here as the annuli contained between the dashed circles on both sides of the operations). Move \eqref{E:K1} is also known as the \textit{slam-dunk}, while move \eqref{E:K2} is also known as the \textit{slide}.

The \textit{category $\KTan$ of Kirby tangles in handlebodies} is the category defined as follows:
\begin{itemize}
 \item An object of $\KTan$ is a natural number $g \geqs 0$.
 \item A morphism of $\KTan$ from $g$ to $g'$ is an equivalence class of Kirby $g'$-tangles $T$ in the $g$-handlebody $M_g$ up to signature-preserving Kirby moves.
 \item The composition
  \[
   T' \ast T : g \to g''
  \]
  of morphisms $T : g \to g'$ and $T' : g' \to g''$ of $\KTan$ is the equivalence class of the Kirby tangle $f(T \cup T')$ obtained by carving out an open tubular neighborhood $N(A(T))$ in $M_g$ of the top tangle $A(T) \subset T$, by gluing $M_g \smallsetminus N(A(T))$ to $M_{g'}$, identifying the top base of $M_g \smallsetminus N(A(T))$ with the bottom base of $M_{g'}$ as prescribed by the top tangle $A(T)$, and by considering a diffeomorphism $f : M_{g'} \cup_{\Sigma_{g'}} (M_g \smallsetminus N(A(T))) \to M_g$ given by vertical rescaling. Here is an example of a composition of Kirby tangles:
  \[
   \pic{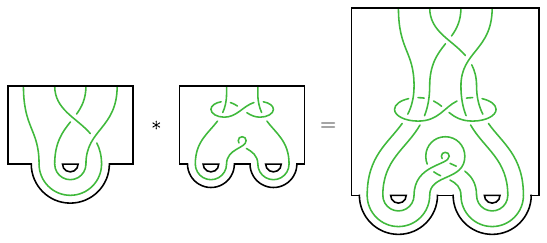}
  \]
 \item The identity $\id_g : g \to g$ of an object $g$ of $\KTan$ is given by
  \[
   \id_g := \pic{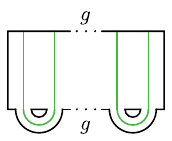}
  \]
 \item The tensor product $g \bcs g'$ of objects $g$ and $g'$ of $\KTan$ is $g+g'$.
 \item The tensor product
  \[
   T \bcs T' : g \bcs g' \to g'' \bcs g'''
  \]
  of morphisms $T : g \to g''$ and $T' : g' \to g'''$ of $\KTan$ is the equivalence class of the Kirby tangle $f(T \cup T')$ obtained by gluing $M_g \sqcup M_{g'}$ to $P^2 \times I$, identifying the vertical boundary of $M_g \sqcup M_{g'}$ with the innermost vertical boundary of $P^2 \times I$ as prescribed by the identity map, and by considering a diffeomorphism $f : (P^2 \times I) \cup_{(S^1 \sqcup S^1) \times I} (M_g \sqcup M_{g'}) \to M_{g+g'}$ given by horizontal rescaling. Here is an example of a tensor product of Kirby tangles:
  \[
   \pic{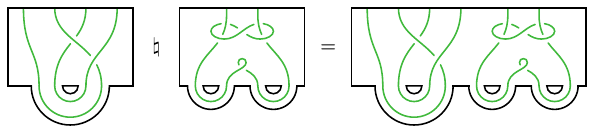}
  \]
 \item The tensor unit $0$ of $\KTan$ is the natural number $0$.
 \item The braiding
  \[
   c_{g,g'} : g \bcs g' \to g' \bcs g
  \]
  of objects $g$ and $g'$ of $\KTan$ is given by
  \[
   c_{g,g'} := \pic{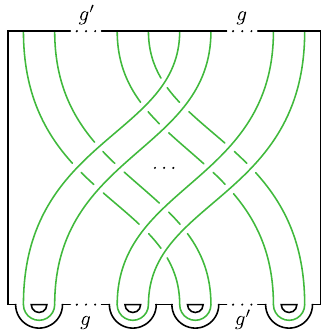}
  \]
\end{itemize}

The \textit{category $\TTan$ of top tangles in handlebodies} is the subcategory of $\KTan$ whose objects coincide with those of $\KTan$, and whose morphisms are equivalence classes of top tangles in handlebodies.

An equivalence between $\KTan$ and $\CCob$ is provided by the braided monoidal functor
\[
 \chi : \KTan \to \CCob
\]
sending every object $g$ of $\KTan$ to the object $\bfSigma_g$ of $\CCob$ given by
\begin{equation}\label{E:surgery_functor_objects}
 \bfSigma_g := (\Sigma_g,\Lagr_g),
\end{equation}
where $\Lagr_g \subset H_1(\Sigma_g)$ is the Lagrangian subgroup given by $\ker \iota_g^*$ for the inclusion $\iota_g : \Sigma_g \hookrightarrow M_g$, and sending every morphism $T : g \to g'$ of $\KTan$ to the morphism $\bfM(T) : \bfSigma_g \to \bfSigma_{g'}$ of $\CCob$ given by
\begin{equation}\label{E:surgery_functor_morphisms}
 \bfM(T) := (M(T),\sigma(C(T))),
\end{equation}
where:
\begin{itemize}
 \item $M(T)$ is the \dmnsnl{3} cobordism with corners obtained from $M_g$ by carving out an open tubular neighborhood $N(A(T))$ in $M_g$ of the top tangle $A(T) \subset T$, and by performing $2$-surgery along the framed link $C(T) \subset T$;
 \item $\sigma(C(T))$ is the signature of the linking matrix of $C(T) \subset M_g \subset \R^3$.
\end{itemize}
See \cite[Proposition~4.2]{BD21} for a proof that $\chi : \KTan \to \CCob$ is a braided monoidal equivalence. Notice that, under the functor $\chi : \KTan \to \CCob$, the subcategory $\TTan \subset \KTan$ of top tangles in handlebodies is sent to the subcategory $\SCob \subset \CCob$ of \textit{special Lagrangian cobordisms}, in the terminology of \cite[Definition~2.6]{CHM07}.

\subsection{BPHK Hopf algebras}\label{S:BPHK_Hopf_algebras}

We finish this section by recalling the Hopf algebra structure supported by the genus $1$ surface $\bfSigma_1$ inside the category $\CCob$ of connected cobordisms. The existence of such an algebraic structure can be seen as a deep, intrinsically topological explanation of the prominence of Hopf algebras in quantum topology. In fact, although we will not actually need it, we will also recall a complete algebraic presentation of $\CCob$, which pinpoints the exact algebraic structure that governs the topology of connected cobordisms.

First, let us fix a braided monoidal category $\calC$, and let us recall the notion of a Hopf algebra in $\calC$.

\begin{definition}\label{D:Hopf_algebra}
 A \textit{Hopf algebra} in $\calC$ is an object $\calH \in \calC$, together with
 \begin{itemize}
  \item a \textit{product} $\mu \in \calC(\calH \otimes \calH,\calH)$ and a \textit{unit} $\eta \in \calC(\one,\calH)$, represented as
   \[
    \pic{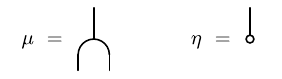}
   \]
  \item a \textit{coproduct} $\Delta \in \calC(\calH,\calH \otimes \calH)$ and a \textit{counit} $\varepsilon \in \calC(\calH,\one)$, represented as
   \[
    \pic{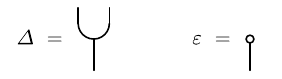}
   \]
  \item an \textit{antipode} and an \textit{inverse antipode} $S,S^{-1} \in \calC(\calH,\calH)$, represented as
   \[
    \pic{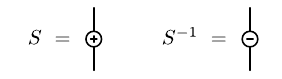}
   \]
 \end{itemize}
 satisfying the following axioms:
 \begin{itemize}
  \item $\mu$ and $\eta$ provide an algebra structure
   \begin{equation}\label{E:Hopf_algebra_axiom}
    \pic{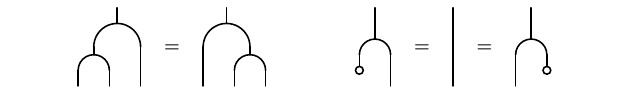}
   \end{equation}
  \item $\Delta$ and $\varepsilon$ provide a coalgebra structure
   \begin{equation}\label{E:Hopf_coalgebra_axiom}
    \pic{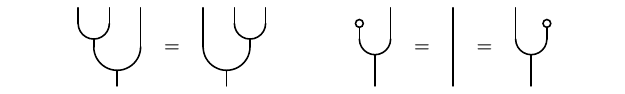}
   \end{equation}
  \item $\mu$, $\eta$, $\Delta$, and $\varepsilon$ provide a bialgebra structure
   \begin{equation}\label{E:Hopf_bialgebra_axiom}
    \pic{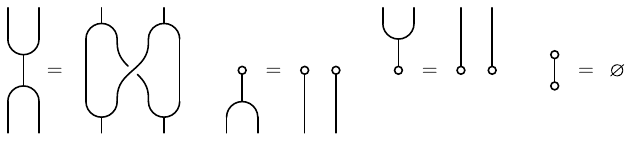}
   \end{equation}
  \item $S$ is the convolution-inverse of the identity and it is invertible
   \begin{equation}\label{E:Hopf_antipode_axiom}
    \pic{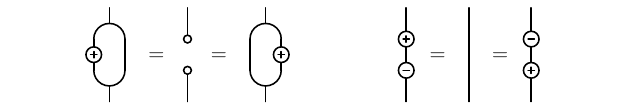}
   \end{equation}
 \end{itemize}
\end{definition}

We will abusively denote Hopf algebras in $\calC$ simply by their underlying objects $\calH$, without explicit mention to their structure morphisms. Notice that Hopf algebras are not always required to have an invertible antipode in the literature. However, the antipode of a Hopf algebra in the category $\FVect_\Bbbk$ of finite-dimensional vector spaces is always automatically invertible, see \cite[Theorem~7.1.14]{R12}.

It was first observed by Crane and Yetter \cite{CY94} that $\bfSigma_1 \in \CCob$ is a Hopf algebra. Its structure morphisms are represented, as morphisms of $\KTan$, by
\begin{align*}
 &\pic{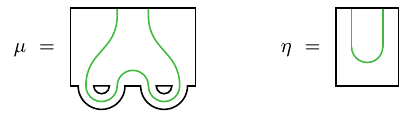} \\*
 &\pic{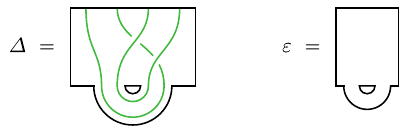} \\*
 &\pic{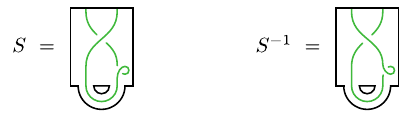}
\end{align*}
It was later noticed by Habiro \cite{A11} that the adjoint action of $\bfSigma_1 \in \CCob$ on itself is \textit{braided cocommutative}, in the sense that it satisfies
\begin{equation}\label{E:Hopf_braided_cocommutative_axiom}
 \pic{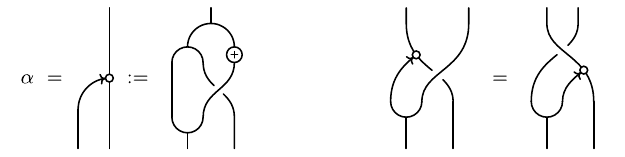}
\end{equation}
The problem of finding a complete algebraic presentation of the category $\CCob$ led to the following notion.

\begin{definition}\label{D:BPHK_Hopf_algebra}
 A \textit{BPHK Hopf algebra} in $\calC$ (short for \textit{Bobtcheva--Piergallini--Habiro--Kerler}) is a Hopf algebra $\calH \in \calC$ whose adjoint action is braided cocommutative, equipped with
 \begin{itemize}
  \item a \textit{ribbon} and an \textit{inverse ribbon element} $v_+,v_- \in \calC(\one,\calH)$, represented as
   \[
    \pic{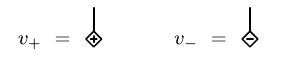}
   \]
  \item an \textit{integral form} $\lambda \in \calC(\calH,\one)$, represented as
   \[
    \pic{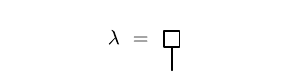}
   \]
 \end{itemize}
 satisfying the following axioms:
 \begin{itemize}
  \item $v_+$ is central, invertible, and antipode-invariant
   \begin{equation}\label{E:Hopf_ribbon_axiom}
    \pic{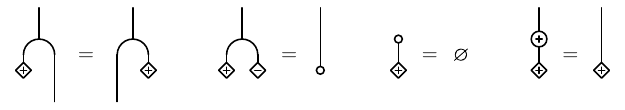}
   \end{equation}
  \item $\Delta$, $v_+$, and $v_-$ induce a Hopf copairing $w$
   \begin{equation}\label{E:Hopf_copairing_axiom}
    \pic{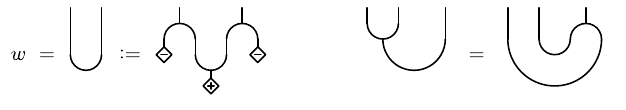}
   \end{equation}
  \item $\lambda$ is a coinvariant form which is antipode-invariant
   \begin{equation}\label{E:Hopf_integral_axiom}
    \pic{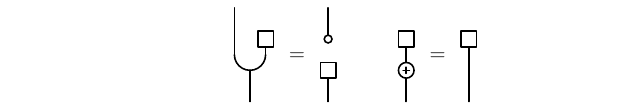}
   \end{equation}
  \item $w$ and $\lambda$ induce an invariant element $\Lambda$ which is antipode-invariant
   \begin{equation}\label{E:Hopf_cointegral_axiom}
    \pic{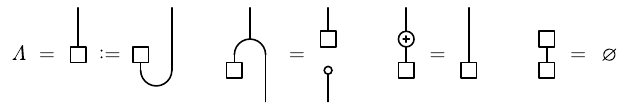}
   \end{equation}
 \end{itemize}
\end{definition}

Notice that the ribbon element of a ribbon Hopf algebra in $\FVect_\Bbbk$ does not satisfy the axioms of a ribbon element of a BPHK Hopf algebra in general, since it might not induce a Hopf copairing, as required by Equation~\eqref{E:Hopf_copairing_axiom}. Similarly, neither the left nor the right integral form of a unimodular Hopf algebra in $\FVect_\Bbbk$ need satisfy the axioms of an integral form of a BPHK Hopf algebra in general, since they might not be antipode-invariant, as required by Equation~\eqref{E:Hopf_integral_axiom}.

As explained in \cite{A11}, it was Habiro who first observed that $\bfSigma_1 \in \CCob$ is a BPHK Hopf algebra, based on the work of Kerler \cite{K00}. Its remaining structure morphisms are represented, as morphisms of $\KTan$, by
\begin{align*}
 &\pic{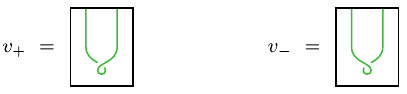} \\*
 &\pic{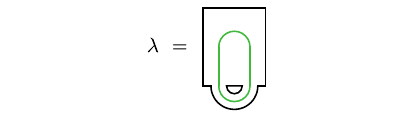}
\end{align*}
The following result was announced by Habiro, as stated in \cite[Theorem~2.3]{A11}, and independently proved by Bobtcheva and Piergallini in \cite[Theorem~5.5.4]{BP11}, see \cite[Theorem~2.6.11]{BBDP23} for the reformulation reported here.

\begin{theorem}\label{T:BP}
 $\CCob$ is equivalent to the free braided mo\-noi\-dal category generated by the BPHK Hopf algebra $\bfSigma_1$.
\end{theorem}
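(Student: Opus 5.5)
The plan is to pass through the braided monoidal equivalence $\chi : \KTan \to \CCob$ of \cite[Proposition~4.2]{BD21}, so that it suffices to prove that $\KTan$ is equivalent to the free braided monoidal category $\calH$ generated by a BPHK Hopf algebra $H$. By the universal property of $\calH$, there is a braided monoidal functor $\Phi : \calH \to \KTan$ sending $H$ to the object $1$ and each structure morphism to the Kirby tangle depicted above. Its existence requires checking that these Kirby tangles satisfy the axioms of Definitions~\ref{D:Hopf_algebra} and \ref{D:BPHK_Hopf_algebra}, together with braided cocommutativity \eqref{E:Hopf_braided_cocommutative_axiom}. Each axiom is a local statement about tangles in $M_g$ for small $g$. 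It should be verified directly by isotopy and signature-preserving moves \eqref{E:K1}, \eqref{E:K2}. For example, the antipode axiom \eqref{E:Hopf_antipode_axiom} and the copairing axiom \eqref{E:Hopf_copairing_axiom} should reduce to a slam-dunk, and the integral axioms \eqref{E:Hopf_integral_axiom}, \eqref{E:Hopf_cointegral_axiom} to handle slides over a $0$-framed meridian. Since $\Phi$ is bijective on objects ($n \mapsto H^{\otimes n}$), it remains to show that $\Phi$ is full and faithful.

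For fullness, I would put an arbitrary Kirby tangle $T : g \to g'$ in regular position and slice its diagram into elementary horizontal layers. Each layer is a tensor product of identities and of one of the following pieces: a crossing between strands, a strand running through a $1$-handle, a local maximum or minimum, or a closed component. The strands passing through the handles $B_1,\ldots,B_g$ are encoded by iterated coproducts $\Delta$, with antipodes recording orientation reversals. Crossings are encoded by the braiding and the adjoint action, and the arcs of $A(T)$ are recollected by products $\mu$ ending at the top points. This realizes every top tangle, that is, it shows that $\Phi$ surjects onto $\TTan$. For the closed components $C(T)$, I would use that surgery along a framed component corresponds to inserting the integral form $\lambda$, with framing adjusted by $v_\pm$. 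This follows the standard Hennings/Kerler dictionary: a closed component becomes a top arc on which $\lambda$ is evaluated, after pulling the component through a new handle.

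Faithfulness is the main obstacle. I would construct an inverse $\Psi : \KTan \to \calH$ on regular diagrams by reading the above decomposition algebraically. Then I would prove that $\Psi$ is invariant under all changes of regular diagram: planar isotopies, Reidemeister moves inside $M_g$, moves of strands across handles, and the Kirby moves \eqref{E:K1}, \eqref{E:K2}. Each invariance check should reduce to one of the BPHK axioms. Braided cocommutativity handles sliding crossings past handle attachments. Antipode-invariance of $\lambda$ and $v_\pm$ handles orientation reversal of closed components. The copairing and cointegral axioms handle slides and slam-dunks. The delicate point is that Kirby equivalence of $3$-dimensional cobordisms is really generated by $2$-deformations of $4$-dimensional $2$-handlebodies together with $1/2$-handle cancellation and blow-ups. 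Following \cite{BP11}, I would first establish the equivalence for $4$-dimensional relative $2$-handlebodies up to $2$-deformation, using only the Hopf and ribbon axioms. I would then impose the additional $3$-dimensional moves, which correspond exactly to \eqref{E:Hopf_integral_axiom} and \eqref{E:Hopf_cointegral_axiom}. Completeness of this list of moves is the genuinely hard input, and I would take it from the handle-move theorems underlying \cite[Theorem~5.5.4]{BP11}.
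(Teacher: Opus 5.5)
The paper does not prove this statement. It quotes it from \cite[Theorem~5.5.4]{BP11} (announced by Habiro, see \cite[Theorem~2.3]{A11}), in the formulation of \cite[Theorem~2.6.11]{BBDP23}. Your outline also imports its decisive step from \cite{BP11}, so it does not add an independent argument, and several of the steps you do sketch would fail as written.

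\textbf{Fullness.} The operations you use to encode top tangles are $\mu,\eta,\Delta,\epsilon,S^{\pm1}$, braidings and the adjoint action. These cannot reach most of $\TTan$. By the unit and counit axioms and naturality of the braiding with respect to $\eta$, every composite $0\to n$ built from them equals $\eta^{\otimes n}$. So you never obtain $v_+$, the clasp $w$, or a knotted arc. Crossings inside the cylinder must be generated with $v_\pm$ (equivalently $w$), as in Habiro's generating set for bottom tangles.

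\textbf{Verifying the axioms.} The Hopf, ribbon, copairing and braided cocommutativity axioms already hold in $\TTan$, by isotopy in the handlebody. For instance, the antipode axiom amounts to pulling a U-turn back through a $1$-handle; it is not a slam-dunk. Only the axioms involving $\lambda$ need \eqref{E:K1} and \eqref{E:K2}.

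\textbf{Faithfulness.} This step puts the difficulty in the wrong place. Once you pass through $\chi$, completeness of the list of moves is no longer in question. It is built into the definition of $\KTan$ (isotopy in $M_g$ together with \eqref{E:K1}, \eqref{E:K2}) and into \cite[Proposition~4.2]{BD21}. What remains is purely algebraic:
\begin{itemize}
 \item $\Psi$ must be shown independent of the many choices made when reading a diagram as a composite of generators;
 \item every isotopy move in $M_g$ and every Kirby move must be derived from the finitely many BPHK axioms.
\end{itemize}
Your sentence ``each invariance check should reduce to one of the BPHK axioms'' simply asserts this. That is the substance of the Bobtcheva--Piergallini theorem, and you give no argument for it.

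The $4$-dimensional detour is unnecessary after using $\chi$, and as described it is also inaccurate:
\begin{itemize}
 \item $1/2$-handle cancellation is already a $2$-deformation.
 \item Individual blow-ups change the signature, so they are not moves of $\KTan$.
 \item $\lambda$ (a $2$-handle running over a $1$-handle of the source) is already a $4$-dimensional generator, and its coinvariance is a handle slide. So the $4$-dimensional stage cannot use only the Hopf and ribbon axioms, and \eqref{E:Hopf_integral_axiom} is not a $3$-dimensional move.
 \item The specifically $3$-dimensional ingredient is trading a $1$-handle for a $0$-framed unknot, which is what ties $\Lambda$ to $w$ and $\lambda$.
\end{itemize}

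As it stands, the proposal is either a citation of \cite[Theorem~5.5.4]{BP11}, which is exactly what the paper does, or an incomplete proof whose missing core is the algebraic well-definedness of $\Psi$.
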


In other words, the list of axioms of a BPHK Hopf algebra, given by Equations~\eqref{E:Hopf_algebra_axiom}--\eqref{E:Hopf_cointegral_axiom}, yields a complete algebraic presentation of the category $\CCob$.

\section{Topological Quantum Field Theories}\label{S:TQFTs}

In this section, we revisit the construction of the \KL{} TQFT $J_\eend : \CCob \to \calC$ for connected cobordisms, of the \KLRT{} functor $F_\eend : \calB_\calC \to \calC$ for $\calC$-labeled bichrome graphs, and of the renormalized \KL{} TQFT $V_\eend : \ACob_\calC \to \FVect_\Bbbk$ for admissible $\calC$-decorated cobordisms. It is useful to start by recalling that every ribbon category induces a ribbon functor $F_\calC : \calR_\calC \to \calC$, called the \textit{\RT{} functor}, on the category $\calR_\calC$ of $\calC$-labeled ribbon graphs, as defined in \cite[Chapter~I, Theorem~2.5]{T94}. If $P$ and $P'$ are objects of $\calR_\calC$, then we say two linear combinations $T$ and $T'$ of morphisms in $\calR_\calC$ from $P$ to $P'$ are \textit{skein equivalent in $D^2 \times I$} if
\[
 F_\calC(T) = F_\calC(T').
\]
More generally, if $M$ is a \dmnsnl{3} cobordism with corners from $\Sigma$ to $\Sigma'$, and if $P \subset \Sigma$ and $P' \subset \Sigma'$ are $\calC$-labeled blue sets, then we say two linear combinations $T$ and $T'$ of $\calC$-labeled ribbon graphs in $M$ from $P$ to $P'$ are \textit{skein equivalent in $M$} if they are related by a finite sequence of skein equivalences in embedded copies of $D^2 \times I$. In this case, we write
\[
 T \doteq T'.
\]

\subsection{Kerler--Lyubashenko TQFT}\label{S:KL_TQFTs}

First, let us consider a modular category $\calC$ with adjoint end $\eend$, and let us recall the construction of a braided monoidal functor $J_\eend : \CCob \to \calC$
known as the \textit{\KL{} TQFT associated with $\eend$}, which is essentially a byproduct of the main result of \cite{KL01}. In order to build it, one possibility is to show that, if $\calC$ is modular, then $\eend$ is a BPHK Hopf algebra, see \cite[Theorem~7.4]{BD21}. However, the construction of $J_\eend : \CCob \to \calC$ does not actually require such a strong result as Theorem~\ref{T:BP}. Let us recall here below how the definition of the \KL{} TQFT can be given in terms of the universal property satisfied by $\eend$. We will do this in two steps: first, we will define a braided monoidal functor $J_\eend : \TTan \to \calC$ for an arbitrary finite ribbon category $\calC$,
and then, we will explain how to extend the definition to a braided monoidal functor $J_\eend : \KTan \to \calC$ when $\calC$ is modular, in Proposition~\ref{P:KL_TQFT}.

Let us start from top tangles. If $T : g \to g'$ is a top tangle, let $\myuline{T}$ denote the portion of (a regular diagram of) $T$ that is contained in the cylinder $D^2 \times I \subset M_g$, as represented by the following picture:
\[
 \pic{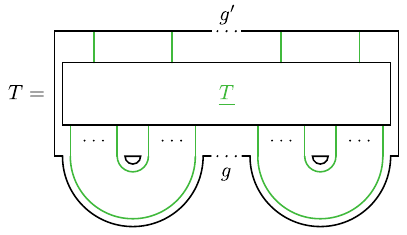}
\]
Notice that $\myuline{T}$ depends on the specific regular diagram that has been chosen to represent $T$. For all $X_1,\ldots,X_{g'} \in \calC$, let $T_{X_1,\ldots,X_{g'}}$ be obtained from $T$ by orienting its $k$th component from right to left and by labeling it $X_k$, and let $\myuline{T}_{X_1,\ldots,X_{g'}}$ denote the $\calC$-labeled framed tangle corresponding to the portion of $T_{X_1,\ldots,X_{g'}}$ that is contained in the cylinder $D^2 \times I \subset M_g$, as represented by the following picture:
\[
 \pic{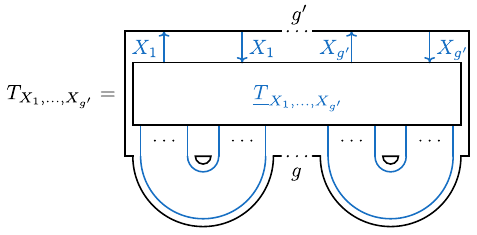}
\]
This determines objects $S_j := S_j(\myuline{T}_{X_1,\ldots  ,X_{g'}}) \in \calC$ such that
\[
 F_\calC(\myuline{T}_{X_1,\ldots,X_{g'}}) \in \calC \left( \bigotimes_{j=1}^g (S_j \otimes S_j^*), \bigotimes_{j=1}^{g'} (X_j \otimes X_j^*) \right).
\]
Notice that, up to skein equivalence, we can assume that the $j$th handle of $M_g$ intersects a single strand of the $\calC$-labeled ribbon graph $T_{X_1,\ldots,X_{g'}}$, since
\[
 \pic{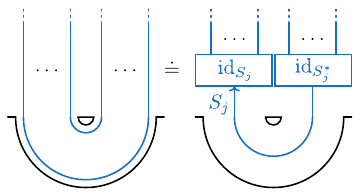}
\]
If we replace the $j$th handle of $M_g$ by a coupon labeled by the dinatural structure morphism $i_{S_j} : \eend \to S_j \otimes S_j^*$ of the adjoint end $\eend$, then, by applying the \RT{} functor $F_\calC$, we obtain a morphism of $\calC$ that is dinatural in $X_1,\ldots,X_{g'} \in \calC$. Therefore, thanks to the universal property satisfied by $\eend$, there exists a unique morphism $J_\eend(T) : \eend^{\otimes g} \to \eend^{\otimes g'}$ satisfying
\begin{equation}\label{E:KL_TQFT_top_tangle}
 \pic{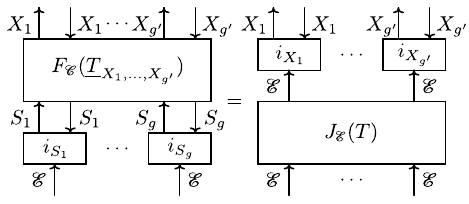}
\end{equation}
for $X_1,\ldots,X_{g'} \in \calC$. The uniqueness of this solution can be used to show that, if $\calC$ is a finite ribbon category, then
Equation~\eqref{E:KL_TQFT_top_tangle} defines a braided monoidal functor
\[
 J_\eend : \TTan \to \calC.
\]
In particular, as a direct consequence, we obtain a Hopf algebra structure on the adjoint end $\eend$ of a finite ribbon category $\calC$, whose structure morphisms are the images of those of $\bfSigma_1$ under $J_\eend$.

\begin{remark}\label{R:end_vs_coend_1}
 \KL{} TQFTs are originally defined using the coadjoint coend $\coend$ instead of the adjoint end $\eend$. However, if $\calC$ is a modular category, the two constructions are completely equivalent to each other. Indeed, instead of using top tangles in handlebodies, we can consider \textit{bottom tangles in handlebodies}, which are obtained from the former by simply applying a rotation $R$ of an angle $\pi$ around the horizontal axis $D^1 \times \{ (0,\frac{1}{2}) \} \subset D^2 \times I$.
 Just like before, every bottom tangle $T \in \BTan(g,g')$ induces a dinatural transformation that, by invoking the universal property of the coend $\coend$, determines a unique morphism $J_\coend(T) : \coend^{\otimes g} \to \coend^{\otimes g'}$. This defines a Hopf algebra structure on $\coend$. Notice however that the rotation $R$ exchanges the roles of product and coproduct, and of unit and counit. That is, by applying $R$ to the top tangles defining the algebra and the coalgebra structure on $\bfSigma_1$, we obtain the bottom tangles defining the coalgebra and the algebra structure, respectively. However, thanks to the universal properties satisfied by $\eend$ and $\coend$, there exists a unique morphism $\calD : \coend \to \eend$, called the \textit{Drinfeld map}, satisfying
 \begin{equation}\label{E:Drinfeld}
  \pic{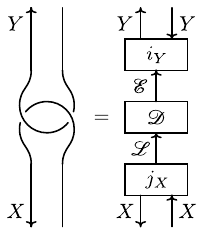}
 \end{equation}
 for all $X,Y \in \calC$. The Drinfeld map is a Hopf algebra morphism, and it is invertible if and only if $\calC$ is factorizable, see \cite[Proposition~4.11]{FGR17}.
\end{remark}

If $\calC$ is unimodular, then there exists, up to scalar, a unique
\textit{integral form on $\eend$}, which is a morphism $\lambda \in \calC(\eend,\one)$ satisfying
\begin{align}\label{E:int_form}
 (\id_\eend \otimes \lambda) \circ \Delta &= \eta \circ \lambda, &
 \lambda \circ S &= \lambda.
\end{align}
Similarly, and always up to scalar, there exists a unique \textit{integral element of $\eend$}, which is a morphism $\Lambda \in \calC(\one,\eend)$ satisfying
\begin{align}\label{E:int_element}
 \mu \circ (\Lambda \otimes \id_\eend) &= \Lambda \circ \varepsilon, &
 S \circ \Lambda &= \Lambda.
\end{align}
The two statements are proved in \cite[Proposition~3.1]{BKLT97} and \cite[Lemmas~5.2.8 \& 5.2.12]{KL01}.
If $\lambda$ and $\Lambda$ are both non-zero, then their composition $\lambda \circ \Lambda$ is a non-zero scalar multiple of $\id_{\one}$, thanks to \cite[Theorem~3.3]{BKLT97}. Furthermore, if $w = (\mu \otimes \mu) \circ (v_- \otimes \Delta \otimes v_-) \circ v_+$ denotes the Hopf copairing of $\eend$, then $\calC$ is factorizable if and only if $(\id_\eend \otimes \lambda) \circ w$ is a non-zero scalar multiple of $\Lambda$, thanks to \cite[Proposition~5.2.9]{KL01} and \cite[Proposition~7.5]{BD21}. Therefore, if $\calC$ is a modular category over an algebraically closed field $\Bbbk$, then, up to sign, there exists a unique \textit{normalized integral form on $\eend$}, which is an integral form $\lambda \in \calC(\eend,\one)$ that satisfies
\begin{equation}\label{E:int_normalization}
 (\lambda \otimes \lambda) \circ w = \id_{\one}.
\end{equation}
Notice that a normalized integral form satisfies Equations~\eqref{E:Hopf_integral_axiom} and \eqref{E:Hopf_cointegral_axiom}.

Let us pause to record a few identities that will be later used in the proof of Proposition~\ref{P:KL_TQFT}. First of all, Equation~\eqref{E:Hopf_braided_cocommutative_axiom} implies that the adjoint action $\alpha$ of $\eend$ on itself is braided cocommutative. As a consequence, $\lambda$ intertwines $\alpha$, which means that
\begin{align}\label{E:int_adj}
 \lambda \circ \alpha &= \varepsilon \otimes \lambda,
\end{align}
see
\cite[Figure~3.4.7]{BBDP23}.
Furthermore, $\lambda$ is a quantum character, which means that
\begin{align}\label{E:int_q_char}
 \lambda \circ \mu \circ (\id_\eend \otimes S^2) \circ c_{\eend,\eend} &= \lambda \circ \mu,
\end{align}
see \cite[Lemma~3.2]{BDF25}. Finally, $\lambda$ satisfies
\begin{equation}\label{E:int_slam-dunk}
 \lambda \circ (\lambda \otimes \mu) \circ (w \otimes \id_\eend) = \varepsilon,
\end{equation}
as follows directly from Equation~\eqref{E:int_normalization}.

We are now ready to move on to arbitrary Kirby tangles. If $T : g \to g'$ is an arbitrary Kirby tangle, a top tangle presentation $\tilde{T}$ of $T$ is a top tangle $\tilde{T} : g \to g'+h$ satisfying
\[
 \pic{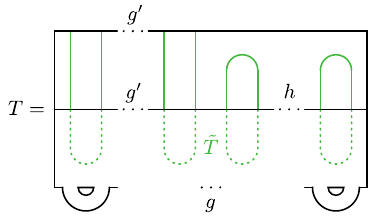}
\]
Notice that top tangle presentations always exist, but they are of course by no means unique. If $\tilde{T} : g \to g'+h$ is a top tangle presentation of a Kirby tangle $T : g \to g'$, then we consider the morphism $J_\eend(T) : \eend^{\otimes g} \to \eend^{\otimes g'}$ defined by
\begin{equation}\label{E:KL_TQFT}
 \pic{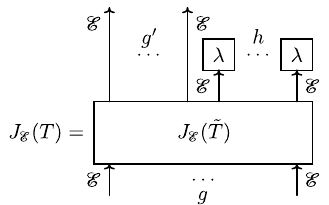}
\end{equation}
The following statement implies that this definition is independent of the choice of the top tangle presentation.

\begin{proposition}\label{P:KL_TQFT}
 If $\calC$ is a modular category and $\lambda \in \calC(\eend,\one)$ is a normalized integral form on $\eend$, then
 Equation~\eqref{E:KL_TQFT}
defines a braided monoidal functor
 \[
  J_\eend : \KTan \to \calC.
 \]
\end{proposition}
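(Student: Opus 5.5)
The plan is to reduce everything to the already established braided monoidal functor $J_\eend : \TTan \to \calC$. Equation~\eqref{E:KL_TQFT} caps off the last $h$ outputs of $J_\eend(\tilde{T}) : \eend^{\otimes g} \to \eend^{\otimes g'+h}$ with $\lambda^{\otimes h}$. The first and main step is to show that this morphism depends only on the class of $T$ in $\KTan$. It must not depend on the top tangle presentation $\tilde{T}$ of a fixed Kirby tangle, and it must be invariant under the moves \eqref{E:K1} and \eqref{E:K2}.

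For the choice of presentation, I would first list the elementary modifications relating any two top tangle presentations of the same $T$. These are:
\begin{enumerate}
 \item isotopies of $\tilde{T}$ in $M_g$;
 \item reordering of the $h$ extra components;
 \item changing which arc of each closed component is pulled up to the top;
 \item changing the path along which it is pulled, that is, crossing changes between the pulled strands and the remaining tangle, and the resulting conjugations.
\end{enumerate}
Modifications of types 1 and 2 are handled by functoriality and braided monoidality of $J_\eend$ on $\TTan$, together with the fact that $\lambda^{\otimes h}$ is insensitive to permutations by braidings. Moving the base point of a pulled-up arc, or changing its path, amounts algebraically to precomposing the corresponding output with the adjoint action $\alpha$ of $\eend$ on itself, or with $\mu \circ (\id_\eend \otimes S^2)\circ c_{\eend,\eend}$ type rewirings. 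These are absorbed by the integral form precisely through Equations~\eqref{E:int_adj} and \eqref{E:int_q_char}, with antipode invariance $\lambda \circ S = \lambda$ from \eqref{E:int_form} taking care of orientation reversal of a closed component.

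The Kirby moves come next. For the slide \eqref{E:K2}, a slide of one component over a closed component corresponds, in a suitable top tangle presentation, to inserting a coproduct $\Delta$ on the output of the closed component and multiplying one copy into the other strand. Invariance then follows from the integral property $(\id_\eend \otimes \lambda)\circ\Delta = \eta\circ\lambda$, again in combination with \eqref{E:int_adj}. For the slam-dunk \eqref{E:K1}, the Hopf link of two closed components, one of which is a meridian, is presented by inserting the Hopf copairing $w$. The move then reduces exactly to Equation~\eqref{E:int_slam-dunk}. Signature preservation corresponds to the framings $\pm1$ being absent from the moves, so no normalization of $v_\pm$ is needed beyond \eqref{E:int_normalization}.

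Once well-definedness is established, functoriality follows by choosing compatible presentations. If $\tilde{T}$ and $\tilde{T}'$ present $T$ and $T'$, then composing them in $\TTan$, with the extra strands of $\tilde{T}$ passed through the handles of $\tilde{T}'$ via identities, gives a top tangle presentation of $T' \ast T$. Functoriality and monoidality of $J_\eend$ on $\TTan$ then yield $J_\eend(T'\ast T) = J_\eend(T')\circ J_\eend(T)$. The tensor product and the braiding are handled in the same way: presentations can be juxtaposed, and the braidings $c_{g,g'}$ are top tangles, so braided monoidality is inherited. Identities are top tangles with $h=0$.

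The main obstacle is the independence of the presentation, in particular the crossing changes between pulled-up strands. I would handle it by first proving a lemma that any two presentations differ by a sequence of the elementary modifications above. I would then verify each modification diagrammatically using the dinatural characterization~\eqref{E:KL_TQFT_top_tangle}, testing on the labels $X_1,\ldots,X_{g'}$.
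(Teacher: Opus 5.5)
Your proposal is correct and follows essentially the same route as the paper's proof in Appendix~\ref{A:proof_KL_TQFT}. Independence of paths, order, orientations and basepoints comes from \eqref{E:int_adj}, the form property of $\lambda$, antipode invariance and \eqref{E:int_q_char}; invariance under slides and slam-dunks comes from the integral property and \eqref{E:int_slam-dunk}; and functoriality, monoidality and braiding are inherited from $J_\eend : \TTan \to \calC$ through compatible presentations. The lemma you plan to prove, that any two top tangle presentations are related by these elementary modifications, is left implicit in the paper, which simply builds presentations by choosing an order, orientations, basepoints and framed paths.
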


As explained earlier, Proposition~\ref{P:KL_TQFT} follows from the construction of Kerler and Lyubashenko \cite{KL01}, and a proof can be found in \cite{BD21}. However, a different proof, based entirely on the universal property of $\eend$, is given in Appendix~\ref{A:proof_KL_TQFT}.

\subsection{Kerler--Lyubashenko--Reshetikhin--Turaev Functor}\label{S:KLRT_functors}

Next, let us consider a finite ribbon category $\calC$ with adjoint end $\eend$, and let us recall the construction of a ribbon functor $F_\eend : \calB_\calC \to \calC$
that we will refer to as the \textit{\KLRT{} functor} (called the \LRT{} functor in \cite[Proposition~3.1]{DGGPR19}, where it is denoted $F_\Lambda : \calR_\Lambda \to \calC$) extending the \RT{} functor $F_\calC : \calR_\calC \to \calC$ of $\calC$-labeled ribbon graphs.

First, the \textit{smoothing} of a generalized $\calC$-labeled bichrome graph $T \subset M$ from $P \subset \Sigma$ to $P' \subset \Sigma'$ is the unoriented unlabeled red framed tangle $R(T) \subset M$ from $R(P) \subset \Sigma$ to $R(P') \subset \Sigma'$ obtained from $T$ by throwing away blue edges and coupons, and by replacing bichrome coupons as shown
\[
 \pic{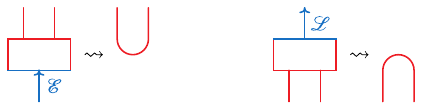}
\]
A regular $\calC$-labeled bichrome set $P \subset D^2$ is \textit{$\ell$-split} if $P = R(P) \bcs B(P)$, where $R(P) \subset D^2$ is a regular red set composed of $2\ell$ points, $B(P) \subset D^2$ is a regular $\calC$-labeled blue set, and $\bcs$ denotes the extension of the tensor product of $\KTan$ to regular bichrome graphs in $D^2 \times I$.
If $P,P' \subset D^2$ are an $\ell$-split and an $\ell'$-split regular $\calC$-labeled bichrome set, respectively, then we say a generalized $\calC$-labeled bichrome graph $T \subset D^2 \times I$ from $P$ to $P'$ is a \textit{generalized $\calC$-labeled Kirby $(\ell,\ell')$-graph}, or sometimes simply a \textit{generalized Kirby graph}, if $R(T) = A(R(T)) \cup C(R(T))$, where $A(R(T))$ is the union of a bottom $\ell$-tangle and a top $\ell'$-tangle, and $C(R(T))$ is a framed link. A \textit{$\calC$-labeled bottom-top $(\ell,\ell')$-graph}, or sometimes simply a \textit{bottom-top graph}, is a generalized Kirby graph $T$ such that $C(R(T)) = \varnothing$.

Let us start from bottom-top graphs. If $P,P' \subset D^2$ are an $\ell$-split and an $\ell'$-split regular $\calC$-labeled bichrome set, respectively, and $T \subset D^2 \times I$ is a $\calC$-labeled bottom-top $(\ell,\ell')$-graph from $P$ to $P'$, then, for all $X_1,\ldots,X_\ell,X'_1,\ldots,X'_{\ell'} \in \calC$, let $T_{X_1,\ldots,X_\ell,X'_1,\ldots,X'_{\ell'}}$ be the $\calC$-labeled ribbon graph obtained from $T$ by orienting the $k$th bottom component and the $k'$th top component of its smoothing from right to left, by labeling them by $X_k$ and $X'_{k'}$, respectively, and by labeling every bichrome coupon intersecting them by the component at either $X_k$ or $(X_k)^*$ and at either $X'_{k'}$ or $(X'_{k'})^*$, respectively, of the family of structure morphisms of either $\eend$ or $\coend$
(according to its configuration). Then, by applying the \RT{} functor $F_\calC$, we obtain a morphism of $\calC$ that is dinatural in $X_1,\ldots,X_\ell,X'_1,\ldots,X'_{\ell'} \in \calC$. Therefore, thanks to the universal property satisfied by $\eend$ and $\coend$, there exists a unique morphism $F_\calC(T) : \coend^{\otimes \ell} \otimes F_\calC(B(P)) \to \eend^{\otimes \ell'} \otimes F_\calC(B(P'))$ satisfying
\begin{equation}\label{E:KLRT_functor_pure_top_graph}
 \pic{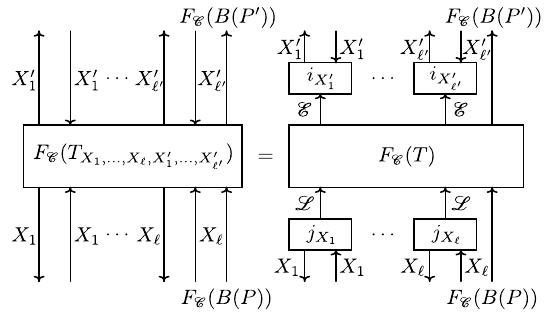}
\end{equation}
for all $X_1, \ldots, X_\ell,X'_1,\ldots,X'_{\ell'} \in \calC$.

Let us move on to arbitrary Kirby graphs. If $P,P' \subset D^2$ are an $\ell$-split and an $\ell'$-split regular $\calC$-labeled bichrome set, respectively, and $T \subset D^2 \times I$ is a generalized $\calC$-labeled Kirby $(\ell,\ell')$-graph from $P$ to $P'$, then a \textit{bottom-top graph presentation of $T$} is a $\calC$-labeled bottom-top $(\ell,k+\ell')$-graph $\tilde{T}$ from $P$ to $\tilde{P} \bcs P'$, where $\tilde{P} \subset D^2$ is a regular red set composed of $2k$ points, satisfying
\[
 \pic{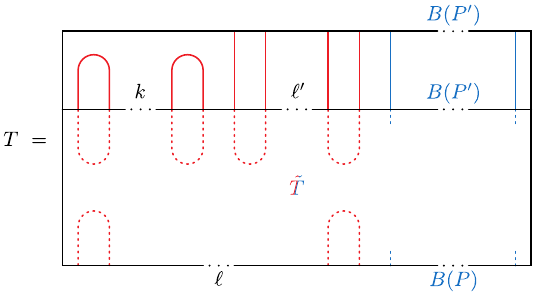}
\]
If a $\calC$-labeled bottom-top $(\ell,k+\ell')$-graph $\tilde{T}$ from $P$ to $\tilde{P} \bcs P'$ is a bottom-top graph presentation of a generalized $\calC$-labeled Kirby $(\ell,\ell')$-graph $T$ from $P$ to $P'$, then $F_\eend(T) : \coend^{\otimes \ell} \otimes F_\calC(B(P)) \to \eend^{\otimes \ell'} \otimes F_\calC(B(P'))$ is defined as
\begin{equation}\label{E:KLRT_functor}
 \pic{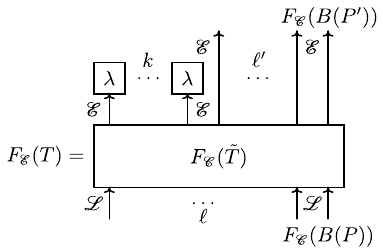}
\end{equation}
where $\lambda \in \calC(\eend,\one)$ is an integral form on $\eend$. When $\ell = \ell' = 0$, this map extends to a ribbon functor.

\begin{proposition}\label{P:KLRT_functor}
 If $\calC$ is a finite ribbon category and $\lambda \in \calC(\eend,\one)$ is an integral form on the adjoint end $\eend$, then
 Equation~\eqref{E:KLRT_functor}
 defines a ribbon functor
 \[
  F_\eend : \calB_\calC \to \calC.
 \]
\end{proposition}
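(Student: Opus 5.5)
The plan is to prove Proposition~\ref{P:KLRT_functor} in three stages: first check that Equation~\eqref{E:KLRT_functor} is independent of the chosen bottom-top graph presentation, then check invariance under isotopy of generalized Kirby graphs, and finally verify functoriality and the ribbon structure.

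\textbf{Bottom-top graphs.} I would start with bottom-top graphs, where Equation~\eqref{E:KLRT_functor_pure_top_graph} defines $F_\eend(T)$ through the universal properties of $\eend$ and $\coend$. Uniqueness in those universal properties shows that this definition depends only on the isotopy class of $T$. The reason is that $F_\calC$ is an isotopy invariant of the labeled ribbon graph $T_{X_1,\ldots,X'_{\ell'}}$, and any isotopy of $T$ induces an isotopy of each of its labeled versions. The same uniqueness gives compatibility with composition and tensor product of bottom-top graphs, in exactly the way $J_\eend : \TTan \to \calC$ was obtained. When bottom components meet top components under composition, dinaturality of the labeling, together with the pairing between $\coend$ and $\eend$ encoded by the bichrome coupons, reduces the check to the corresponding identity for $F_\calC$.

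\textbf{Independence of the presentation.} Two bottom-top graph presentations of the same generalized Kirby graph $T$ differ by a finite sequence of two kinds of moves. The first kind is an isotopy of $\tilde{T}$ rel the closing arcs. The second kind is a change of where the red link components are cut open, that is, which point of each closed component is pulled up to the top, together with sliding the cut point along the component. Invariance under the first kind follows from the bottom-top step above. For the second kind, moving the cut point through a crossing or past a coupon amounts to conjugating by the adjoint action $\alpha$ of $\eend$, or to composing with $S^2$. Each of these is absorbed by $\lambda$: the first by Equation~\eqref{E:int_adj}, the second by the quantum character property of Equation~\eqref{E:int_q_char}. Reordering the closed components is harmless, because $\lambda^{\otimes k}$ is symmetric under the braiding up to these same identities.

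\textbf{Main obstacle and strategy for it.} The hardest point is that here $\calC$ is only finite ribbon and unimodular-type hypotheses are not assumed. So Equations~\eqref{E:int_adj} and \eqref{E:int_q_char} must be established from the integral-form axioms in Equation~\eqref{E:int_form} alone, without factorizability. I would first prove that $\lambda$ is $\alpha$-invariant using only that $\eend$ is braided cocommutative (Equation~\eqref{E:Hopf_braided_cocommutative_axiom}, as realized via $J_\eend$ on $\TTan$) and that $\lambda$ is a two-sided coinvariant form. I would then derive the quantum character property from the invariance of $\lambda$ under $S$.

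\textbf{Functoriality and ribbon structure.} Once well-definedness is settled, I would show that $F_\eend$ is a functor on $\calB_\calC$ with $\ell=\ell'=0$. Concatenating presentations gives a presentation of the composite, and the $\lambda$'s simply multiply, so compatibility with composition follows. Tensor products are handled the same way. Braidings, twists and dualities are blue, so $F_\eend$ restricts to $F_\calC$ on $\calR_\calC$, and the ribbon structure morphisms are preserved.
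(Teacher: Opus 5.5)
Your route is the standard one behind the cited proof \cite[Propositions~3.1 \& 3.5]{DGGPR19}, and it parallels the paper's own argument for $J_\eend$ in Appendix~\ref{A:proof_KL_TQFT}: cut the closed red components open, evaluate through the universal property of $\eend$, cap with $\lambda$, check independence of the choices, then check functoriality. The gap is in your list of moves relating two presentations. Besides the cut point, the path along which it is pulled up, and the ordering, a presentation also fixes an \emph{orientation} of each cut-open red component. The top arcs are oriented from right to left when you label $T_{X_1,\ldots,X'_{\ell'}}$. Reversing this orientation is neither an isotopy rel the closing arcs nor a slide of the cut point, and it changes the bottom-top value by the antipode $S$ on the corresponding factor $\eend$. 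As written, you therefore never prove independence of orientations. This is exactly where the hypothesis $\lambda\circ S=\lambda$ of Equation~\eqref{E:int_form} must be used; it is the ``independence of orientations'' step of Appendix~\ref{A:proof_KL_TQFT}. Reordering, by contrast, needs nothing beyond naturality of the braiding, namely $(\lambda\otimes\lambda)\circ c_{\eend,\eend}=\lambda\otimes\lambda$.

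Relatedly, in your ``main obstacle'' paragraph, $S$-invariance is the wrong source for Equation~\eqref{E:int_q_char}. Already for an ordinary finite-dimensional Hopf algebra $H$, Radford's formula shows that an integral on $H$ is an $S^2$-twisted trace exactly when $H$ is unimodular, and $S$-invariance of the integral does not ensure this. The identity follows instead from the ad-invariance \eqref{E:int_adj} that you plan to derive first, from braided cocommutativity and two-sided coinvariance, as the paper does via \cite{BBDP23}. Schematically, suppressing braidings, $\lambda(yx)=\lambda\bigl(y_{(1)}\,x\,S^2(y_{(3)})\,S(y_{(2)})\bigr)=\lambda\bigl(x\,S^2(y)\bigr)$. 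The first equality uses the antipode axiom and the fact that $S$ is a (braided) anti-homomorphism, and the second is \eqref{E:int_adj} applied to $y_{(1)}$ acting on $x\,S^2(y_{(2)})$. With these two fixes, no factorizability is needed, consistent with the paper's remark that any $S$-invariant quantum character would do. Finally, the aside in your first stage about composing top arcs with bottom arcs is unnecessary and does not typecheck: top arcs produce $\eend$ factors, bottom arcs consume $\coend$ factors, and gluing them creates closed components. Functoriality is only required for $\ell=\ell'=0$, which your last paragraph handles by stacking presentations and letting the braidings of the rerouted red arcs disappear after applying $\lambda$.
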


For a proof of Proposition~\ref{P:KLRT_functor}, see \cite[Propositions~3.1 \& 3.5]{DGGPR19}. Notice that the statement remains true if the integral form $\lambda \in \calC(\eend,\one)$ is replaced by any quantum character on $\eend$, meaning any antipode-invariant form on $\eend$ satisfying Equation~\eqref{E:int_q_char}, compare with the proof of \cite[Proposition~4.3]{BDF25}.

\begin{remark}\label{R:end_vs_coend_3}
 The \LRT{} functor of \cite[Equation~(27)]{DGGPR19} is actually defined using the coadjoint coend $\coend$ instead of the adjoint end $\eend$, and should therefore be denoted $F_\coend : \calB_\calC \to \calC$ here. Notice however that, in the setting that is relevant to us (that is, when $\calC$ is factorizable), the two constructions are completely equivalent to each other, for the same reasons we pointed out in Remark~\ref{R:end_vs_coend_1}.
\end{remark}

The construction above allows us to extend the notion of skein equivalence. If $P,P' \subset D^2$ are an $\ell$-split and an $\ell'$-split regular $\calC$-labeled bichrome set, respectively, then we say two linear combinations $T$ and $T'$ of generalized $\calC$-labeled Kirby $(\ell,\ell')$-graphs from $P$ to $P'$ are \textit{skein equivalent in $D^2 \times I$} if
\[
 F_\eend(T) = F_\eend(T').
\]
If $M$ is a \dmnsnl{3} cobordism with corners from $\Sigma$ to $\Sigma'$, and if $P \subset \Sigma$ and $P' \subset \Sigma'$ are $\calC$-labeled blue sets, then we say two linear combinations $T$ and $T'$ of $\calC$-labeled bichrome graphs in $M$ from $P$ to $P'$ are \textit{skein equivalent in $M$} if they are related by a finite sequence of skein equivalences in embedded copies of $D^2 \times I$. In this case, we write
\[
 T \doteq T'.
\]

\subsection{Renormalized Kerler--Lyubashenko TQFT}\label{S:renormalized_KL_TQFTs}

Now, let us consider a modular category $\calC$, and let us recall the construction of a symmetric monoidal functor $V_\eend : \ACob_\calC \to \FVect_\Bbbk$
that we will refer to as the \textit{renormalized \KL{} TQFT} (called the renormalized Lyubashenko TQFT in \cite[Theorem~4.12]{DGGPR19}, where it is denoted $\rmV_\calC : \adCob_\calC \to \Vect_\Bbbk$).

Let us fix a trace $\rmt$ on the ideal $\Proj(\calC)$ of projective objects of $\calC$. As recalled in Section~\ref{S:ribbon_categories}, such a trace is unique up to scalar, and furthermore non-degenerate. It is convenient to lock the normalization of $\rmt$ to those of the integral form $\lambda$ on $\eend$ and of the integral element $\Lambda$ of $\eend$, by exploiting the fact that, if $\varepsilon_{\one} : P_{\one} \to \one$ is the projective cover of $\one$, then there exists a unique morphism $\eta_{\one} : \one \to P_{\one}$ satisfying
\begin{equation}\label{E:eta_one}
 i_{P_X} \circ \Lambda = \delta_{X,\one} \eta_{\one} \otimes \varepsilon_{\one}^*
\end{equation}
for every simple object $X \in \calC$, as follows from \cite[Corollary~6.4]{GR17} and Remark~\ref{R:end_vs_coend_1}. Notice that $\eta_{\one} : \one \to P_{\one}$ makes $P_{\one}$ into the injective envelope of $\one$. Then, we fix the normalization of $\rmt$ by asking that
\[
 \rmt_{P_{\one}}(\eta_{\one} \circ \epsilon_{\one}) = 1.
\]
This normalization is possible thanks to the non-degeneracy of $\rmt$, together with the fact that $\varepsilon_{\one}$ and $\eta_{\one}$ generate their respective morphism spaces. Notice that $\epsilon_{\one} \circ \eta_{\one} \neq 0$ if and only if $\calC$ is semisimple.

A $\calC$-labeled blue set $P$ in a \dmnsnl{2} cobordism $\Sigma$ is \textit{admissible} if it admits a label in $\Proj(\calC)$. Similarly, a $\calC$-labeled bichrome graph in a \dmnsnl{3} cobordism with corners $M$ is \textit{admissible} if it admits a label in $\Proj(\calC)$, and it is \textit{closed} if it is disjoint from $\partial M$. In particular, every admissible closed bichrome graph $T : \varnothing \to \varnothing$ in $\calB_\calC$ determines an endomorphism $\bfS^3_T = (S^3,T,0)$ of $\varnothing$ in $\ACob_\calC$. Furthermore, it admits a cutting presentation, which is a bichrome graph $T_P : P \to P$ in $\calB_\calC$, with $P$ admissible, whose trace is $T$. If $T$ is a closed admissible bichrome graph in $\calB_\calC$ and $T_P$ is a cutting presentation, then the scalar
\begin{equation}\label{E:renormalized_admissible_closed_graph_invariant}
 F'_\eend(T) := \rmt_{F_\calC(P)}(F_\eend(T_P))
\end{equation}
is independent of $T_P$, and is a topological invariant of $T$. See \cite[Theorem~3.3]{DGGPR19} for a proof, where $F'_\eend$ is denoted $F'_\intL$.

Let $\bfM = (M,T,n)$ be an admissible connected closed \mnfld{3}, and let $L$ be a surgery presentation of $M$ given by a red framed link in $S^3$ whose linking matrix has signature $n$. We assume the bichrome graph $T$ to be contained in the exterior of the surgery link $L$, so that we can think of them as being simultaneously embedded into $S^3$. Then, the scalar
\begin{equation}\label{E:renormalized_Lyubashenko}
 J'_\eend(\bfM) := F'_\eend(L \cup T)
\end{equation}
is a topological invariant of the admissible triple $\bfM = (M,T,n)$, called the \textit{renormalized Lyubashenko invariant}. See \cite[Theorem~3.8]{DGGPR19} for a proof, where $J'_\eend$ is denoted $\rmL'_\calC$.

\begin{remark}\label{R:normalization}
 Equation~\eqref{E:renormalized_Lyubashenko} defines directly the normalization of the invariant determined by \cite[Equation~(6)]{DGGPR20}. Indeed, since here we are using the integral form $\lambda \in \calC(\eend,\one)$ satisfying Equations~\eqref{E:Hopf_integral_axiom} and \eqref{E:Hopf_cointegral_axiom}, which is unique up to sign, the coefficient $\calD$ appearing in \cite[Equation~(4)]{DGGPR20} is equal to $1$, in our conventions.
\end{remark}

The universal construction of \cite{BHMV95} can be used to extend $J'_\eend$ to a TQFT defined on $\ACob_\calC$. Indeed, if $\bfM_i$ is an admissible connected closed \mnfld{3} for every $1 \leqs i \leqs k$, we start by setting
\[
 J'_\eend \left( \bigsqcup_{i=1}^k \bfM_i \right) := \prod_{i=1}^k J'_\eend(\bfM_i).
\]
Then, if $\bfSigma$ is an object of $\ACob_\calC$, we consider vector spaces $\calV(\bfSigma)$ and $\calV'(\bfSigma)$ with bases given by morphisms of $\ACob_\calC$ of the form $\bfM_{\bfSigma} : \varnothing \to \bfSigma$ and $\bfM'_{\bfSigma} : \bfSigma \to \varnothing$, respectively. These vector spaces are paired via
\begin{align*}
 \langle \_,\_ \rangle_{\bfSigma} : \calV'(\bfSigma) \times \calV(\bfSigma) & \to \Bbbk \\*
 (\bfM'_{\bfSigma},\bfM_{\bfSigma}) & \mapsto J'_\eend(\bfM'_{\bfSigma} \circ \bfM_{\bfSigma}).
\end{align*}
State spaces are then defined as quotients with respect to the left and the right radical of the pairing $\langle \_,\_ \rangle_{\bfSigma}$, namely
\begin{equation}\label{E:renormalized_KL_state_spaces}
 V_\eend(\bfSigma) := \calV(\bfSigma) / \rrad \langle \_,\_ \rangle_{\bfSigma}, \qquad
 V'_\eend(\bfSigma) := \calV'(\bfSigma) / \lrad \langle \_,\_ \rangle_{\bfSigma}.
\end{equation}
If $\bfM : \bfSigma \to \bfSigma'$ is a morphism of $\ACob_\calC$, its associated operators are simply defined as
\begin{align}
 V_\eend(\bfM) : V_\eend(\bfSigma) & \to V_\eend(\bfSigma')
 &
 V'_\eend(\bfM) : V'_\eend(\bfSigma') & \to V'_\eend(\bfSigma) \label{E:renormalized_KL_time-evolution_operators} \\*
 {}[\bfM_{\bfSigma}] & \mapsto [\bfM \circ \bfM_{\bfSigma}], &
 {}[\bfM'_{\bfSigma}] & \mapsto [\bfM'_{\bfSigma} \circ \bfM]. \nonumber
\end{align}

\begin{proposition}\label{P:renormalized_KL_TQFT}
 If $\calC$ is a modular category with adjoint end $\eend$, then Equations~\eqref{E:renormalized_KL_state_spaces} and \eqref{E:renormalized_KL_time-evolution_operators} define a pair of symmetric monoidal functors
 \[
 V_\eend : \ACob_\calC \to \FVect_\Bbbk, \quad
 V'_\eend : (\ACob_\calC)^{\op} \to \FVect_\Bbbk.
\]
\end{proposition}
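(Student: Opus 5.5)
The plan is to follow the universal construction of \cite{BHMV95}, as in \cite[Theorem~4.12]{DGGPR19}, and check two things. Functoriality is formal. Symmetric monoidality and finite-dimensionality are the real content.

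First I would check that the operators in Equation~\eqref{E:renormalized_KL_time-evolution_operators} are well defined. Suppose $\sum_i a_i \bfM_{\bfSigma,i}$ lies in the right radical of $\langle \_,\_ \rangle_{\bfSigma}$. For every $\bfM'_{\bfSigma'} : \bfSigma' \to \varnothing$, the composition $\bfM'_{\bfSigma'} \circ \bfM$ is a morphism $\bfSigma \to \varnothing$ of $\ACob_\calC$, because compositions of admissible morphisms are admissible. Hence
\[
 \sum_i a_i J'_\eend(\bfM'_{\bfSigma'} \circ \bfM \circ \bfM_{\bfSigma,i}) = 0,
\]
so $\bfM \circ (\sum_i a_i \bfM_{\bfSigma,i})$ lies in the right radical for $\bfSigma'$. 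Here I use that $J'_\eend$ is an invariant of admissible closed triples. I also need that signatures and Maslov corrections are consistent under composition, which holds because $J'_\eend$ is defined on equivalence classes of closed morphisms of $\ACob_\calC$. Functoriality and preservation of identities then follow from associativity of composition in $\ACob_\calC$. The same argument applies to $V'_\eend$.

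Next I would prove finite-dimensionality together with the monoidal structure. The natural map $V_\eend(\bfSigma) \otimes V_\eend(\bfSigma') \to V_\eend(\bfSigma \disjun \bfSigma')$ sends $[\bfM] \otimes [\bfM']$ to $[\bfM \disjun \bfM']$. It is well defined and compatible with the pairing because $J'_\eend$ is multiplicative on disjoint unions, by definition. Since $\varnothing$ is the unit, $V_\eend(\varnothing) = \Bbbk$: the pairing on $\calV(\varnothing)$ factors through evaluation by $J'_\eend$, and $J'_\eend$ is nonzero, for instance on $S^3$ with a projective unknot, by normalization of $\rmt$.

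The key input is a surgery argument. Every $\bfM_{\bfSigma}$ is skein equivalent, in the sense of the pairing, to a linear combination of a fixed handlebody-type cobordism $\bfH_{\bfSigma}$ (one per component, with an admissible projective blue graph attached) carrying $\calC$-labeled ribbon graphs. To see this, present $\bfM_{\bfSigma}$ by surgery on a red link inside $\bfH_{\bfSigma}$. Then use the defining property of $F_\eend$ to express red components as colorings by the end $\eend$, that is, as combinations of blue graphs labeled by the projective generator $G$. The result is a surjection
\[
 \bigotimes \calC\bigl(\cdots, G^{\otimes g} \otimes \cdots\bigr) \twoheadrightarrow V_\eend(\bfSigma)
\]
from a finite-dimensional Hom-space of $\calC$, so $V_\eend(\bfSigma)$ is finite-dimensional. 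The dual statement gives the same for $V'_\eend$.

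The main obstacle is injectivity of the monoidal map, which then makes it an isomorphism. I would handle it via non-degeneracy. By construction, the pairings $V'_\eend(\bfSigma) \times V_\eend(\bfSigma) \to \Bbbk$ are non-degenerate, and both spaces are finite-dimensional. It therefore suffices to show that every vector in $V'_\eend(\bfSigma \disjun \bfSigma')$ is spanned by disjoint unions $\bfM'_{\bfSigma} \disjun \bfM'_{\bfSigma'}$. Here I would use the non-degenerate trace $\rmt$ on $\Proj(\calC)$. The idea is to cut a connected cobordism $\bfSigma \disjun \bfSigma' \to \varnothing$ along spheres meeting a projective blue strand, and apply the partial-trace property together with a dual-basis identity for the pairing \eqref{E:non-deg_m-trace_pairing}. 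This gives a sum of disconnected pieces, each still admissible thanks to the projective strand. Connecting tubes carrying red curves are first turned into blue $\eend$-colorings as above, after which the projective cutting lemma applies. Compatibility with the braiding holds because $J'_\eend$ is invariant under the diffeomorphism swapping components. Associativity and unitality are immediate.
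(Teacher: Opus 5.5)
Your architecture (universal construction, generation of state spaces by skeins in a fixed cobordism, cutting along spheres crossed by projective strands) is that of \cite[Theorem~4.12]{DGGPR19}, which is all the paper invokes. However, two steps fail as written.

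First, you have put the difficulty on the wrong side. Injectivity of $V_\eend(\bfSigma)\otimes V_\eend(\bfSigma')\to V_\eend(\bfSigma\disjun\bfSigma')$ is formal in any universal construction. Pair a kernel element with $\bfM'_{\bfSigma}\disjun\bfM'_{\bfSigma'}$, which is admissible, and use multiplicativity of $J'_\eend$ together with non-degeneracy on each factor. The real content is surjectivity. You address it only in the surgery paragraph, and there the claim is false. A component of $\bfM_{\bfSigma}$ meeting several components of $\Sigma$ cannot be obtained by surgery on a link in a disjoint union of handlebodies, one per component of $\Sigma$, because surgery does not change $\pi_0$. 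Nor can you cut such a component on the $V_\eend$ side, since $\bfM_{\bfSigma}$ need not contain any projective edge. Moreover, replacing red components by $G$-labeled blue graphs is not a consequence of the definition of $F_\eend$. For non-semisimple $\calC$, $\lambda$ has no extension $G\otimes G^*\to\one$ along $i_G$, since $\lambda\circ\Lambda\neq 0$ while categorical traces vanish on $\Proj(\calC)$. The replacement is the move \eqref{E:RB}, which uses $\lambda_G : G\otimes G^*\to P_{\one}$ and an $\eta_{\one}$ that must be absorbed into a projective edge of the same component. So it is only available for $V'_\eend$. The workable order is the reverse of yours:
\begin{enumerate}
 \item prove finite-dimensionality and generation on the admissible side, or use Proposition~\ref{P:algebraic_and_skein_model};
 \item prove that $V'_\eend(\bfSigma)\otimes V'_\eend(\bfSigma')\to V'_\eend(\bfSigma\disjun\bfSigma')$ is surjective;
 \item conclude for both functors from the automatic injectivity and from $\dim V_\eend=\dim V'_\eend$.
\end{enumerate}

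Second, the cutting lemma does not follow from the identities you cite. To show that a connected $\bfM'$ equals a combination of disjoint unions in $V'_\eend(\bfSigma\disjun\bfSigma')$, you must pair it with every element of $\calV(\bfSigma\disjun\bfSigma')$, including connected cobordisms. After such a gluing your sphere is non-separating in the closed manifold. Cyclicity and the dual-basis expansion for \eqref{E:non-deg_m-trace_pairing} settle only the separating case. In the non-separating case, after \eqref{E:GR}, you need the following: a $0$-framed red meridian around a strand labeled by a projective $P$ must be skein equivalent to $\sum_i \beta_i\circ\alpha_i$, where $\{\alpha_i\}\subset\calC(P,\one)$ and $\{\beta_i\}\subset\calC(\one,P)$ are dual for $\rmt_P(\_\circ\_)$. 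This holds for two reasons. The meridian acts on $P_X$ as $\delta_{X,\one}\,\eta_{\one}\circ\epsilon_{\one}$, by \eqref{E:eta_one}. And $\rmt$ is normalized by $\rmt_{P_{\one}}(\eta_{\one}\circ\epsilon_{\one})=1$. This is the connected sum move \eqref{E:CS}, that is, \cite[Proposition~4.10]{DGGPR19}, which underlies Lemma~\ref{L:connected_1}. Trace identities alone cannot give it. Under $\rmt\mapsto c\,\rmt$, the left-hand side, which is $J'_\eend$ of a connected manifold, is multiplied by $c$. The right-hand side is unchanged, because the cut manifold is still connected and the dual basis scales by $c^{-1}$. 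You use the normalization of $\rmt$ only to get $J'_\eend\neq 0$, but a correct proof must use its normalization against $\lambda$ precisely at this step.
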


For a proof of Proposition~\ref{P:renormalized_KL_TQFT}, see \cite[Theorem~4.12]{DGGPR19}.

\begin{remark}\label{R:end_vs_coend_5}
 Notice that, with respect to the conventions of \cite[Section~4.3]{DGGPR19}, the TQFTs $V_\eend$ and $V'_\eend$ have been interchanged here, as a consequence of Remark~\ref{R:end_vs_coend_4}.
\end{remark}

These functors are dually paired, in the sense that, for every $\bfSigma \in \ACob_\calC$, the bilinear form
\[
 \langle \_,\_ \rangle_{\bfSigma} : V'_\eend(\bfSigma) \times V_\eend(\bfSigma) \to \Bbbk
\]
is non-degenerate, and for all $\bfM : \bfSigma \to \bfSigma'$, $[\bfM_{\bfSigma}] \in V_\eend(\bfSigma)$, and $[\bfM'_{\bfSigma'}] \in V'_\eend(\bfSigma')$, it satisfies
\begin{equation}\label{E:adjunction_V_V'}
 \langle \bfM'_{\bfSigma'}, V_\eend(\bfM)(\bfM_{\bfSigma})
 \rangle_{\bfSigma'} =
 \langle V'_\eend(\bfM)(\bfM'_{\bfSigma'}), \bfM_{\bfSigma}
 \rangle_{\bfSigma}.
\end{equation}
As a consequence, the linear map
\begin{align}
 t_{\bfSigma,\bfSigma'} : V_\eend(\bfSigma') \otimes V'_\eend(\bfSigma) &\to \Hom_\Bbbk(V_\eend(\bfSigma),V_\eend(\bfSigma')) \label{E:linear_maps_tensor_product} \\*
 [\bfM_{\bfSigma'}] \otimes [\bfM'_{\bfSigma}] &\mapsto V_\eend(\bfM_{\bfSigma'} \ast \bfM'_{\bfSigma})
 = \langle  [\bfM'_{\bfSigma}],\_ \rangle_{\bfSigma} [\bfM_{\bfSigma'}] \nonumber
\end{align}
is an isomorphism.

\subsection{Morphism spaces and admissible skein modules}

We finish this section by recalling a few important properties of the renormalized \KL{} TQFT, which will be later used for the main construction of this paper.

First of all, the TQFT is invariant under (admissible) color changes: indeed, closed green components can be turned into red ones, and closed red components can be turned into $\calC$-labeled blue graphs in the presence of projective blue labels. More precisely, the image under $V_\eend$ of a morphism of $\ACob_\calC$ does not change if we perform a move of the following type:
\begin{gather*}
 \pic{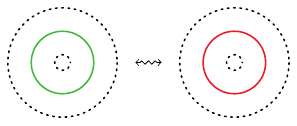} \tag{GR}\label{E:GR} \\*
 \pic{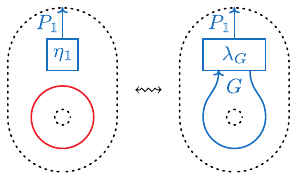} \tag{RB}\label{E:RB}
\end{gather*}
Here, $\eta_{\one} : \one \to P_{\one}$ is the monomorphism determined by Equation~\eqref{E:eta_one}, while $\lambda_G : G \otimes G^* \to P_{\one}$ is any morphism satisfying
\begin{equation}
 \lambda_G \circ i_G = \eta_{\one} \circ \lambda,
\end{equation}
whose existence is ensured by the fact that $P_{\one}$ is injective, together with the fact that $i_G : \eend \to G \otimes G^*$ is a monomorphism, because $G$ is a projective generator of $\calC$. These operations can be performed inside any solid torus $S^1 \times D^2$ arbitrarily embedded into a \dmnsnl{3} cobordism $M$ (represented here as the annuli contained between the dashed circles on both sides of the operations). We refer to \eqref{E:GR} as the \textit{green-to-red move}, and to \eqref{E:RB} as the \textit{red-to-blue move}. The proof of the invariance of $V_\eend$ under \eqref{E:GR} and \eqref{E:RB} follows from \cite[Propositions~4.5 \& 4.10]{DGGPR19}, together with Remarks~\ref{R:end_vs_coend_1} and \ref{R:normalization}.

Furthermore, the TQFT is also invariant under (admissible) connected sums of diagrams representing different connected components of a decorated cobordism. More precisely, the image under $V_\eend$ of a morphism of $\ACob_\calC$ does not change if we perform a move of the following type:
\begin{gather*}
 \pic{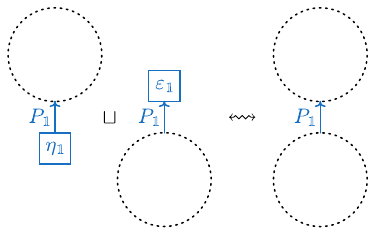} \tag{CS}\label{E:CS}
\end{gather*}
These operations replace an embedded copy of $S^0 \times D^3$ in a \dmnsnl{3} cobordism $M \sqcup M'$ with a copy of $D^1 \times S^2$ (represented here as the exteriors of the dashed circles on both sides of the operation). We refer to \eqref{E:CS} as the \textit{connected sum move}. The proof of the invariance of $V_\eend$ under \eqref{E:CS} follows from \cite[Proposition~4.10]{DGGPR19}, together with Remarks~\ref{R:end_vs_coend_1} and \ref{R:normalization}.

Next, let $\bfSigma = (\Sigma,P,\Lagr)$ and $\bfSigma' = (\Sigma',P',\Lagr')$ be objects of $\bfACob_\calC(\bfGamma,\bfGamma')$, and let $M$ be a connected \dmnsnl{3} cobordism with corners from $\Sigma$ to $\Sigma'$. We denote by $\calS(M;P,P')$ the free vector space generated by the set of all isotopy classes of $\calC$-labeled bichrome graphs $T \subset M$ from $P$ to $P'$, and we denote by $\calS'(M;P,P')$ the subspace of $\calS(M;P,P')$ generated by isotopy classes of admissible $\calC$-labeled bichrome graphs $T \subset M$ from $P$ to $P'$. A \textit{skein equivalence in $\calS(M;P,P')$} is a finite sequence of skein equivalences in embedded copies of $D^2 \times I$, and an \textit{admissible skein equivalence in $\calS'(M;P,P')$} is a finite sequence of skein equivalences in embedded copies of $D^2 \times I$ whose horizontal boundaries always intersect an edge labeled by an object of $\Proj(\calC)$. The \textit{skein module} $S_\eend(M;P,P')$ is the quotient of $\calS(M;P,P')$ with respect to the skein equivalence relation, and the \textit{admissible skein module} $S'_\eend(M;P,P')$ is the quotient of $\calS'(M;P,P')$ with respect to the admissible skein equivalence relation. We set $S^\partial_\eend(M;P,P') := S'_\eend(M;P,P')$ if $\bfSigma' = \varnothing$, and $S^\partial_\eend(M;P,P') := S_\eend(M;P,P')$ otherwise. Notice that, if either $P$ or $P'$ is admissible, then automatically $S_\eend(M;P,P') = S'_\eend(M;P,P')$.

If $\bfSigma_i = (\Sigma_i,P_i,\Lagr_i)$ and $\bfSigma'_i = (\Sigma'_i,P'_i,\Lagr'_i)$ are objects of $\bfACob_\calC(\bfGamma_i,\bfGamma'_i)$ for every integer $1 \leqs i \leqs k$, and if $M_i$ is a connected \dmnsnl{3} cobordism with corners from $\Sigma_i$ to $\Sigma'_i $ for every integer $1 \leqs i \leqs k$, then we set
\[
 S^\partial_\eend \left( \bigsqcup_{i=1}^k M_i; \bigsqcup_{i=1}^k P_i, \bigsqcup_{i=1}^k P'_i \right) := \bigotimes_{i=1}^k S^\partial_\eend (M_i;P_i,P'_i)
\]

Now, let $\Sigma$ and $\Sigma'$ be \dmnsnl{2} cobordisms from $\Gamma$ to $\Gamma'$. A \dmnsnl{3} cobordism with corners $M$ from $\Sigma$ to $\Sigma'$ is
\textit{boundary bijective}
if the embedding
\[
 \iota : \Sigma' \cup_{\overline{\Gamma} \sqcup \Gamma'} \overline{\Sigma} \hookrightarrow M
\]
induced by the boundary parametrization determines
a bijection
\[
 \iota_* : \pi_0(\Sigma' \cup_{\overline{\Gamma} \sqcup \Gamma'} \overline{\Sigma}) \to \pi_0(M).
\]
Then, the proof of the next statement follows from \cite[Proposition~4.11 \& Theorem~4.12]{DGGPR19}.

\begin{lemma}\label{L:connected_1}
 Let $\bfSigma = (\Sigma,P,\Lagr)$ be an object of $\ACob_\calC$. If $M$ is a boundary bijective \dmnsnl{3} cobordism from $\varnothing$ to $\Sigma$, and $M'$ is a boundary bijective \dmnsnl{3} cobordism from $\Sigma$ to $\varnothing$, then the linear maps
 \begin{align*}
  \pi_M : S_\eend(M;\varnothing,P) &\to V_\eend(\bfSigma) &
  \pi_{M'} : S'_\eend(M';P,\varnothing) &\to V'_\eend(\bfSigma) \\*
  [T] &\mapsto [M,T,0] &
  [T'] &\mapsto [M',T',0]
 \end{align*}
 are surjective.
\end{lemma}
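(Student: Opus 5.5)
To prove Lemma~\ref{L:connected_1}, I will show that every generator $[\bfM_{\bfSigma}]$ of $V_\eend(\bfSigma)$, with $\bfM_{\bfSigma} = (N,T_N,n) : \varnothing \to \bfSigma$ an admissible decorated cobordism, is equal in $V_\eend(\bfSigma)$ to a class of the form $[M,T,0]$ with $T \subset M$ a bichrome graph. The argument for $\pi_{M'}$ is the same, with the roles of incoming and outgoing boundary exchanged. The exchange is also why admissible skein modules appear on that side: for $\bfM' : \bfSigma \to \varnothing$, every component is disjoint from the outgoing boundary and must carry a projective label, so only admissible graphs can occur.

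\textbf{Reduction to $M$ by surgery.} I will choose a presentation of $N$ obtained from $M$ by surgery. Since $M$ and $N$ have the same boundary (after capping, each component of $N$ touching $\Sigma$ corresponds bijectively to a component of $\Sigma$, by boundary bijectivity of $M$), I split into two kinds of components.
\begin{itemize}
 \item \emph{Components of $N$ meeting $\Sigma$.} After a connected sum among components meeting the same component of $\Sigma$, $N$ differs from $M$ by integral surgery on a framed link in the interior of $M$. This uses the standard fact that two $3$-manifolds with the same boundary are related by surgery rel boundary, plus the \eqref{E:CS} move to merge components. Such a link is drawn as red components, so $[N,T_N,n] = [M, L \cup T_N, n']$.
 \item \emph{Components of $N$ closed or disjoint from $\Sigma$.} These carry projective labels by admissibility, so they evaluate to scalars via $J'_\eend$. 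In the $\varnothing \to \bfSigma$ case, a closed component contributes a factor $J'_\eend$; I will remove it by multiplying by that scalar. Alternatively I can connect-sum it into a component meeting $\Sigma$ using \eqref{E:CS}, which is legitimate since that component then contains projective labels.
\end{itemize}

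\textbf{Correcting the signature.} The integer $n'$ is changed to $0$ by adding a disjoint $\pm1$-framed red unknot. Up to the invertible scalar obtained from the normalization \eqref{E:int_normalization}, this shifts the signature by $\pm1$ while leaving the manifold unchanged. Alternatively, the Maslov correction in the composition can absorb the change. Linear combinations then give surjectivity.

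\textbf{Main obstacle: admissibility.} The hardest point is keeping the moves valid in $V_\eend$ when the red link sits in a component of $M$ with no projective label. The quotient $V_\eend$ is defined by pairing with arbitrary admissible $\bfM'$, and after gluing, the composite closed manifold is admissible. Hence the relevant invariance statements (\eqref{E:GR}, \eqref{E:RB}, \eqref{E:CS}) can be applied in the closed composite. This is exactly the content of \cite[Proposition~4.11]{DGGPR19}, which I would invoke for the detailed verification.
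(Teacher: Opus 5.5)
There is a genuine gap in your reduction to $M$. Boundary bijectivity is a hypothesis on $M$ alone: it says that each component of $M$ has nonempty connected boundary. A generator $(N,T_N,n)$ of $V_\eend(\bfSigma)$ can have a component whose boundary contains several components of $\Sigma$. For example, take $\Sigma = S^2 \sqcup S^2$, $M = B^3 \sqcup B^3$, and $N = S^2 \times I$. So your claimed bijection between components of $N$ meeting $\Sigma$ and components of $\Sigma$ is false, and this is precisely the hard case.

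Surgery in the interior of $M$ never changes $\pi_0$, and a connected sum merges components, so \eqref{E:CS} as you use it goes the wrong way. However you set it up, you must \emph{split} such a component of $N$: present it as surgery on $M_{i_1} \# \cdots \# M_{i_r}$ and then undo the connected sums. But the connecting spheres are then crossed by the surgery link (or by nothing at all, as for $S^2 \times I$). The move \eqref{E:CS} can only be undone when a sphere is crossed by its specific $P_{\one}$-configuration. Reaching that configuration needs a projective edge in that component of $N$, either to apply \eqref{E:RB} or to push a projective strand across the sphere, and a generator of $V_\eend(\bfSigma)$ need not contain one.

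Performing the moves ``in the closed composite'' does not fix this. An identity in $V_\eend(\bfSigma)$ must come from a modification inside $N$ that is valid for every admissible closing $\bfM'$. A move whose hypothesis is only met inside $\bfM'$ does not qualify.

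The paper sidesteps this by citing \cite[Theorem~4.12]{DGGPR19} together with Proposition~4.11. Monoidality gives $V_\eend(\bfSigma_1 \disjun \bfSigma_2) \cong V_\eend(\bfSigma_1) \otimes V_\eend(\bfSigma_2)$, and similarly for $V'_\eend$. Under this isomorphism $\pi_{M_1 \sqcup M_2} = \pi_{M_1} \otimes \pi_{M_2}$, which reduces everything to connected $\Sigma$ and connected $M$.

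In the connected case your argument is fine, and simpler than you make it. After splitting off closed components as scalars, $N$ is surgery on a red link in $M$. Red components are already allowed in $S_\eend(M;\varnothing,P)$, and the signature is fixed by $\pm 1$-framed red unknots, so neither \eqref{E:RB} nor \eqref{E:CS} is needed.

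If you want to prove the splitting yourself, one route is in two steps. First, the map $V_\eend(\bfSigma_1) \otimes V_\eend(\bfSigma_2) \to V_\eend(\bfSigma_1 \disjun \bfSigma_2)$ is injective by non-degeneracy of the pairing. Second, surjectivity follows from a dimension count on the admissible side $V'_\eend$, where a projective edge is always available to cross the separating sphere.
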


Notice that changing the signature of a morphism of $\ACob_\calC$ multiplies its image under the TQFTs $V_\eend$ and $V'_\eend$ by an invertible scalar. This is a direct consequence of Equation~\ref{E:renormalized_Lyubashenko} and \cite[Proposition~2.6]{DGGPR19}.

Finally, for all objects $P \in \calB_\calC$ and integers $g \geqs 0$, let us fix the \mrphsm{1} $\bfD^2(P) : \varnothing \to \bfS^1$ of $\bfACob_\calC$ given by
\begin{equation}\label{E:disc_object}
 \bfD^2(P) := (D^2,P,\{0\}),
\end{equation}
and the \mrphsm{1} $\bfSigma_g : \varnothing \to \bfS^1$ of $\bfNCob = \bfACob_\calC \cap \bfCob$ defined by Equation~\eqref{E:surgery_functor_objects}.
Let us also denote by $(\bfD^2(P))^\dagger : \bfS^1 \to \varnothing$ the adjoint \mrphsm{1} to $\bfD^2(P) : \varnothing \to \bfS^1$ defined by Equation~\eqref{E:adjoint_1-morphism}. Then, we consider the linear map
\begin{align}
 \Psi_{P,g} : \calC(F_\calC(P),\eend^{\otimes g}) &\to V_\eend((\bfD^2(P))^\dagger \circ \bfSigma_g) \label{E:algebraic_model} \\*
 f &\mapsto ((\bfD^2(P))^\dagger \triangleright [M(\eta^{{} \bcs g}),T_f,0]) \ast \radun_{\bfD^2(P)} \nonumber
\end{align}
determined by Equation~\eqref{E:right_adjunction_unit}, where $M(\eta^{{} \bcs g})$ denotes the \dmnsnl{3} cobordism with corners from $\Sigma_0$ to $\Sigma_g$ associated to the $g$th tensor power of the unit $\eta$ of $\KTan$ by Equation~\eqref{E:surgery_functor_morphisms}, and where $T_f \subset M(\eta^{{} \bcs g})$ is the $\calC$-labeled bichrome graph from $P$ to $\varnothing$ defined by
\begin{align}\label{E:skein_model_picture}
 T_f := \pic{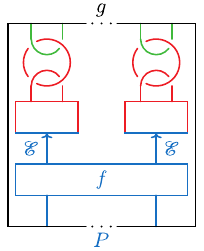}
\end{align}
We also consider the linear map
\begin{align}
 \Psi'_{P,g} : S'_\eend(M(\epsilon^{{} \bcs g});\varnothing,P) &\to V'_\eend((\bfD^2(P))^\dagger \circ \bfSigma_g) \label{E:skein_model} \\*
 T &\mapsto \ladcoun_{\bfD^2(P)} \ast ((\bfD^2(P))^\dagger \triangleright [M(\epsilon^{{} \bcs g}),T,0]) \nonumber
\end{align}
determined by Equation~\eqref{E:left_adjunction_counit}, where $M(\varepsilon^{{} \bcs g})$ denotes the \dmnsnl{3} cobordism with corners from $\Sigma_g$ to $\Sigma_0$ associated to the $g$th tensor power of the counit $\epsilon$ of $\KTan$ by Equation~\eqref{E:surgery_functor_morphisms}.

\begin{proposition}\label{P:algebraic_and_skein_model}
 If $\bfD^2(P), \bfSigma_g : \varnothing \to \bfS^1$ denote the \mrphsms{1} of $\bfACob_\calC$ defined by Equations~\eqref{E:disc_object} and \eqref{E:surgery_functor_objects}, respectively, then the linear map
 \[
  \Psi_{P,g} : \calC(F_\calC(P),\eend^{\otimes g}) \to V_\eend((\bfD^2(P))^\dagger \circ \bfSigma_g)
 \]
 defined by Equation~\eqref{E:algebraic_model} is an isomorphism that is natural in $P \in \calB_\calC$, and similarly the linear map
 \[
  \Psi'_{P,g} : S'_\eend(M(\epsilon^{{} \bcs g});\varnothing,P) \to V'_\eend((\bfD^2(P))^\dagger \circ \bfSigma_g)
 \]
 defined by Equation~\eqref{E:skein_model} is an isomorphism that is natural in $P \in \calB_\calC$.
\end{proposition}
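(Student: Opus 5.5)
The composite $(\bfD^2(P))^\dagger \circ \bfSigma_g$ is a closed decorated surface $\bfSigma$: a genus $g$ surface carrying the $\calC$-labeled blue set $P$, with negative orientation inherited from the dual disc. The plan is to reduce both maps to the description of state spaces given by Lemma~\ref{L:connected_1}, combined with the non-degenerate pairing $\langle \_,\_ \rangle_{\bfSigma}$.

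For $\Psi_{P,g}$, we first compute the composite cobordism. Gluing $M(\eta^{{} \bcs g})$, decorated by $T_f$, to the unit of the adjunction $\radun_{\bfD^2(P)}$ yields a connected handlebody $H_g$ from $\varnothing$ to $\Sigma$. The graph inside it consists of a coupon labeled $f$, whose $g$ outgoing $\eend$-legs run through the handles as bichrome (red) strands, and whose incoming legs are attached to $P$. Since $H_g$ is boundary bijective, Lemma~\ref{L:connected_1} makes $\pi_{H_g} : S_\eend(H_g;\varnothing,P) \to V_\eend(\bfSigma)$ surjective.

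Surjectivity of $\Psi_{P,g}$ then reduces to a skein-theoretic statement: every bichrome graph in $H_g$ ending on $P$ is skein equivalent to some $T_f$. To prove it, isotope the graph into a neighborhood of a regular diagram in $D^2 \times I$ together with $g$ strands crossing the handles. Using the fusion identity already used for top tangles (a single strand through each handle), the crossing strands become labeled by $S_j \otimes S_j^*$ and then factor through $i_{S_j}$. The universal property of $\eend$ packages everything into one coupon $f \in \calC(F_\calC(P),\eend^{\otimes g})$, and red components are removed via \eqref{E:KLRT_functor}.

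For injectivity, pair against $V'_\eend(\bfSigma)$ using $\Psi'_{P,g}$. For $f$ and a graph $T' \in S'_\eend(M(\epsilon^{{} \bcs g});\varnothing,P)$, the pairing $\langle \Psi'_{P,g}(T'),\Psi_{P,g}(f)\rangle_{\bfSigma}$ is the renormalized invariant of a closed manifold. That manifold is the genus $g$ Heegaard splitting $H_g \cup \overline{H_g}$ glued along the standard identification, namely $S^3$ (or $\#^g S^1\times S^2$ depending on $\epsilon\circ\eta$ in $\KTan$); in either case it has an explicit surgery presentation. Using \eqref{E:renormalized_Lyubashenko}, \eqref{E:GR}, \eqref{E:RB} and the dinaturality defining $J_\eend$, the pairing becomes
\[
 \rmt_{F_\calC(P)}\bigl(g_{T'} \circ f\bigr),
\]
where $g_{T'} \in \calC(\eend^{\otimes g},F_\calC(P))$ is obtained from $T'$ by the KLRT functor.

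This step is the main obstacle. Non-degeneracy of $\rmt$ \eqref{E:non-deg_m-trace_pairing} requires a projective object, and $P$ need not be projective. If $P$ is not admissible, we would instead use that $\eend^{\otimes g}$ is projective when $g\geqs 1$, since $\eend$ is a subobject of $G\otimes G^*$. The pairing should then be rewritten as $\rmt_{\eend^{\otimes g}}(f \circ g_{T'})$ using cyclicity. We would also check that every $h\in\calC(\eend^{\otimes g},F_\calC(P))$ arises as some $g_{T'}$, via admissible skein modules. The case $g=0$ forces $P$ admissible by admissibility of the closed result.

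Granting this, $\Psi_{P,g}(f)=0$ implies $\rmt(h\circ f)=0$ for all $h$, hence $f=0$. Surjectivity of $\Psi'_{P,g}$ follows from Lemma~\ref{L:connected_1} as above. Its injectivity follows because $g_{T'}$ determines $\Psi'_{P,g}(T')$ and non-degeneracy works symmetrically, showing that $T'\mapsto g_{T'}$ identifies $S'_\eend(M(\epsilon^{{} \bcs g});\varnothing,P)$ with $\calC(\eend^{\otimes g},F_\calC(P))$.

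Naturality in $P\in\calB_\calC$ is immediate. A morphism $T:P\to P'$ in $\calB_\calC$ acts on $V_\eend$ by stacking $T$ into a collar of the disc. By functoriality of $F_\eend$ (Proposition~\ref{P:KLRT_functor}), this corresponds to precomposition of $f$ with $F_\eend(T)$, and the same holds for $\Psi'_{P,g}$.
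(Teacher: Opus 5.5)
Your injectivity argument, for both $\Psi_{P,g}$ and $\Psi'_{P,g}$, has a genuine gap: it only works when $F_\calC(P)$ is projective. The proposition is stated for every $P \in \calB_\calC$, and it is later used in that generality, e.g.\ in Proposition~\ref{P:circle_category} to recover $\calC(X,Y)$ for arbitrary $X,Y$.

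Your fallback is not available. $\eend^{\otimes g}$ is never projective unless $\calC$ is semisimple, because $\varepsilon \circ \eta = \id_{\one}$ exhibits $\one$ as a retract of $\eend$, and hence of $\eend^{\otimes g}$. Since $\Proj(\calC)$ is closed under retracts, projectivity of $\eend$ would make $\one$, and then every $X \cong X \otimes \one$, projective. Being a subobject of $G \otimes G^*$ does not help, so $\rmt_{\eend^{\otimes g}}$ is not even defined. Likewise, $g = 0$ does not force $P$ to be admissible. $(\bfD^2(P))^\dagger \circ \bfSigma_0$ is an object of $\ACob_\calC$ for every $P$, and admissibility of the glued closed manifold is supplied by $T'$, not by $P$.

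More seriously, the non-renormalized evaluation $g_{T'}$ does not detect $\Psi'_{P,g}(T')$. So two of your claims are false: that $g_{T'}$ determines $\Psi'_{P,g}(T')$, and that $T' \mapsto g_{T'}$ identifies $S'_\eend(M(\epsilon^{{} \bcs g});\varnothing,P)$ with $\calC(\eend^{\otimes g},F_\calC(P))$. For a counterexample with $\calC$ non-semisimple, take $g = 0$ and $P = \varnothing$. Let $T' \subset D^2 \times I$ be the closure of a $P_{\one}$-labeled strand carrying a coupon $\eta_{\one} \circ \epsilon_{\one}$. Then $g_{T'}$ is the categorical trace $\epsilon_{\one} \circ \eta_{\one} = 0$. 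However, $\langle \Psi'_{\varnothing,0}(T'), \Psi_{\varnothing,0}(\id_{\one}) \rangle = \rmt_{P_{\one}}(\eta_{\one} \circ \epsilon_{\one}) = 1$.

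What is missing is a mechanism that carries projectivity on an auxiliary strand.
\begin{itemize}
 \item For $\Psi_{P,g}$: test $f$ against admissible skeins obtained by closing up a projective strand $Q$ on a coupon $h \in \calC(\eend^{\otimes g} \otimes Q, F_\calC(P) \otimes Q)$ inside the complementary handlebody. You must check that all such $h$ are realized. The pairing then becomes $\rmt_{F_\calC(P) \otimes Q}(h \circ (f \otimes \id_Q))$, a modified trace over a projective object. Non-degeneracy together with faithfulness of $\_ \otimes Q$ then gives $f = 0$.
 \item For $\Psi'_{P,g}$: you need an actual normal form for admissible skeins in the handlebody showing that the left kernel of the pairing is trivial. ``Non-degeneracy works symmetrically'' does not supply this.
\end{itemize}
This is exactly the content the paper imports from \cite[Proposition~4.17]{DGGPR19} and \cite[Lemma~6.8]{H25}.

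Separately, in your surjectivity step, a morphism into $S_j \otimes S_j^*$ does not factor through $i_{S_j} : \eend \to S_j \otimes S_j^*$. You should use the coend maps $j_{S_j}$ and the Drinfeld map $\calD$, as in $S'(g,P)$ in the proof of Proposition~\ref{P:circle_category}. Also, red components threading the handles cannot simply be evaluated in a ball.
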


The first statement is a direct consequence of \cite[Proposition~4.17]{DGGPR19}, while the second one is proved in \cite[Lemma~6.8]{H25} for $P = \varnothing$, although the proof can be adapted to the general case.

\begin{remark}\label{R:opposite_admissibility_convention}
 Proposition~\ref{P:algebraic_and_skein_model} implies that state spaces for $V_\eend$ are naturally isomorphic to morphism spaces of $\calC$, while state spaces for $V'_\eend$ are naturally isomorphic to admissible skein modules associated with the ideal $\Proj(\calC)$. This is precisely the reason why we adopt the opposite convention with respect to \cite{DGGPR19} for what concerns the admissibility condition. Indeed, the \KL{} TQFT is formulated in algebraic terms that are closer to the description of the functor $V_\eend$ than to the skein-theoretic language that describes $V'_\eend$. Both models have their own advantages, and the choice of conventions here is merely designed to facilitate the formulation of the relation between the \KL{} TQFT and its renormalized version.
\end{remark}

\section{Extended TQFTs}\label{S:ETQFTs}

In this section, we construct a \fnctr{2} $\bfA_\eend : \bfACob_\calC \to \bfCat_\Bbbk$, and we prove that its finite completion $\bfhatA_\eend : \bfACob_\calC \to \coCat_\Bbbk$ admits a symmetric monoidal structure in Theorem~\ref{T:symmetric_monoidality}. This result rests on an explicit description of the end of a functor from $\calC \times \calC^\op$ to $\bfA_\eend({\bfS^1} \disjun {(\bfS^1)^*})$ in terms of (the opposite of) the annulus $I \times S^1$. We also compute images of generating objects and \mrphsms{1} of $\bfACob_\calC$ to recover and extend analogous computations of \cite{D21}, thus obtaining results that match the definition of \cite{L25}. We finish by showing that the ETQFT $\bfhatA_\eend : \bfACob_\calC \to \coCat_\Bbbk$ indeed contains both the \KL{} TQFT $J_\eend : \CCob \to \calC$ and its renormalized version $V_\eend : \ACob_\calC \to \FVect_\Bbbk$, in Propositions~\ref{P:KL_TQFT_inside_ETQFT} and \ref{P:renormalized_KL_TQFT_inside_ETQFT}.

\subsection{Extended universal construction}\label{S:extended_universal_construction}

First, let us recall how to extend the TQFT $V_\eend : \ACob_\calC \to \FVect_\Bbbk$ to a pair of \fnctrs{2}
\begin{align*}
 \bfA_\eend &: \bfACob_\calC \to \bfCat_\Bbbk, &
 \bfA'_\eend &: (\bfACob_\calC)^\op \to \bfCat_\Bbbk
\end{align*}
following a procedure called the \textit{extended universal construction}, which is inspired by the universal construction of \cite{BHMV95}, and which was already used in \cite[Section~3.4]{D17} and in \cite[Section~5]{D21}.

Let us start by considering an object $\bfGamma$ of $\bfACob_\calC$. Let $\calA(\bfGamma)$ denote the linear category generated by the category $\bfACob_\calC(\varnothing,\bfGamma)$ of \mrphsms{1} $\bfSigma_{\bfGamma} : \varnothing \to \bfGamma$ and \mrphsms{2} $\bfM_{\bfGamma} : \bfSigma_{\bfGamma} \Rightarrow \bfSigma''_{\bfGamma}$ of $\bfACob_\calC$, and let $\calA'(\bfGamma)$ denote the linear category generated by the category $\bfACob_\calC(\bfGamma,\varnothing)$ of \mrphsms{1} $\bfSigma'_{\bfGamma} : \bfGamma \to \varnothing$ and \mrphsms{2} $\bfM'_{\bfGamma} : \bfSigma'_{\bfGamma} \Rightarrow \bfSigma'''_{\bfGamma}$ of $\bfACob_\calC$. Here, recall that the linear category generated by another category is simply defined as the linear category with the same set of objects, and with vector spaces of morphisms generated by the corresponding sets of morphisms. Next, we consider the bilinear functor
\[
 \langle \_ , \_ \rangle_{\bfGamma} : \calA'(\bfGamma) \times \calA(\bfGamma) \to \FVect_\Bbbk,
\]
sending every object $(\bfSigma'_{\bfGamma}, \bfSigma_{\bfGamma}) \in \calA'(\bfGamma) \times \calA(\bfGamma)$ to the state space
\[
 V_\eend(\bfSigma'_{\bfGamma} \circ \bfSigma_{\bfGamma}) \in \FVect_\Bbbk,
\]
and sending every morphism $(\bfM'_{\bfGamma},\bfM_{\bfGamma}) \in \calA'(\bfGamma)(\bfSigma'_{\bfGamma},\bfSigma'''_{\bfGamma}) \times \calA(\bfGamma)(\bfSigma_{\bfGamma},\bfSigma''_{\bfGamma})$ to the linear operator
\[
 V_\eend(\bfM'_{\bfGamma} \circ \bfM_{\bfGamma}) : V_\eend(\bfSigma'_{\bfGamma} \circ \bfSigma_{\bfGamma}) \to V_\eend(\bfSigma'''_{\bfGamma} \circ \bfSigma''_{\bfGamma}).
\]
The annihilator of $\calA'(\bfGamma)$ with respect to $\langle \_ , \_ \rangle_{\bfGamma}$ is the linear congruence $\calA'(\bfGamma)^{\perp}$ on $\calA(\bfGamma)$ defined by
\[
 \calA'(\bfGamma)^{\perp}(\bfSigma_{\bfGamma},\bfSigma''_{\bfGamma}) := \{ \bfM_{\bfGamma} \in \calA(\bfGamma)(\bfSigma_{\bfGamma},\bfSigma''_{\bfGamma}) \mid \langle \id_{\bfSigma'_{\bfGamma}},\bfM_{\bfGamma} \rangle = 0 \Forall \bfSigma'_{\bfGamma} \in \calA'(\bfGamma) \},
\]
and analogously the annihilator of $\calA(\bfGamma)$ with respect to $\langle \_ , \_ \rangle_{\bfGamma}$ is the linear congruence $\calA(\bfGamma)^{\perp}$ on $\calA'(\bfGamma)$ defined by
\[
 \calA(\bfGamma)^{\perp}(\bfSigma'_{\bfGamma},\bfSigma'''_{\bfGamma}) := \{ \bfM'_{\bfGamma} \in \calA'(\bfGamma)(\bfSigma'_{\bfGamma},\bfSigma'''_{\bfGamma}) \mid \langle \bfM'_{\bfGamma},\id_{\bfSigma_{\bfGamma}} \rangle = 0 \Forall \bfSigma_{\bfGamma} \in \calA(\bfGamma) \}.
\]
Here, the term congruence is used in the sense of \cite[Section~II.8]{M71}. Then, the image of $\bfGamma$ under $\bfA_\eend$ is defined to be the linear category
\[
 \bfA_\eend(\bfGamma) := \calA(\bfGamma)/\calA'(\bfGamma)^{\perp},
\]
while the image of $\bfGamma$ under $\bfA'_\eend$ is defined to be the linear category
\[
 \bfA'_\eend(\bfGamma) := \calA'(\bfGamma)/\calA(\bfGamma)^{\perp}.
\]
Here, recall that the quotient of a linear category with respect to a linear congruence is simply defined as the linear category with the same set of objects, and with vector spaces of morphisms obtained as quotients of the original ones with respect to the linear subspaces determined by the linear congruence.

Next, let us consider a \mrphsm{1} $\bfSigma : \bfGamma \to \bfGamma'$ of $\bfACob_\calC$. The image of $\bfSigma$ under $\bfA_\eend$ is defined to be the linear functor
\[
 \bfA_\eend(\bfSigma) : \bfA_\eend(\bfGamma) \to \bfA_\eend(\bfGamma')
\]
sending every object $\bfSigma_{\bfGamma} \in \bfA_\eend(\bfGamma)$ to the object
\[
 \bfSigma \circ \bfSigma_{\bfGamma} \in \bfA_\eend(\bfGamma'),
\]
and every morphism $\left[ \bfM_{\bfGamma} \right] \in \bfA_\eend(\bfGamma)(\bfSigma_{\bfGamma},\bfSigma''_{\bfGamma})$ to the morphism
\[
 \left[ \bfSigma \triangleright \bfM_{\bfGamma} \right] \in \bfA_\eend(\bfGamma')(\bfSigma \circ \bfSigma_{\bfGamma},\bfSigma \circ \bfSigma''_{\bfGamma}),
\]
while the image of $\bfSigma$ under $\bfA'_\eend$ is defined to be the linear functor
\[
 \bfA'_\eend(\bfSigma) : \bfA'_\eend(\bfGamma') \to \bfA'_\eend(\bfGamma)
\]
sending every object $\bfSigma'_{\bfGamma'} \in \bfA'_\eend(\bfGamma')$ to the object
\[
 \bfSigma'_{\bfGamma'} \circ \bfSigma \in \bfA'_\eend(\bfGamma),
\]
and every morphism $\left[ \bfM'_{\bfGamma'} \right] \in \bfA'_\eend(\bfGamma')(\bfSigma'_{\bfGamma'},\bfSigma'''_{\bfGamma'})$ to the morphism
\[
 \left[ \bfM'_{\bfGamma'} \triangleleft \bfSigma \right] \in \bfA'_\eend(\bfGamma)(\bfSigma'_{\bfGamma'} \circ \bfSigma, \bfSigma'''_{\bfGamma'} \circ \bfSigma).
\]

Finally, let us consider a \mrphsm{2} $\bfM : \bfSigma \Rightarrow \bfSigma'$ of $\bfACob_\calC$ between \mrphsms{1} $\bfSigma, \bfSigma' : \bfGamma \to \bfGamma'$. The image of $\bfM$ under $\bfA_\eend$ is defined to be the natural transformation
\[
 \bfA_\eend(\bfM) : \bfA_\eend(\bfSigma) \Rightarrow \bfA_\eend(\bfSigma')
\]
associating with every object $\bfSigma_{\bfGamma} \in \bfA_\eend(\bfGamma)$ the morphism
\[
 \left[ \bfM \triangleleft \bfSigma_{\bfGamma} \right] \in \bfA_\eend(\bfGamma')(\bfSigma \circ \bfSigma_{\bfGamma},\bfSigma' \circ \bfSigma_{\bfGamma}),
\]
while the image of $\bfM$ under $\bfA'_\eend$ is defined to be the natural transformation
\[
 \bfA'_\eend(\bfM) : \bfA'_\eend(\bfSigma) \Rightarrow \bfA'_\eend(\bfSigma')
\]
associating with every object $\bfSigma'_{\bfGamma'} \in \bfA'_\eend(\bfGamma')$ the morphism
\[
 \left[ \bfSigma'_{\bfGamma'} \triangleright \bfM \right] \in \bfA'_\eend(\bfGamma)(\bfSigma'_{\bfGamma'} \circ \bfSigma,\bfSigma'_{\bfGamma'} \circ \bfSigma').
\]

\subsection{Properties of the universal construction}

Next, let us record some fundamental properties shared by every \fnctr{2} obtained from the extended universal construction, such as the existence of a faithful separate linear functor $\bfmu_{\bfGamma,\bfGamma'} : (\bfA_\eend(\bfGamma),\bfA_\eend(\bfGamma')) \to \bfA_\eend(\bfGamma \disjun \bfGamma')$ for all $\bfGamma, \bfGamma' \in \bfACob_\calC$, which provides a lax monoidal structure for $\bfA_\eend$.

Indeed, let us consider the \trnsfrmtn{2}
\[
 \bfmu : \otimes \circ \left( \bfA_\eend \times \bfA_\eend \right) \Rightarrow \bfA_\eend \circ \disjun
\]
defined as follows: for all objects $\bfGamma$ and $\bfGamma'$ of $\bfACob_\calC$, we specify the separate linear functor
\[
 \bfmu_{\bfGamma,\bfGamma'} : (\bfA_\eend(\bfGamma),\bfA_\eend(\bfGamma')) \to \bfA_\eend(\bfGamma \disjun \bfGamma')
\]
sending every object $(\bfSigma_{\bfGamma},\bfSigma_{\bfGamma'})$ of $\bfA_\eend(\bfGamma) \otimes \bfA_\eend(\bfGamma')$ to the object $\bfSigma_{\bfGamma} \disjun \bfSigma_{\bfGamma'}$ of $\bfA_\eend(\bfGamma \disjun \bfGamma')$, and every morphism
\[
 \left[ \bfM_{\bfGamma} \right] \otimes \left[ \bfM_{\bfGamma'} \right]
\]
of $\bfA_\eend(\bfGamma)(\bfSigma_{\bfGamma},\bfSigma''_{\bfGamma}) \otimes \bfA_\eend(\bfGamma')(\bfSigma_{\bfGamma'},\bfSigma''_{\bfGamma'})$
to the morphism
\[
 \left[ \bfM_{\bfGamma} \disjun \bfM_{\bfGamma'} \right]
\]
of $\bfA_\eend(\bfGamma \disjun \bfGamma')(\bfSigma_{\bfGamma} \disjun \bfSigma_{\bfGamma'},\bfSigma''_{\bfGamma} \disjun \bfSigma''_{\bfGamma'})$. Next, for all \mrphsms{1} $\bfSigma : \bfGamma \to \bfGamma''$ and $\bfSigma' : \bfGamma' \to \bfGamma'''$ of $\bfACob_\calC$, we specify the natural transformation
\[
 \bfmu_{\bfSigma,\bfSigma'} : \bfA_\eend(\bfSigma \disjun \bfSigma') \circ \bfmu_{\bfGamma,\bfGamma'} \Rightarrow \bfmu_{\bfGamma'',\bfGamma'''} \circ \left( \bfA_\eend(\bfSigma) \otimes \bfA_\eend(\bfSigma') \right)
\]
associating with every object $(\bfSigma_{\bfGamma},\bfSigma_{\bfGamma'})$ of $\bfA_\eend(\bfGamma) \otimes \bfA_\eend(\bfGamma')$ the morphism
\[
 [\sqcup_{(\varnothing,\varnothing),(\bfGamma,\bfGamma'),(\bfGamma'',\bfGamma''')}]
\]
of $\bfA_\eend(\bfGamma'' \disjun \bfGamma''')((\bfSigma \disjun \bfSigma') \circ (\bfSigma_{\bfGamma} \disjun \bfSigma_{\bfGamma'}),{(\bfSigma \circ \bfSigma_{\bfGamma})} \disjun {(\bfSigma' \circ \bfSigma_{\bfGamma'})})$, where the coherence \mrphsm{2} $\sqcup_{(\varnothing,\varnothing),(\bfGamma,\bfGamma'),(\bfGamma'',\bfGamma''')}$ is provided by the monoidal structure of the \ctgr{2} $\bfACob_\calC$.

\begin{proposition}\label{P:lax_monoidality_ETQFT}
 For all objects $\bfGamma, \bfGamma' \in \bfACob_\calC$, the separate linear functor $\bfmu_{\bfGamma,\bfGamma'} : (\bfA_\eend(\bfGamma),\bfA_\eend(\bfGamma')) \to \bfA_\eend(\bfGamma \disjun \bfGamma')$ is faithful.
\end{proposition}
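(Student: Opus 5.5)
The plan is a linear-algebra argument. A morphism of $\bfA_\eend(\bfGamma)$ is detected by its action on state spaces of $V_\eend$, and $V_\eend$ is symmetric monoidal by Proposition~\ref{P:renormalized_KL_TQFT}, so it turns disjoint unions into tensor products of linear operators. Fix objects $(\bfSigma_{\bfGamma},\bfSigma_{\bfGamma'})$ and $(\bfSigma''_{\bfGamma},\bfSigma''_{\bfGamma'})$. A morphism of $\bfA_\eend(\bfGamma) \otimes \bfA_\eend(\bfGamma')$ between them is a finite sum $x = \sum_i [\bfM_i] \otimes [\bfN_i]$. Rewriting in bases of the two morphism spaces, I may assume that the classes $[\bfM_i]$ are linearly independent in $\bfA_\eend(\bfGamma)(\bfSigma_{\bfGamma},\bfSigma''_{\bfGamma})$, and likewise the $[\bfN_i]$ in $\bfA_\eend(\bfGamma')(\bfSigma_{\bfGamma'},\bfSigma''_{\bfGamma'})$. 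Then $x = \sum_{i,j} c_{ij} [\bfM_i] \otimes [\bfN_j]$ with $x \neq 0$ exactly when some $c_{ij} \neq 0$. The goal is to show that $\sum_{i,j} c_{ij} [\bfM_i \disjun \bfN_j] \neq 0$ in $\bfA_\eend(\bfGamma \disjun \bfGamma')$ whenever some $c_{ij}\neq 0$.

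First I reformulate the quotient. By definition of the congruence $\calA'(\bfGamma)^\perp$, the linear map
\[
 \bfA_\eend(\bfGamma)(\bfSigma_{\bfGamma},\bfSigma''_{\bfGamma}) \to \prod_{\bfSigma'_{\bfGamma}} \Hom_\Bbbk\bigl(V_\eend(\bfSigma'_{\bfGamma} \circ \bfSigma_{\bfGamma}), V_\eend(\bfSigma'_{\bfGamma} \circ \bfSigma''_{\bfGamma})\bigr),
\]
which sends $[\bfM]$ to the family of operators $V_\eend(\id_{\bfSigma'_{\bfGamma}} \circ \bfM)$, is injective. A finite linearly independent family in a vector space that injects into a product stays independent after projecting to finitely many factors. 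So I can choose finitely many test $1$-morphisms $\bfSigma'_{\bfGamma,1},\ldots,\bfSigma'_{\bfGamma,r}$ such that the images of the $[\bfM_i]$ in the finite direct sum of the corresponding $\Hom$ spaces are linearly independent. In the same way I choose $\bfSigma'_{\bfGamma',1},\ldots,\bfSigma'_{\bfGamma',s}$ for the $[\bfN_j]$.

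Next I test with the $1$-morphisms $\bfSigma'_{\bfGamma,a} \disjun \bfSigma'_{\bfGamma',b} : \bfGamma \disjun \bfGamma' \to \varnothing$, which are legitimate in $\bfACob_\calC$ since there is no admissibility condition on $1$-morphisms. Up to the coherence $2$-morphisms of $\bfACob_\calC$, the composite $(\bfSigma'_{\bfGamma,a} \disjun \bfSigma'_{\bfGamma',b}) \circ (\bfSigma_{\bfGamma} \disjun \bfSigma_{\bfGamma'})$ is the disjoint union of the two closed surfaces. The coherence $2$-morphisms are invertible mapping cylinders, so they only conjugate the operators and do not affect vanishing. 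Monoidality of $V_\eend$ then identifies $V_\eend(\id \circ (\bfM_i \disjun \bfN_j))$ with $V_\eend(\id \circ \bfM_i) \otimes V_\eend(\id \circ \bfN_j)$, compatibly with this decomposition. Hence the element $\sum c_{ij} [\bfM_i \disjun \bfN_j]$, evaluated on these finitely many test pairs, is the image of $\sum c_{ij}\, \phi(\bfM_i) \otimes \psi(\bfN_j)$ under the natural map
\[
 \Bigl(\bigoplus_a \Hom_\Bbbk(U_a,U''_a)\Bigr) \otimes \Bigl(\bigoplus_b \Hom_\Bbbk(W_b,W''_b)\Bigr) \to \bigoplus_{a,b} \Hom_\Bbbk(U_a \otimes W_b, U''_a \otimes W''_b).
\]
Here $\phi$ and $\psi$ denote the families of operators attached to the tests. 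All these spaces are finite-dimensional, so this map is an isomorphism. The $\phi(\bfM_i)$ and the $\psi(\bfN_j)$ are independent families, so the vectors $\phi(\bfM_i)\otimes\psi(\bfN_j)$ are independent, and the image is nonzero. Thus $\langle \id, \sum c_{ij}(\bfM_i \disjun \bfN_j)\rangle \neq 0$ for some test $1$-morphism, which means the class is nonzero and $\bfmu_{\bfGamma,\bfGamma'}$ is faithful.

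The main point requiring care is the middle step. I need to check that the monoidal structure of $V_\eend$ really intertwines the operator of $\id_{\bfSigma'_{\bfGamma}\disjun\bfSigma'_{\bfGamma'}} \circ (\bfM \disjun \bfN)$ with the tensor product of the operators, including signatures, since Maslov corrections vanish for disjoint unions. I also need the coherence $2$-morphisms to act by invertible maps. Once that is in place, the rest is finite-dimensional linear algebra.
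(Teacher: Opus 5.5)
Your argument is correct and is essentially the one the paper imports from \cite[Proposition~5.1]{D21}: classes in $\bfA_\eend$ are detected by $V_\eend$ on test $1$-morphisms, and disjoint-union tests together with symmetric monoidality of $V_\eend$ split the operators into tensor factors, after which finite-dimensional linear algebra concludes. Your reduction to finitely many test $1$-morphisms with independence on both sides is valid. It is more than you need, though: linear independence of one family suffices if you fix a matrix coefficient of the other tensor factor and use that vanishing on all test objects means vanishing in the quotient.
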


The proof of Proposition~\ref{P:lax_monoidality_ETQFT} is identical to the one of \cite[Proposition 5.1]{D21}, up to some minor change of notation (for instance, the enriched tensor product of linear categories is denoted $\sqtimes$ in \cite{D21}, while it is denoted $\otimes$ here, since we reserve the former notation for Kelly's box tensor product of small finitely complete linear categories).

\begin{remark}\label{R:mono-epi}
 Let $\bfGamma \in \bfACob_\calC$ be an object, let $\bfSigma_{\bfGamma}, \bfSigma''_{\bfGamma} \in \bfA_\eend(\bfGamma)$ be objects, and let $[\bfM_{\bfGamma}] \in \bfA_\eend(\bfGamma)(\bfSigma_{\bfGamma},\bfSigma''_{\bfGamma})$ be a morphism. As a direct consequence of the extended universal construction, if, for every object $\bfSigma'_{\bfGamma} \in \bfA'_\eend(\bfGamma)$, the linear map $V_\eend(\bfSigma'_{\bfGamma} \triangleright \bfM_{\bfGamma}) \in \Hom_\Bbbk(V_\eend(\bfSigma'_{\bfGamma} \circ \bfSigma_{\bfGamma}),V_\eend(\bfSigma'_{\bfGamma} \circ \bfSigma''_{\bfGamma}))$ is injective or surjective, then $[\bfM_{\bfGamma}] \in \bfA_\eend(\bfGamma)(\bfSigma_{\bfGamma},\bfSigma''_{\bfGamma})$ is a monomorphism or an epimorphism, respectively.
\end{remark}

\begin{lemma}\label{L:split}
 For all objects $\bfSigma_\varnothing,\bfSigma''_\varnothing \in \bfA_\eend(\varnothing)$, the linear map
 \begin{align*}
  v_{\bfSigma_\varnothing,\bfSigma''_\varnothing} : \bfA_\eend(\varnothing)(\bfSigma_\varnothing,\bfSigma''_\varnothing) &\to V_\eend(\bfSigma''_\varnothing) \otimes V'_\eend(\bfSigma_\varnothing) \\*
  [\bfM_\varnothing] &\mapsto t_{\bfSigma_\varnothing,\bfSigma''_\varnothing}^{-1}(V_\eend(\bfM_\varnothing))
 \end{align*}
 induced by Equation~\eqref{E:linear_maps_tensor_product} is an isomorphism, whose inverse is
 \begin{align*}
  a_{\bfSigma_\varnothing,\bfSigma''_\varnothing} : V_\eend(\bfSigma''_\varnothing) \otimes V'_\eend(\bfSigma_\varnothing) &\to \bfA_\eend(\varnothing)(\bfSigma_\varnothing,\bfSigma''_\varnothing) \\*
  [\bfM_{\bfSigma''_\varnothing}] \otimes [\bfM'_{\bfSigma_\varnothing}] &\mapsto [\bfM_{\bfSigma''_\varnothing} \ast \bfM'_{\bfSigma_\varnothing}].
 \end{align*}
\end{lemma}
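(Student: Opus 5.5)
The plan is to verify directly that $v_{\bfSigma_\varnothing,\bfSigma''_\varnothing}$ is well defined and injective, that $a_{\bfSigma_\varnothing,\bfSigma''_\varnothing}$ is well defined, and that $v \circ a = \id$. Together these give bijectivity with $a = v^{-1}$.

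First I unwind the definitions for $\bfGamma = \varnothing$. Objects of $\bfA_\eend(\varnothing)$ are objects $\bfSigma_\varnothing$ of $\ACob_\calC$. Morphisms are linear combinations of admissible $\bfM_\varnothing : \bfSigma_\varnothing \Rightarrow \bfSigma''_\varnothing$ modulo $\calA'(\varnothing)^\perp$. Since $\bfSigma'_\varnothing \circ \bfSigma_\varnothing = \bfSigma'_\varnothing \disjun \bfSigma_\varnothing$ when $\bfGamma = \varnothing$, a combination $\bfM_\varnothing$ lies in $\calA'(\varnothing)^\perp$ iff $V_\eend(\id_{\bfSigma'_\varnothing} \disjun \bfM_\varnothing) = 0$ for all $\bfSigma'_\varnothing$. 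Taking $\bfSigma'_\varnothing = \varnothing$ shows that this forces $V_\eend(\bfM_\varnothing) = 0$. Conversely, by symmetric monoidality of $V_\eend$ (Proposition~\ref{P:renormalized_KL_TQFT}), $V_\eend(\id \disjun \bfM_\varnothing) \cong \id \otimes V_\eend(\bfM_\varnothing)$, so $V_\eend(\bfM_\varnothing) = 0$ implies the other condition. Hence $\bfA_\eend(\varnothing)(\bfSigma_\varnothing,\bfSigma''_\varnothing)$ is exactly the image of the linear map $[\bfM_\varnothing] \mapsto V_\eend(\bfM_\varnothing)$ inside $\Hom_\Bbbk(V_\eend(\bfSigma_\varnothing),V_\eend(\bfSigma''_\varnothing))$, and this map is injective on the quotient. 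Composing with $t^{-1}$, which is an isomorphism by Equation~\eqref{E:linear_maps_tensor_product}, shows that $v$ is well defined and injective.

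Next I check $a$. A representative $\bfM_{\bfSigma''_\varnothing} : \varnothing \Rightarrow \bfSigma''_\varnothing$ is admissible as a morphism of $\ACob_\calC$. Hence the vertical composite $\bfM_{\bfSigma''_\varnothing} \ast \bfM'_{\bfSigma_\varnothing}$ is admissible as well, because every component disjoint from the outgoing boundary either comes from a component of $\bfM_{\bfSigma''_\varnothing}$ disjoint from $\bfSigma''_\varnothing$, or already carries a projective label. This point deserves a line of care: a component of $\bfM'_{\bfSigma_\varnothing}$ glued to a component of $\bfM_{\bfSigma''_\varnothing}$ is fine, since the latter reaches the outgoing boundary or is admissible. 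By Equation~\eqref{E:linear_maps_tensor_product}, $V_\eend(\bfM_{\bfSigma''_\varnothing} \ast \bfM'_{\bfSigma_\varnothing}) = t([\bfM_{\bfSigma''_\varnothing}] \otimes [\bfM'_{\bfSigma_\varnothing}])$. This depends only on the classes in $V_\eend(\bfSigma''_\varnothing) \otimes V'_\eend(\bfSigma_\varnothing)$, so by the characterization of the quotient above, $a$ is well defined on generators and extends linearly. The same identity gives $v \circ a = t^{-1} \circ t = \id$.

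Finally, $v$ is injective and has the right inverse $a$, so it is bijective with inverse $a$. Surjectivity of $v$ also follows directly from $v \circ a = \id$, without any dimension count. The main obstacle is the first step, identifying the congruence $\calA'(\varnothing)^\perp$ with the kernel of $V_\eend$. This relies on the monoidality isomorphism $V_\eend(\bfSigma' \disjun \bfSigma) \cong V_\eend(\bfSigma') \otimes V_\eend(\bfSigma)$ being compatible with $V_\eend(\id \disjun \bfM)$. That compatibility is part of Proposition~\ref{P:renormalized_KL_TQFT}, and only the trivial direction with $\bfSigma' = \varnothing$ is actually needed for injectivity.
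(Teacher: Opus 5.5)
Your proposal is correct and follows essentially the same route as the paper: the case $\bfSigma'_\varnothing = \varnothing$ gives well-definedness of $v$. Monoidality of $V_\eend$ gives injectivity of $v$ and well-definedness of $a$; the paper unpacks this by hand, writing vectors of $V_\eend(\bfSigma_\varnothing \disjun \bfSigma'_\varnothing)$ as sums of disjoint unions and regrouping. Then $v \circ a = \id$ concludes.

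Your closing remark has the two directions swapped. The trivial direction with $\bfSigma'_\varnothing = \varnothing$ only yields well-definedness of $v$. Injectivity of $v$, and well-definedness of $a$, are exactly where monoidality of $V_\eend$ is needed, as your own first paragraph in fact uses.
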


\begin{proof}
 On one hand, if
 \[
  0 = \sum_{i=1}^m [\bfM_{\varnothing,i}] \in \bfA_\eend(\varnothing)(\bfSigma_\varnothing,\bfSigma''_\varnothing),
 \]
 then
 \[
  \sum_{i=1}^m V_\eend(\bfSigma'_\varnothing \triangleright \bfM_{\varnothing,i}) = 0
 \]
 for every object $\bfSigma'_\varnothing \in \bfA'_\eend(\varnothing)$, so this holds in particular for $\id_\varnothing \in \bfA'_\eend(\varnothing)$. This means that $v_{\bfSigma_\varnothing,\bfSigma''_\varnothing}$ is well-defined.

 On the other hand, if
 \[
  0 = \sum_{i=1}^m V_\eend(\bfM_{\varnothing,i}) \in \Hom_\Bbbk(V_\eend(\bfSigma_\varnothing),V_\eend(\bfSigma''_\varnothing)),
 \]
 then
 \[
  \sum_{i=1}^m J'_\eend(\bfM'_{\bfSigma''_\varnothing} \ast \bfM_{\varnothing,i} \ast \bfM_{\bfSigma_\varnothing}) = 0
 \]
 for all $[\bfM_{\bfSigma_\varnothing}] \in V_\eend(\bfSigma_\varnothing)$ and $[\bfM'_{\bfSigma''_\varnothing}] \in V'_\eend(\bfSigma''_\varnothing)$. Since for all $\bfSigma'_\varnothing \in \bfA'_\eend(\varnothing)$ and $[\bfM_{\bfSigma'_\varnothing \circ \bfSigma_\varnothing}] \in V_\eend(\bfSigma'_\varnothing \circ \bfSigma_\varnothing) = V_\eend(\bfSigma_\varnothing \disjun \bfSigma'_\varnothing) \cong V_\eend(\bfSigma_\varnothing) \otimes V_\eend(\bfSigma'_\varnothing)$ there exist $[\bfM_{\bfSigma_\varnothing,1}],\ldots,[\bfM_{\bfSigma_\varnothing,n}] \in V_\eend(\bfSigma_\varnothing)$ and $[\bfM_{\bfSigma'_\varnothing,1}],\ldots,[\bfM_{\bfSigma'_\varnothing,n}] \in V_\eend(\bfSigma'_\varnothing)$ such that
 \[
  [\bfM_{\bfSigma'_\varnothing \circ \bfSigma_\varnothing}]
  = \sum_{j=1}^n [\bfM_{\bfSigma_\varnothing,j} \disjun \bfM_{\bfSigma'_\varnothing,j}]
  = \sum_{j=1}^n [\bfM_{\bfSigma'_\varnothing,j} \circ \bfM_{\bfSigma_\varnothing,j}],
 \]
 we have
 \begin{align*}
  &\sum_{i=1}^m J'_\eend (\bfM'_{\bfSigma'_\varnothing \circ \bfSigma''_\varnothing} \ast (\bfSigma'_\varnothing \triangleright \bfM_{\varnothing,i}) \ast \bfM_{\bfSigma'_\varnothing \circ \bfSigma_\varnothing}) \\
  &= \sum_{i=1}^m \sum_{j=1}^n J'_\eend (\bfM'_{\bfSigma'_\varnothing \circ \bfSigma''_\varnothing} \ast (\bfSigma'_\varnothing \triangleright \bfM_{\varnothing,i}) \ast (\bfM_{\bfSigma'_\varnothing,j} \circ \bfM_{\bfSigma_\varnothing,j})) \\
  &= \sum_{i=1}^m \sum_{j=1}^n J'_\eend ((\bfM'_{\bfSigma'_\varnothing \circ \bfSigma''_\varnothing} \ast (\bfM_{\bfSigma'_\varnothing,j} \triangleleft \bfSigma''_\varnothing)) \ast \bfM_{\varnothing,i} \ast \bfM_{\bfSigma_\varnothing,j}) \\
  &= 0
 \end{align*}
 for all $[\bfM'_{\bfSigma'_\varnothing \circ \bfSigma''_\varnothing}] \in V'_\eend(\bfSigma'_\varnothing \circ \bfSigma''_\varnothing)$. This means that $a_{\bfSigma_\varnothing,\bfSigma''_\varnothing}$ is well-defined too, and that $v_{\bfSigma_\varnothing,\bfSigma''_\varnothing}$ is injective.

 Since the composition $v_{\bfSigma_\varnothing,\bfSigma''_\varnothing} \circ a_{\bfSigma_\varnothing,\bfSigma''_\varnothing}$ is the identity of $V_\eend(\bfSigma''_\varnothing) \otimes V'_\eend(\bfSigma_\varnothing)$, we can conclude.
\end{proof}

In particular, morphism spaces of $\bfA_\eend(\varnothing)$ are finite-dimensional, and
\begin{align*}
 \bfA_\eend(\varnothing)(\id_\varnothing,\bfSigma_\varnothing) &\cong V_\eend(\bfSigma_\varnothing), &
 \bfA_\eend(\varnothing)(\bfSigma_\varnothing,\id_\varnothing) &\cong V'_\eend(\bfSigma_\varnothing)
\end{align*}
for every object $\bfSigma_\varnothing \in \bfA_\eend(\varnothing)$.

\subsection{Admissibility and exactness}

Now, let us establish a few immediate consequences of admissibility and coadmissibility of \mrphsms{1} of $\bfACob_\calC$ for what concerns left and right exactness of linear functors in the image of $\bfA_\eend$.

Indeed, let $\bfSigma : \bfGamma \to \bfGamma'$ be a \mrphsm{1} of $\bfCob_\calC$, and let $\bfSigma^\dagger : \bfGamma' \to \bfGamma$ denote its \textit{adjoint \mrphsm{1}} in $\bfCob_\calC$, which is defined by Equation~\eqref{E:adjoint_1-morphism}. If $\bfSigma : \bfGamma \to \bfGamma'$ is admissible, then $\bfSigma^\dagger : \bfGamma' \to \bfGamma$ is right adjoint to $\bfSigma : \bfGamma \to \bfGamma'$ in $\bfACob_\calC$, because the right adjunction counit $\radcoun_{\bfSigma} : \bfSigma \circ \bfSigma^\dagger \Rightarrow \id_{\bfGamma'}$ defined by Equation~\eqref{E:right_adjunction_counit} is admissible. Similarly, if $\bfSigma : \bfGamma \to \bfGamma'$ is coadmissible, then $\bfSigma^\dagger : \bfGamma' \to \bfGamma$ is left adjoint to $\bfSigma : \bfGamma \to \bfGamma'$ in $\bfACob_\calC$, because the left adjunction counit $\ladcoun_{\bfSigma} : \bfSigma^\dagger \circ \bfSigma \Rightarrow \id_{\bfGamma}$ defined by Equation~\eqref{E:left_adjunction_counit} is admissible. Notice that both the right adjunction unit $\radun_{\bfSigma} : \id_{\bfGamma} \Rightarrow \bfSigma^\dagger \circ \bfSigma$ and the right adjunction unit $\ladun_{\bfSigma} : \id_{\bfGamma'} \Rightarrow \bfSigma \circ \bfSigma^\dagger$ are always admissible. This immediately implies the following result.

\begin{lemma}\label{L:adjoint}
 Let $\bfGamma \in \bfACob_\calC$ be an object.
 \begin{enumerate}
  \item If $\bfSigma_{\bfGamma} \in \bfA_\eend(\bfGamma)$ is admissible, then $\bfA_\eend((\bfSigma_{\bfGamma})^\dagger) : \bfA_\eend(\bfGamma) \to \bfA_\eend(\varnothing)$ is right adjoint to $\bfA_\eend(\bfSigma_{\bfGamma}) : \bfA_\eend(\varnothing) \to \bfA_\eend(\bfGamma)$, so in particular the linear isomorphism
   \begin{align*}
    (h_{\bfSigma_{\bfGamma}})_{\bfSigma_\varnothing,\bfSigma''_{\bfGamma}} : \bfA_\eend(\bfGamma)(\bfSigma_{\bfGamma} \circ \bfSigma_\varnothing,\bfSigma''_{\bfGamma}) &\to \bfA_\eend(\varnothing)(\bfSigma_\varnothing,(\bfSigma_{\bfGamma})^\dagger \circ \bfSigma''_{\bfGamma}) \\*
    [\bfM_{\bfGamma}] &\mapsto [((\bfSigma_{\bfGamma})^\dagger \triangleright \bfM_{\bfGamma}) \ast (\radun_{\bfSigma_{\bfGamma}} \triangleleft \bfSigma_\varnothing)],
   \end{align*}
   determined by Equation~\eqref{E:right_adjunction_unit} is natural in $\bfSigma''_{\bfGamma} \in \bfA_\eend(\bfGamma)$ and $\bfSigma_\varnothing \in \bfA_\eend(\varnothing)$, with inverse
   \begin{align*}
    (e_{\bfSigma_{\bfGamma}})_{\bfSigma_\varnothing,\bfSigma''_{\bfGamma}} : \bfA_\eend(\varnothing)(\bfSigma_\varnothing,(\bfSigma_{\bfGamma})^\dagger \circ \bfSigma''_{\bfGamma}) &\to \bfA_\eend(\bfGamma)(\bfSigma_{\bfGamma} \circ \bfSigma_\varnothing,\bfSigma''_{\bfGamma}) \\*
    [\bfM_\varnothing] &\mapsto [(\radcoun_{\bfSigma_{\bfGamma}} \triangleleft \bfSigma''_{\bfGamma}) \ast (\bfSigma_{\bfGamma} \triangleright \bfM_\varnothing)]
   \end{align*}
   determined by Equation~\eqref{E:right_adjunction_counit}.
  \item If $\bfSigma''_{\bfGamma} \in \bfA_\eend(\bfGamma)$ is coadmissible, then it is automatically also admissible, and $\bfA_\eend((\bfSigma''_{\bfGamma})^\dagger) : \bfA_\eend(\bfGamma) \to \bfA_\eend(\varnothing)$ is both left and right adjoint to $\bfA_\eend(\bfSigma''_{\bfGamma}) : \bfA_\eend(\varnothing) \to \bfA_\eend(\bfGamma)$, so in particular the linear isomorphism
   \begin{align*}
    (e_{\bfSigma''_{\bfGamma}})_{\bfSigma_{\bfGamma},\bfSigma_\varnothing} : \bfA_\eend(\bfGamma)(\bfSigma_{\bfGamma},\bfSigma''_{\bfGamma} \circ \bfSigma_\varnothing) &\to \bfA_\eend(\varnothing)((\bfSigma''_{\bfGamma})^\dagger \circ \bfSigma_{\bfGamma},\bfSigma_\varnothing) \\*
    [\bfM_{\bfGamma}] &\mapsto [(\ladcoun_{\bfSigma''_{\bfGamma}} \triangleleft \bfSigma_\varnothing) \ast ((\bfSigma''_{\bfGamma})^\dagger \triangleright \bfM_{\bfGamma})],
   \end{align*}
   determined by Equation~\eqref{E:left_adjunction_counit} is natural in $\bfSigma_{\bfGamma} \in \bfA_\eend(\bfGamma)$ and $\bfSigma_\varnothing \in \bfA_\eend(\varnothing)$, with inverse
   \begin{align*}
    (h_{\bfSigma''_{\bfGamma}})_{\bfSigma_{\bfGamma},\bfSigma_\varnothing} : \bfA_\eend(\varnothing)((\bfSigma''_{\bfGamma})^\dagger \circ \bfSigma_{\bfGamma},\bfSigma_\varnothing) &\to \bfA_\eend(\bfGamma)(\bfSigma_{\bfGamma},\bfSigma''_{\bfGamma} \circ \bfSigma_\varnothing) \\*
    [\bfM_\varnothing] &\mapsto [(\bfSigma''_{\bfGamma} \triangleright \bfM_\varnothing) \ast (\ladun_{\bfSigma''_{\bfGamma}} \triangleleft \bfSigma_{\bfGamma})]
   \end{align*}
   determined by Equation~\eqref{E:left_adjunction_unit}.
 \end{enumerate}
\end{lemma}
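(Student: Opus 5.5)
The plan is to obtain both parts from a single 2-categorical principle. A 2-functor sends adjunctions in its source to adjunctions in its target, and so does whiskering by 1-morphisms. Concretely, I take $\bfSigma = \bfSigma_{\bfGamma} : \varnothing \to \bfGamma$ and use the unit/counit pair $\radun_{\bfSigma_{\bfGamma}}, \radcoun_{\bfSigma_{\bfGamma}}$ from Equations~\eqref{E:right_adjunction_unit} and \eqref{E:right_adjunction_counit}. These satisfy the triangle (zig-zag) identities in $\bfCob_\calC$, as is recorded in Appendix~\ref{A:dualities_adjunctions}.

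The only issue is whether these 2-morphisms lie in the sub-2-category $\bfACob_\calC$. The unit is always admissible: its support has all components meeting the outgoing boundary $\bfSigma^\dagger \circ \bfSigma$. The counit $\radcoun_{\bfSigma_{\bfGamma}} : \bfSigma_{\bfGamma} \circ \bfSigma_{\bfGamma}^\dagger \Rightarrow \id_{\bfGamma}$ is admissible exactly when $\bfSigma_{\bfGamma}$ is admissible. This is because the closed components of its support come from the components of $\Sigma_{\bfGamma}$ disjoint from $\bfGamma$, and these carry projective blue points, which provide the projective edges in $\Sigma_{\bfGamma} \times I$.

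Granting admissibility, I apply $\bfA_\eend$. The triangle identities hold in $\bfACob_\calC$, hence in the image. So $\bfA_\eend(\radun)$ and $\bfA_\eend(\radcoun)$ are the unit and counit of an adjunction $\bfA_\eend(\bfSigma_{\bfGamma}) \dashv \bfA_\eend(\bfSigma_{\bfGamma}^\dagger)$. Unwinding the extended universal construction, the components of these natural transformations are $[\radun \triangleleft \bfSigma_\varnothing]$ and $[\radcoun \triangleleft \bfSigma''_{\bfGamma}]$. The standard hom-set bijection of an adjunction then reads exactly as the stated maps $h$ and $e$. Naturality and the identities $e \circ h = \id$ and $h \circ e = \id$ follow from interchange and the zig-zag identities. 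These hold already at the level of representatives, so they hold in the quotient categories.

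For part (2), I first show that coadmissibility implies admissibility. If $\bfSigma''_{\bfGamma} : \varnothing \to \bfGamma$ is coadmissible, then $(\bfSigma''_{\bfGamma})^*$ is admissible. Since every component of a 1-morphism from $\varnothing$ is disjoint from the incoming boundary, the dualization of Equation~\eqref{E:dual_1-morphism} should force projective points on every component not touching $\bfGamma$, which is what admissibility requires. This is the step I expect to need the most care, since it depends on the exact convention of Equation~\eqref{E:dual_1-morphism}. I would check it directly on the component description.

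Part (1) then gives the right adjoint. For the left adjoint, I use $\ladun$ and $\ladcoun$ from Equations~\eqref{E:left_adjunction_unit} and \eqref{E:left_adjunction_counit}. The text before the lemma notes that the left counit is admissible when $\bfSigma''_{\bfGamma}$ is coadmissible and that $\ladun$ is always admissible. The same argument, applying $\bfA_\eend$ and reading off the hom-bijection, yields the maps $e_{\bfSigma''_{\bfGamma}}$ and $h_{\bfSigma''_{\bfGamma}}$ with the stated formulas, mutually inverse and natural.
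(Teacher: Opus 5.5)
Your proposal is correct and matches the paper's own argument. The paper likewise notes that $\radun$ and $\ladun$ are always admissible, and that $\radcoun_{\bfSigma}$ (resp.\ $\ladcoun_{\bfSigma}$) is admissible when $\bfSigma$ is admissible (resp.\ coadmissible). Hence the adjunction $\bfSigma \dashv \bfSigma^\dagger$ (resp.\ $\bfSigma^\dagger \dashv \bfSigma$) lives in $\bfACob_\calC$, is transported by the 2-functor $\bfA_\eend$, and the stated maps are the resulting hom-set bijections.

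The step you flagged in part (2) is immediate. The 1-morphism $(\bfSigma''_{\bfGamma})^* : \bfGamma^* \to \varnothing$ has empty outgoing boundary, so its admissibility forces a projective point on every component of $\Sigma''_\Gamma$.
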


Furthermore, Remark~\ref{R:adjoints_and_(co)limits}.$(ii)$ immediately implies the following result.

\begin{lemma}\label{L:adjoint_exactness}
 Let $\bfSigma : \bfGamma \to \bfGamma'$ be a \mrphsm{1} of $\bfACob_\calC$. If $\bfSigma : \bfGamma \to \bfGamma'$ is admissible, then the linear functor $\bfA_\eend(\bfSigma) : \bfA_\eend(\bfGamma) \to \bfA_\eend(\bfGamma')$ is right exact, and if $\bfSigma : \bfGamma \to \bfGamma'$ is coadmissible, then the linear functor $\bfA_\eend(\bfSigma) : \bfA_\eend(\bfGamma) \to \bfA_\eend(\bfGamma')$ is left exact.
\end{lemma}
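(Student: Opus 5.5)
The strategy is to show that each functor in the statement is an adjoint in $\bfCat_\Bbbk$, and then read off exactness from Remark~\ref{R:adjoints_and_(co)limits}.$(ii)$. The adjunctions come from $\bfACob_\calC$ itself, transported by the \fnctr{2} $\bfA_\eend$.

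Suppose first that $\bfSigma : \bfGamma \to \bfGamma'$ is admissible. The discussion preceding Lemma~\ref{L:adjoint} supplies the adjoint \mrphsm{1} $\bfSigma^\dagger : \bfGamma' \to \bfGamma$, together with the right adjunction unit $\radun_{\bfSigma} : \id_{\bfGamma} \Rightarrow \bfSigma^\dagger \circ \bfSigma$ and counit $\radcoun_{\bfSigma} : \bfSigma \circ \bfSigma^\dagger \Rightarrow \id_{\bfGamma'}$ of Equations~\eqref{E:right_adjunction_unit} and \eqref{E:right_adjunction_counit}. Both \mrphsms{2} are admissible: the unit always is, and the counit is admissible precisely because $\bfSigma$ is. They therefore live in $\bfACob_\calC$. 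Since they satisfy the triangle identities in $\bfCob_\calC$ (Appendix~\ref{A:dualities_adjunctions}), they satisfy them in the sub-\ctgr{2} $\bfACob_\calC$ as well, so $\bfSigma \dashv \bfSigma^\dagger$ there.

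Next I apply the \fnctr{2} $\bfA_\eend$. This step relies only on functoriality with respect to horizontal and vertical composition and whiskering, which is built into the extended universal construction: $\bfA_\eend(\bfSigma)$ acts by $\bfSigma \circ \_$, and \mrphsms{2} act by whiskering. It follows that $\bfA_\eend(\radun_{\bfSigma})$ and $\bfA_\eend(\radcoun_{\bfSigma})$ exhibit $\bfA_\eend(\bfSigma)$ as left adjoint to $\bfA_\eend(\bfSigma^\dagger)$. By Remark~\ref{R:adjoints_and_(co)limits}.$(ii)$, a left adjoint preserves all colimits that exist, and in particular finite ones, so $\bfA_\eend(\bfSigma)$ is right exact.

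The coadmissible case is symmetric. Here the left adjunction counit $\ladcoun_{\bfSigma} : \bfSigma^\dagger \circ \bfSigma \Rightarrow \id_{\bfGamma}$ is admissible, and the left adjunction unit $\ladun_{\bfSigma} : \id_{\bfGamma'} \Rightarrow \bfSigma \circ \bfSigma^\dagger$ always is. Hence $\bfSigma^\dagger \dashv \bfSigma$ in $\bfACob_\calC$, and applying $\bfA_\eend$ makes $\bfA_\eend(\bfSigma)$ a right adjoint, which preserves existing finite limits and is therefore left exact.

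The only step needing care is confirming that the triangle identities hold on the nose in $\bfACob_\calC$, and not merely up to invertible \mrphsms{2}. Even in the latter case the conclusion survives, because an adjunction up to coherent isomorphism still gives an adjunction after modifying the unit. In any event this is what the appendix equations are designed to provide, so I expect no real obstacle.
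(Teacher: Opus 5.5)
Your proposal is correct and follows the paper's own argument. Admissibility (resp.\ coadmissibility) of $\bfSigma$ makes the counit $\radcoun_{\bfSigma}$ (resp.\ $\ladcoun_{\bfSigma}$) admissible, while the units are always admissible, so $\bfSigma^\dagger$ is a right (resp.\ left) adjoint of $\bfSigma$ inside $\bfACob_\calC$; the \fnctr{2} $\bfA_\eend$ transports this adjunction to $\bfCat_\Bbbk$, and Remark~\ref{R:adjoints_and_(co)limits}.$(ii)$ then yields right (resp.\ left) exactness of $\bfA_\eend(\bfSigma)$.
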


We also establish the following result, which will be used in the proof of Proposition~\ref{P:product_coherence}.

\begin{lemma}\label{L:coadjoint_exactness}
 Let $\bfGamma \in \bfACob_\calC$ be an object, and let $\bfSigma_{\bfGamma} \in \bfA_\eend(\bfGamma)$ be an object. If $\bfSigma_{\bfGamma}$ is coadmissible, then the linear functor
 \[
  \bfA_\eend(\bfGamma)(\_,\bfSigma_{\bfGamma}) : \bfA_\eend(\bfGamma)^\op \to \Vect_\Bbbk
 \]
 is right exact.
\end{lemma}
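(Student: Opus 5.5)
The plan is to reduce the representable functor $\bfA_\eend(\bfGamma)(\_,\bfSigma_{\bfGamma})$ to a composite of functors whose exactness is already known, using the adjunctions of Lemma~\ref{L:adjoint}. Since $\bfSigma_{\bfGamma}$ is coadmissible, part (2) of Lemma~\ref{L:adjoint}, applied with $\bfSigma''_{\bfGamma} = \bfSigma_{\bfGamma}$ and $\bfSigma_\varnothing = \id_\varnothing$, gives an isomorphism
\[
 \bfA_\eend(\bfGamma)(\bfSigma'_{\bfGamma},\bfSigma_{\bfGamma}) \cong \bfA_\eend(\varnothing)((\bfSigma_{\bfGamma})^\dagger \circ \bfSigma'_{\bfGamma},\id_\varnothing)
\]
natural in $\bfSigma'_{\bfGamma} \in \bfA_\eend(\bfGamma)$. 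Thus $\bfA_\eend(\bfGamma)(\_,\bfSigma_{\bfGamma})$ is naturally isomorphic to the composition of $\bfA_\eend((\bfSigma_{\bfGamma})^\dagger)^\op : \bfA_\eend(\bfGamma)^\op \to \bfA_\eend(\varnothing)^\op$ with $\bfA_\eend(\varnothing)(\_,\id_\varnothing) : \bfA_\eend(\varnothing)^\op \to \Vect_\Bbbk$.

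Next I would treat the two factors separately. For the first, Lemma~\ref{L:adjoint}(2) says $\bfA_\eend((\bfSigma_{\bfGamma})^\dagger)$ is a left adjoint of $\bfA_\eend(\bfSigma_{\bfGamma})$, so by Remark~\ref{R:adjoints_and_(co)limits}.$(ii)$ it preserves all existing colimits. Equivalently, its opposite sends limits in $\bfA_\eend(\bfGamma)^\op$ to limits. The second factor is the contravariant representable at $\id_\varnothing$. By Lemma~\ref{L:split}, $\bfA_\eend(\varnothing)(\bfSigma_\varnothing,\id_\varnothing) \cong V'_\eend(\bfSigma_\varnothing)$ and all morphism spaces of $\bfA_\eend(\varnothing)$ factor as $V_\eend(\bfSigma''_\varnothing) \otimes V'_\eend(\bfSigma_\varnothing)$. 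This means $\bfA_\eend(\varnothing)$ is equivalent to a full subcategory of $\FVect_\Bbbk$ (via $\bfSigma_\varnothing \mapsto V_\eend(\bfSigma_\varnothing)$, with $V'_\eend$ dual to $V_\eend$), and $\bfA_\eend(\varnothing)(\_,\id_\varnothing)$ corresponds to the dual-space functor $V \mapsto V^*$. I would check that this embedding into $\FVect_\Bbbk$ reflects the finite (co)limits that exist in $\bfA_\eend(\varnothing)$. Dualization is exact on $\FVect_\Bbbk$, so the representable $\bfA_\eend(\varnothing)(\_,\id_\varnothing)$ should send finite colimits of $\bfA_\eend(\varnothing)$ to finite limits, and finite limits to finite colimits.

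The main obstacle is the precise meaning of ``right exact'' for the contravariant representable. It must preserve finite colimits that exist in $\bfA_\eend(\bfGamma)^\op$, that is, send finite limits in $\bfA_\eend(\bfGamma)$ to colimits in $\Vect_\Bbbk$. For the composite this requires two things. First, $\bfA_\eend((\bfSigma_{\bfGamma})^\dagger)$ must also preserve finite limits; this holds because by Lemma~\ref{L:adjoint}(2) it is simultaneously a right adjoint of $\bfA_\eend(\bfSigma_{\bfGamma})$, so Remark~\ref{R:adjoints_and_(co)limits}.$(ii)$ applies. Second, $\bfA_\eend(\varnothing)(\_,\id_\varnothing)$ must send existing finite limits in $\bfA_\eend(\varnothing)$ to colimits in $\Vect_\Bbbk$. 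This is the step I expect to need care. I would argue it by transporting the limit cone through the fully faithful embedding $V_\eend$ into $\FVect_\Bbbk$. A limit cone there is detected by the representables $\bfA_\eend(\varnothing)(\bfSigma_\varnothing,\_) \cong V_\eend(\_) \otimes V'_\eend(\bfSigma_\varnothing)$, which are the tensor products of $V_\eend(\_)$ with fixed finite-dimensional spaces. Taking $\bfSigma_\varnothing = \id_\varnothing$ shows that $V_\eend$ itself preserves the limit. Applying exact duality to this limit then turns it into a colimit of the spaces $V'_\eend \cong V_\eend^*$, which is the required right exactness.
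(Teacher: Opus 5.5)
Your proposal is correct and is essentially the paper's proof: the paper also rewrites $\bfA_\eend(\bfGamma)(\_,\bfSigma_{\bfGamma})$ as $\bigl(\bfA_\eend(\varnothing)(\id_\varnothing,(\bfSigma_{\bfGamma})^\dagger \circ \_)\bigr)^*$ using Lemma~\ref{L:adjoint}, Lemma~\ref{L:split} and the duality between $V'_\eend$ and $V_\eend$. It then combines left exactness of $\bfA_\eend((\bfSigma_{\bfGamma})^\dagger)$, preservation of limits by the representable at $\id_\varnothing$, and exactness of finite-dimensional duality, exactly as in your last paragraph. (Only the limit-preservation direction matters; your earlier remarks about reflecting limits and about colimit preservation by the left adjoint are not needed.)
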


\begin{proof}
 Thanks to Equation~\eqref{E:adjunction_V_V'} and Lemmas~\ref{L:split} and \ref{L:adjoint}, we have a chain of natural isomorphisms
 \begin{align*}
  \bfA_\eend(\bfGamma)(\_,\bfSigma_{\bfGamma})
  &\cong \bfA_\eend(\varnothing)((\bfSigma_{\bfGamma})^\dagger \circ \_,\id_\varnothing)
  \cong V'_\eend((\bfSigma_{\bfGamma})^\dagger \circ \_)
  \cong (V_\eend((\bfSigma_{\bfGamma})^\dagger \circ \_))^* \\*
  &\cong (\bfA_\eend(\varnothing)(\id_\varnothing,(\bfSigma_{\bfGamma})^\dagger \circ \_))^*
 \end{align*}
 between linear functors from $\bfA_\eend(\bfGamma)^\op$ to $\Vect_\Bbbk$.

 Now, since $(\bfSigma_{\bfGamma})^\dagger$ is coadmissible, Lemma~\ref{L:adjoint_exactness} implies that $\bfA_\eend((\bfSigma_{\bfGamma})^\dagger)$ is left exact. Then, if a finite diagram $D : J \to \bfA_\eend(\bfGamma)$ admits a limit, we have
 \begin{align*}
  \colim_{j \in J^\op} \bfA_\eend(\varnothing)((\bfSigma_{\bfGamma})^\dagger \circ D^\op(j),\id_\varnothing)
  &\cong \colim_{j \in J^\op} \left( \bfA_\eend(\varnothing)(\id_\varnothing,(\bfSigma_{\bfGamma})^\dagger \circ D^\op(j)) \right)^* \\
  &\cong \left( \lim_{j \in J} \bfA_\eend(\varnothing)(\id_\varnothing,(\bfSigma_{\bfGamma})^\dagger \circ D(j)) \right)^* \\
  &\cong \left( \bfA_\eend(\varnothing) \left( \id_\varnothing, \lim_{j \in J} ((\bfSigma_{\bfGamma})^\dagger \circ D(j)) \right) \right)^* \\
  &\cong \left( \bfA_\eend(\varnothing) \left( \id_\varnothing, (\bfSigma_{\bfGamma})^\dagger \circ \left( \lim_{j \in J} D(j) \right) \right) \right)^* \\
  &\cong \bfA_\eend(\varnothing) \left( (\bfSigma_{\bfGamma})^\dagger \circ \left( \lim_{j \in J} D(j) \right), \id_\varnothing \right). \qedhere
 \end{align*}
\end{proof}

\subsection{Generation by boundary bijective cobordisms}

Next, we are going to show that, for every object $\bfGamma \in \bfACob_\calC$ and all objects $\bfSigma_{\bfGamma}, \bfSigma''_{\bfGamma} \in \bfA_\eend(\bfGamma)$, the morphism space $\bfA_\eend(\bfGamma)(\bfSigma_{\bfGamma}, \bfSigma''_{\bfGamma})$ can be generated by admissible skeins inside a single cobordism with corners, as a consequence of Lemma~\ref{L:connected_1}. This implies that the separate linear functor $\bfmu_{\bfGamma,\bfGamma'} : (\bfA_\eend(\bfGamma),\bfA_\eend(\bfGamma')) \to \bfA_\eend(\bfGamma \disjun \bfGamma')$ of Proposition~\ref{P:lax_monoidality_ETQFT} is full for all $\bfGamma, \bfGamma' \in \bfACob_\calC$.

\begin{lemma}\label{L:connected_2}
 Let $\bfSigma_{\bfGamma} = (\Sigma_\Gamma,P_\Gamma,\Lagr_\Gamma)$ and $\bfSigma''_{\bfGamma} = (\Sigma''_\Gamma,P''_\Gamma,\Lagr''_\Gamma)$ be objects of $\bfA_\eend(\bfGamma)$. If $M$ is a boundary bijective \dmnsnl{3} cobordism with corners from $\Sigma_\Gamma$ to $\Sigma''_\Gamma$, then the linear map
 \begin{align*}
  \pi_M : S^\partial_\eend(M;P_\Gamma,P''_\Gamma) &\to \bfA_\eend(\bfGamma)(\bfSigma_{\bfGamma}, \bfSigma''_{\bfGamma}) \\*
  [T] &\mapsto [M,T,0]
 \end{align*}
 is surjective.
\end{lemma}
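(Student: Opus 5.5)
The plan is to show that every generating \mrphsm{2} $\bfM_{\bfGamma} = (M',T',\sig') : \bfSigma_{\bfGamma} \Rightarrow \bfSigma''_{\bfGamma}$ of $\calA(\bfGamma)$ is equal in $\bfA_\eend(\bfGamma)$ to a linear combination of classes $[M,T,0]$. By the definition of $\bfA_\eend(\bfGamma) = \calA(\bfGamma)/\calA'(\bfGamma)^\perp$, two linear combinations of \mrphsms{2} agree in $\bfA_\eend(\bfGamma)$ as soon as $V_\eend(\bfSigma'_{\bfGamma} \triangleright \_)$ takes the same value on them for every $\bfSigma'_{\bfGamma} \in \calA'(\bfGamma)$. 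Hence it suffices to transform $\bfM_{\bfGamma}$ into a combination of \mrphsms{2} supported on $M$ using only moves that are local in the interior of the support, and that are known to leave $V_\eend$ unchanged (or to rescale it by a fixed invertible scalar). Such moves remain valid after whiskering with any $\bfSigma'_{\bfGamma}$. The signature is handled separately: changing $\sig'$ to $0$ multiplies all images under $V_\eend$ by the same invertible scalar, as remarked after Lemma~\ref{L:connected_1}. So from now on we may assume $\sig' = 0$.

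The topological step compares $M'$ with $M$. Both are \dmnsnl{3} cobordisms with corners from $\Sigma_\Gamma$ to $\Sigma''_\Gamma$, with the same horizontal and vertical boundary. Because $M$ is boundary bijective, each component of $M$ contains exactly one component of $\Sigma''_\Gamma \cup_{\overline{\Gamma} \sqcup \Gamma''} \overline{\Sigma_\Gamma}$. The components of $M'$ come in two kinds. First, $M'$ may have closed components, which are disjoint from the outgoing horizontal boundary; by admissibility of $\bfM_{\bfGamma}$ they carry projective labels, and by monoidality of $V_\eend$ they contribute scalars $J'_\eend$. Second, $M'$ has components whose boundary is a union of several boundary components. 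Components of the second kind can be obtained from the corresponding components of $M$ by forming boundary connected sums along arcs, that is, by attaching \hndl{1}s $D^1 \times D^2$ joining different components of $M$. Then, relative to the boundary, one passes from this to $M'$ by integral surgery along a framed link in the interior, via standard relative Kirby calculus for cobordisms with fixed boundary. Surgery along a framed link $L$ is encoded by turning $L$ into red components of the bichrome graph. This is exactly how $V_\eend$ is computed via $F_\eend$, in Equation~\eqref{E:renormalized_Lyubashenko} and Proposition~\ref{P:KLRT_functor}. Thus the surgery is absorbed into a bichrome graph $T$ inside the connected sums of components of $M$, with the signature correction handled as above.

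To undo the \hndl{1}s, each connecting sphere $\{0\} \times S^2$ is cut using the connected sum move \eqref{E:CS}, read backwards. This replaces a copy of $D^1 \times S^2$ by $S^0 \times D^3$ and turns the connected sum back into the disjoint union of components of $M$.

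I expect this last step to be the main obstacle, because \eqref{E:CS} is only valid in admissible situations. The \hndl{1} may join components that carry no projective label of their own, and for $\bfSigma''_{\bfGamma} \neq \varnothing$ the target skein module $S^\partial_\eend = S_\eend$ need not consist of admissible skeins. My first attempt would exploit the quantifier over $\bfSigma'_{\bfGamma}$. After whiskering, the closed cobordism $\bfSigma'_{\bfGamma} \triangleright \bfM_{\bfGamma}$ is evaluated by $V_\eend$ on states which, by Lemma~\ref{L:connected_1}, can be represented inside boundary bijective cobordisms, and admissibility of the whole composite guarantees a projective strand somewhere in each relevant component. I would then use the red-to-blue move \eqref{E:RB} together with \eqref{E:GR} to create a projective-labelled meridian around the connecting sphere, isotope it across, and apply \eqref{E:CS}. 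All these moves are performed inside the interior of $M'$, so the resulting skein in $M$ does not depend on $\bfSigma'_{\bfGamma}$. Finally, the admissibility requirement defining $S^\partial_\eend$ when $\bfSigma''_{\bfGamma} = \varnothing$ follows from the admissibility of $\bfM_{\bfGamma}$, which is preserved by these moves. This gives surjectivity of $\pi_M$.
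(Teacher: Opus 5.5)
You have the right obstacle in view, but your proposed fix does not get past it, so the proposal has a genuine gap at the connected-sum step.

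\textbf{What works.} The signature rescaling, the treatment of closed components of $M'$, and relating each remaining component of $M'$ to a connected sum of components of $M$ via surgery encoded as red links are all reasonable up to invertible scalars. Note that you need \emph{interior} connected sums, replacing $S^0\times D^3$ by $D^1\times S^2$; attaching \hndl{1}s along the boundary would change $\partial M$.

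\textbf{Why the fix fails.} The moves \eqref{E:RB} and \eqref{E:CS} need a projective label inside the region where they are performed. If the only projective labels of the composite lie in the states, the covectors, or the whiskering by $\bfSigma'_{\bfGamma}$, the moves are not supported in $M'$. The resulting skein then depends on that closing data and does not give an element of $S^\partial_\eend(M;P_\Gamma,P''_\Gamma)$ independent of $\bfSigma'_{\bfGamma}$.

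Your last sentence also misreads $S^\partial_\eend$. Admissibility is required on every component of $M$ disjoint from $\Sigma''_\Gamma$, not only when $\bfSigma''_{\bfGamma}=\varnothing$. Admissibility of $\bfM_{\bfGamma}$ is a condition on components of $M'$ and does not transfer to the components of $M$ after cutting.

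\textbf{A concrete failure.} Take $\bfGamma=\varnothing$, $\bfSigma_\varnothing=\bfSigma''_\varnothing$ an unlabelled closed surface, $M=M_1\sqcup M_2$ with $M_2$ the component containing the incoming copy, and $\bfM_\varnothing=\id_{\bfSigma_\varnothing}$. The lemma says the empty cylinder equals $\sum_j [M_1\sqcup M_2,T^j_1\sqcup T^j_2,0]$ with every $T^j_2$ admissible. But $\Sigma\times I$ carries no projective label, so no sequence of \eqref{E:GR}, \eqref{E:RB}, \eqref{E:CS} moves inside it can produce this. The identity is a global neck-cutting relation coming from the non-degenerate pairing between $V_\eend$ and $V'_\eend$: it is $\id_{\bfSigma_\varnothing}=\sum_i[\bfM_{\varnothing,i}\ast\bfM'_{\varnothing,i}]$ for dual bases, as in Lemma~\ref{L:split}. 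This is the monoidality input from \cite{DGGPR19} behind Lemma~\ref{L:connected_1}, which local moves cannot reproduce.

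\textbf{The missing idea.} Avoid skein manipulations in $M'$ and reduce to the closed case.
\begin{enumerate}
 \item Write $\bfSigma_{\bfGamma}\cong\bfSigma'_{\bfGamma}\circ\bfSigma_\varnothing$, with $\bfSigma'_{\bfGamma}$ admissible and $\bfSigma_\varnothing$ closed.
 \item Lemmas~\ref{L:adjoint} and \ref{L:split} identify $\bfA_\eend(\bfGamma)(\bfSigma'_{\bfGamma}\circ\bfSigma_\varnothing,\bfSigma''_{\bfGamma})$ with $V_\eend((\bfSigma'_{\bfGamma})^\dagger\circ\bfSigma''_{\bfGamma})\otimes V'_\eend(\bfSigma_\varnothing)$.
 \item Bending $M$ along the right adjunction unit gives a boundary bijective $h(M)$ from $\Sigma_\varnothing$ to $(\Sigma'_\Gamma)^\dagger\circ\Sigma''_\Gamma$.
 \item Bending skeins gives an isomorphism of the corresponding $S^\partial_\eend$ modules; this is where admissibility of $\bfSigma'_{\bfGamma}$ is used.
 \item Lemma~\ref{L:connected_1} gives surjectivity of $\pi_{h(M)}$, hence of $\pi_M$.
\end{enumerate}
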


\begin{proof}
 First of all, up to isomorphism in $\bfACob_\calC$, we can assume that $\bfSigma_{\bfGamma}$ is of the form $\bfSigma'_{\bfGamma} \circ \bfSigma_\varnothing$ for some admissible $\bfSigma'_{\bfGamma} = (\Sigma'_\Gamma,P'_\Gamma,\Lagr'_\Gamma) \in \bfA_\eend(\bfGamma)$ and some $\bfSigma_\varnothing = (\Sigma_\varnothing,P_\varnothing,\Lagr_\varnothing) \in \bfA_\eend(\varnothing)$. Then consider the \dmnsnl{3} cobordism
 \[
   h(M) = ((\Sigma'_\Gamma)^\dagger \triangleright M) \ast (\radun_{\Sigma'_\Gamma} \triangleleft \Sigma_\varnothing).
 \]
 from $\Sigma_\varnothing$ to $(\Sigma'_\Gamma)^\dagger \circ \Sigma''_\Gamma$. Notice that the linear map
 \begin{align*}
  S^\partial_\eend(M;P'_\Gamma \cup P_\varnothing,P''_\Gamma) &\to S^\partial_\eend(h(M);P_\varnothing,(P'_\Gamma)^\dagger \cup P''_\Gamma) \\*
  [T] &\mapsto [(\id_{(P'_\Gamma)^\dagger} \cup T) \ast (\radun_{P'_\Gamma} \cup \id_{P_\varnothing})]
 \end{align*}
 is an isomorphism, because $\bfSigma'_{\bfGamma}$ is admissible.
 Since $M$ is boundary bijective, then
 $h(M)$ is boundary bijective too. Therefore, Lemma~\ref{L:connected_1} implies that the linear map
 \[
  \pi_{h(M)} : S^\partial_\eend(h(M);P_\varnothing,(P'_\Gamma)^\dagger \cup P''_\Gamma) \to V_\eend((\bfSigma'_{\bfGamma})^\dagger \circ \bfSigma''_{\bfGamma}) \otimes V'_\eend(\bfSigma_\varnothing)
 \]
 is surjective, and Lemmas~\ref{L:split} and \ref{L:adjoint} imply that
 \[
  V_\eend((\bfSigma'_{\bfGamma})^\dagger \circ \bfSigma''_{\bfGamma}) \otimes V'_\eend(\bfSigma_\varnothing) \cong \bfA_\eend(\bfGamma)(\bfSigma'_{\bfGamma} \circ \bfSigma_\varnothing,\bfSigma''_{\bfGamma}). \qedhere
 \]

\end{proof}

\begin{proposition}\label{P:fullness}
 For all objects $\bfGamma, \bfGamma' \in \bfACob_\calC$, the separate linear functor $\bfmu_{\bfGamma,\bfGamma'} : (\bfA_\eend(\bfGamma),\bfA_\eend(\bfGamma')) \to \bfA_\eend(\bfGamma \disjun \bfGamma')$ is full.
\end{proposition}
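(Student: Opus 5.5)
To prove fullness, I fix objects $(\bfSigma_{\bfGamma},\bfSigma_{\bfGamma'})$ and $(\bfSigma''_{\bfGamma},\bfSigma''_{\bfGamma'})$ of $\bfA_\eend(\bfGamma) \otimes \bfA_\eend(\bfGamma')$. The goal is to show that every morphism of $\bfA_\eend(\bfGamma \disjun \bfGamma')(\bfSigma_{\bfGamma} \disjun \bfSigma_{\bfGamma'},\bfSigma''_{\bfGamma} \disjun \bfSigma''_{\bfGamma'})$ is a linear combination of classes of the form $[\bfM_{\bfGamma} \disjun \bfM_{\bfGamma'}]$. The tool is Lemma~\ref{L:connected_2}: it suffices to exhibit a boundary bijective \dmnsnl{3} cobordism with corners of split form $M = M_\Gamma \sqcup M_{\Gamma'}$ from $\Sigma_\Gamma \sqcup \Sigma_{\Gamma'}$ to $\Sigma''_\Gamma \sqcup \Sigma''_{\Gamma'}$. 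Once we have one, Lemma~\ref{L:connected_2} gives surjectivity of
\[
 \pi_M : S^\partial_\eend(M;P_\Gamma \sqcup P_{\Gamma'},P''_\Gamma \sqcup P''_{\Gamma'}) \to \bfA_\eend(\bfGamma \disjun \bfGamma')(\bfSigma_{\bfGamma} \disjun \bfSigma_{\bfGamma'},\bfSigma''_{\bfGamma} \disjun \bfSigma''_{\bfGamma'}).
\]
By the convention fixed for disconnected supports, the source of $\pi_M$ is the tensor product of the skein modules of the connected components of $M$. Every skein class is then a sum of disjoint unions $T_\Gamma \sqcup T_{\Gamma'}$, and its image $[M_\Gamma \sqcup M_{\Gamma'}, T_\Gamma \sqcup T_{\Gamma'},0]$ equals $\bfmu_{\bfGamma,\bfGamma'}([M_\Gamma,T_\Gamma,0] \otimes [M_{\Gamma'},T_{\Gamma'},0])$. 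This yields fullness.

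The construction of $M$ proceeds as follows. First, I choose $M_\Gamma$ as a boundary bijective cobordism with corners from $\Sigma_\Gamma$ to $\Sigma''_\Gamma$: take $\Sigma_\Gamma \times I$, glue it to $\Sigma''_\Gamma \times I$ along the vertical boundary $\Gamma \times I$, and then join, by $1$-handles, any components that the closed surface $\Sigma''_\Gamma \cup_{\Gamma} \overline{\Sigma_\Gamma}$ forces to lie in the same component. More concretely, $M_\Gamma$ is a thickening of the closed surface $\Sigma''_\Gamma \cup_\Gamma \overline{\Sigma_\Gamma}$ (a compression body with that boundary), so that $\pi_0$ of the boundary surface maps bijectively to $\pi_0(M_\Gamma)$. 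I construct $M_{\Gamma'}$ in the same way. The disjoint union of boundary bijective cobordisms is boundary bijective, because the closed boundary surface of $M$ is the disjoint union of those of $M_\Gamma$ and $M_{\Gamma'}$.

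It remains to handle the admissibility bookkeeping, and this is the step where care is needed. One must check that each $[M_\Gamma,T_\Gamma,0]$ is a legitimate (admissible) \mrphsm{2} of $\bfACob_\calC$. This holds because the components of $M$ disjoint from the outgoing horizontal boundary are exactly those whose skein module is the admissible one in $S^\partial_\eend$. Such components lie either in $M_\Gamma$ or in $M_{\Gamma'}$, so the admissibility of a split skein is inherited factorwise. A component of $M_\Gamma$ touching the outgoing boundary $\Sigma''_\Gamma$ imposes no condition. One must also confirm that the signature $0$ splits additively, which it does, and that the identification of $S^\partial_\eend$ for $M$ with the tensor product of the factors matches the definition given before Lemma~\ref{L:connected_1}. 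The remaining possible subtlety is whether a suitable $M_\Gamma$ exists with a prescribed vertical boundary. I expect the thickening construction above to settle this routinely.
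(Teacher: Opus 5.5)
This is the paper's argument. You apply Lemma~\ref{L:connected_2} to a disjoint union $M_\Gamma \sqcup M_{\Gamma'}$ of boundary bijective cobordisms, and use $S^\partial_\eend(M_\Gamma \sqcup M_{\Gamma'};\ldots) \cong S^\partial_\eend(M_\Gamma;\ldots) \otimes S^\partial_\eend(M_{\Gamma'};\ldots)$ to see that the surjection $\pi_{M_\Gamma \sqcup M_{\Gamma'}}$ factors through $\bfmu_{\bfGamma,\bfGamma'}$. Your admissibility bookkeeping is also consistent with the definition of $S^\partial_\eend$.

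The one slip is your existence remark for $M_\Gamma$. Gluing $\Sigma_\Gamma \times I$ to $\Sigma''_\Gamma \times I$ along $\Gamma \times I$ produces the thickening $(\Sigma''_\Gamma \cup_\Gamma \overline{\Sigma_\Gamma}) \times I$. That manifold has two boundary copies of each component, so it is not boundary bijective, and attaching $1$-handles would change the boundary surface. Instead, fill each component of $\Sigma''_\Gamma \cup_\Gamma \overline{\Sigma_\Gamma}$ with a handlebody, using a collar of $\Gamma$ as the vertical boundary. This is the existence the paper tacitly assumes.
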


\begin{proof}
 Let $\bfSigma_{\bfGamma} = (\Sigma_\Gamma,P_\Gamma,\Lagr_\Gamma)$ and $\bfSigma''_{\bfGamma} = (\Sigma''_\Gamma,P''_\Gamma,\Lagr''_\Gamma)$ be objects of $\bfA_\eend(\bfGamma)$, and let $\bfSigma_{\bfGamma'} = (\Sigma_{\Gamma'},P_{\Gamma'},\Lagr_{\Gamma'})$ and $\bfSigma''_{\bfGamma'} = (\Sigma''_{\Gamma'},P''_{\Gamma'},\Lagr''_{\Gamma'})$ be objects of $\bfA_\eend(\bfGamma')$. If $M$ is a boundary bijective \dmnsnl{3} cobordism with corners from $\Sigma_\Gamma$ to $\Sigma''_\Gamma$, and $M'$ is a boundary bijective \dmnsnl{3} cobordism with corners from $\Sigma_{\Gamma'}$ to $\Sigma''_{\Gamma'}$, then $M \sqcup M'$ is a boundary bijective \dmnsnl{3} cobordism with corners from $\Sigma_\Gamma \sqcup \Sigma_{\Gamma'}$ to $\Sigma''_\Gamma \sqcup \Sigma''_{\Gamma'}$. Then, Lemma~\ref{L:connected_2} implies that $\pi_{M \sqcup M'}$ is surjective. Since
 \[
  S^\partial_\eend(M \sqcup M';P_\Gamma \sqcup P_{\Gamma'},P''_\Gamma \sqcup P''_{\Gamma'}) \cong S^\partial_\eend(M;P_\Gamma,P''_\Gamma) \otimes S^\partial_\eend(M';P_{\Gamma'},P''_{\Gamma'}),
 \]
 this means that the linear map from $\bfA_\eend(\bfGamma)(\bfSigma_{\bfGamma}, \bfSigma''_{\bfGamma}) \otimes \bfA_\eend(\bfGamma')(\bfSigma_{\bfGamma'}, \bfSigma''_{\bfGamma'})$ to $\bfA_\eend(\bfGamma \disjun \bfGamma')(\bfSigma_{\bfGamma} \disjun \bfSigma_{\bfGamma'}, \bfSigma''_{\bfGamma} \disjun \bfSigma''_{\bfGamma'})$ defined by the separate linear functor $\bfmu_{\bfGamma,\bfGamma'}$ is surjective. \qedhere
\end{proof}

\subsection{Identification of the circle category}

Now, let us identify explicitly the circle category $\bfA_\eend(\bfS^1)$ with $\calC$. In order to do this, let us consider the functor
\[
 \bfD^2 : \calB_\calC \to \bfACob_\calC(\varnothing,\bfS^1)
\]
sending every object $P$ of $\calB_\calC$ to the \mrphsm{1} $\bfD^2(P) : \varnothing \to \bfS^1$ of $\bfACob_\calC$ given by Equation~\eqref{E:disc_object},
and sending every morphism $T$ of $\calB_\calC(P,P')$ to the \mrphsm{2} $\bfD^2(T) : \bfD^2(P) \Rightarrow \bfD^2(P')$ of $\bfACob_\calC$ given by (the equivalence class of)
\begin{equation}\label{E:disc_functor_graphs}
 \bfD^2(T) := (D^2 \times I,T,0).
\end{equation}
We use a similar notation for the linear functor
\[
 \bfD^2 : \calC \to \bfA_\eend(\bfS^1)
\]
sending every object $X$ of $\calC$ to the object $\bfD^2_X := \bfD^2(O_X)$ of $\bfA_\eend(\bfS^1)$, where $O_X$ denotes the regular $\calC$-labeled blue set determined by the center of $D^2$ with positive orientation and label $X$, and sending every morphism $f$ of $\calC(X,Y)$ to the morphism $[\bfD^2_f] := [\bfD^2(O_f)]$ of $\bfA_\eend(\bfS^1)(\bfD^2_X,\bfD^2_Y)$, where $O_f$ denotes the $\calC$-labeled ribbon graph represented here below.
\begin{equation}\label{E:disc_functor_C}
 O_f := \pic{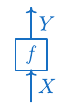}
\end{equation}
Notice that the family of ribbon graphs
\[
 \varphi = \{ \varphi_P \in \calB_\calC(P,O_{F_\calC(P)}) \mid P \in \calB_\calC \}
\]
given by
\begin{align*}
 \varphi_P &:= \pic{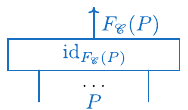}
\end{align*}
defines a natural isomorphism
\begin{center}
 \begin{tikzpicture}[descr/.style={fill=white}]
  \node (P0) at ({sqrt(2)},0) {$\bfA_\eend(\bfS^1)$};
  \node (P1) at (0,{sqrt(2)}) {$\bfACob_\calC(\varnothing,\bfS^1)$};
  \node (P2) at ({-sqrt(2)},0) {$\calB_\calC$};
  \node (P3) at (0,{-sqrt(2)}) {$\calC$};
   \node (P4) at (0,0) {$\phantom{\varphi} \Downarrow \varphi$};
  \draw
  (P1) edge[->] node[right,yshift=5pt] {$\bfA_\eend$} (P0)
  (P2) edge[->] node[left,yshift=5pt] {$\bfD^2$} (P1)
  (P2) edge[->] node[left,yshift=-5pt] {$F_\eend$} (P3)
  (P3) edge[->] node[right,yshift=-5pt] {$\bfD^2$} (P0);
 \end{tikzpicture}
\end{center}

The next result is formulated in terms of the canonical action of $\FVect_\Bbbk$ on $\calC$, defined by Equation~\eqref{E:canonical_Vect-action}.

\begin{lemma}\label{L:connected_objects}
 For every object $\bfSigma_\varnothing$ of $\bfA_\eend(\varnothing)$, the objects $\bfD^2_{\one} \circ \bfSigma_\varnothing$ and $\bfD^2_{V_\eend(\bfSigma_\varnothing) \otimes \one}$ of $\bfA_\eend(\bfS^1)$ are isomorphic.
\end{lemma}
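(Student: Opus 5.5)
The plan is to write down explicit mutually inverse morphisms, using a dual pair of bases of $V_\eend(\bfSigma_\varnothing)$ and $V'_\eend(\bfSigma_\varnothing)$. Set $V := V_\eend(\bfSigma_\varnothing)$, which is finite-dimensional, say of dimension $n$. Since the pairing $\langle \_,\_ \rangle_{\bfSigma_\varnothing}$ is non-degenerate, we can choose representatives $\bfM_i : \id_\varnothing \Rightarrow \bfSigma_\varnothing$ and $\bfM'_i : \bfSigma_\varnothing \Rightarrow \id_\varnothing$, for $1 \leqs i \leqs n$, such that the classes $[\bfM_i]$ form a basis of $V$ and $J'_\eend(\bfM'_j \ast \bfM_i) = \delta_{ij}$. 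Then $V \otimes \one \cong \one^{\oplus n}$ in $\calC$, with biproduct inclusions $\iota_i$ and projections $p_i$. The linear functor $\bfD^2 : \calC \to \bfA_\eend(\bfS^1)$ preserves the biproduct equations $p_j \circ \iota_i = \delta_{ij} \id_{\one}$ and $\sum_i \iota_i \circ p_i = \id$. Hence it exhibits $\bfD^2_{V \otimes \one}$ as a biproduct of $n$ copies of $\bfD^2_{\one}$, with structure maps $[\bfD^2_{\iota_i}]$ and $[\bfD^2_{p_i}]$.

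Next we define the two candidate morphisms. Note that $\bfD^2_{\one} \circ \bfSigma_\varnothing = \bfD^2_{\one} \disjun \bfSigma_\varnothing$. Set
\[
 f := \sum_{i=1}^n [\id_{\bfD^2_{\one}} \disjun \bfM_i] \ast [\bfD^2_{p_i}] : \bfD^2_{V \otimes \one} \to \bfD^2_{\one} \circ \bfSigma_\varnothing,
\]
\[
 g := \sum_{i=1}^n [\bfD^2_{\iota_i}] \ast [\id_{\bfD^2_{\one}} \disjun \bfM'_i] : \bfD^2_{\one} \circ \bfSigma_\varnothing \to \bfD^2_{V \otimes \one}.
\]
These are \mrphsms{2} of $\bfACob_\calC$ with target $\bfS^1$. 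Every component disjoint from the outgoing boundary is a closed component coming from $\bfM_i$ or $\bfM'_i$, and these are admissible, so admissibility holds.

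The composition $g \ast f$ is straightforward. By the interchange law it equals $\sum_{i,j} [\bfD^2_{\iota_j}] \ast [\id \disjun (\bfM'_j \ast \bfM_i)] \ast [\bfD^2_{p_i}]$. The closed component $\bfM'_j \ast \bfM_i$ is an endomorphism of $\varnothing$. In $\bfA_\eend(\bfS^1)$ it acts as the scalar $J'_\eend(\bfM'_j \ast \bfM_i) = \delta_{ij}$, because $V_\eend$ is monoidal and $V_\eend(\varnothing) = \Bbbk$. We therefore get $\sum_i [\bfD^2_{\iota_i \circ p_i}] = \id$.

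The main point is $f \ast g = \id$. We compute $f \ast g = \id_{\bfD^2_{\one}} \disjun \sum_i [\bfM_i \ast \bfM'_i]$. By Lemma~\ref{L:split}, $\sum_i [\bfM_i \ast \bfM'_i] = a(\sum_i [\bfM_i] \otimes [\bfM'_i])$, and $v(\id_{\bfSigma_\varnothing}) = t^{-1}(\id_V)$ is exactly this dual-basis element. Hence $\bfN := \sum_i \bfM_i \ast \bfM'_i - \id_{\bfSigma_\varnothing}$ vanishes in $\bfA_\eend(\varnothing)$, that is, $V_\eend(\bfN) = 0$. It remains to check that $\id_{\bfD^2_{\one}} \disjun \bfN$ lies in the congruence $\calA'(\bfS^1)^\perp$. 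For any $\bfSigma'_{\bfS^1}$, the operator $V_\eend(\bfSigma'_{\bfS^1} \triangleright (\id \disjun \bfN))$ is identified, under the monoidal isomorphism
\[
 V_\eend((\bfSigma'_{\bfS^1} \circ \bfD^2_{\one}) \disjun \bfSigma_\varnothing) \cong V_\eend(\bfSigma'_{\bfS^1} \circ \bfD^2_{\one}) \otimes V,
\]
with $\id \otimes V_\eend(\bfN) = 0$. This uses that $V_\eend$ is symmetric monoidal (Proposition~\ref{P:renormalized_KL_TQFT}), and that disjoint union with a closed component is tensoring in $V_\eend$. This last identification, together with the bookkeeping of signatures (all zero, and the Maslov corrections vanish for disjoint unions), is the step requiring the most care. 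Once it is in place, $f$ and $g$ are mutually inverse, which proves Lemma~\ref{L:connected_objects}.
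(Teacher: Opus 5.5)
Your proposal is correct and is essentially the paper's proof: the paper uses the same dual bases of $V_\eend(\bfSigma_\varnothing)$ and $V'_\eend(\bfSigma_\varnothing)$ and the same biproduct maps $\pi_i,\iota_i$ for $V_\eend(\bfSigma_\varnothing)\otimes\one$, and shows that $\sum_i [\bfD^2_{\pi_i}\circ\bfM_{\varnothing,i}]$ and $\sum_i[\bfD^2_{\iota_i}\circ\bfM'_{\varnothing,i}]$ are mutually inverse; by the interchange law these are your $f$ and $g$. The step you flag as delicate is actually automatic: the congruence $\calA'(\varnothing)^\perp$ is stable under whiskering, which is exactly why $\bfA_\eend(\bfD^2_{\one})$ is a well-defined linear functor, so the identity $\sum_i[\bfM_i\ast\bfM'_i]=\id_{\bfSigma_\varnothing}$ in $\bfA_\eend(\varnothing)$ carries over directly to $\bfA_\eend(\bfS^1)$.
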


\begin{proof}
 If
 \[
  \{ [\bfM_{\varnothing,i}] \in V_\eend(\bfSigma_\varnothing) \mid i \in I \}, \qquad
  \{ [\bfM'_{\varnothing,i}] \in V'_\eend(\bfSigma_\varnothing) \mid i \in I \}
 \]
 are dual bases of $V_\eend(\bfSigma_\varnothing)$ and $V'_\eend(\bfSigma_\varnothing)$ with respect to the non-degenerate pairing $\langle \_,\_ \rangle_{\bfSigma_\varnothing}$ of Equation~\eqref{E:adjunction_V_V'}, which is induced by composition, then we have
 \begin{align*}
  \sum_{i \in I} [\bfM_{\varnothing,i} \ast \bfM'_{\varnothing,i}] &= \id_{\bfSigma_\varnothing}, &
  [\bfM'_{\varnothing,i} \ast \bfM_{\varnothing,j}] &= \delta_{i,j} \id_{\id_\varnothing}
 \end{align*}
 in $\bfA_\eend(\varnothing)$. Similarly, if $\pi_i : V_\eend(\bfSigma_\varnothing) \otimes \one \to \one$ and $\iota_i : \one \to V_\eend(\bfSigma_\varnothing) \otimes \one$ are the corresponding projection and inclusion morphisms, then we have
 \begin{align*}
  \sum_{i \in I} \iota_i \circ \pi_i &= \id_{V_\eend(\bfSigma_\varnothing) \otimes \one}, &
  \pi_i \circ \iota_j &= \delta_{i,j} \id_{\one}
 \end{align*}
 in $\calC$. Then, thanks to a skein equivalence,
 \begin{align*}
  \sum_{i \in I} [\bfD^2_{\pi_i} \circ \bfM_{\varnothing,i}] &
 \end{align*}
 is an isomorphism, whose inverse is
 \begin{align*}
  \sum_{i \in I} [\bfD^2_{\iota_i} \circ \bfM'_{\varnothing,i}]. & \qedhere
 \end{align*}
\end{proof}

Let $\bfGamma = \Gamma$ and $\bfGamma' = \Gamma'$ be objects of $\bfACob_\calC$. A \mrphsm{1} $\bfSigma = (\Sigma,P,\Lagr)$ from $\bfGamma$ to $\bfGamma'$ in $\bfACob_\calC$ is \textit{target surjective} if the embedding
\[
 \iota : \Gamma' \hookrightarrow \Sigma
\]
induced by the outgoing boundary parametrization determines a surjection
\[
 \iota_* : \pi_0(\Gamma') \twoheadrightarrow \pi_0(\Sigma).
\]
Notice that target surjective \mrphsms{1} are always admissible.

Let $\bfGamma$ be an object of $\bfACob_\calC$. We denote by $\TS_\eend(\bfGamma)$ the full linear subcategory of $\bfA_\eend(\bfGamma)$ whose objects are target surjective.

Let $\bfSigma : \bfGamma \to \bfGamma'$ be a \mrphsm{1} of $\bfACob_\calC$. If $\bfSigma : \bfGamma \to \bfGamma'$ is target surjective, then $\bfA_\eend(\bfSigma) : \bfA_\eend(\bfGamma) \to \bfA_\eend(\bfGamma')$ restricts to a linear functor
\[
 \bfA_\eend(\bfSigma) : \TS_\eend(\bfGamma) \to \TS_\eend(\bfGamma').
\]

\begin{lemma}\label{L:ts_equivalent}
 If $\bfGamma \in \bfACob_\calC$ is non-empty, then the linear category $\TS_\eend(\bfGamma)$ is equivalent to $\bfA_\eend(\bfGamma)$.
\end{lemma}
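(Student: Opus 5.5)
Since $\TS_\eend(\bfGamma)$ is a full linear subcategory of $\bfA_\eend(\bfGamma)$, the inclusion is fully faithful. It therefore suffices to show it is essentially surjective: every object $\bfSigma_{\bfGamma} = (\Sigma,P,\Lagr)$ of $\bfA_\eend(\bfGamma)$ should be isomorphic in $\bfA_\eend(\bfGamma)$ to a target surjective one.

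The plan is to absorb the closed components. Decompose $\Sigma = \Sigma^\partial \sqcup \Sigma^c$, where $\Sigma^\partial$ is the union of the components meeting $\Gamma$ and $\Sigma^c$ is the union of the closed components. Up to isomorphism in $\bfACob_\calC$ this gives $\bfSigma_{\bfGamma} \cong \bfSigma^\partial_{\bfGamma} \disjun \bfSigma_\varnothing$, with $\bfSigma^\partial_{\bfGamma}$ target surjective and $\bfSigma_\varnothing \in \bfA_\eend(\varnothing)$. Because $\bfGamma$ is non-empty, $\Sigma^\partial$ has at least one component $\Sigma_0$ bounded by some circle of $\Gamma$. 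Near that boundary circle we can write $\bfSigma^\partial_{\bfGamma}$ as a horizontal composition involving a disc $\bfD^2_{\one}$. Concretely, $\bfSigma^\partial_{\bfGamma}$ is isomorphic (via a \mrphsm{2} built from a collar) to a composition $\bfSigma^\flat \circ (\bfD^2_{\one} \disjun \id)$, obtained by removing a small disc in $\Sigma_0$ carrying a blue point labeled $\one$. The point labeled $\one$ can be inserted and removed by skein equivalence, so this step costs nothing.

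Next I would apply Lemma~\ref{L:connected_objects}, which gives $\bfD^2_{\one} \circ \bfSigma_\varnothing \cong \bfD^2_{V_\eend(\bfSigma_\varnothing) \otimes \one}$ in $\bfA_\eend(\bfS^1)$. Applying the linear functor $\bfA_\eend(\bfSigma^\flat)$ (with identity on the other circles) turns this into an isomorphism in $\bfA_\eend(\bfGamma)$:
\[
 \bfSigma_{\bfGamma} \cong \bfSigma^\flat \circ \left( (\bfD^2_{\one} \circ \bfSigma_\varnothing) \disjun \id \right) \cong \bfSigma^\flat \circ \left( \bfD^2_{V_\eend(\bfSigma_\varnothing) \otimes \one} \disjun \id \right).
\]
The right-hand side has the same support as $\Sigma^\partial$, with one extra blue point labeled $V_\eend(\bfSigma_\varnothing) \otimes \one$. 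It is therefore target surjective.

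The main obstacle is making the first isomorphism rigorous. We must rewrite $\bfSigma^\partial_{\bfGamma} \disjun \bfSigma_\varnothing$ as $\bfSigma^\flat$ applied to $\bfD^2_{\one}\circ\bfSigma_\varnothing$ with the other boundary circles left alone, and check that the Lagrangians and signatures match. The composition $\bfD^2_{\one} \circ \bfSigma_\varnothing$ is just $\bfD^2_{\one} \disjun \bfSigma_\varnothing$ as a \mrphsm{1} $\varnothing \to \bfS^1$, so the rewriting holds up to canonical diffeomorphism. The Lagrangian of the disc is trivial, so sums of Lagrangians agree, and invertible cylinder \mrphsm{2}s realize the isomorphism. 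If Maslov corrections arise, they only rescale by invertible scalars, which do not affect isomorphism. The case where the blue point labeled $\one$ is absent in $\bfSigma^\partial_{\bfGamma}$ is handled by the skein equivalence $O_{\one} \doteq \varnothing$ inside $D^2 \times I$.
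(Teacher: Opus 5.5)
Your proof is correct and follows the paper's argument: the paper likewise writes $\bfSigma_{\bfGamma}$, up to isomorphism in $\bfACob_\calC$, as $\bfSigma \circ \bfD^2 \circ \bfSigma_\varnothing$ with $\bfSigma : \bfS^1 \to \bfGamma$ target surjective. It then uses $\bfD^2 \cong \bfD^2_{\one}$ and Lemma~\ref{L:connected_objects}, and applies $\bfA_\eend(\bfSigma)$ to reach $\bfSigma \circ \bfD^2_{V_\eend(\bfSigma_\varnothing) \otimes \one} \in \TS_\eend(\bfGamma)$; the only cosmetic point is that the extra ``$\disjun \id$'' in your formulas is superfluous, since your $\bfSigma^\flat$ already contains all remaining components and has source $\bfS^1$.
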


\begin{proof}
 Let $\bfSigma_{\bfGamma} \in \bfA_\eend(\bfGamma)$ be an object. Up to isomorphism in $\bfACob_\calC$, we can assume that $\bfSigma_{\bfGamma}$ is of the form $\bfSigma \circ \bfD^2 \circ \bfSigma_\varnothing$ for some $\bfSigma_\varnothing \in \bfA_\eend(\varnothing)$ and some target surjective \mrphsm{1} $\bfSigma : \bfS^1 \to \bfGamma$ of $\bfACob_\calC$. Then, since $\bfD^2$ is isomorphic to $\bfD^2_{\one}$ in $\bfA_\eend(\bfS^1)$, Lemma~\ref{L:connected_objects} yields an isomorphism between $\bfSigma \circ \bfD^2 \circ \bfSigma_\varnothing \in \bfA_\eend(\bfGamma)$ and $\bfSigma \circ \bfD^2_{V_\eend(\bfSigma_\varnothing) \otimes \one} \in \TS_\eend(\bfGamma)$. \qedhere
\end{proof}

Let $\bfSigma,\bfSigma' : \bfGamma \to \bfGamma'$ be \mrphsms{1} of $\bfCob_\calC$, let $\bfM : \bfSigma \Rightarrow \bfSigma'$ be a \mrphsm{2} of $\bfCob_\calC$, and let $\bfM^\rmT : (\bfSigma')^\rmT \Rightarrow \bfSigma^\rmT$ denote its \textit{transpose \mrphsm{2}} in $\bfCob_\calC$, which is defined by Equation~\eqref{E:transpose_2-morphism}. If $\bfGamma = \varnothing$ and both $\bfSigma$ and $\bfSigma'$ are target surjective, then $\bfM^\rmT$ is admissible if and only if $\bfM$ is. Since $\bfM^\rmT$ is diffeomorphic to $\bfM$, this immediately implies the following result.

\begin{lemma}\label{L:transpose_non-empty}
 For every object $\bfGamma \in \bfACob_\calC$, the linear functor
 \[
  \_^\rmT : \TS_\eend(\bfGamma)^\op \to \TS_\eend(\bfGamma^*)
 \]
 defined by Equations~\eqref{E:transpose_1-morphism} and \eqref{E:transpose_2-morphism} is an equivalence, whose inverse is the opposite of the functor
 \[
  \_^\rmT : \TS_\eend(\bfGamma^*)^\op \to \TS_\eend(\bfGamma).
 \]
\end{lemma}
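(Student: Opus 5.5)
We plan to construct the functor directly, check that it is well defined on the quotient categories, and show that applying the transpose twice gives the identity up to a canonical isomorphism.

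On objects, a target surjective \mrphsm{1} $\bfSigma_{\bfGamma} : \varnothing \to \bfGamma$ is sent to its transpose $(\bfSigma_{\bfGamma})^\rmT : \varnothing \to \bfGamma^*$ of Equation~\eqref{E:transpose_1-morphism}. Its support is the same surface with the boundary parametrization reversed, so it is again target surjective. On morphisms, a representative $\bfM_{\bfGamma} : \bfSigma_{\bfGamma} \Rightarrow \bfSigma''_{\bfGamma}$ is sent to $\bfM_{\bfGamma}^\rmT : (\bfSigma''_{\bfGamma})^\rmT \Rightarrow (\bfSigma_{\bfGamma})^\rmT$ of Equation~\eqref{E:transpose_2-morphism}. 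By the observation preceding the statement, with source $\varnothing$ and both ends target surjective, $\bfM^\rmT$ is admissible exactly when $\bfM$ is. Hence the construction gives a bijection between the generating \mrphsms{2} of $\calA(\bfGamma)$ and those of $\calA(\bfGamma^*)$ between target surjective objects, with the direction reversed. Functoriality at the level of the free linear categories should follow from the identities $(\bfM' \ast \bfM)^\rmT = \bfM^\rmT \ast (\bfM')^\rmT$ and $\id^\rmT = \id$. I expect to read these off the definition in Appendix~\ref{A:dualities_adjunctions}, checking only that the Maslov correction term is symmetric under reversing the order of the three Lagrangians.

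The main obstacle is showing that the transpose descends to the quotients $\bfA_\eend(\bfGamma) = \calA(\bfGamma)/\calA'(\bfGamma)^\perp$, i.e. that it maps the annihilator congruence onto the annihilator congruence. My plan is to test against all objects $\bfSigma'$, using Lemma~\ref{L:split} and the fact that the vanishing of $[\bfM]$ is detected by the scalars
\[
 J'_\eend(\bfM' \ast (\bfSigma' \triangleright \bfM) \ast \bfM'')
\]
for all admissible closings. The key geometric input is that transposition is implemented by an orientation-preserving diffeomorphism of the supports ("$\bfM^\rmT$ is diffeomorphic to $\bfM$"). So any closed admissible \mrphsm{2} obtained by capping $\bfSigma' \triangleright \bfM$ is diffeomorphic, as a decorated closed \mnfld{3} with the same signature, to a closed \mrphsm{2} obtained by capping $\bfM^\rmT$ with the transposed caps. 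Because $J'_\eend$ is a topological invariant, the two scalars agree.

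To make this rigorous I would check two things. First, transposes of admissible caps are admissible caps; this again uses target surjectivity, so that every component touches $\bfGamma$. Second, the correspondence between test data for $\bfGamma$ and test data for $\bfGamma^*$ is bijective, using $(\bfSigma^\rmT)^\rmT \cong \bfSigma$. Once this holds, $[\bfM] = 0$ if and only if $[\bfM^\rmT] = 0$, so the functor is well defined and faithful. Finally, the composite of the two transpose functors is naturally isomorphic to the identity, via the canonical isomorphism $(\bfSigma^\rmT)^\rmT \cong \bfSigma$ induced by the identity diffeomorphism of supports. This gives the equivalence with the stated inverse.
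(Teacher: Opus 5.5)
Your outline is the paper's (transposition preserves admissibility between target surjective objects, and $\bfM^\rmT$ is diffeomorphic to $\bfM$). However, the step you flag as the main obstacle fails as written: transposes of admissible caps are not admissible caps, and target surjectivity does not help.

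Your caps $\bfM'' : \id_\varnothing \Rightarrow \bfSigma' \circ \bfSigma$ and $\bfM' : \bfSigma' \circ \bfSigma'' \Rightarrow \id_\varnothing$ are \mrphsms{2} between \mrphsms{1} $\varnothing \to \varnothing$. The closed surface $\bfSigma' \circ \bfSigma$ is not target surjective unless it is empty, so the criterion stated before the lemma does not apply to them. Transposition swaps vectors and covectors, and the two admissibility conditions are asymmetric. The vector $\bfM''$ needs projective labels only on closed components of its support, while $(\bfM'')^\rmT : (\bfSigma')^\rmT \circ \bfSigma^\rmT \Rightarrow \id_\varnothing$ needs them on every component.

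Concretely, to get $[\bfM^\rmT] = 0$ from $[\bfM] = 0$ you must show $J'_\eend(\boldsymbol{K}' \ast ((\bfSigma')^\rmT \triangleright \bfM^\rmT) \ast \boldsymbol{K}) = 0$ for every admissible vector $\boldsymbol{K}$ and covector $\boldsymbol{K}'$. Transposing back yields a closing of $\bfSigma' \triangleright \bfM$ whose covector $\boldsymbol{K}^\rmT$ is admissible only if every component of $\boldsymbol{K}$ carries a projective label, and such vectors need not span. For example, take $\bfGamma = \bfS^1$, $\bfSigma = \bfSigma'' = \bfD^2$ and $\bfSigma' = (\bfD^2)^\dagger$. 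The state space $V_\eend(S^2)$ is one-dimensional and spanned by the undecorated ball (Proposition~\ref{P:algebraic_and_skein_model} with $P = \varnothing$, $g = 0$). Yet a ball containing a closed $\calC$-labeled ribbon graph with a projective label represents $0$ there when $\calC$ is non-semisimple, because the categorical trace vanishes on $\Proj(\calC)$. The converse implication has the same defect, so your two families of test scalars are not in bijection, and descent to the quotients is not established.

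The missing step is to compare both morphism spaces with a single state space instead of testing with caps. Target surjective objects are admissible, so Lemmas~\ref{L:adjoint} and \ref{L:split} give isomorphisms $\bfA_\eend(\bfGamma)(\bfSigma,\bfSigma'') \cong V_\eend(\bfSigma^\dagger \circ \bfSigma'')$, $[\bfM] \mapsto [(\bfSigma^\dagger \triangleright \bfM) \ast \radun_{\bfSigma}]$, and $\bfA_\eend(\bfGamma^*)((\bfSigma'')^\rmT,\bfSigma^\rmT) \cong V_\eend((\bfSigma'')^* \circ \bfSigma^\rmT)$, $[\bfM^\rmT] \mapsto [((\bfSigma'')^* \triangleright \bfM^\rmT) \ast \radun_{(\bfSigma'')^\rmT}]$. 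Both closed surfaces have support $\overline{\Sigma} \cup_\Gamma \Sigma''$ and Lagrangian $\Lagr + \Lagr''$. The two vectors also agree, signature included. By the mate identity $(\bfSigma^\dagger \triangleright \bfM) \ast \radun_{\bfSigma} = (\bfM^\dagger \triangleleft \bfSigma'') \ast \radun_{\bfSigma''}$ and $\radun_{(\bfSigma'')^\rmT} = (\radun_{\bfSigma''})^*$, the second vector is the dual of the first. Moreover, $\_^*$ acts trivially on \mrphsms{2} between \mrphsms{1} $\varnothing \to \varnothing$.

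Both sides are then tested by the same covectors, so $\sum_i c_i [\bfM_i] = 0$ if and only if $\sum_i c_i [\bfM_i^\rmT] = 0$. This is what the paper's remark that $\bfM^\rmT$ is diffeomorphic to $\bfM$ amounts to, and your remaining steps then go through.

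One small correction to your functoriality check: the Maslov index changes sign, rather than staying invariant, when the order of the three Lagrangians is reversed. The identity $(\bfM' \ast \bfM)^\rmT = \bfM^\rmT \ast (\bfM')^\rmT$ still holds, because the middle surface $(\bfSigma')^\rmT$ carries the opposite orientation, which flips the sign once more.
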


We are now ready to prove the following result.

\begin{proposition}\label{P:circle_category}
 The linear functor $\bfD^2 : \calC \to \bfA_\eend(\bfS^1)$ is an equivalence.
\end{proposition}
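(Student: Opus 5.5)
To prove that $\bfD^2 : \calC \to \bfA_\eend(\bfS^1)$ is an equivalence, I will check that it is fully faithful and essentially surjective.

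\emph{Essential surjectivity.} By Lemma~\ref{L:ts_equivalent}, every object of $\bfA_\eend(\bfS^1)$ is isomorphic to a target surjective one. A target surjective \mrphsm{1} $\varnothing \to \bfS^1$ is connected, so up to isomorphism in $\bfACob_\calC$ it has the form $\bfSigma_g \circ \bfD^2(P)$. More concretely, it is a connected genus-$g$ surface with one boundary component and a decoration $P$. I will build an isomorphism in $\bfA_\eend(\bfS^1)$ between such an object and $\bfD^2_{F_\calC(P) \otimes \eend^{\otimes g}}$. The candidate is the obvious handlebody-type cobordism $M(\eta^{{} \bcs g})$ with a coupon joining the strands labeled by $P$ and by $g$ copies of $\eend$ via the structure maps $i_X$. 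This is the same graph $T_f$ as in Equation~\eqref{E:skein_model_picture}, with $f$ an identity. The candidate inverse is $M(\epsilon^{{} \bcs g})$ with a similar coupon.

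To see that the two composites are identities modulo the congruence $\calA'(\bfS^1)^\perp$, I will use Lemma~\ref{L:connected_2}. It reduces morphism spaces to skeins in a single boundary bijective cobordism, so the check becomes an identity between skeins in $\Sigma_g \times I$ and in $D^2 \times I$. These should follow from the defining universal property of $\eend$ together with the red-to-blue and green-to-red moves \eqref{E:RB} and \eqref{E:GR}: a red handle meridian is equivalent to the $\eend$-labelled coupon. This is essentially the content of Proposition~\ref{P:algebraic_and_skein_model}.

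\emph{Full faithfulness.} I will compute $\bfA_\eend(\bfS^1)(\bfD^2_X,\bfD^2_Y)$. Since $\bfD^2_X$ is admissible, Lemma~\ref{L:adjoint}(i) gives
\[
 \bfA_\eend(\bfS^1)(\bfD^2_X,\bfD^2_Y) \cong \bfA_\eend(\varnothing)(\id_\varnothing,(\bfD^2_X)^\dagger \circ \bfD^2_Y).
\]
By Lemma~\ref{L:split}, the right-hand side is $V_\eend((\bfD^2(O_X))^\dagger \circ \bfD^2(O_Y))$. Proposition~\ref{P:algebraic_and_skein_model} with $g=0$, applied after writing $(\bfD^2_X)^\dagger\circ\bfD^2_Y$ as $(\bfD^2(P))^\dagger \circ \bfSigma_0$ with $P = O_{X}\cup\overline{O_Y}$, identifies this space with $\calC(X,Y)$; I may need dualities here. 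I then trace through the isomorphisms to verify that $f \mapsto [\bfD^2_f]$ corresponds to this identification, and conclude that the map induced by the functor is bijective.

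\emph{Main obstacle.} I expect the hard step to be essential surjectivity: showing that positive genus surfaces are isomorphic to discs in $\bfA_\eend(\bfS^1)$. That both composites reduce to identities requires careful use of the universal property of $\eend$ and of the non-degeneracy built into $V_\eend$. My first approach is to test against all $\bfSigma'$ through $V_\eend$, where Proposition~\ref{P:algebraic_and_skein_model} already provides the isomorphism $\calC(F_\calC(P),\eend^{\otimes g}) \cong V_\eend(\cdots)$. I will then argue by Yoneda: if both objects represent the same functor on $\TS_\eend(\bfS^1)$, equivalently by Lemma~\ref{L:transpose_non-empty} on coadmissible test objects, then they are isomorphic.
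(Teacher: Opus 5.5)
\paragraph{Verdict.} Your proof of full faithfulness and your reduction of essential surjectivity to the objects $\bfSigma_g \bcs \bfD^2(P)$ are the paper's. (Full faithfulness goes through Lemmas~\ref{L:adjoint} and \ref{L:split}, then Proposition~\ref{P:algebraic_and_skein_model} with $g=0$; the reduction goes through Lemma~\ref{L:ts_equivalent}. You wrote $\bfSigma_g \circ \bfD^2(P)$, which does not typecheck.) The gap is the step you call the main obstacle, and it carries all the content of essential surjectivity: you neither specify an inverse to $[M(\eta^{{} \bcs g}),S(g,P),0]$ nor show that the composites are identities.

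\paragraph{The missing inverse.} A ``similar coupon'' built from the maps $i_X$ cannot serve in $M(\epsilon^{{} \bcs g})$. The maps $i_X$ go \emph{out of} $\eend$, whereas the inverse must deliver the factor $\eend^{\otimes g}$ to the outgoing disc. The paper's $S'(g,P)$ instead produces $\coend$-labelled strands and then applies the Drinfeld map $\calD : \coend \to \eend$ of Equation~\eqref{E:Drinfeld}; this is where factorizability enters. Moreover, $\bfM(g,P) \ast \bfM'(g,P)$ is supported in $M(\eta^{{} \bcs g}) \ast M(\epsilon^{{} \bcs g})$, not in $\Sigma_g \times I$. The paper passes from one support to the other by green-to-red moves, slam-dunks and slides. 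Only then does it remove the red components, using \eqref{E:Drinfeld} and \cite[Lemma~2.10]{DGGPR19}.

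\paragraph{Why your shortcuts do not close the gap.} Lemma~\ref{L:connected_2} only says that $\pi_M$ is surjective for a \emph{fixed} boundary bijective support $M$. It gives no way to compare a class supported in $M(\eta^{{} \bcs g}) \ast M(\epsilon^{{} \bcs g})$ with the identity supported in $\Sigma_g \times I$. So the verification does not reduce to skein identities in $\Sigma_g \times I$.

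For the Yoneda route, the correct reduction goes through Lemmas~\ref{L:adjoint} and \ref{L:split}, not Lemma~\ref{L:transpose_non-empty}. It says that $[\bfM(g,P)]$ is invertible if and only if $V_\eend(Z^\dagger \triangleright \bfM(g,P))$ is bijective for every $Z \in \TS_\eend(\bfS^1)$. Proposition~\ref{P:algebraic_and_skein_model} gives this at best when $Z$ is a disc $\bfD^2(P')$. It is natural only in $P$ at fixed genus, and it does not identify the map $V_\eend(Z^\dagger \triangleright \bfM(g,P))$ when $Z$ has positive genus. In particular it says nothing for $Z = \bfSigma_g \bcs \bfD^2(P)$, which is exactly the case that would produce a right inverse. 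Restricting the test objects to discs would presuppose the essential surjectivity you are trying to prove.

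You therefore still need one of two things: the explicit inverse together with the Kirby-calculus computation above, or some other argument showing that $\id_{\bfSigma_g \bcs \bfD^2(P)}$ factors through a disc.
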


\begin{proof}
 The functor $\bfD^2$ is fully faithful, because
 \begin{align*}
  \bfD^2 : \calC(X,Y) &\to \bfA_\eend(\bfS^1)(\bfD^2_X,\bfD^2_Y) \\*
  f &\mapsto [\bfD^2_f]
 \end{align*}
 is a linear isomorphism, thanks to Proposition~\ref{P:algebraic_and_skein_model} and Lemmas~\ref{L:split} and \ref{L:adjoint}.

 Furthermore, $\bfD^2$ is essentially surjective. Indeed, thanks to Lemmas~\ref{L:connected_objects} and \ref{L:ts_equivalent}, every object of $\bfA_\eend(\bfS^1)$ is isomorphic to a target surjective object. Furthermore, every object of $\TS_\eend(\bfS^1)$ is isomorphic to $\bfSigma_g \bcs \bfD^2(P)$, where $\bfSigma_g$ is the object of $\CCob$ defined by Equation~\eqref{E:surgery_functor_objects}, and $\bfD^2(P)$ is the \mrphsm{1} of $\bfACob_\calC(\varnothing,\bfS^1)$ defined by Equation~\eqref{E:disc_object}. Then, using the same notation of Equations~\eqref{E:algebraic_model} and \eqref{E:skein_model}, let $[\bfM(g,P)] \in \bfA_\eend(\bfS^1)(\bfD^2_{\eend^{\otimes g} \otimes F_\calC(P)},\bfSigma_g \bcs \bfD^2(P))$ denote the morphism given by
 \[
  \bfM(g,P) := (M(\eta^{{} \bcs g}),S(g,P),0),
 \]
 where $S(g,P) \subset M(\eta^{{} \bcs g})$ denotes the $\calC$-labeled bichrome graph
 \begin{equation}\label{E:disc_essentially_surjective_1}
  S(g,P) := \pic{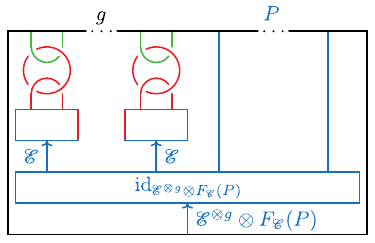}
 \end{equation}
 Then, the morphism $[\bfM'(g,P)] \in \bfA_\eend(\bfS^1)(\bfSigma_g \bcs \bfD^2(P),\bfD^2_{\eend^{\otimes g} \otimes F_\calC(P)})$ given by
 \[
  \bfM'(g,P) := (M(\epsilon^{{} \bcs g}),S'(g,P),0),
 \]
 where $S'(g,P) \subset M(\epsilon^{{} \bcs g})$ denotes the $\calC$-labeled bichrome graph
 \begin{equation}\label{E:disc_essentially_surjective_2}
  S'(g,P) := \pic{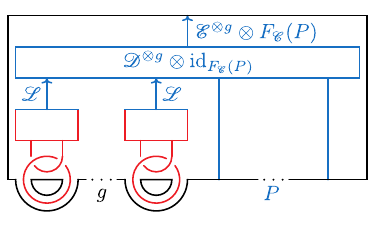}
 \end{equation}
 for the Drinfeld morphism $\calD : \coend \to \eend$ defined by Equation~\eqref{E:Drinfeld}, satisfies
 \begin{align*}
  [\bfM'(g,P) \ast \bfM(g,P)] &= \id_{\bfD^2_{\eend^{\otimes g} \otimes F_\calC(P)}}, &
  [\bfM(g,P) \ast \bfM'(g,P)] &= \id_{\bfSigma_g \bcs \bfD^2(P)}.
 \end{align*}
 Indeed, up to green-to-red moves, slam-dunks, and slides, this follows from Equation~\eqref{E:Drinfeld} and \cite[Lemma~2.10]{DGGPR19}, which yield the skein equivalences
 \[
  \pic{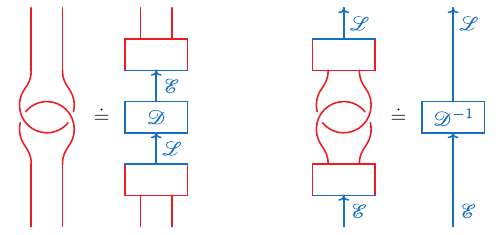}
 \]
\end{proof}

\subsection{Colimit density of coadmissible target surjective objects}

Next, we are going to show that coadmissible objects are finite colimit dense in $\bfA_\eend(\bfGamma)$ for every object $\bfGamma$ of $\bfACob_\calC$. The empty object has to be treated separately, so we start from there.

\begin{lemma}\label{L:empty_additive_generator}
 The object $\id_\varnothing$ of $\bfA_\eend(\varnothing)$ is a finite additive generator of $\bfA_\eend(\varnothing)$.
\end{lemma}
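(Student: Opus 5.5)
The plan is to show that every object $\bfSigma_\varnothing$ of $\bfA_\eend(\varnothing)$ is isomorphic to a finite direct sum of copies of $\id_\varnothing$, namely to $V_\eend(\bfSigma_\varnothing) \otimes \id_\varnothing$, in direct analogy with the proof of Lemma~\ref{L:connected_objects}. I will use Lemma~\ref{L:split}, which identifies
\[
 \bfA_\eend(\varnothing)(\id_\varnothing,\bfSigma_\varnothing) \cong V_\eend(\bfSigma_\varnothing), \qquad
 \bfA_\eend(\varnothing)(\bfSigma_\varnothing,\id_\varnothing) \cong V'_\eend(\bfSigma_\varnothing),
\]
with composition $\bfA_\eend(\varnothing)(\bfSigma_\varnothing,\id_\varnothing) \times \bfA_\eend(\varnothing)(\id_\varnothing,\bfSigma_\varnothing) \to \bfA_\eend(\varnothing)(\id_\varnothing,\id_\varnothing) \cong \Bbbk$ corresponding to the non-degenerate pairing $\langle \_,\_ \rangle_{\bfSigma_\varnothing}$. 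One point to check is that $\bfA_\eend(\varnothing)(\id_\varnothing,\id_\varnothing) \cong V_\eend(\varnothing) \otimes V'_\eend(\varnothing) \cong \Bbbk$, with $\id_{\id_\varnothing}$ corresponding to $1$; this follows from symmetric monoidality of $V_\eend$ and $V'_\eend$ (Proposition~\ref{P:renormalized_KL_TQFT}).

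First, since state spaces are finite-dimensional and $\langle \_,\_ \rangle_{\bfSigma_\varnothing}$ is non-degenerate, I choose dual bases $\{[\bfM_{\varnothing,i}]\}_{i \in I}$ of $V_\eend(\bfSigma_\varnothing)$ and $\{[\bfM'_{\varnothing,i}]\}_{i \in I}$ of $V'_\eend(\bfSigma_\varnothing)$, viewed as morphisms $\id_\varnothing \Rightarrow \bfSigma_\varnothing$ and $\bfSigma_\varnothing \Rightarrow \id_\varnothing$. Duality gives $[\bfM'_{\varnothing,i} \ast \bfM_{\varnothing,j}] = \delta_{i,j} \id_{\id_\varnothing}$. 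Next I verify $\sum_i [\bfM_{\varnothing,i} \ast \bfM'_{\varnothing,i}] = \id_{\bfSigma_\varnothing}$. Via the isomorphism $v_{\bfSigma_\varnothing,\bfSigma_\varnothing}$ of Lemma~\ref{L:split}, this amounts to checking that the operator $V_\eend(\sum_i \bfM_{\varnothing,i} \ast \bfM'_{\varnothing,i}) = \sum_i \langle [\bfM'_{\varnothing,i}],\_ \rangle [\bfM_{\varnothing,i}]$ from Equation~\eqref{E:linear_maps_tensor_product} is the identity of $V_\eend(\bfSigma_\varnothing)$, which holds exactly because the bases are dual. Injectivity of $v$ then upgrades this equality of operators to an equality in $\bfA_\eend(\varnothing)$.

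These identities exhibit $\bfSigma_\varnothing$ as a biproduct of $|I| = \dim V_\eend(\bfSigma_\varnothing)$ copies of $\id_\varnothing$ in the linear category $\bfA_\eend(\varnothing)$, with inclusions $[\bfM_{\varnothing,i}]$ and projections $[\bfM'_{\varnothing,i}]$. Therefore the smallest replete full subcategory containing $\id_\varnothing$ and closed under finite direct sums is all of $\bfA_\eend(\varnothing)$. The empty case $V_\eend(\bfSigma_\varnothing) = 0$ is covered as well, since then $\id_{\bfSigma_\varnothing} = 0$ and $\bfSigma_\varnothing$ is a zero object, that is, the empty direct sum.

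The only delicate step is making sure the biproduct identities hold in the quotient category $\bfA_\eend(\varnothing)$ rather than merely after applying $V_\eend$. This is precisely what Lemma~\ref{L:split} guarantees, so I do not expect any real obstacle beyond keeping track of that identification.
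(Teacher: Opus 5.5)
Your proposal is correct, and it is more direct than the paper's proof. You stay inside $\bfA_\eend(\varnothing)$ and use Lemma~\ref{L:split} to turn dual bases of $V_\eend(\bfSigma_\varnothing)$ and $V'_\eend(\bfSigma_\varnothing)$ into biproduct data. This exhibits $\bfSigma_\varnothing$ as a sum of $\dim V_\eend(\bfSigma_\varnothing)$ copies of $\id_\varnothing$. These are exactly the identities recorded at the start of the proof of Lemma~\ref{L:connected_objects}, so you could simply quote them.

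The paper takes a different route. It first proves $\id_\varnothing \cong (\bfD^2_{P_{\one}})^\dagger \circ \bfD^2_{\one}$: it identifies the two relevant morphism spaces with $\calC(P_{\one},\one)$ and $\calC(\one,P_{\one})$ through Proposition~\ref{P:algebraic_and_skein_model}, and checks via Equation~\eqref{E:eta_one} and move~\eqref{E:CS} that $\epsilon_{\one}$ and $\eta_{\one}$ give mutually inverse morphisms. It then whiskers this isomorphism with $\bfSigma_\varnothing$. Next it uses Lemma~\ref{L:connected_objects} to replace $\bfD^2_{\one} \circ \bfSigma_\varnothing$ by $\bfD^2_{V_\eend(\bfSigma_\varnothing) \otimes \one}$. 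Finally it pushes the resulting direct sum through the linear functor $\bfA_\eend((\bfD^2_{P_{\one}})^\dagger)$, using Proposition~\ref{P:circle_category}.

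Your argument needs only Lemma~\ref{L:split}, finite-dimensionality, and non-degeneracy of the pairing. It avoids projective covers, the connected sum move, and the circle category entirely. What the paper's detour buys is the explicit isomorphisms \eqref{E:admissible_non-empty_representative_of_empty} and \eqref{E:admissible_connected_representative}, which tie $\bfA_\eend(\varnothing)$ to decorated discs. These are reused later: for the quasi-inverse of $\bfhateta$, in Proposition~\ref{P:counit}, and in Proposition~\ref{P:renormalized_KL_TQFT_inside_ETQFT}. Your isomorphism is canonical as well, since $\sum_i [\bfM_{\varnothing,i}] \otimes [\bfM'_{\varnothing,i}]$ does not depend on the choice of dual bases, so it also yields naturality in $\bfSigma_\varnothing$ if needed.
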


\begin{proof}
 First of all, in $\bfA_\eend(\varnothing)$, we have an isomorphism
 \begin{equation}
  \id_\varnothing \cong (\bfD^2_{P_{\one}})^\dagger \circ \bfD^2_{\one}. \label{E:admissible_non-empty_representative_of_empty}
 \end{equation}
 Indeed, thanks to Proposition~\ref{P:algebraic_and_skein_model} and Lemma~\ref{L:split}, we have
 \begin{align*}
  \bfA_\eend(\varnothing)(\id_\varnothing,(\bfD^2_{P_{\one}})^\dagger \circ \bfD^2_{\one})
  &\cong V_\eend((\bfD^2_{P_{\one}})^\dagger \circ \bfD^2_{\one})
  \cong \calC(P_{\one},\one),
 \end{align*}
 which is generated by the projective cover epimorphism $\epsilon_{\one} : P_{\one} \to \one$, and similarly
 \begin{align*}
  \bfA_\eend(\varnothing)((\bfD^2_{P_{\one}})^\dagger \circ \bfD^2_{\one},\id_\varnothing)
  &\cong V'_\eend((\bfD^2_{P_{\one}})^\dagger \circ \bfD^2_{\one})
  \cong S'_\eend(D^2 \times I;O_{\one},O_{P_{\one}})
  \cong \calC(\one,P_{\one}),
 \end{align*}
 which is generated by the injective hull monomorphism $\eta_{\one} : \one \to P_{\one}$. The specified generators are inverse to each other, thanks to invariance under admissible skein equivalence, and to Equation~\eqref{E:eta_one} and move~\eqref{E:CS}.

 Furthermore, thanks to Proposition~\ref{P:circle_category}, for all objects $X,Y \in \calC$, the direct sum of $(\bfD^2_{P_{\one}})^\dagger \circ \bfD^2_X, (\bfD^2_{P_{\one}})^\dagger \circ \bfD^2_Y \in \bfA_\eend(\varnothing)$ is given by $(\bfD^2_{P_{\one}})^\dagger \circ \bfD^2_{X \oplus Y} \in \bfA_\eend(\varnothing)$, because every linear functor preserves direct sums. Then, thanks to Equation~\eqref{E:canonical_Vect-action_intertwiner} and Lemma~\ref{L:connected_objects}, for every object $\bfSigma_\varnothing \in \bfA_\eend(\varnothing)$ we have isomorphisms
 \begin{align}
  \bfSigma_\varnothing
  &\cong (\bfD^2_{P_{\one}})^\dagger \circ \bfD^2_{\one} \circ \bfSigma_\varnothing
  \cong V_\eend(\bfSigma_\varnothing) \otimes \left( (\bfD^2_{P_{\one}})^\dagger \circ \bfD^2_{\one} \right). \label{E:admissible_connected_representative}
 \end{align}
\end{proof}

Let $\bfGamma$ be an object of $\bfACob_\calC$. We denote by $\CTS_\eend(\bfGamma)$ the full linear subcategory of $\TS_\eend(\bfGamma)$ whose objects are coadmissible.

Let $\bfSigma : \bfGamma \to \bfGamma'$ be a \mrphsm{1} of $\bfACob_\calC$. If $\bfSigma : \bfGamma \to \bfGamma'$ is target surjective and coadmissible, then $\bfA_\eend(\bfSigma) : \TS_\eend(\bfGamma) \to \TS_\eend(\bfGamma')$ restricts to a linear functor
\[
 \bfA_\eend(\bfSigma) : \CTS_\eend(\bfGamma) \to \CTS_\eend(\bfGamma').
\]

\begin{lemma}\label{L:cts_colimit_dense}
 For every object $\bfGamma \in \bfACob_\calC$, the linear category $\CTS_\eend(\bfGamma)$ is finite colimit dense in $\bfA_\eend(\bfGamma)$.
\end{lemma}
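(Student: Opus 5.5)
We treat the empty object separately and then reduce the general case to the circle. For $\bfGamma = \varnothing$, the only target surjective object is $\id_\varnothing$, which is also coadmissible, so $\CTS_\eend(\varnothing)$ is the full subcategory on $\id_\varnothing$. By Lemma~\ref{L:empty_additive_generator}, $\id_\varnothing$ is a finite additive generator of $\bfA_\eend(\varnothing)$. Finite additively dense subcategories are finite colimit dense, so this case is settled.

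Now let $\bfGamma$ be non-empty. By Lemma~\ref{L:ts_equivalent}, every object of $\bfA_\eend(\bfGamma)$ is isomorphic to an object of $\TS_\eend(\bfGamma)$. Up to isomorphism in $\bfACob_\calC$, any target surjective object can be written as $\bfSigma \circ \bfD^2_X$ for a suitable $X \in \calC$ and a target surjective $\bfSigma : \bfS^1 \to \bfGamma$. To get this form, we slide all decorations and genus into a single disc attached along one outgoing circle, using Proposition~\ref{P:circle_category} together with the isomorphisms $\bfSigma_g \bcs \bfD^2(P) \cong \bfD^2_{\eend^{\otimes g} \otimes F_\calC(P)}$ from its proof. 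We may further arrange that $\bfSigma$ itself is coadmissible, for instance by choosing it of genus zero (pants, discs and cylinders), whose components all meet the incoming circle or $\bfGamma$.

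The key observation is then that $\bfSigma \circ \bfD^2_X$ is coadmissible whenever $X$ is projective. Since every object of $\calC$ is a cokernel of a map between projectives, say $P_1 \to P_0 \to X \to 0$, the object $\bfD^2_X$ is a finite colimit of $\bfD^2_{P_1}$ and $\bfD^2_{P_0}$ in $\bfA_\eend(\bfS^1) \simeq \calC$. The functor $\bfA_\eend(\bfSigma)$ is right exact by Lemma~\ref{L:adjoint_exactness}, because target surjective $1$-morphisms are admissible. Hence $\bfSigma \circ \bfD^2_X$ is the cokernel of $\bfSigma \circ \bfD^2_{P_1} \to \bfSigma \circ \bfD^2_{P_0}$. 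Both of these objects lie in $\CTS_\eend(\bfGamma)$: they are target surjective, and they are coadmissible because the only component of $\bfSigma \circ \bfD^2_{P_i}$ that fails to meet the incoming boundary, if any, carries the projective label $P_i$. Thus every object lies in the closure of $\CTS_\eend(\bfGamma)$ under finite colimits and isomorphisms.

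The main obstacle is the normal form claim in the second paragraph: that a target surjective object of $\bfA_\eend(\bfGamma)$ is isomorphic to $\bfSigma \circ \bfD^2_X$ with $\bfSigma$ coadmissible. Its handling of components is delicate, since components of $\bfSigma$ attached to several circles of $\bfGamma$ may carry genus that cannot all be pushed into a single disc. To address this, I would absorb genus and decorations one component at a time. Each component of the target surjective surface meets $\bfGamma$ in some circle, where we can attach a disc $\bfD^2_{X_k}$, giving the form $\bfSigma \circ (\bigsqcup_k \bfD^2_{X_k})$ with $\bfSigma$ planar. The resolution argument then proceeds one disc at a time: each step takes a cokernel in one variable, which is a separate finite colimit preserved by the right exact functor. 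Finite colimits of finite colimits remain in the closure, so the conclusion follows.
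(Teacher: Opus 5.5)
Your argument is correct and has the same structure as the paper's proof. The empty case goes through Lemma~\ref{L:empty_additive_generator}. For $\bfGamma \neq \varnothing$ you reduce to target surjective objects, take a projective presentation of a disc object in $\bfA_\eend(\bfS^1) \simeq \calC$, push it forward using right exactness of $\bfA_\eend$ of an admissible $1$-morphism $\bfS^1 \to \bfGamma$ (Lemma~\ref{L:adjoint_exactness}), and induct one component at a time.

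The paper avoids the normal-form difficulty you worried about. It cuts an undecorated disc $\bfD^2 \cong \bfD^2_{\one}$ out of each component that violates coadmissibility, leaving genus and decorations in place, and inducts on the number of such components. Your slot-by-slot step should be justified the same way: by right exactness of $\bfA_\eend$ of the target surjective $1$-morphism $\bfS^1 \to \bfGamma$ obtained by filling the other slots. It cannot rest on separate right exactness of $\bfmu$, which has not been established at this point.
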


\begin{proof}
 Let us separate two cases.

 If $\bfGamma = \varnothing$, then the only target surjective object of $\bfA_\eend(\varnothing)$ is $\id_\varnothing$, which is also coadmissible, so the claim follows from Lemma~\ref{L:empty_additive_generator}.

 If $\bfGamma \neq \varnothing$, let $\bfSigma_{\bfGamma} = (\Sigma_\Gamma,P_\Gamma,\Lagr_\Gamma) \in \bfA_\eend(\bfGamma)$ be an object. First of all, up to isomorphism, we can assume that $\bfSigma_{\bfGamma}$ is target surjective, thanks to Lemma~\ref{L:ts_equivalent}. Then, let $k(\bfSigma_{\bfGamma}) \in \N$ be the number of connected components of $\Sigma_\Gamma$ that violate coadmissibility. If $k(\bfSigma_{\bfGamma}) = 0$, then $\bfSigma_{\bfGamma}$ is coadmissible. If $k(\bfSigma_{\bfGamma}) > 0$, then $\bfSigma_{\bfGamma}$ is isomorphic to $\bfSigma \circ \bfD^2$ in $\bfACob_\calC$, where $\bfSigma : \bfS^1 \to \bfGamma$ is admissible. Thanks to Proposition~\ref{P:circle_category}, $\bfA_\eend(\bfS^1)$ is equivalent to $\calC$, and since $\Proj(\calC)$ is finite colimit dense in $\calC$, then $\bfD^2$ is the colimit of a finite diagram $D : J \to \CTS_\eend(\bfS^1)$. Then, since $\bfA_\eend(\bfSigma) : \bfA_\eend(\bfS^1) \to \bfA_\eend(\bfGamma)$ is right exact, $\bfSigma_{\bfGamma}$ is the colimit of the finite diagram $\bfA_\eend(\bfSigma) \circ D : J \to \bfA_\eend(\bfGamma)$. Since $k(\bfSigma \circ D(j)) < k(\bfSigma_{\bfGamma})$ for every $j \in J$, an induction on $k(\bfSigma_{\bfGamma})$ implies that $\bfSigma_{\bfGamma}$ belongs to the finite colimit closure of $\CTS_\eend(\bfGamma)$. \qedhere
\end{proof}

\subsection{Coevaluation as the (co)end}

Now, let us explain how to interpret the annulus $I \times S^1$ and its opposite as the end of a functor to $\bfA_\eend({\bfS^1} \disjun {(\bfS^1)^*})$ and as the coend of a functor to $\bfA_\eend({(\bfS^1)^*} \disjun {\bfS^1})$.

Indeed, let $\rcoev_{\bfS^1} \in \bfA_\eend({(\bfS^1)^*} \disjun {\bfS^1})$ and $\lcoev_{\bfS^1} \in \bfA_\eend({\bfS^1} \disjun {(\bfS^1)^*})$ denote the objects determined by the right and left coevaluation \mrphsms{1} of $\bfS^1$ defined by Equations~\eqref{E:right_coevaluation} and \eqref{E:left_coevaluation}.

Then, let $[\bfW_X] \in \bfA_\eend({\bfS^1} \disjun {(\bfS^1)^*})(\lcoev_{\bfS^1},{\bfD^2_X} \disjun {(\bfD^2_X)^\rmT})$ denote the morphism given by
\[
 \bfW_X := (W,I_X,0),
\]
where the transpose of a \mrphsm{1} of $\bfCob_\calC$ is defined by Equation~\eqref{E:transpose_1-morphism}, where $W$ denotes the \dmnsnl{3} cobordism with corners from $\lcoev_{S^1}$ to $D^2 \sqcup (D^2)^\rmT$ whose support is
given by $I \times D^2 \subset \R^3$ minus two open balls of radius $\frac{1}{4}$ and centers $(0,0,0)$ and $(1,0,0)$,
and where $I_X \subset W$ denotes the $\calC$-labeled ribbon graph from $\varnothing$ to $O_X \sqcup (O_X)^\dagger$
represented by the following picture.
\[
 \pic{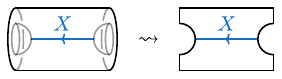}
\]
Notice that, since both $\lcoev_{\bfS^1}$ and ${\bfD^2_X} \disjun {(\bfD^2_X)^\rmT}$ are admissible, we can consider $[\bfW_X^\rmT] \in \bfA_\eend({(\bfS^1)^*} \disjun {\bfS^1})({(\bfD^2_X)^\rmT} \disjun {\bfD^2_X},\rcoev_{\bfS^1})$, where the transpose of a \mrphsm{2} of $\bfCob_\calC$ is defined by Equation~\eqref{E:transpose_2-morphism}.

\begin{proposition}\label{P:end}
 The dinatural family of morphisms
 \[
  [\bfW_X] : \lcoev_{\bfS^1} \to {\bfD^2_X} \disjun {(\bfD^2_X)^\rmT}
 \]
 of $\bfA_\eend({\bfS^1} \disjun {(\bfS^1)^*})$ yields
 \[
  \int_{X \in \calC} {\bfD^2_X} \disjun {(\bfD^2_X)^\rmT} = \lcoev_{\bfS^1},
 \]
 and the dinatural family of morphisms
 \[
  [\bfW_X^\rmT] : {(\bfD^2_X)^\rmT} \disjun {\bfD^2_X} \to \rcoev_{\bfS^1}
 \]
 of $\bfA_\eend({(\bfS^1)^*} \disjun {\bfS^1})$ yields
 \[
  \int^{X \in \calC} {(\bfD^2_X)^\rmT} \disjun {\bfD^2_X} = \rcoev_{\bfS^1}.
 \]
\end{proposition}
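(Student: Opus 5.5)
I would prove the end statement first and then obtain the coend statement by transposition. Lemma~\ref{L:transpose_non-empty} gives an equivalence $\_^\rmT : \TS_\eend(\bfS^1 \disjun (\bfS^1)^*)^\op \to \TS_\eend((\bfS^1)^* \disjun \bfS^1)$, and Lemma~\ref{L:ts_equivalent} shows that these subcategories are equivalent to the full categories. Under this equivalence, limits become colimits, $\lcoev_{\bfS^1}$ goes to $\rcoev_{\bfS^1}$, ${\bfD^2_X} \disjun {(\bfD^2_X)^\rmT}$ goes to ${(\bfD^2_X)^\rmT} \disjun {\bfD^2_X}$ (up to the braiding), and $[\bfW_X]$ goes to $[\bfW_X^\rmT]$. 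Dinaturality of $[\bfW_X]$ in $X$ is a skein equivalence: a coupon labeled $f$ can be slid around the annular core of $W$ from one boundary disc to the other, exactly as in the picture defining $I_X$.

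For the end, I have to show that every object $\bfSigma$ of $\bfA_\eend(\bfS^1 \disjun (\bfS^1)^*)$ gives a bijection
\[
\bfA_\eend(\bfS^1 \disjun (\bfS^1)^*)(\bfSigma,\lcoev_{\bfS^1}) \to \mathrm{Dinat}\bigl(\bfSigma,{\bfD^2_X} \disjun {(\bfD^2_X)^\rmT}\bigr).
\]
By Lemmas~\ref{L:ts_equivalent} and \ref{L:cts_colimit_dense}, the source is a finite colimit of coadmissible target surjective objects. Both sides are representable in the first variable, so they send finite colimits to limits. It therefore suffices to treat a spanning family of objects. I would reduce further using the adjunctions of Lemma~\ref{L:adjoint}. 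Since $\lcoev_{\bfS^1}$ and each ${\bfD^2_X} \disjun {(\bfD^2_X)^\rmT}$ are coadmissible, maps into them correspond to maps out of their adjoints in $\bfA_\eend(\varnothing)$, and these are computed by $V'_\eend$ through Lemma~\ref{L:split}.

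The concrete step is the following. A target surjective object of $\bfA_\eend(\bfS^1 \disjun (\bfS^1)^*)$ is, up to isomorphism, either a connected surface with two boundary circles or a disjoint union of two surfaces. The disjoint case reduces, via Proposition~\ref{P:circle_category} and Proposition~\ref{P:fullness}, to objects $\bfD^2_Y \disjun (\bfD^2_Z)^\rmT$. The connected case reduces to $\lcoev_{\bfS^1}$ glued with genus and decorations, and I expect it to be handled by the same computation. For $\bfSigma = \bfD^2_Y \disjun (\bfD^2_Z)^\rmT$, Proposition~\ref{P:algebraic_and_skein_model} identifies
\[
\bfA_\eend(\bfD^2_Y \disjun (\bfD^2_Z)^\rmT, \lcoev_{\bfS^1}) \cong \calC(Y \otimes Z^*,\eend),
\]
by capping the annulus into a once-punctured genus-one situation, that is, using $\bfSigma_1$ with state space $\calC(\,\cdot\,,\eend)$. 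It also identifies
\[
\bfA_\eend\bigl(\bfD^2_Y \disjun (\bfD^2_Z)^\rmT, {\bfD^2_X} \disjun {(\bfD^2_X)^\rmT}\bigr) \cong \calC(Y,X) \otimes \calC(X,Z),
\]
which is isomorphic to $\calC(Y \otimes Z^*, X \otimes X^*)$ by rigidity. The comparison map is then composition with $i_X$, so the claim becomes exactly the universal property of the adjoint end $\eend = \int_X X \otimes X^*$. The main check is that the composite of $[\bfW_X]$ with the skein generator corresponds to the coupon $i_X$. This should follow from the definition of $\Psi_{P,1}$ together with the handle-to-$i_{S}$ replacement used to define $J_\eend$ in Equation~\eqref{E:KL_TQFT_top_tangle}.

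The main obstacle is making this identification natural in $\bfSigma$ and compatible with the colimit reduction. The density lemma only provides colimits in the full category, whereas the dinatural-transformation functor has to be shown to turn those colimits into limits, which is automatic for representables. The harder point is ensuring that the test objects really span, meaning that both sides agree on $\CTS_\eend$ and not only on disjoint discs. For connected test objects, I would first try to use Lemma~\ref{L:coadjoint_exactness} and Proposition~\ref{P:fc_equivalence}-style arguments, expressing them as finite limits of disjoint-disc objects via the copairing $w$. If that fails, I would compute them directly using Lemma~\ref{L:connected_2} with the boundary bijective cobordism $W$.
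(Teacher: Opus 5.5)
Your reduction to test objects (this is Lemma~\ref{L:fc_equivalence_0} applied to $\CTS_\eend$) is fine, and so is the transposition argument for the coend. The gap is in the concrete step, which carries all the content: it is wrong for disjoint test objects. First, $\lcoev_{\bfS^1}$ is not coadmissible, since its annulus has empty incoming boundary and carries no projective label. Likewise, ${\bfD^2_X}\disjun{(\bfD^2_X)^\rmT}$ is coadmissible only for $X\in\Proj(\calC)$. The adjunction you can actually use is part~$(i)$ of Lemma~\ref{L:adjoint}, applied to the source, which is target surjective and hence admissible. Together with Lemma~\ref{L:split}, this identifies $\bfA_\eend({\bfS^1}\disjun{(\bfS^1)^*})\bigl({\bfD^2_Y}\disjun{(\bfD^2_Z)^\rmT},\lcoev_{\bfS^1}\bigr)$ with the state space of an annulus capped by two discs. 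That surface is a sphere with two marked points, so the space is $\calC(Y,Z)$, not the genus-one space $\calC(Y\otimes Z^*,\eend)$. For instance, for $\calC$ semisimple and $Y=Z=\one$, the first is $\Bbbk$, while $\calC(\one,\eend)$ has dimension equal to the number of simple objects. Second, $\calC(Y,X)\otimes\calC(X,Z)$ is not isomorphic to $\calC(Y\otimes Z^*,X\otimes X^*)$ by rigidity. For $Y=Z=\one$ and $X$ a simple object not isomorphic to $\one$, the former vanishes while the latter contains $\lcoev_X\neq 0$. So in the disjoint case the claim is not the universal property of $\eend$ in disguise. What has to be shown is that the $[\bfW_X]$ induce $\calC(Y,Z)\cong\int_{X\in\calC}\calC(Y,X)\otimes\calC(X,Z)$. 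This needs the projectivity of $Y$ and $Z$, which your reduction to $\CTS_\eend$ provides but which you never use.

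The paper handles this case using projectivity. It chooses $r_Y$ and $s_Z$ with $r_Y\circ(\id_Y\otimes\eta_{\one})=\id_Y$ and $(\id_Z\otimes\epsilon_{\one})\circ s_Z=\id_Z$. It then uses the connected sum move~\eqref{E:CS} to join the two discs. A dinatural family $g_X\otimes h_X$ thereby becomes the dinatural family $(g_X\otimes h_X^*)\circ(r_Y\otimes s_Z^*)\circ(\id_Y\otimes\lcoev_{P_{\one}}\otimes\id_{Z^*})$ in $\calC(Y\otimes Z^*,X\otimes X^*)$. The end property of $\eend$ then gives a factorization through $\lcoev_{\bfS^1}$, but only its existence. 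Uniqueness is proved separately and globally: $[\bfW_G]$ is shown to be a monomorphism for a projective generator $G$, via Remark~\ref{R:mono-epi}, boundary bijective cobordisms and Lemma~\ref{L:connected_1}.

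An alternative along your lines also works. For $Z$ projective, the non-degenerate pairing~\eqref{E:non-deg_m-trace_pairing} gives $\calC(X,Z)^*\cong\calC(Z,X)$ naturally in $X$. Hence $\int_{X\in\calC}\calC(Y,X)\otimes\calC(X,Z)\cong\Nat(\calC(Z,\_),\calC(Y,\_))\cong\calC(Y,Z)$ by Yoneda, and you would then have to check that this isomorphism is the one induced by the $[\bfW_X]$.

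Finally, the genus-one comparison you wrote down, $\calC(Y,\eend)$ against $\calC(Y,X\otimes X^*)$, is precisely the \emph{connected} case. By Proposition~\ref{P:circle_category}, a connected target surjective test object is an annulus carrying a single point $Y$, and there the claim does reduce directly to the universal property of $\eend$. So your plan of expressing connected test objects as finite limits of disjoint ones via the copairing $w$ has it backwards: the connected case is the easy one, and the disjoint case is where the work lies.
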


\begin{proof}
 Let us show that, for every object
 \[
  \bfSigma_{{\bfS^1} \disjun {(\bfS^1)^*}} = (\Sigma_{S^1 \sqcup (S^1)^*},P_{S^1 \sqcup (S^1)^*},\Lagr_{S^1 \sqcup (S^1)^*}) \in \bfA_\eend({\bfS^1} \disjun {(\bfS^1)^*})
 \]
 and every dinatural family of morphisms
 \[
  [\bfM_X] \in \bfA_\eend({\bfS^1} \disjun {(\bfS^1)^*}) \left( \bfSigma_{{\bfS^1} \disjun {(\bfS^1)^*}},{\bfD^2_X} \disjun {(\bfD^2_X)^\rmT} \right),
 \]
 there exists a unique morphism
 \[
  [\bfM] \in \bfA_\eend({\bfS^1} \disjun {(\bfS^1)^*}) \left( \bfSigma_{{\bfS^1} \disjun {(\bfS^1)^*}}, \lcoev_{\bfS^1} \right)
 \]
 satisfying
 \[
  [\bfM_X] = [\bfW_X] \circ [\bfM].
 \]
 First of all, up to isomorphism, we can assume that
 $\bfSigma_{{\bfS^1} \disjun {(\bfS^1)^*}}$ is target surjective, thanks to Lemma~\ref{L:ts_equivalent}. In particular, we have two cases:
 \begin{enumerate}
  \item There exists an object
   \[
    \bfSigma_{\bfS^1} = (\Sigma_{S^1},P_{S^1},\Lagr_{S^1}) \in \TS_\eend(\bfS^1)
   \]
   such that $\bfSigma_{{\bfS^1} \disjun {(\bfS^1)^*}}$ is isomorphic to $(\bfP^2 \cdot (\id_{S^1} \disjun \_^*))^\dagger \circ \bfSigma_{\bfS^1}$ in $\bfACob_\calC$, where $\_^* : (S^1)^* \to S^1$ denotes the diffeomorphism induced by complex conjugation, see Equation~\eqref{E:right_action_1-morphism} for the right action of a diffeomorphism on a morphism of $\bfACob_\calC$;
  \item There exist objects
   \begin{gather*}
    \bfSigma_{\bfS^1} = (\Sigma_{S^1},P_{S^1},\Lagr_{S^1}) \in \TS_\eend(\bfS^1), \\*
    \bfSigma_{(\bfS^1)^*} = (\Sigma_{(S^1)^*},P_{(S^1)^*},\Lagr_{(S^1)^*}) \in \TS_\eend((\bfS^1)^*)
   \end{gather*}
   such that $\bfSigma_{{\bfS^1} \disjun {(\bfS^1)^*}}$ is isomorphic to $\bfSigma_{\bfS^1} \sqcup \bfSigma_{(\bfS^1)^*}$ in $\bfACob_\calC$.
 \end{enumerate}
 Then, Proposition~\ref{P:circle_category} implies that, up to isomorphism in $\bfA_\eend(\bfS^1)$, we can assume that $\Sigma_{S^1} = D^2$ and $P_{S^1} = O_Y$ for some $Y \in \calC$, and similarly, up to isomorphism in $\bfA_\eend((\bfS^1)^*)$, we can assume that $\Sigma_{(S^1)^*} = (D^2)^\rmT$ and $P_{(S^1)^*} = (O_Z)^\dagger$ for some $Z \in \calC$. Furthermore, thanks to Lemma~\ref{L:fc_equivalence_0}, we can assume that $Y,Z \in \calC$ are projective, because $\CTS_\eend({\bfS^1} \disjun {(\bfS^1)^*})$ is finite colimit dense in $\bfA_\eend({\bfS^1} \disjun {(\bfS^1)^*})$.

 In the first case, Lemmas~\ref{L:adjoint} and \ref{L:split} imply that
 \[
  \bfA_\eend({\bfS^1} \disjun {(\bfS^1)^*}) \left( (\bfP^2 \cdot (\id_{S^1} \disjun \_^*))^\dagger \circ \bfD^2_Y,{\bfD^2_X} \disjun {(\bfD^2_X)^\rmT} \right)
 \]
 is naturally isomorphic to
 \[
  V_\eend\left( (\bfD^2_Y)^\dagger \circ (\bfP^2 \cdot (\id_{S^1} \disjun \_^*)) \circ ({\bfD^2_X} \disjun {(\bfD^2_X)^\rmT}) \right),
 \]
 and Proposition~\ref{P:algebraic_and_skein_model} implies that it is naturally isomorphic to
 \[
  \calC(Y,X \otimes X^*)
 \]
 through the map that sends every morphism $f_X$ in $\calC(Y,X \otimes X^*)$ to the morphism $[W \ast M,T_X,0]$ in $\bfA_\eend({\bfS^1} \disjun {(\bfS^1)^*}) \left( (\bfP^2 \cdot (\id_{S^1} \disjun \_^*))^\dagger \circ \bfD^2_Y,{\bfD^2_X} \disjun {(\bfD^2_X)^\rmT} \right)$, where $M$ denotes the \dmnsnl{3} cobordism with corners defined by the right action of an isomorphism from $(P^2 \cdot (\id_{S^1} \disjun \_^*))^\dagger \circ D^2$ to $\lcoev_{S^1}$ on $\id_{\lcoev_{S^1}}$, and $T_X \subset W \ast M$ denotes the $\calC$-labeled bichrome graph from $O_Y$ to $O_X \sqcup (O_X)^\dagger$ appearing on the left-hand side of Equation~\eqref{E:dinatural_skein_equivalence_1}.
 \begin{equation}\label{E:dinatural_skein_equivalence_1}
  \pic{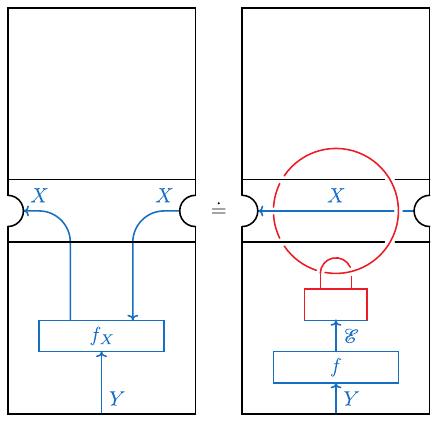}
 \end{equation}
 Then, dinaturality of $[\bfM_X]$ implies dinaturality of $f_X$, which in turn implies the existence of a unique morphism $f \in \calC(Y,\eend)$
 satisfying the admissible skein equivalence of Equation~\eqref{E:dinatural_skein_equivalence_1}.
 This means that there exists a $\calC$-labeled bichrome graph $T \subset M$ from $O_Y$ to $\varnothing$ satisfying
 \[
  [\bfM_X] = [\bfW_X] \circ [M,T,0].
 \]

 In the second case, Proposition~\ref{P:renormalized_KL_TQFT} and Lemmas~\ref{L:adjoint} and \ref{L:split} imply that
 \[
  \bfA_\eend({\bfS^1} \disjun {(\bfS^1)^*}) \left( {\bfD^2_Y} \disjun {(\bfD^2_Z)^\rmT},{\bfD^2_X} \disjun {(\bfD^2_X)^\rmT} \right)
 \]
 is naturally isomorphic to
 \[
  V_\eend \left( (\bfD^2_Y)^\dagger \circ \bfD^2_X \right) \otimes V_\eend \left( ((\bfD^2_Z)^\dagger \circ \bfD^2_X)^\rmT \right),
 \]
 and Proposition~\ref{P:algebraic_and_skein_model} implies that it is naturally isomorphic to
 \[
  \calC(Y,X) \otimes \calC(X,Z)
 \]
 through the map that sends every $f_X = g_X \otimes h_X \in \calC(Y,X) \otimes \calC(X,Z)$ to
 \[
  [{\bfD^2_{g_X}} \disjun {(\bfD^2_{h_X})^\rmT}] \in \bfA_\eend({\bfS^1} \disjun {(\bfS^1)^*}) \left( {\bfD^2_Y} \disjun {(\bfD^2_Z)^\rmT},{\bfD^2_X} \disjun {(\bfD^2_X)^\rmT} \right).
 \]
 Then, let us consider morphisms $r_Y \in \calC(Y \otimes P_{\one},Y)$ and $s_Z \in \calC(Z,Z \otimes P_{\one})$ satisfying
 \begin{align*}
  r_Y \circ (\id_Y \otimes \eta_{\one}) &= \id_Y, &
  (\id_Z \otimes \epsilon_{\one}) \circ s_Z &= \id_Z
 \end{align*}
 for the monomorphism $\eta_{\one} \in \calC(\one,P_{\one}) $ and the epimorphism $\epsilon_{\one} \in \calC(P_{\one},\one)$ appearing in move~\eqref{E:CS}, which exist because $Y,Z \in \calC$ are projective. Then, up to connected sum and admissible skein equivalence,
 \[
  [{\bfD^2_{g_X}} \disjun {(\bfD^2_{h_X})^\rmT}] = [W \ast M,T_X,0]
 \]
 in $\bfA_\eend({\bfS^1} \disjun {(\bfS^1)^*}) \left( {\bfD^2_Y} \disjun {(\bfD^2_Z)^\rmT},{\bfD^2_X} \disjun {(\bfD^2_X)^\rmT} \right)$, where $M$ denotes the \dmnsnl{3} cobordism with corners from $D^2 \sqcup (D^2)^\rmT$ to $\lcoev_{S^1}$ obtained from $W$ by reversing its orientation and exchanging incoming and outgoing horizontal boundary identifications, and $T_X \subset W \ast M$ denotes the $\calC$-labeled bichrome graph from $O_Y \sqcup (O_Z)^\dagger$ to $O_X \sqcup (O_X)^\dagger$ appearing on the left-hand side of Equation~\eqref{E:dinatural_skein_equivalence_3}.
 \begin{equation}\label{E:dinatural_skein_equivalence_3}
  \pic{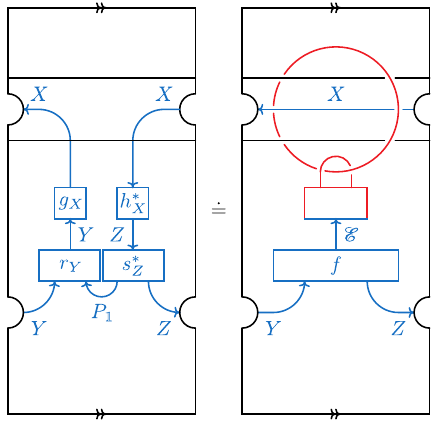}
 \end{equation}
 Then, dinaturality of $[\bfM_X]$ implies dinaturality of $f_X = g_X \otimes h_X$, which implies dinaturality of $f'_X = (g_X \otimes h_X^*) \circ (r_Y \otimes s_Z^*) \circ (\id_Y \otimes \lcoev_{P_{\one}} \otimes \id_{Z^*})$, which in turn implies the existence of a unique morphism $f' \in \calC(Y \otimes Z^*,\eend)$
 satisfying the admissible skein equivalence of Equation~\eqref{E:dinatural_skein_equivalence_3}.
 This means that there exists a $\calC$-labeled bichrome graph $T \subset M$ from $O_Y \sqcup (O_Z)^\dagger$ to $\varnothing$ satisfying
 \[
  [\bfM_X] = [\bfW_X] \circ [M,T,0].
 \]

 Notice that, in both cases, this decomposition is unique, because, if $G$ is a projective generator of $\calC$, then $[\bfW_G]$ is a monomorphism. In order to prove it, thanks to Remark~\ref{R:mono-epi}, we can show
 that the linear map $V_\eend(\bfSigma'_{{\bfS^1} \disjun {(\bfS^1)^*}} \triangleright \bfW_G)$ is injective for every object
 \[
  \bfSigma'_{{\bfS^1} \disjun {(\bfS^1)^*}} = (\Sigma'_{S^1 \sqcup (S^1)^*},P'_{S^1 \sqcup (S^1)^*},\Lagr'_{S^1 \sqcup (S^1)^*})
 \]
 of $\bfA'_\eend({\bfS^1} \disjun {(\bfS^1)^*})$. Now, Equation~\eqref{E:adjunction_V_V'} implies that $V_\eend(\bfSigma'_{{\bfS^1} \disjun {(\bfS^1)^*}} \triangleright \bfW_G)$ is injective if and only if $V'_\eend(\bfSigma'_{{\bfS^1} \disjun {(\bfS^1)^*}} \triangleright \bfW_G)$ is surjective. If $M''$ is a boundary bijective \dmnsnl{3} cobordism from $\Sigma'_{S^1 \sqcup (S^1)^*} \circ (D^2 \sqcup (D^2)^\rmT)$ to $\varnothing$, then the \dmnsnl{3} cobordism
 \[
  M' := M'' \ast (\Sigma'_{S^1 \sqcup (S^1)^*} \triangleright W)
 \]
 from $\Sigma'_{S^1 \sqcup (S^1)^*} \circ \lcoev_{S^1}$ to $\varnothing$ is boundary bijective too. Then, thanks to Lemma~\ref{L:connected_1}, the linear map
 \begin{align*}
  \pi_{M'} : S'_\eend(M';P'_{S^1 \sqcup (S^1)^*},\varnothing) &\to V'_\eend(\bfSigma'_{{\bfS^1} \disjun {(\bfS^1)^*}} \circ \lcoev_{\bfS^1}) \\*
  [T'] &\mapsto [M',T',0]
 \end{align*}
 is surjective.
 Since $G$ is a projective generator, then, up to admissible skein equivalence, for every admissible $\calC$-labeled bichrome graph $T' \subset M'$ from $P'_{S^1 \sqcup (S^1)^*}$ to $\varnothing$ there exist admissible $\calC$-labeled bichrome graphs $T''_1,\ldots,T''_m \subset M''$ from $P'_{S^1 \sqcup (S^1)^*} \cup (O_G \sqcup (O_G)^\dagger)$ to $\varnothing$ such that
 \[
  [M',T',0] = \sum_{i=1}^m [M'',T''_i,0] \circ [\bfSigma'_{{\bfS^1} \disjun {(\bfS^1)^*}} \triangleright \bfW_G].
 \]
 Indeed, up to isotopy and skein equivalence, $T'$ intersects $W$ along $I_X$, and we can suppose that $X$ is projective, because $T'$ is admissible. Since the identity of every projective object $X$ can be written as a sum of endomorphisms that factor through the projective generator $G$, the claim follows from an admissible skein equivalence.

 The second statement is obtained from the first one by applying Lemma~\ref{L:transpose_non-empty}. \qedhere
\end{proof}

\begin{remark}\label{R:coend_cokernel}
 Thanks to Lemma~\ref{L:adjoint_exactness}, the separate linear functor
 \begin{align*}
  (\bfD^2)^\rmT \sqcup \bfD^2 : (\calC^\op,\calC) &\to \bfA_\eend({(\bfS^1)^*} \disjun {\bfS^1}) \\*
  X \otimes Y &\mapsto (\bfD^2_X)^\rmT \sqcup \bfD^2_Y
 \end{align*}
 is separately right exact, because both $(\bfD^2_X)^\rmT \sqcup \id_{\bfS^1}$ and $\id_{(\bfS^1)^*} \sqcup \bfD^2_Y$ are admissible. Then, it follows from Proposition~\ref{P:end} and \cite[Corollary~5.1.8]{KL01} that $\rcoev_{\bfS^1}$ is the cokernel of a morphism in $\bfA_\eend({(\bfS^1)^*} \disjun {\bfS^1})$, namely
 \[
  ([{(\bfD^2_f)^\rmT} \disjun {\id_{\bfD^2_G}}] - [{\id_{(\bfD^2_G)^\rmT}} \disjun {\bfD^2_f}])_{f \in \calB} : \bigoplus_{f \in \calB} {(\bfD^2_G)^\rmT} \disjun {\bfD^2_G} \to {(\bfD^2_G)^\rmT} \disjun {\bfD^2_G},
 \]
 where $G$ is a projective generator of $\calC$, and $\calB$ is a basis of $\End_\calC(G)$, as in Section~\ref{S:ribbon_categories}. Notice that this morphism belongs to the image of the functor $\bfmu_{(\bfS^1)^*,\bfS^1}$ of Proposition~\ref{P:lax_monoidality_ETQFT}.
\end{remark}

\subsection{Limit density of coadmissible target bijective objects}

Next, we are going to show that coadmissible objects are also finite limit dense in $\bfA_\eend(\bfGamma)$ for every object $\bfGamma$ of $\bfACob_\calC$. This will allow us to prove that the separate linear functor $\bfmu_{\bfGamma,\bfGamma'} : (\bfA_\eend(\bfGamma),\bfA_\eend(\bfGamma')) \to \bfA_\eend(\bfGamma \disjun \bfGamma')$ of Proposition~\ref{P:lax_monoidality_ETQFT} is separately left exact for all $\bfGamma, \bfGamma' \in \bfACob_\calC$, and that the image of the \fnctr{2} $\bfA_\eend : \bfACob_\calC \to \bfCat_\Bbbk$ is contained in the \ctgr{2} $\SCat_\Bbbk$ of separate linear categories.

Let $\bfGamma = \Gamma$ and $\bfGamma' = \Gamma'$ be objects of $\bfACob_\calC$. A \mrphsm{1} $\bfSigma = (\Sigma,P,\Lagr)$ from $\bfGamma$ to $\bfGamma'$ in $\bfACob_\calC$ is \textit{target injective} if the embedding
\[
 \iota : \Gamma' \hookrightarrow \Sigma
\]
induced by the outgoing boundary parametrization determines an injection
\[
 \iota_* : \pi_0(\Gamma') \hookrightarrow \pi_0(\Sigma).
\]

Let $\bfGamma$ be an object of $\bfACob_\calC$. We denote by $\CTB_\eend(\bfGamma)$ the full linear subcategory of $\CTS_\eend(\bfGamma)$ whose objects are target injective.

Let $\bfSigma : \bfGamma \to \bfGamma'$ be a \mrphsm{1} of $\bfACob_\calC$. If $\bfSigma : \bfGamma \to \bfGamma'$ is coadmissible and target bijective, then $\bfA_\eend(\bfSigma) : \bfA_\eend(\bfGamma) \to \bfA_\eend(\bfGamma')$ restricts to a linear functor $\bfA_\eend(\bfSigma) : \CTB_\eend(\bfGamma) \to \CTB_\eend(\bfGamma')$.

\begin{lemma}\label{L:ctb_colimit_dense}
 For every object $\bfGamma \in \bfACob_\calC$, the linear category $\CTB_\eend(\bfGamma)$ is finite colimit dense in the linear category $\bfA_\eend(\bfGamma)$.
\end{lemma}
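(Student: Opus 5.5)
By Lemma~\ref{L:cts_colimit_dense}, $\CTS_\eend(\bfGamma)$ is already finite colimit dense in $\bfA_\eend(\bfGamma)$. Finite colimit closures are transitive, so it suffices to show that every object of $\CTS_\eend(\bfGamma)$ lies in the finite colimit closure of $\CTB_\eend(\bfGamma)$. The case $\bfGamma = \varnothing$ is trivial: the only target surjective object is $\id_\varnothing$, which is vacuously target injective. We may therefore assume $\bfGamma \neq \varnothing$.

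Let $\bfSigma_{\bfGamma} = (\Sigma_\Gamma,P_\Gamma,\Lagr_\Gamma)$ be coadmissible and target surjective. We argue by induction on the defect $d(\bfSigma_{\bfGamma}) := |\pi_0(\Gamma)| - |\pi_0(\Sigma_\Gamma)|$, which vanishes exactly when $\bfSigma_{\bfGamma}$ is target bijective. If $d > 0$, some component of $\Sigma_\Gamma$ meets at least two circles of $\Gamma$. Cutting that component along a separating arc joining two such boundary circles exhibits $\bfSigma_{\bfGamma}$, up to isomorphism in $\bfACob_\calC$, as $\bfSigma \circ \lcoev_{\bfS^1}$, where $\bfSigma : \bfS^1 \disjun (\bfS^1)^* \disjun \ldots \to \bfGamma$ is coadmissible and target surjective, and the annulus $\lcoev_{\bfS^1}$ joins the two cut boundary circles. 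To make this precise, I would choose the cut so that $\bfSigma$ is a pair of pants glued to a surface with one fewer connection.

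Next we resolve the annulus. Proposition~\ref{P:end} identifies $\lcoev_{\bfS^1}$ with the end $\int_{X} \bfD^2_X \disjun (\bfD^2_X)^\rmT$, which is a limit, whereas we need a colimit presentation. Instead I would use the coend statement combined with transposition (Lemma~\ref{L:transpose_non-empty}). Alternatively, I would mirror Remark~\ref{R:coend_cokernel}: present the relevant annulus object as the cokernel of
\[
 \bigoplus_{f \in \calB} \bfD^2_G \disjun (\bfD^2_G)^\rmT \to \bfD^2_G \disjun (\bfD^2_G)^\rmT
\]
for a projective generator $G$, oriented so that the annulus is the target. Then $\lcoev_{\bfS^1}$ becomes a finite colimit of objects $\bfD^2_G \disjun (\bfD^2_G)^\rmT$, which are disjoint unions of two discs with projective labels, hence biadmissible. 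Applying $\bfA_\eend(\bfSigma)$ preserves this colimit by Lemma~\ref{L:adjoint_exactness}, since $\bfSigma$ is admissible, being target surjective. This presents $\bfSigma_{\bfGamma}$ as a finite colimit of objects $\bfSigma \circ (\bfD^2_G \disjun (\bfD^2_G)^\rmT)$. These objects are target surjective, coadmissible (their new components carry projective labels $G$), and have strictly smaller defect, so the induction closes.

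The main obstacle is the colimit presentation of the annulus. Proposition~\ref{P:end} gives $\lcoev_{\bfS^1}$ as an end and $\rcoev_{\bfS^1}$ as a coend, and the cut must produce the coend version, \emph{i.e.}\ $\rcoev_{\bfS^1}$ in $\bfA_\eend((\bfS^1)^* \disjun \bfS^1)$. I would first try to arrange, using the braiding and the diffeomorphism $\_^*$, that the gluing circle pair is ordered as $(\bfS^1)^* \disjun \bfS^1$, so that Remark~\ref{R:coend_cokernel} applies verbatim. If orientations obstruct this, I would transpose and invoke the end statement for $\bfGamma^*$. A subsidiary check is that the restricted objects remain coadmissible, which amounts to checking that the components of the dual surface disjoint from the outgoing boundary still carry projective labels.
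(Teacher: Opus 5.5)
Your argument is the paper's: reduce to $\CTS_\eend(\bfGamma)$ by Lemma~\ref{L:cts_colimit_dense}, induct on the failure of target injectivity, and write $\bfSigma_{\bfGamma} \cong \bfSigma \circ \rcoev_{\bfS^1}$ with $\bfSigma$ target surjective (hence admissible, so $\bfA_\eend(\bfSigma)$ is right exact by Lemma~\ref{L:adjoint_exactness}); then resolve $\rcoev_{\bfS^1}$ by the cokernel of Remark~\ref{R:coend_cokernel}. The ``obstacle'' you flag does not arise, and no transposition detour is needed: the paper writes the decomposition directly with $\rcoev_{\bfS^1}$, absorbing any braiding and the conjugation $\_^*$ into $\bfSigma$, and geometrically the cut is along a simple closed curve (e.g.\ one parallel to a boundary circle), not along an arc.
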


\begin{proof}
 Let us separate two cases.

 If $\bfGamma = \varnothing$, then the only target bijective object of $\bfA_\eend(\varnothing)$ is $\id_\varnothing$, which is also coadmissible, so the claim follows from Lemma~\ref{L:empty_additive_generator}.

 If $\bfGamma \neq \varnothing$, let $\bfSigma_{\bfGamma} = (\Sigma_\Gamma,P_\Gamma,\Lagr_\Gamma) \in \bfA_\eend(\bfGamma)$ be an object. First of all, up to iterated finite colimits, we can assume that $\bfSigma_{\bfGamma}$ is coadmissible and target surjective, thanks to Lemma~\ref{L:cts_colimit_dense}. Then, let $n(\bfSigma_{\bfGamma}) \in \N$ be the number of connected components of $\Gamma$ that violate target injectivity. If $n(\bfSigma_{\bfGamma}) = 0$, then $\bfSigma_{\bfGamma}$ is target injective. If $n(\bfSigma_{\bfGamma}) > 0$, then $\bfSigma_{\bfGamma}$ is isomorphic to $\bfSigma \circ \rcoev_{\bfS^1}$ in $\bfACob_\calC$, where $\bfSigma : {(\bfS^1)^*} \disjun {\bfS^1} \to \bfGamma$ is admissible and $n(\bfSigma) < n(\bfSigma_{\bfGamma})$. Thanks to Remark~\ref{R:coend_cokernel}, $\rcoev_{\bfS^1}$ is the colimit of a finite diagram $D : J \to \CTB_\eend({(\bfS^1)^*} \disjun {\bfS^1})$. Then, since $\bfA_\eend(\bfSigma) : \bfA_\eend({(\bfS^1)^*} \disjun {\bfS^1}) \to \bfA_\eend(\bfGamma)$ is right exact, $\bfSigma_{\bfGamma}$ is the colimit of the finite diagram $\bfA_\eend(\bfSigma) \circ D : J \to \bfA_\eend(\bfGamma)$. Since $n(\bfSigma \circ D(j)) < n(\bfSigma_{\bfGamma})$ for every $j \in J$, an induction on $n(\bfSigma_{\bfGamma})$ implies that $\bfSigma_{\bfGamma}$ belongs to the finite colimit closure of $\CTB_\eend(\bfGamma)$. \qedhere
\end{proof}

\begin{proposition}\label{P:left_exactness}
 For all objects $\bfGamma, \bfGamma' \in \bfACob_\calC$, the separate linear functor $\bfmu_{\bfGamma,\bfGamma'} : (\bfA_\eend(\bfGamma),\bfA_\eend(\bfGamma')) \to \bfA_\eend(\bfGamma \disjun \bfGamma')$ is separately left exact.
\end{proposition}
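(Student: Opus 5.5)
We reduce separate left exactness to left exactness in each variable separately, by symmetry in the first. Fix $\bfSigma_{\bfGamma'} \in \bfA_\eend(\bfGamma')$ and a finite diagram $D : J \to \bfA_\eend(\bfGamma)$ with limit $L$. We must show that $L \disjun \bfSigma_{\bfGamma'}$ is the limit of $D \disjun \bfSigma_{\bfGamma'}$ in $\bfA_\eend(\bfGamma \disjun \bfGamma')$.

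\textbf{Step 1: test only on coadmissible target bijective objects.} By Remark~\ref{R:adjoints_and_(co)limits}.$(i)$, a cone is a limit if and only if every representable functor $\bfA_\eend(\bfGamma \disjun \bfGamma')(\bfSigma,\_)$ sends it to a limit in $\Vect_\Bbbk$. The class of $\bfSigma$ for which this holds is closed under finite colimits, because $\bfA_\eend(\bfGamma \disjun \bfGamma')(\colim_k \bfSigma_k,\_) \cong \lim_k \bfA_\eend(\bfGamma \disjun \bfGamma')(\bfSigma_k,\_)$ and limits commute with limits. By Lemma~\ref{L:ctb_colimit_dense}, $\CTB_\eend(\bfGamma \disjun \bfGamma')$ is finite colimit dense, so it suffices to test with $\bfSigma$ coadmissible and target bijective.

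\textbf{Step 2: split the test object.} A target bijective $\bfSigma$ with source $\varnothing$ has one connected component per boundary circle. Up to isomorphism it therefore decomposes as $\bfSigma_1 \disjun \bfSigma_2$ with $\bfSigma_1 \in \CTB_\eend(\bfGamma)$ and $\bfSigma_2 \in \CTB_\eend(\bfGamma')$, both coadmissible; for $\bfGamma'=\varnothing$ we use $\id_\varnothing$ and Lemma~\ref{L:empty_additive_generator}. Fullness (Proposition~\ref{P:fullness}) and faithfulness (Proposition~\ref{P:lax_monoidality_ETQFT}) of $\bfmu$ give
\[
 \bfA_\eend(\bfGamma \disjun \bfGamma')(\bfSigma_1 \disjun \bfSigma_2, X \disjun \bfSigma_{\bfGamma'}) \cong \bfA_\eend(\bfGamma)(\bfSigma_1,X) \otimes \bfA_\eend(\bfGamma')(\bfSigma_2,\bfSigma_{\bfGamma'}),
\]
naturally in $X$. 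Since $\_ \otimes V$ is exact in $\Vect_\Bbbk$, the right-hand side sends $L$ to the limit of the diagram. This proves the claim.

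\textbf{Main obstacle.} The delicate step is Step 2 when $\bfSigma$ is merely target bijective and coadmissible. The isomorphism above needs the morphism space of a disjoint union of surfaces to factor as a tensor product. Fullness gives surjectivity from Lemma~\ref{L:connected_2}, which applies because a boundary bijective cobordism between split objects splits. Injectivity comes from faithfulness. I expect checking that every coadmissible target bijective object is genuinely isomorphic to such a disjoint union, rather than only up to finite colimits, to be the point needing care. If it fails, I would instead apply Lemma~\ref{L:connected_2} directly to a split boundary bijective cobordism to obtain the tensor factorization.
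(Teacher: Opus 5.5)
Your proof is correct and follows essentially the paper's route. The paper notes that $\bfmu_{\bfGamma,\bfGamma'}$ restricts to an essentially surjective, hence (by Propositions~\ref{P:lax_monoidality_ETQFT} and \ref{P:fullness}) an equivalence $(\CTB_\eend(\bfGamma),\CTB_\eend(\bfGamma')) \to \CTB_\eend(\bfGamma \disjun \bfGamma')$, so its image is finite colimit dense by Lemma~\ref{L:ctb_colimit_dense}. It then applies Lemma~\ref{L:fc_equivalence_1}, whose proof through Lemma~\ref{L:fc_equivalence_0} is exactly your Steps~1 and 2 (test against a colimit dense image, then use full faithfulness).

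The splitting you flag as the main obstacle is precisely that ``clearly essentially surjective'' step, and it holds up to isomorphism, not merely up to finite colimits. A target bijective surface is the disjoint union of its pieces. Any mismatch of Lagrangians is absorbed by the identity cylinder with signature $0$: this is an admissible invertible $2$-morphism, because the Maslov correction $\mu(\Lagr,\Lagr',\Lagr)$ vanishes.
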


\begin{proof}
 Thanks to Propositions~\ref{P:lax_monoidality_ETQFT} and \ref{P:fullness}, the restriction
 \[
  \bfmu_{\bfGamma,\bfGamma'} : (\CTB_\eend(\bfGamma),\CTB_\eend(\bfGamma')) \to \CTB_\eend(\bfGamma \disjun \bfGamma')
 \]
 is an equivalence, since it is clearly essentially surjective. Then, Lemma~\ref{L:ctb_colimit_dense} implies that
 \[
  \bfmu_{\bfGamma,\bfGamma'} : (\bfA_\eend(\bfGamma),\bfA_\eend(\bfGamma')) \to \bfA_\eend(\bfGamma \disjun \bfGamma')
 \]
 is separate finite colimit dense. Then, the claim follows from Lemma~\ref{L:fc_equivalence_1}. \qedhere
\end{proof}

\begin{lemma}\label{L:ctb_limit_dense}
 For every object $\bfGamma \in \bfACob_\calC$, the linear category $\CTB_\eend(\bfGamma)$ is finite limit dense in the linear category $\bfA_\eend(\bfGamma)$.
\end{lemma}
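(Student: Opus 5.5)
We dualize the proof of Lemma~\ref{L:ctb_colimit_dense}, replacing colimits by limits, admissibility by coadmissibility, and the coend description of $\rcoev_{\bfS^1}$ by the end description of $\lcoev_{\bfS^1}$ from Proposition~\ref{P:end}. The case $\bfGamma = \varnothing$ is immediate. By Lemma~\ref{L:empty_additive_generator}, $\id_\varnothing$ is a finite additive generator of $\bfA_\eend(\varnothing)$, and additive density implies finite limit density.

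For $\bfGamma \neq \varnothing$, we first reduce to target surjective coadmissible objects. By Lemma~\ref{L:ts_equivalent}, every object $\bfSigma_{\bfGamma}$ is isomorphic to a target surjective one. Let $k(\bfSigma_{\bfGamma})$ be the number of components of $\Sigma_\Gamma$ violating coadmissibility. If $k > 0$, we write $\bfSigma_{\bfGamma} \cong \bfSigma \circ \bfD^2_Y$, where $\bfSigma : \bfS^1 \to \bfGamma$ is coadmissible on the remaining components. Here we use that $\calC$ is unimodular, hence Frobenius, so projectives are injective. Consequently $\Proj(\calC)$ is finite limit dense in $\calC$: every $Y$ is the kernel of a map between injective, hence projective, objects. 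By Proposition~\ref{P:circle_category}, $\bfD^2_Y$ is therefore a finite limit of objects $\bfD^2_Q$ with $Q$ projective. Since $\bfSigma$ is coadmissible, $\bfA_\eend(\bfSigma)$ is left exact by Lemma~\ref{L:adjoint_exactness}, so it preserves this limit. Each term $\bfSigma \circ \bfD^2_Q$ has strictly smaller $k$, and induction places $\bfSigma_{\bfGamma}$ in the finite limit closure of $\CTS_\eend(\bfGamma)$.

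Next we reduce target surjective coadmissible objects to target bijective ones. We induct on the number $n(\bfSigma_{\bfGamma})$ of boundary components of $\Gamma$ violating target injectivity. If $n > 0$, two boundary circles lie on the same component, so $\bfSigma_{\bfGamma} \cong \bfSigma \circ \lcoev_{\bfS^1}$ for some $\bfSigma : \bfS^1 \disjun (\bfS^1)^* \to \bfGamma$ with $n(\bfSigma) < n(\bfSigma_{\bfGamma})$. We arrange $\bfSigma$ to be coadmissible by placing the projective decoration of $\bfSigma_{\bfGamma}$ on $\bfSigma$. By Proposition~\ref{P:end}, $\lcoev_{\bfS^1}$ is the end $\int_X \bfD^2_X \disjun (\bfD^2_X)^\rmT$. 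Dualizing Remark~\ref{R:coend_cokernel} via \cite[Corollary~5.1.8]{KL01}, this end is the kernel of
\[
 ([\bfD^2_f \disjun \id_{(\bfD^2_G)^\rmT}] - [\id_{\bfD^2_G} \disjun (\bfD^2_f)^\rmT])_{f \in \calB} : \bfD^2_G \disjun (\bfD^2_G)^\rmT \to \bigoplus_{f \in \calB} \bfD^2_G \disjun (\bfD^2_G)^\rmT .
\]
The separate functor $\bfD^2 \disjun (\bfD^2)^\rmT$ on $(\calC,\calC^\op)$ must preserve the relevant finite limits. For this we use Lemma~\ref{L:adjoint_exactness}, since $\bfD^2_X \disjun \id$ is coadmissible when $X$ is projective, and $G$ is projective. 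The diagram above therefore lies in $\CTB_\eend(\bfS^1 \disjun (\bfS^1)^*)$. Applying the left exact functor $\bfA_\eend(\bfSigma)$ and inducting on $n$ completes the argument.

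The main obstacle is this last step: proving that $\lcoev_{\bfS^1}$ really is the displayed kernel inside $\bfA_\eend(\bfS^1 \disjun (\bfS^1)^*)$. Proposition~\ref{P:end} gives the end only as a universal dinatural family. To turn it into a finite limit of the displayed shape, we need the comparison kernel to exist and agree with it. We would handle this by verifying the universal property directly, using the explicit Hom computations from the proof of Proposition~\ref{P:end}. Those computations identify maps into $\bfD^2_G \disjun (\bfD^2_G)^\rmT$ with maps into $G \otimes G^*$, where the kernel description of $\eend$ holds in $\calC$. The same check must also confirm that $\bfSigma$ can be chosen coadmissible in every configuration.
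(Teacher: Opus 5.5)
Your overall plan matches the paper's: the empty case via Lemma~\ref{L:empty_additive_generator}, finite limit density of $\Proj(\calC)$ for discs, a kernel presentation of $\lcoev_{\bfS^1}$, and pushing forward along a left exact $\bfA_\eend(\bfSigma)$. However, the coadmissibility induction fails as written. When you peel off a single disc, $\bfSigma_{\bfGamma} \cong \bfSigma \circ \bfD^2_Y$ with $\bfSigma : \bfS^1 \to \bfGamma$, coadmissibility of $\bfSigma$ requires a projective label on every component of its support that is disjoint from the incoming boundary $\bfS^1$. If $k(\bfSigma_{\bfGamma}) \geqs 2$, the other $k-1$ offending components are still disjoint from $\bfS^1$ and still lack projective labels. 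So $\bfSigma$ is not coadmissible, and Lemma~\ref{L:adjoint_exactness} does not give left exactness of $\bfA_\eend(\bfSigma)$. This is exactly where dualizing Lemma~\ref{L:cts_colimit_dense} breaks down. There, target surjectivity makes the residual $\bfSigma$ automatically admissible, so discs can be removed one at a time; nothing makes it automatically coadmissible.

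The paper instead removes all $k$ discs and all $n$ annuli at once, writing $\bfSigma_{\bfGamma} \cong \bfSigma \circ ((\bfD^2)^{\disjun k} \disjun (\lcoev_{\bfS^1})^{\disjun n})$ with the residual $\bfSigma$ coadmissible and target bijective. It then uses the separate left exactness of $\bfmu$ (Proposition~\ref{P:left_exactness}) to place $(\bfD^2)^{\disjun k} \disjun (\lcoev_{\bfS^1})^{\disjun n}$ in the finite limit closure of the corresponding $\CTB_\eend$. Alternatively, you could keep your one-at-a-time induction and cite Proposition~\ref{P:left_exactness_Sigma}, whose proof does not use this lemma. Your $n$-induction is fine: there $\bfSigma_{\bfGamma}$ is already coadmissible, so the residual $\bfSigma$ is too.

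The same missing ingredient settles the obstacle you flagged. Set $H(X,Y) = \bfD^2_X \disjun (\bfD^2_Y)^\rmT$. Lemma~\ref{L:adjoint_exactness} only gives left exactness of $H$ in one variable when the other variable is projective. That is not what \cite[Corollary~5.1.8]{KL01} needs. Its argument resolves one variable by projectives while the other variable is arbitrary; even to show that a dinatural family is determined by its component at $G$, you need $H(X,\_)$ or $H(\_,X)$ to preserve monomorphisms for non-projective $X$. Full separate left exactness of $H$ on $(\calC,\calC^\op)$ is available, though. The functors $\bfD^2$ and $(\bfD^2)^\rmT$ are equivalences onto $\bfA_\eend(\bfS^1)$ and $\bfA_\eend((\bfS^1)^*)$, and left exactness in each slot then follows from Propositions~\ref{P:left_exactness} and \ref{P:left_exactness_Sigma}. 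This is recorded in Remark~\ref{R:end_kernel}. Your fallback would also work: check the kernel property on Hom spaces out of the colimit-dense family of test objects, using the computations in Proposition~\ref{P:end} together with Lemma~\ref{L:fc_equivalence_0}. It is just unnecessary.
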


\begin{proof}
 Let us separate two cases.

 If $\bfGamma = \varnothing$, then the only target bijective object of $\bfA_\eend(\varnothing)$ is $\id_\varnothing$, which is also coadmissible, so the claim follows from Lemma~\ref{L:empty_additive_generator}.

 If $\bfGamma \neq \varnothing$, let $\bfSigma_{\bfGamma} = (\Sigma_\Gamma,P_\Gamma,\Lagr_\Gamma) \in \bfA_\eend(\bfGamma)$ be an object. First of all, up to isomorphism, we can assume that $\bfSigma_{\bfGamma}$ is target surjective, thanks to Lemma~\ref{L:ts_equivalent}. Then, let $k(\bfSigma_{\bfGamma}) \in \N$ be the number of connected components of $\Sigma_\Gamma$ that violate coadmissibility, and let $n(\bfSigma_{\bfGamma}) \in \N$ be the number of connected components of $\Gamma$ that violate target injectivity. Then $\bfSigma_{\bfGamma}$ is isomorphic to
 \[
  \bfSigma \circ ({(\bfD^2)^{{} \disjun {k(\bfSigma_{\bfGamma})}}} \disjun {(\lcoev_{\bfS^1})^{{} \disjun {n(\bfSigma_{\bfGamma})}}})
 \]
 in $\bfACob_\calC$, where $\bfSigma : {(\bfS^1)^{{} \disjun {k(\bfSigma_{\bfGamma})}}} \disjun {({\bfS^1} \disjun {(\bfS^1)^*})^{{} \disjun {n(\bfSigma_{\bfGamma})}}} \to \bfGamma$ is coadmissible and target bijective. Thanks to Proposition~\ref{P:circle_category}, $\bfA_\eend(\bfS^1)$ is equivalent to $\calC$, and since $\Proj(\calC)$ is finite limit dense in $\calC$, then $\bfD^2$ is the limit of a finite diagram $D : J \to \CTB_\eend(\bfS^1)$. At the same time, thanks to Remark~\ref{R:end_kernel}, $\lcoev_{\bfS^1}$ is the limit of a finite diagram $D' : J' \to \CTB_\eend({\bfS^1} \disjun {(\bfS^1)^*})$. Thanks to Proposition~\ref{P:left_exactness},
 \[
  {(\bfD^2)^{{} \disjun {k(\bfSigma_{\bfGamma})}}} \disjun {(\lcoev_{\bfS^1})^{{} \disjun {n(\bfSigma_{\bfGamma})}}}
 \]
 belongs to the finite limit closure of $\CTB_\eend({(\bfS^1)^{{} \disjun {k(\bfSigma_{\bfGamma})}}} \disjun {({\bfS^1} \disjun {(\bfS^1)^*})^{{} \disjun {n(\bfSigma_{\bfGamma})}}})$. Then, since $\bfA_\eend(\bfSigma) : \bfA_\eend({(\bfS^1)^{{} \disjun {k(\bfSigma_{\bfGamma})}}} \disjun {({\bfS^1} \disjun {(\bfS^1)^*})^{{} \disjun {n(\bfSigma_{\bfGamma})}}}) \to \bfA_\eend(\bfGamma)$ is left exact, $\bfSigma_{\bfGamma}$ belongs to the finite limit closure of $\CTB_\eend(\bfGamma)$. \qedhere
\end{proof}

\begin{proposition}\label{P:left_exactness_Sigma}
 The image of the \fnctr{2} $\bfA_\eend : \bfACob_\calC \to \bfCat_\Bbbk$ is contained in $\SCat_\Bbbk$.
\end{proposition}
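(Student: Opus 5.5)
To prove Proposition~\ref{P:left_exactness_Sigma}, we have to show that for every \mrphsm{1} $\bfSigma : \bfGamma \to \bfGamma'$ of $\bfACob_\calC$ the linear functor $\bfA_\eend(\bfSigma) : \bfA_\eend(\bfGamma) \to \bfA_\eend(\bfGamma')$ is left exact. The \mrphsms{2} cause no trouble, since natural transformations impose no condition. We also need the separate version, namely that the structure functors $\bfmu$ are morphisms of $\SCat_\Bbbk$; this is exactly Proposition~\ref{P:left_exactness}.

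The plan is to reduce the general $\bfSigma$ to coadmissible pieces, where Lemma~\ref{L:adjoint_exactness} already gives left exactness. Any \mrphsm{1} $\bfSigma$ can be factored, up to isomorphism in $\bfACob_\calC$, as a composite of \mrphsms{1} of two types:
\begin{enumerate}
 \item coadmissible ones;
 \item ones of the form $\id \disjun \bfD^2 : \bfGamma \to \bfGamma \disjun \bfS^1$, or more generally $\id_{\bfGamma} \disjun \bfSigma_\varnothing$ tensored with discs.
\end{enumerate}
The second type covers the components of $\Sigma$ that violate coadmissibility. Concretely, each connected component of $\Sigma$ disjoint from the incoming boundary and lacking projective labels can be split off by cutting along a circle, which writes it as $\bfSigma'' \circ (\id_{\bfGamma} \disjun \bfD^2 \disjun \cdots)$ with $\bfSigma''$ coadmissible. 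Since compositions of left exact functors are left exact, it suffices to treat each factor separately.

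For the second type, $\bfA_\eend(\id_{\bfGamma} \disjun \bfD^2)$ factors as
\[
 \bfA_\eend(\bfGamma) \to (\bfA_\eend(\bfGamma),\bfA_\eend(\bfS^1)) \to \bfA_\eend(\bfGamma \disjun \bfS^1),
\]
where the first map sends $X \mapsto (X,\bfD^2)$ and the second is $\bfmu_{\bfGamma,\bfS^1}$, up to the coherence isomorphism $\bfmu_{\bfSigma,\bfSigma'}$. The first map is left exact because limits in one variable with the other variable fixed are exactly separate limits. The second is separately left exact by Proposition~\ref{P:left_exactness}. Hence the composite is left exact. For $\bfSigma_\varnothing$ pieces, the same argument works with $\bfA_\eend(\varnothing)$ in place of $\bfA_\eend(\bfS^1)$; alternatively, Lemma~\ref{L:connected_objects} replaces them by discs decorated with $V_\eend(\bfSigma_\varnothing)\otimes\one$.

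The step I expect to be the main obstacle is the topological decomposition itself. One must check carefully that every non-coadmissible component can be isolated by a factorization in which the remaining \mrphsm{1} is genuinely coadmissible, including when that component meets the outgoing boundary $\bfGamma'$ but not the incoming one. My first attempt would be to use a disc around an interior point to exhibit the component as $\bfSigma''\circ(\id\disjun\bfD^2)$. Then $\bfSigma''$ has an extra incoming circle on that component, so the component touches the incoming boundary and coadmissibility holds there; the Lagrangian and decoration data should be routine to match up.

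If this factorization fails, the fallback is to use Lemma~\ref{L:ctb_limit_dense}. Finite limits in $\bfA_\eend(\bfGamma)$ are generated from $\CTB_\eend(\bfGamma)$, so the question reduces to showing that $\bfA_\eend(\bfSigma)$ preserves the specific limit presentations used there. Those presentations are images of coadmissible functors, together with limits of $\bfD^2$ and of $\lcoev_{\bfS^1}$ via Remark~\ref{R:end_kernel}. Combined with Proposition~\ref{P:left_exactness}, this is essentially an argument in the spirit of Lemma~\ref{L:fc_equivalence_1}.
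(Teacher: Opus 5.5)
Your primary argument matches the paper's proof: write $\bfSigma \cong \bfSigma' \circ ((\bfD^2)^{\disjun k} \disjun \id_{\bfGamma})$ with $\bfSigma'$ coadmissible, then compose three functors, namely the exact functor $X \mapsto ((\bfD^2)^{\disjun k}, X)$, the separately left exact $\bfmu$ of Proposition~\ref{P:left_exactness}, and the left exact $\bfA_\eend(\bfSigma')$ given by Lemma~\ref{L:adjoint_exactness}. The disc trick you describe already covers closed components and components meeting only the outgoing boundary, so you need neither the $\bfSigma_\varnothing$ variant nor the density fallback.
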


\begin{proof}
 We need to prove that, for all objects $\bfGamma,\bfGamma' \in \bfACob_\calC$ and for every \mrphsm{1} $\bfSigma \in \bfACob_\calC(\bfGamma,\bfGamma')$, the linear functor $\bfA_\eend(\bfSigma) : \bfA_\eend(\bfGamma) \to \bfA_\eend(\bfGamma')$ is left exact.

 If $\bfSigma = (\Sigma,P,\Lagr)$, let $k(\bfSigma) \in \N$ be the number of connected components of $\Sigma$ that violate coadmissibility. Then $\bfSigma$ is isomorphic to
 \[
  \bfSigma' \circ ({(\bfD^2)^{{} \disjun {k(\bfSigma)}}} \disjun {\id_{\bfGamma}}),
 \]
 where $\bfSigma' : {((\bfS^1)^{{} \disjun {k(\bfSigma)}}} \disjun \bfGamma) \to \bfGamma'$ is coadmissible. Now, the separate linear functor
 \[
  (\bfD^2)^{{} \disjun {k(\bfSigma)}} \otimes \_ : \bfA_\eend(\bfGamma) \to (\bfA_\eend((\bfS^1)^{{} \disjun {k(\bfSigma)}}),\bfA_\eend(\bfGamma))
 \]
 is exact. Then, Lemma~\ref{L:adjoint_exactness} and Proposition~\ref{P:left_exactness} imply that the linear functor
 \[
  \bfA_\eend(\bfSigma') \circ \bfmu_{(\bfS^1)^{{} \disjun {k(\bfSigma)}},\bfGamma} \circ ((\bfD^2)^{{} \disjun {k(\bfSigma)}} \otimes \_)
 \]
 is left exact. \qedhere

\end{proof}

\begin{remark}\label{R:end_kernel}
 Thanks to Propositions~\ref{P:left_exactness} and \ref{P:left_exactness_Sigma}, the separate linear functor
 \begin{align*}
  \bfD^2 \sqcup (\bfD^2)^\rmT : (\calC,\calC^\op) &\to \bfA_\eend({\bfS^1} \disjun {(\bfS^1)^*}) \\*
  X \otimes Y &\mapsto \bfD^2_X \sqcup (\bfD^2_Y)^\rmT
 \end{align*}
 is separately left exact, because both $\bfA_\eend(\bfD^2_X \sqcup \id_{(\bfS^1)^*})$ and $\bfA_\eend(\id_{\bfS^1} \sqcup (\bfD^2_Y)^\rmT)$ are left exact. Then, it follows from  Proposition~\ref{P:end} and \cite[Corollary~5.1.8]{KL01} that $\lcoev_{\bfS^1}$ is the kernel of a morphism in $\bfA_\eend({\bfS^1} \disjun {(\bfS^1)^*})$, namely
 \[
  ([{\bfD^2_f} \disjun {\id_{(\bfD^2_G)^\rmT}}] - [{\id_{\bfD^2_G}} \disjun {(\bfD^2_f)^\rmT}])_{f \in \calB} : {\bfD^2_G} \disjun {(\bfD^2_G)^\rmT} \to \bigoplus_{f \in \calB} {\bfD^2_G} \disjun {(\bfD^2_G)^\rmT},
 \]
 where $G$ is a projective generator of $\calC$, and $\calB$ is a basis of $\End_\calC(G)$, as in Section~\ref{S:ribbon_categories}. Notice that this morphism belongs to the image of the functor $\bfmu_{\bfS^1,(\bfS^1)^*}$ of Proposition~\ref{P:lax_monoidality_ETQFT}.
\end{remark}

\subsection{Symmetric monoidality}\label{S:monoidality}

In this section, we prove our main result, by equipping the finite completion of the \fnctr{2} $\bfA_\eend : \bfACob_\calC \to \SCat_\Bbbk$ with a symmetric monoidal structure, thus promoting it to an ETQFT.

First, let $\bfeta : \Bbbk \to \bfA_\eend(\varnothing)$ denote the linear functor sending the unique object of $\Bbbk$ to the object $\id_\varnothing$ of $\bfA_\eend(\varnothing)$.

\begin{proposition}\label{P:unit_coherence}
 The linear functor $\bfeta : \Bbbk \to \bfA_\eend(\varnothing)$ is a finite completion equivalence.
\end{proposition}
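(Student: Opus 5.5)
The plan is to apply the finite completion equivalence criterion of Proposition~\ref{P:fc_equivalence} to $\bfeta : \Bbbk \to \bfA_\eend(\varnothing)$, viewed as a linear functor between separate linear categories with a single factor. Left exactness is automatic here: $\Bbbk$ has a single object, and its only finite limits are the ones that any linear functor preserves. (More carefully, finite limits in $\Bbbk$ are limits of diagrams of scalars, and these are preserved because $\bfeta$ is linear and $\bfA_\eend(\varnothing)(\id_\varnothing,\id_\varnothing) \cong \Bbbk$.) So the work consists in checking the three hypotheses.

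\emph{Full faithfulness.} By Lemma~\ref{L:split}, there is an isomorphism
\[
 \bfA_\eend(\varnothing)(\id_\varnothing,\id_\varnothing) \cong V_\eend(\id_\varnothing) \otimes V'_\eend(\id_\varnothing) \cong \Bbbk,
\]
since $V_\eend$ is symmetric monoidal and sends the empty surface to $\Bbbk$ (Proposition~\ref{P:renormalized_KL_TQFT}). The map $\Bbbk \to \bfA_\eend(\varnothing)(\id_\varnothing,\id_\varnothing)$ induced by $\bfeta$ sends $1$ to $\id_{\id_\varnothing}$, which is nonzero. It is therefore an isomorphism.

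\emph{Finite limit density.} Lemma~\ref{L:empty_additive_generator} shows that $\id_\varnothing$ is a finite additive generator of $\bfA_\eend(\varnothing)$; explicitly, every object is isomorphic to $V_\eend(\bfSigma_\varnothing) \otimes \id_\varnothing$, a finite direct sum of copies of $\id_\varnothing$. Hence the image of $\bfeta$ is finitely additively dense, and so finite limit dense.

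\emph{Right exactness.} Take $\myuline{B} = \Bbbk$ itself, which is trivially finite limit dense in $\Bbbk$. It remains to check that $\bfA_\eend(\varnothing)(\_,\id_\varnothing) : \bfA_\eend(\varnothing)^\op \to \Vect_\Bbbk$ is right exact. This is exactly Lemma~\ref{L:coadjoint_exactness} applied to $\bfGamma = \varnothing$ and $\bfSigma_{\bfGamma} = \id_\varnothing$, which is coadmissible because it has no connected components. Alternatively, since every object is a finite direct sum of copies of $\id_\varnothing$ with finite-dimensional morphism spaces, $\bfA_\eend(\varnothing)$ is equivalent to $\FVect_\Bbbk$, where the contravariant representable $\Hom(\_,\Bbbk)$ is exact by duality.

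The main obstacle I expect is a subtlety: Proposition~\ref{P:fc_equivalence} requires \emph{separate} right exactness relative to finite limits that actually exist in $\bfA_\eend(\varnothing)$, and the finite limits in the completion of $\Bbbk$ must match these. I would resolve this by the observation above that $\bfA_\eend(\varnothing) \simeq \FVect_\Bbbk$, through the functor $\bfSigma_\varnothing \mapsto V_\eend(\bfSigma_\varnothing)$, whose action on morphisms is given by Lemma~\ref{L:split}. In fact this gives a direct alternative: $\bfeta$ is then identified with the inclusion $\Bbbk \to \FVect_\Bbbk$, which is the finite completion of the empty separate category, so $\bfeta$ is a finite completion equivalence.
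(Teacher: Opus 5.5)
Your proposal is correct and follows the paper's proof: you apply Proposition~\ref{P:fc_equivalence}, getting full faithfulness from Lemma~\ref{L:split}, finite limit density from Lemma~\ref{L:empty_additive_generator}, and right exactness of $\bfA_\eend(\varnothing)(\_,\id_\varnothing)$ from Lemma~\ref{L:coadjoint_exactness}. The only differences are minor: you check left exactness of $\bfeta$ by hand, whereas the paper gets it implicitly from finite colimit density via Lemma~\ref{L:fc_equivalence_1}; the ``obstacle'' in your last paragraph is not a real issue, and your alternative via $\bfA_\eend(\varnothing)\simeq\FVect_\Bbbk$ is also valid and essentially matches the remark the paper makes right after its proof.
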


\begin{proof}
 The linear functor $\bfeta$ is fully faithful, because
 \[
  \bfA_\eend(\varnothing)(\id_\varnothing,\id_\varnothing) \cong V_\eend(\id_\varnothing) \otimes V'_\eend(\id_\varnothing) \cong \Bbbk,
 \]
 thanks to Lemma~\ref{L:split}. It is also both finite limit dense and finite colimit dense, thanks to Lemma~\ref{L:empty_additive_generator}. Then, since $\id_\varnothing$ is biadmissible, Lemma~\ref{L:coadjoint_exactness} and Proposition~\ref{P:fc_equivalence} imply that $\bfeta$ is a finite completion equivalence.
\end{proof}

Notice that, under the canonical identification between the finite completion $\hat{\Bbbk}$ of the linear category $\Bbbk$ and $\FVect_\Bbbk$, the finite completion $\bfhateta : \FVect_\Bbbk \to \bfhatA_\eend(\varnothing)$ of the linear functor $\bfeta : \Bbbk \to \bfA_\eend(\varnothing)$ of Proposition~\ref{P:unit_coherence} sends every finite-dimensional vector space $V \in \FVect_\Bbbk$ to the object $V \otimes \id_\varnothing \in \bfhatA_\eend(\varnothing)$. This defines a linear equivalence, and the representable linear functor $\bfhatA_\eend(\varnothing)(\id_\varnothing,\_) : \bfhatA_\eend(\varnothing) \to \Vect_\Bbbk$ is a quasi-inverse. Indeed, thanks to Lemma~\ref{L:split}, the isomorphism
\[
 \bfA_\eend(\varnothing)(\id_\varnothing,\bfSigma_\varnothing) \cong V_\eend(\bfSigma_\varnothing)
\]
is natural in $\bfSigma_\varnothing \in \bfA_\eend(\varnothing)$, and the claim is a direct consequence of Proposition~\ref{P:renormalized_KL_TQFT} and Equations~\eqref{E:admissible_non-empty_representative_of_empty} and \eqref{E:admissible_connected_representative}.

\begin{proposition}\label{P:product_coherence}
 For all objects $\bfGamma, \bfGamma' \in \bfACob_\calC$, the separate linear functor $\bfmu_{\bfGamma,\bfGamma'} : (\bfA_\eend(\bfGamma),\bfA_\eend(\bfGamma')) \to \bfA_\eend(\bfGamma \disjun \bfGamma')$ is a finite completion equivalence.
\end{proposition}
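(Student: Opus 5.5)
The plan is to apply Proposition~\ref{P:fc_equivalence} directly to the separate linear functor $\bfmu_{\bfGamma,\bfGamma'} : (\bfA_\eend(\bfGamma),\bfA_\eend(\bfGamma')) \to \bfA_\eend(\bfGamma \disjun \bfGamma')$, mirroring the proof of Proposition~\ref{P:unit_coherence}. The hypotheses of that criterion will be checked one at a time, using results already established.

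First, separate left exactness of $\bfmu_{\bfGamma,\bfGamma'}$ is exactly Proposition~\ref{P:left_exactness}. Full faithfulness follows by combining Proposition~\ref{P:lax_monoidality_ETQFT} (faithfulness) with Proposition~\ref{P:fullness} (fullness).

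Next, I will show that $\bfmu_{\bfGamma,\bfGamma'}$ is separate finite limit dense. By Lemma~\ref{L:ctb_limit_dense}, $\CTB_\eend(\bfGamma \disjun \bfGamma')$ is finite limit dense in $\bfA_\eend(\bfGamma \disjun \bfGamma')$. Every coadmissible target bijective object over $\bfGamma \disjun \bfGamma'$ splits as a disjoint union $\bfSigma_{\bfGamma} \disjun \bfSigma_{\bfGamma'}$ of coadmissible target bijective pieces. This holds because each component meets exactly one boundary circle and so lies over $\bfGamma$ or over $\bfGamma'$; the Lagrangian may need to be adjusted up to isomorphism. It follows that $\CTB_\eend(\bfGamma \disjun \bfGamma')$ lies in the essential image of $\bfmu_{\bfGamma,\bfGamma'}$, as already used in the proof of Proposition~\ref{P:left_exactness}. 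Hence the image of $\bfmu_{\bfGamma,\bfGamma'}$ is finite limit dense. Since the image is replete and closed under the required separate limits, this should suffice, given that finite limit closure of the image contains the closure of $\CTB_\eend$.

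For the density subcategory $\myuline{B}$, I take the separate subcategory $(\CTB_\eend(\bfGamma),\CTB_\eend(\bfGamma'))$ of $(\bfA_\eend(\bfGamma),\bfA_\eend(\bfGamma'))$. Its separate finite limit density follows from Lemma~\ref{L:ctb_limit_dense} applied separately in each variable, since separate limits are taken in one factor with the other fixed. For $\myuline{Y} = (\bfSigma_{\bfGamma},\bfSigma_{\bfGamma'}) \in \myuline{B}$, the object $\bfmu(\myuline{Y}) = \bfSigma_{\bfGamma} \disjun \bfSigma_{\bfGamma'}$ is coadmissible. Lemma~\ref{L:coadjoint_exactness} then gives that $\bfA_\eend(\bfGamma \disjun \bfGamma')(\_,\bfSigma_{\bfGamma} \disjun \bfSigma_{\bfGamma'})$ is right exact, and Proposition~\ref{P:fc_equivalence} concludes.

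The main obstacle is a mismatch between what Proposition~\ref{P:fc_equivalence} asks for and what Lemma~\ref{L:coadjoint_exactness} provides. The criterion requires separate right exactness of $\myuline{A'}(\_,\myuline{F}(\myuline{Y}))$ on the target with $\myuline{A'} = \bfA_\eend(\bfGamma \disjun \bfGamma')$ being a single category, so plain right exactness (sending finite limits in $\bfA_\eend(\bfGamma \disjun \bfGamma')$ to colimits) is what is needed, and the lemma supplies exactly that. I would double-check this point. If instead the hypothesis is meant on the source, I would reduce it to the single-variable case by writing $\bfA_\eend(\bfGamma)(\_,\bfSigma_{\bfGamma}) \otimes \bfA_\eend(\bfGamma')(\bfSigma'_{\bfGamma'},\bfSigma_{\bfGamma'})$. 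The second factor is fixed and finite-dimensional, since these morphism spaces are finite-dimensional by Lemmas~\ref{L:split} and~\ref{L:adjoint}, so tensoring with it preserves colimits.
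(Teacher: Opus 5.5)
Your proposal is correct and follows essentially the same route as the paper: full faithfulness from Propositions~\ref{P:lax_monoidality_ETQFT} and \ref{P:fullness}, separate left exactness from Proposition~\ref{P:left_exactness}, limit density via Lemma~\ref{L:ctb_limit_dense} with $\myuline{B} = (\CTB_\eend(\bfGamma),\CTB_\eend(\bfGamma'))$, right exactness of the relevant representables from Lemma~\ref{L:coadjoint_exactness}, and then Proposition~\ref{P:fc_equivalence}. Your reading of the right exactness hypothesis is correct: the target $\bfA_\eend(\bfGamma \disjun \bfGamma')$ is a single category, so plain right exactness is what is needed and the fallback in your last paragraph is not needed. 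Likewise, your claim that the image of $\bfmu_{\bfGamma,\bfGamma'}$ is ``closed under the required separate limits'' is unjustified but also unnecessary, since density only requires that $\CTB_\eend(\bfGamma \disjun \bfGamma')$ lie in its essential image.
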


\begin{proof}
 The linear functor $\bfmu_{\bfGamma,\bfGamma'}$ is fully faithful and separately left exact, thanks to Propositions~\ref{P:lax_monoidality_ETQFT}, \ref{P:fullness}, and \ref{P:left_exactness}. Furthermore, thanks to Lemma~\ref{L:ctb_limit_dense}, the full linear subcategory $\CTB_\eend(\bfGamma) \otimes \CTB_\eend(\bfGamma')$ of $\bfA_\eend(\bfGamma) \otimes \bfA_\eend(\bfGamma')$ is separate finite limit dense, and so is $\bfmu_{\bfGamma,\bfGamma'}$. Furthermore, thanks to Lemma~\ref{L:coadjoint_exactness}, the separate linear functor $\bfA_\eend(\bfGamma \disjun \bfGamma')(\_,\bfmu_{\bfGamma,\bfGamma'}(\bfSigma_{\bfGamma} \otimes \bfSigma'_{\bfGamma}))$ is separately right exact for every $\bfSigma_{\bfGamma} \otimes \bfSigma'_{\bfGamma} \in \CTB_\eend(\bfGamma) \otimes \CTB_\eend(\bfGamma')$. Then, thanks to Proposition~\ref{P:fc_equivalence}, $\bfmu_{\bfGamma,\bfGamma'}$ is a finite completion equivalence. \qedhere
\end{proof}

We are now ready to prove our main result.

\begin{theorem}\label{T:symmetric_monoidality}
 The \fnctr{2} $\bfhatA_\eend : \bfACob_\calC \to \coCat_\Bbbk$ is symmetric monoidal.
\end{theorem}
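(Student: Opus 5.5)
The plan is to assemble the symmetric monoidal structure on $\bfhatA_\eend = \cmpl \circ \bfA_\eend$ from the lax data already built for $\bfA_\eend$, and to use the completion results of Section~\ref{S:completion}. By Proposition~\ref{P:left_exactness_Sigma}, $\bfA_\eend$ factors through $\SCat_\Bbbk$, so $\bfhatA_\eend$ is a well-defined \fnctr{2} into $\FCat_\Bbbk$.

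The first step is to regard $\bfmu$ and $\bfeta$ as data internal to $\SCat_\Bbbk$. For each pair $\bfGamma,\bfGamma'$, the functor $\bfmu_{\bfGamma,\bfGamma'} : (\bfA_\eend(\bfGamma),\bfA_\eend(\bfGamma')) \to \bfA_\eend(\bfGamma \disjun \bfGamma')$ is a $1$-morphism of $\SCat_\Bbbk$ from the tensor product (concatenation) of $\bfA_\eend(\bfGamma)$ and $\bfA_\eend(\bfGamma')$; this uses Proposition~\ref{P:left_exactness}. The unit $\bfeta : \Bbbk \to \bfA_\eend(\varnothing)$ has source the empty separate category. The components $\bfmu_{\bfSigma,\bfSigma'}$ are the classes of coherence \mrphsms{2} of $\bfACob_\calC$. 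Their naturality, together with the associativity, unitality and braiding coherence conditions (modifications), follows by applying the extended universal construction to the corresponding coherence data of the symmetric monoidal \ctgr{2} $\bfACob_\calC$. I would check this exactly as in \cite[Section~5]{D21}: every required equation holds already at the level of representatives in $\bfACob_\calC$, and it therefore descends to the quotients defining $\bfA_\eend$. This yields a lax symmetric monoidal \fnctr{2} $\bfA_\eend : \bfACob_\calC \to \SCat_\Bbbk$.

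The second step applies $\cmpl$. By Proposition~\ref{P:completion}, $\cmpl$ is symmetric monoidal, so $\cmpl \circ \bfA_\eend$ inherits a lax symmetric monoidal structure. Its structure \trnsfrmtns{2} are the composites of the monoidal structure of $\cmpl$, namely $\hat{A} \sqtimes \hat{A'} \simeq \widehat{(A,A')}$ and $\FVect_\Bbbk \simeq \hat{\varnothing}$, with the completions $\hat{\bfmu}$ and $\hat{\bfeta}$. To upgrade this to a strong structure, the components must be equivalences in $\FCat_\Bbbk$. This is exactly the content of Proposition~\ref{P:product_coherence} for $\hat{\bfmu}_{\bfGamma,\bfGamma'}$ and of Proposition~\ref{P:unit_coherence} for $\hat{\bfeta}$. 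Every component of the \trnsfrmtns{2} $\hat{\bfmu}$ is then an equivalence, hence an adjoint equivalence, and the components $\hat{\bfmu}_{\bfSigma,\bfSigma'}$ are invertible because they are completions of invertible classes.

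The main obstacle I expect is the first step: verifying the modification axioms for $\bfmu$ on separate categories. The issue is that $\bfmu_{\bfSigma,\bfSigma'}$ must be a natural transformation between separately left exact functors, and the coherence cells of $\bfACob_\calC$ must survive the quotient by annihilators. I would handle this by noting that all the data are given by whiskering invertible coherence \mrphsms{2}, which are compatible with the congruences $\calA'(\bfGamma)^\perp$ because these congruences are stable under horizontal composition. The remaining strictification subtleties in going from lax to strong follow the standard pattern of \cite[Appendix~C]{S11}.
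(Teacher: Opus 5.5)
Your proposal is correct and follows essentially the same route as the paper: factor $\bfA_\eend$ through $\SCat_\Bbbk$ via Proposition~\ref{P:left_exactness_Sigma}, build the structure $1$-cells as $\cmpl \triangleright \bfmu$ composed with the coherence data of the symmetric monoidal completion $\cmpl$, and obtain that they and $\cmpl(\bfeta)$ are equivalences from Propositions~\ref{P:product_coherence} and \ref{P:unit_coherence}. The only difference is how the coherence is discharged: the paper notes that, by quasi-strictness of $\bfACob_\calC$, the unit and associativity composites of $\bfmu$ and $\bfeta$ are literally identities, so those modifications can be taken to be identities, and it assembles the symmetry modification from the braiding of $\bfACob_\calC$; your ``equations hold at the level of representatives and descend to the quotients'' amounts to the same thing.
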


\begin{proof}
 The claim follows directly from the results we have established up to here, but in order to prove it we need to check carefully that all the conditions required by \cite[Definition~D.16]{D17} are met. First of all, as a direct consequence of Proposition~\ref{P:left_exactness_Sigma}, we can consider the finite completion $\bfhatA_\eend : \bfACob_\calC \to \FCat_\Bbbk$ of the \fnctr{2} $\bfA_\eend : \bfACob_\calC \to \SCat_\Bbbk$, which is given by
 \[
  \bfhatA_\eend := \bfC \circ \bfA_\eend
 \]
 for the finite completion \fnctr{2} $\cmpl : \SCat_\Bbbk \to \FCat_\Bbbk$ of Proposition~\ref{P:completion}. 
 
 Next, thanks to Proposition~\ref{P:unit_coherence}, the completion
 \[
  \bfhateta := \cmpl(\bfeta)
 \]
 of the linear functor $\bfeta$ is an equivalence. Similarly, thanks to Proposition~\ref{P:product_coherence}, the \trnsfrmtn{2}
 \[
  \bfhatmu := ( \cmpl \triangleright \bfmu ) \ast ( \bfchi \triangleleft (\bfA_\eend \times \bfA_\eend))
 \]
 is a composition of equivalences, where $\bfchi : {\sqtimes} \circ {(\cmpl \times \cmpl)} \Rightarrow \cmpl \circ \otimes$ is part of the coherence data of the symmetric monoidal \fnctr{2} $\cmpl$ given by Proposition~\ref{P:completion}.
 
 Furthermore, we claim that
 \begin{gather*}
  \bfmu_{\varnothing,\bfGamma} \circ (\bfeta \otimes \id_{\bfA_\eend(\bfGamma)}) = \id_{\bfA_\eend(\bfGamma)}, \quad \bfmu_{\bfGamma,\varnothing} \circ (\id_{\bfA_\eend(\bfGamma)} \otimes \bfeta) = \id_{\bfA_\eend(\bfGamma)}, \\
  \bfmu_{\bfGamma,\bfGamma' \disjun \bfGamma''} \circ \left( \id_{\bfA_\eend(\bfGamma)} \otimes \bfmu_{\bfGamma',\bfGamma''} \right) = \bfmu_{\bfGamma \disjun \bfGamma',\bfGamma''} \circ \left( \bfmu_{\bfGamma,\bfGamma'} \otimes \id_{\bfA_\eend(\bfGamma'')} \right)
 \end{gather*}
 for all $\bfGamma, \bfGamma', \bfGamma'' \in \bfACob_\calC$. Indeed, this is an immediate consequence of the quasi-strictness of $\bfACob_\calC$. This means that the unit and associativity coherence data for $\bfhatA_\eend$ can be taken as identity \mdfctns{2}. 
 
 Finally, the symmetry coherence data for $\bfhatA_\eend$ can be taken as the \mdfctn{2} obtained by appropriately assembling the image of the symmetric braiding of $\bfACob_\calC$, so its axioms follow directly from those of $\bfACob_\calC$. \qedhere
\end{proof}

\subsection{Identification of the image}

In this section, we characterize explicitly the image of all the generating objects and \mrphsms{1} of $\bfACob_\calC$ under the ETQFT $\bfhatA_\eend : \bfACob_\calC \to \coCat_\Bbbk$.

Let us start from discs. First,
Equation~\eqref{E:canonical_Vect-action_intertwiner}
yields, for every object $X \in \calC$, an isomorphism
\[
 (\eta_X)_V \in \bfhatA_\eend(\bfS^1) \left( \bfD^2_{V \otimes X},V \otimes \bfD^2_X \right)
\]
that is natural in $V \in \FVect_\Bbbk$. In other words, we have the following result.

\begin{proposition}\label{P:unit}
 For every object $X \in \calC$, the family
 \[
  \eta_X = \left\{ (\eta_X)_V \in \bfhatA_\eend(\bfS^1) \left( \bfD^2_{V \otimes X},V \otimes \bfD^2_X \right) \Bigm\vert V \in \FVect_\Bbbk \right\}
 \]
 of isomorphisms of $\bfhatA_\eend(\bfS^1)$ defines a natural isomorphism
 \begin{center}
  \begin{tikzpicture}[descr/.style={fill=white}]
   \node (P0) at ({sqrt(2)},0) {$\bfhatA_\eend(\bfS^1)$};
   \node (P1) at (0,{sqrt(2)}) {$\calC$};
   \node (P2) at ({-sqrt(2)},0) {$\FVect_\Bbbk$};
   \node (P3) at (0,{-sqrt(2)}) {$\bfhatA_\eend(\varnothing)$};
    \node (P4) at (0,0) {$\phantom{\eta_X} \Downarrow \eta_X$};
   \draw
   (P1) edge[->] node[right,yshift=5pt] {$\bfD^2$} (P0)
   (P2) edge[->] node[left,yshift=5pt] {$\_ \otimes X$} (P1)
   (P2) edge[->] node[left,yshift=-5pt] {$\bfhateta$} (P3)
   (P3) edge[->] node[right,yshift=-5pt] {$\bfhatA_\eend(\bfD^2_X)$} (P0);
  \end{tikzpicture}
 \end{center}
\end{proposition}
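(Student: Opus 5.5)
The plan is to reduce the statement to the intertwining isomorphism~\eqref{E:canonical_Vect-action_intertwiner}, applied to the left exact linear functor $\bfhatA_\eend(\bfD^2_X) : \bfhatA_\eend(\varnothing) \to \bfhatA_\eend(\bfS^1)$, combined with the explicit description of $\bfhateta$ given after Proposition~\ref{P:unit_coherence}.

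First, we identify the lower composite. By the discussion after Proposition~\ref{P:unit_coherence}, $\bfhateta(V) \cong V \otimes \id_\varnothing$, naturally in $V \in \FVect_\Bbbk$. Applying $\bfhatA_\eend(\bfD^2_X)$, which is a linear functor between finitely complete linear categories, Equation~\eqref{E:canonical_Vect-action_intertwiner} yields a natural isomorphism
\[
 \bfhatA_\eend(\bfD^2_X)(V \otimes \id_\varnothing) \cong V \otimes \bfhatA_\eend(\bfD^2_X)(\id_\varnothing) = V \otimes \bfD^2_X .
\]
Here the last equality holds because $\bfhatA_\eend(\bfD^2_X)$ restricts on representables, via $\theta$, to $\bfA_\eend(\bfD^2_X)$, which sends $\id_\varnothing$ to $\bfD^2_X \circ \id_\varnothing = \bfD^2_X$.

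Second, we identify the upper composite. The functor $\bfD^2 : \calC \to \bfA_\eend(\bfS^1)$ is an equivalence by Proposition~\ref{P:circle_category}. Moreover, $\bfA_\eend(\bfS^1) \simeq \calC$ is already finitely complete, so $K_{\bfA_\eend(\bfS^1)}$ is an equivalence onto $\bfhatA_\eend(\bfS^1)$ and $\bfD^2$ may be regarded as a linear functor $\calC \to \bfhatA_\eend(\bfS^1)$. Applying~\eqref{E:canonical_Vect-action_intertwiner} to it gives $\bfD^2_{V \otimes X} \cong V \otimes \bfD^2_X$, natural in $V$. Concretely, for a basis $\{v_i\}$ of $V$ this is the direct sum decomposition of $V \otimes X$ into copies of $X$, carried by $\bfD^2$ to the corresponding direct sum decomposition. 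We then define $(\eta_X)_V$ as the composite of this isomorphism with the inverse of the one obtained in the first step.

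Naturality in $V$ is automatic, since each piece is natural. The only point requiring care is checking that the two copower structures agree, that is, that the copower $V \otimes \bfD^2_X$ computed in $\bfhatA_\eend(\bfS^1)$ is the same object used in both steps. This follows because copowers by finite-dimensional spaces are finite direct sums, and every linear functor preserves them; so no exactness subtleties arise. I expect this bookkeeping to be the main, though mild, obstacle. The cleanest way to handle it is to write everything in terms of chosen bases, and then check independence of the basis via the naturality of~\eqref{E:canonical_Vect-action}.
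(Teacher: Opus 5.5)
Your proposal is correct and takes the paper's route. The paper obtains $(\eta_X)_V$ directly from the intertwining isomorphism~\eqref{E:canonical_Vect-action_intertwiner}, which is exactly what you do by applying it to $\bfD^2$ and to $\bfhatA_\eend(\bfD^2_X)$, combined with $\bfhateta(V) \cong V \otimes \id_\varnothing$. Your extra bookkeeping spells out what the paper leaves implicit: $K_{\bfA_\eend(\bfS^1)}$ is an equivalence because $\bfA_\eend(\bfS^1) \simeq \calC$ is finitely complete, and copowers by finite-dimensional spaces are finite biproducts, which every linear functor preserves. One small correction: $\bfhatA_\eend(\bfD^2_X)(\id_\varnothing) \cong \bfD^2_X$ is an isomorphism via $\theta$, not an equality.
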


Next, Proposition~\ref{P:algebraic_and_skein_model} and Equations~\eqref{E:admissible_non-empty_representative_of_empty} and \eqref{E:admissible_connected_representative} yield, for every object $X \in \calC$, an isomorphism
\[
 (\epsilon_X)_Y \in \bfhatA_\eend(\varnothing) \left( \calC(X,Y) \otimes \id_\varnothing,(\bfD^2_X)^\dagger \circ \bfD^2_Y \right)
\]
that is natural in $Y \in \calC$. In other words, we have the following result.

\begin{proposition}\label{P:counit}
 For every object $X \in \calC$, the family
 \[
  \epsilon_X = \left\{ (\epsilon_X)_Y \in \bfhatA_\eend(\varnothing) \left( \calC(X,Y) \otimes \id_\varnothing,(\bfD^2_X)^\dagger \circ \bfD^2_Y \right) \Bigm\vert Y \in \calC \right\}
 \]
 of isomorphisms of $\bfhatA_\eend(\varnothing)$ defines a natural isomorphism
 \begin{center}
  \begin{tikzpicture}[descr/.style={fill=white}]
   \node (P0) at ({sqrt(2)},0) {$\bfhatA_\eend(\varnothing)$};
   \node (P1) at (0,{sqrt(2)}) {$\FVect_\Bbbk$};
   \node (P2) at ({-sqrt(2)},0) {$\calC$};
   \node (P3) at (0,{-sqrt(2)}) {$\bfhatA_\eend(\bfS^1)$};
    \node (P4) at (0,0) {$\epsilon_X \Downarrow \phantom{\epsilon_X}$};
   \draw
   (P1) edge[->] node[right,yshift=5pt] {$\bfhateta$} (P0)
   (P2) edge[->] node[left,yshift=5pt] {$\calC(X,\_)$} (P1)
   (P2) edge[->] node[left,yshift=-5pt] {$\bfD^2$} (P3)
   (P3) edge[->] node[right,yshift=-5pt] {$\bfhatA_\eend((\bfD^2_X)^\dagger)$} (P0);
  \end{tikzpicture}
 \end{center}
\end{proposition}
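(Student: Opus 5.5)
The plan is to build the components $(\epsilon_X)_Y$ explicitly, then check in turn that they are isomorphisms and that they are natural in $Y$. All the work can be done in $\bfA_\eend(\varnothing)$ and then transported to the completion. The reason is that $K_{\bfA_\eend(\varnothing)}$ is fully faithful and left exact, and that $\bfhatA_\eend((\bfD^2_X)^\dagger)$ restricts to $\bfA_\eend((\bfD^2_X)^\dagger)$ up to the isomorphism $\theta$. Under the identification $\hat\Bbbk \simeq \FVect_\Bbbk$, the functor $\bfhateta$ sends $V$ to the copower $V \otimes \id_\varnothing$. Composing the upper path therefore gives $Y \mapsto \calC(X,Y) \otimes \id_\varnothing$. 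Composing the lower path gives $Y \mapsto (\bfD^2_X)^\dagger \circ \bfD^2_Y$, which is the target of a representative object of $\bfA_\eend(\varnothing)$.

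To produce the components, I would use the quasi-inverse $\bfhatA_\eend(\varnothing)(\id_\varnothing,\_)$ of $\bfhateta$ recorded after Proposition~\ref{P:unit_coherence}. By Lemma~\ref{L:split} and Proposition~\ref{P:algebraic_and_skein_model} with $P = O_X$ and $g = 0$, there are isomorphisms
\[
 \bfA_\eend(\varnothing)(\id_\varnothing,(\bfD^2_X)^\dagger \circ \bfD^2_Y) \cong V_\eend((\bfD^2_X)^\dagger \circ \bfD^2_Y) \cong \calC(X,Y).
\]
Here the second map is $\Psi_{O_X,0}$, precomposed with the isomorphism $\varphi$ identifying $\bfD^2(O_X)$ with $\bfD^2_X$. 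This is exactly the content of Equations~\eqref{E:admissible_non-empty_representative_of_empty} and \eqref{E:admissible_connected_representative}, which together say $\bfSigma_\varnothing \cong V_\eend(\bfSigma_\varnothing) \otimes \id_\varnothing$. For each $f \in \calC(X,Y)$ the isomorphism yields a morphism $[\Psi_{O_X,0}(f)] : \id_\varnothing \to (\bfD^2_X)^\dagger \circ \bfD^2_Y$. By the universal property of the copower \eqref{E:canonical_Vect-action}, this family assembles into a single morphism $(\epsilon_X)_Y : \calC(X,Y) \otimes \id_\varnothing \to (\bfD^2_X)^\dagger \circ \bfD^2_Y$. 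Equivalently, $(\epsilon_X)_Y$ is the composition of the isomorphism \eqref{E:admissible_connected_representative} with $\Psi_{O_X,0}^{-1}\otimes \id$.

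Next I would show that $(\epsilon_X)_Y$ is invertible. Apply the functor $\bfhatA_\eend(\varnothing)(\id_\varnothing,\_)$, which is an equivalence. It sends $(\epsilon_X)_Y$ to the map $\calC(X,Y) \to V_\eend((\bfD^2_X)^\dagger\circ\bfD^2_Y)$, and this map is $\Psi_{O_X,0}$, an isomorphism by Proposition~\ref{P:algebraic_and_skein_model}. Since an equivalence reflects isomorphisms, $(\epsilon_X)_Y$ is an isomorphism.

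Naturality in $Y$ is the step I expect to need the most care. Take $g \in \calC(Y,Y')$. The lower path sends $g$ to $[(\bfD^2_X)^\dagger \triangleright \bfD^2_g]$, while the upper path sends it to $(g \circ \_) \otimes \id_\varnothing$. Because $\bfhatA_\eend(\varnothing)(\id_\varnothing,\_)$ is faithful, it suffices to compare the two after applying it. The claim then reduces to
\[
 V_\eend((\bfD^2_X)^\dagger \triangleright \bfD^2_g) \circ \Psi_{O_X,0} = \Psi_{O_X,0} \circ (g \circ \_).
\]
This should follow from the naturality statement in Proposition~\ref{P:algebraic_and_skein_model}, or directly from the definition \eqref{E:algebraic_model}: stacking the coupon $O_g$ on top of $T_f$ inside $D^2 \times I$ is skein equivalent to $T_{g \circ f}$. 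One also has to check that the identification $\varphi$ is compatible with this stacking, i.e. that it intertwines $\bfD^2(O_g)$ with the action of $g$. Finally, naturality and invertibility pass from $\bfA_\eend(\varnothing)$ to $\bfhatA_\eend(\varnothing)$ through $K_{\bfA_\eend(\varnothing)}$ and $\theta$. Since all the objects involved lie in the image of $K$, no genuinely completed objects enter.
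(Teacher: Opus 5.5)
Your overall architecture matches the paper's. You produce an isomorphism $\calC(X,Y)\cong\bfA_\eend(\varnothing)(\id_\varnothing,(\bfD^2_X)^\dagger\circ\bfD^2_Y)$, natural in $Y$, out of Lemma~\ref{L:split} and Proposition~\ref{P:algebraic_and_skein_model}. You then convert it into the copower isomorphism via Equations~\eqref{E:admissible_non-empty_representative_of_empty} and \eqref{E:admissible_connected_representative}, i.e.\ via the quasi-inverse $\bfhatA_\eend(\varnothing)(\id_\varnothing,\_)$ of $\bfhateta$.

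The gap is in the instance you use for that key identification. Proposition~\ref{P:algebraic_and_skein_model} with $P=O_X$ and $g=0$ concerns $(\bfD^2(O_X))^\dagger\circ\bfSigma_0=(\bfD^2_X)^\dagger\circ\bfD^2$, where $\bfSigma_0=\bfD^2$ is the \emph{undecorated} disc. It gives $V_\eend((\bfD^2_X)^\dagger\circ\bfD^2)\cong\calC(X,\one)$, which is essentially only the case $Y=\one$; the label $Y$ never enters. For the same reason your naturality step fails as written. The naturality in Proposition~\ref{P:algebraic_and_skein_model} is naturality in $P$, which for $P=O_X$ is (contravariant) naturality in $X$, not in $Y$. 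Moreover, the graph $T_f$ has no $Y$-strand on which to stack $O_g$. The isomorphism $\varphi$ is also irrelevant here, since $\bfD^2_X$ is $\bfD^2(O_X)$ by definition.

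The repair is straightforward, and there are two options. First, you can take for $P$ a positive $X$-point followed by a negative $Y$-point. Then $(\bfD^2(P))^\dagger\circ\bfD^2\cong(\bfD^2_X)^\dagger\circ\bfD^2_Y$ and $\calC(F_\calC(P),\one)=\calC(X\otimes Y^*,\one)\cong\calC(X,Y)$ by rigidity. Naturality in $Y$ then follows from naturality in $P$ along $\id_X\otimes g^*$.

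Second, and more simply, you can compose two isomorphisms: the full faithfulness of $\bfD^2:\calC\to\bfA_\eend(\bfS^1)$ from Proposition~\ref{P:circle_category}, and the adjunction isomorphism $h_{\bfD^2_X}$ of Lemma~\ref{L:adjoint}. The lemma applies because $\bfD^2_X$ is target surjective, hence admissible. The composite sends $f\in\calC(X,Y)$ to $[((\bfD^2_X)^\dagger\triangleright\bfD^2_f)\ast\radun_{\bfD^2_X}]$. It is natural in $Y$ because $h_{\bfD^2_X}$ is natural in its target $\bfD^2_Y\in\bfA_\eend(\bfS^1)$ and $\bfD^2$ is a functor.

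With this identification in place, the rest of your argument goes through: invertibility by applying the equivalence $\bfhatA_\eend(\varnothing)(\id_\varnothing,\_)$, assembling components through the copower, and transporting along $K_{\bfA_\eend(\varnothing)}$.
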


Let us move on to pants.
First, Proposition~\ref{P:circle_category} yields an isomorphism
\[
 \pi_{X \otimes Y} \in \bfhatA_\eend(\bfS^1) \left( \bfD^2_{X \otimes Y},\bfP^2 \circ (\bfD^2_X \disjun \bfD^2_Y) \right)
\]
that is natural in $X \otimes Y \in (\calC,\calC)$. This extends uniquely to all $X \sqtimes Y \in \calC \sqtimes \calC$ because both $\bfD^2 \circ \otimes$ and $\bfhatA_\eend(\bfP^2) \circ \bfmu_{\bfS^1,\bfS^1} \circ (\bfD^2,\bfD^2)$ are separately left exact. In other words, if we use the shorthand notation $\bfD^2 \disjun \bfD^2 : \calC \sqtimes \calC \to \bfhatA_\eend(\bfS^1 \disjun \bfS^1)$ for the linear functor $\bfD^2 \disjun \bfD^2 := \bfhatmu_{\bfS^1,\bfS^1} \circ (\bfD^2 \sqtimes \bfD^2)$, we have the following result.

\begin{proposition}\label{P:product}
 The family
 \[
  \pi = \left\{ \pi_{X \sqtimes Y} \in \bfhatA_\eend(\bfS^1) \left( \bfD^2_{X \otimes Y},\bfP^2 \circ (\bfD^2_X \disjun \bfD^2_Y) \right) \Bigm\vert X \sqtimes Y \in \calC \sqtimes \calC \right\}
 \]
 of isomorphisms of $\bfhatA_\eend(\bfS^1)$ defines a natural isomorphism
 \begin{center}
  \begin{tikzpicture}[descr/.style={fill=white}]
   \node (P0) at ({sqrt(2)},0) {$\bfhatA_\eend(\bfS^1)$};
   \node (P1) at (0,{sqrt(2)}) {$\calC$};
   \node (P2) at ({-sqrt(2)},0) {$\calC \sqtimes \calC$};
   \node (P3) at (0,{-sqrt(2)}) {$\bfhatA_\eend(\bfS^1 \disjun \bfS^1)$};
    \node (P4) at (0,0) {$\phantom{\pi} \Downarrow \pi$};
   \draw
   (P1) edge[->] node[right,yshift=5pt] {$\bfD^2$} (P0)
   (P2) edge[->] node[left,yshift=5pt] {$\otimes$} (P1)
   (P2) edge[->] node[left,yshift=-5pt] {$\bfD^2 \disjun \bfD^2$} (P3)
   (P3) edge[->] node[right,yshift=-5pt] {$\bfhatA_\eend(\bfP^2)$} (P0);
  \end{tikzpicture}
 \end{center}
\end{proposition}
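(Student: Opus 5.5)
The plan has three steps: build the components on the separate category $(\calC,\calC)$, check naturality there, and then extend to $\calC \sqtimes \calC$ through the universal property of the box tensor product.

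\emph{Step 1: components on $(\calC,\calC)$.} For $X,Y \in \calC$, I take $\pi_{X \otimes Y}$ to be the class of the $2$-morphism
\[
 (P^2 \times I \cup (\text{two cylinders}),T_{X,Y},0) : \bfD^2_{X \otimes Y} \Rightarrow \bfP^2 \circ (\bfD^2_X \disjun \bfD^2_Y).
\]
Its support is the solid cylinder $D^2 \times I$, viewed as a cobordism with corners from $D^2$ to $P^2 \circ (D^2 \sqcup D^2) \cong D^2$. The graph $T_{X,Y}$ is the ribbon graph splitting the strand labeled $X \otimes Y$ into two strands labeled $X$ and $Y$. Equivalently, it is $\bfD^2(\varphi)$ applied to the identity coupon, via the natural isomorphism $\varphi$ of the functor $\bfD^2 : \calB_\calC \to \bfACob_\calC(\varnothing,\bfS^1)$.

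The target $\bfP^2 \circ (\bfD^2_X \disjun \bfD^2_Y)$ is just $\bfD^2(P)$ with $P = O_X \cup O_Y$ a regular blue set. So $\varphi_P$ gives an isomorphism $\bfD^2(P) \cong \bfD^2_{F_\calC(P)} = \bfD^2_{X \otimes Y}$, with inverse $\varphi_P^{-1}$ (a merging coupon). Hence each $\pi_{X \otimes Y}$ is invertible.

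\emph{Step 2: naturality on $(\calC,\calC)$.} Naturality in $X$ and $Y$ separately is an isotopy/skein identity in $D^2 \times I$. Moving coupons $f$ on the $X$-strand and $g$ on the $Y$-strand past the splitting coupon turns them into $f \otimes g$. This follows because $\varphi$ is natural in $P \in \calB_\calC$, applied to the morphism $O_f \sqcup O_g$ of $\calB_\calC$. So $\pi$ is a natural isomorphism between the separate functors $\bfD^2 \circ \otimes$ and $\bfA_\eend(\bfP^2) \circ \bfmu_{\bfS^1,\bfS^1} \circ (\bfD^2,\bfD^2)$ from $(\calC,\calC)$ to $\bfA_\eend(\bfS^1)$.

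\emph{Step 3: extension to $\calC \sqtimes \calC$.} This is the main point. The box tensor product's universal property gives an equivalence
\[
 \_ \circ K_{(\calC,\calC)} : \Lex_\Bbbk(\calC \sqtimes \calC,\bfhatA_\eend(\bfS^1)) \to \SLex_\Bbbk(\calC,\calC;\bfhatA_\eend(\bfS^1)).
\]
Since it is fully faithful on $2$-cells, a natural isomorphism between the restrictions lifts uniquely. It remains to check that both functors are left exact on $\calC \sqtimes \calC$ and restrict to the two functors of Step 2.

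For $\bfD^2 \circ \otimes$, I use the left exact extension of the separately exact functor $\otimes$; it is separately exact since $\calC$ is rigid. Composing with the equivalence $\bfD^2$ of Proposition~\ref{P:circle_category} keeps it left exact.

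For $\bfhatA_\eend(\bfP^2) \circ (\bfD^2 \disjun \bfD^2)$, there are three factors:
\begin{itemize}
 \item $\bfD^2 \sqtimes \bfD^2$ is left exact, being a box product of equivalences.
 \item $\bfhatmu_{\bfS^1,\bfS^1}$ is an equivalence, by Proposition~\ref{P:product_coherence}.
 \item $\bfhatA_\eend(\bfP^2)$ is a $1$-morphism in $\FCat_\Bbbk$, by Theorem~\ref{T:symmetric_monoidality}.
\end{itemize}
Its restriction along $K_{(\calC,\calC)}$ agrees with $\bfA_\eend(\bfP^2) \circ \bfmu_{\bfS^1,\bfS^1} \circ (\bfD^2,\bfD^2)$, up to the coherence isomorphisms $\theta$ of the completion.

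The main obstacle is this bookkeeping: identifying the restriction with the separate functor, which amounts to the separate left exactness of $\bfmu$ from Proposition~\ref{P:left_exactness}. The lifted $\pi$ then has invertible components because its inverse lifts likewise.
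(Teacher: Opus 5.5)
Your proposal is correct and follows essentially the paper's own argument. The paper likewise takes the components on $(\calC,\calC)$ from the disc identification (Proposition~\ref{P:circle_category}, equivalently the isomorphism $\varphi$), natural in both variables. It then extends them uniquely to $\calC \sqtimes \calC$ via the universal property of the box tensor product, because both $\bfD^2 \circ \otimes$ and $\bfhatA_\eend(\bfP^2) \circ \bfmu_{\bfS^1,\bfS^1} \circ (\bfD^2,\bfD^2)$ are separately left exact.

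One minor slip: $\varphi_P$ is the merging coupon, not its inverse. Your splitting graph $T_{X,Y}$ is its inverse, and it is inverse only in $\bfA_\eend(\bfS^1)$, that is, up to skein equivalence, not in $\calB_\calC$ itself.
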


Finally, for every $Y \in \calC$, let us use the shorthand notation
\[
 \eend_{[1]} \sqtimes {(\eend_{[2]}^* \otimes Y)} = \int_{X \in \calC} X \sqtimes {(X^* \otimes Y)}
\]
for the end of the separate linear functor $\_ \sqtimes {(\_^* \otimes Y)} : (\calC,\calC^\op) \to \calC \sqtimes \calC$, and
\[
 \bfD^2_{\eend_{[1]}} \disjun \bfD^2_{\eend_{[2]}^* \otimes Y} = \int_{X \in \calC} \bfD^2_X \disjun \bfD^2_{X^* \otimes Y}
\]
for the end of the separate linear functor $\bfD^2_\_ \disjun \bfD^2_{\_^* \otimes Y} : (\calC,\calC^\op) \to \bfhatA_\eend(\bfS^1 \disjun \bfS^1)$ defined by $\bfD^2_\_ \disjun \bfD^2_{\_^* \otimes Y} := (\bfD^2 \disjun \bfD^2) \circ (\_ \sqtimes {(\_^* \otimes Y)})$. Notice that both separate linear functors are separately left exact.

Then, let
\[
 \eend_{[1]} \sqtimes {(\eend_{[2]}^* \otimes \_)} : \calC \to \calC \sqtimes \calC
\]
be the left exact linear functor sending every $Y \in \calC$ to $\eend_{[1]} \sqtimes {(\eend_{[2]}^* \otimes Y)} \in \calC \sqtimes \calC$.

Now, notice that, if $\_^* : (S^1)^* \to S^1$ denotes the diffeomorphism induced by complex conjugation, then we have an isomorphism
\[
 (\bfP^2)^\dagger \cong ({\id_{\bfS^1}} \disjun {(\bfP^2 \cdot (\_^* \disjun \id_{S^1}))}) \circ (\lcoev_{\bfS^1} \disjun \id_{\bfS^1})
\]
in $\bfACob_\calC$, see Equation~\eqref{E:right_action_1-morphism} for the right action of a diffeomorphism on a morphism of $\bfACob_\calC$. Then, Proposition~\ref{P:end} and Remark~\ref{R:end_kernel}, together with left exactness of $\bfhatmu_{\bfS^1,\bfS^1}$ and $\bfhatA_\eend({\id_{\bfS^1}} \disjun {(\bfP^2 \cdot (\_^* \disjun \id_{S^1}))})$, yield an isomorphism
\[
 \Delta_Y \in \bfhatA_\eend(\bfS^1 \disjun \bfS^1) \left( \bfD^2_{\eend_{[1]}} \disjun \bfD^2_{\eend_{[2]}^* \otimes Y},(\bfP^2)^\dagger \circ \bfD^2_Y \right)
\]
that is natural in $Y \in \calC$. In other words, we have the following result.

\begin{proposition}\label{P:coproduct}
 The family
 \[
  \Delta = \left\{ \Delta_Y \in \bfhatA_\eend(\bfS^1 \disjun \bfS^1) \left( \bfD^2_{\eend_{[1]}} \disjun \bfD^2_{\eend_{[2]}^* \otimes Y},(\bfP^2)^\dagger \circ \bfD^2_Y \right) \Bigm\vert Y \in \calC \right\}
 \]
 of isomorphisms of $\bfhatA_\eend(\bfS^1 \disjun \bfS^1)$ defines a natural isomorphism
 \begin{center}
  \begin{tikzpicture}[descr/.style={fill=white}]
   \node (P0) at ({sqrt(2)},0) {$\bfhatA_\eend(\bfS^1 \disjun \bfS^1)$};
   \node (P1) at (0,{sqrt(2)}) {$\calC \sqtimes \calC$};
   \node (P2) at ({-sqrt(2)},0) {$\calC$};
   \node (P3) at (0,{-sqrt(2)}) {$\bfhatA_\eend(\bfS^1)$};
    \node (P4) at (0,0) {$\Delta \Downarrow \phantom{\Delta}$};
   \draw
   (P1) edge[->] node[right,yshift=5pt] {$\bfD^2 \disjun \bfD^2$} (P0)
   (P2) edge[->] node[left,yshift=5pt] {$\eend_{[1]} \sqtimes {(\eend_{[2]}^* \otimes \_)}$} (P1)
   (P2) edge[->] node[left,yshift=-5pt] {$\bfD^2$} (P3)
   (P3) edge[->] node[right,yshift=-5pt] {$\bfhatA_\eend((\bfP^2)^\dagger)$} (P0);
  \end{tikzpicture}
 \end{center}
\end{proposition}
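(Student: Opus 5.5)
The plan is to build the natural isomorphism $\Delta$ as a composite of three isomorphisms, each natural in $Y \in \calC$, using the decomposition
\[
 (\bfP^2)^\dagger \cong ({\id_{\bfS^1}} \disjun {(\bfP^2 \cdot (\_^* \disjun \id_{S^1}))}) \circ (\lcoev_{\bfS^1} \disjun \id_{\bfS^1})
\]
stated just before the proposition. Write $Q := {\id_{\bfS^1}} \disjun {(\bfP^2 \cdot (\_^* \disjun \id_{S^1}))}$.

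\textbf{Step 1.} Applying $\bfhatA_\eend$ to the decomposition gives a natural isomorphism
\[
 (\bfP^2)^\dagger \circ \bfD^2_Y \cong \bfhatA_\eend(Q)\bigl(\bfhatmu(\lcoev_{\bfS^1} \sqtimes \bfD^2_Y)\bigr).
\]
Here I use that $\bfhatA_\eend$ is a $2$-functor and $\bfhatmu$ is its monoidal structure from Theorem~\ref{T:symmetric_monoidality}.

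\textbf{Step 2.} Substitute the end description of $\lcoev_{\bfS^1}$. By Proposition~\ref{P:end}, $\lcoev_{\bfS^1} = \int_X \bfD^2_X \disjun (\bfD^2_X)^\rmT$. By Remark~\ref{R:end_kernel}, this end is the finite limit (a kernel) of an explicit morphism between finite direct sums of $\bfD^2_G \disjun (\bfD^2_G)^\rmT$. Three functors then preserve this limit:
\begin{itemize}
 \item $\bfhatmu_{\bfS^1 \disjun (\bfS^1)^*,\bfS^1}(\_ \sqtimes \bfD^2_Y)$, which is separately left exact by Proposition~\ref{P:left_exactness};
 \item $\bfhatA_\eend(Q)$, which is left exact by Proposition~\ref{P:left_exactness_Sigma}.
\end{itemize}
Together these give a natural isomorphism
\[
 (\bfP^2)^\dagger \circ \bfD^2_Y \cong \int_X \bfhatA_\eend(Q)\bigl(\bfD^2_X \disjun (\bfD^2_X)^\rmT \disjun \bfD^2_Y\bigr) = \int_X \bfD^2_X \disjun \bigl((\bfP^2 \cdot (\_^* \disjun \id))\circ ((\bfD^2_X)^\rmT \disjun \bfD^2_Y)\bigr).
\]

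\textbf{Step 3.} Identify the second factor in each term of the end. I would need an analogue of Proposition~\ref{P:product} for the twisted pair of pants: a natural isomorphism
\[
 (\bfP^2 \cdot (\_^* \disjun \id_{S^1})) \circ ((\bfD^2_X)^\rmT \disjun \bfD^2_Y) \cong \bfD^2_{X^* \otimes Y}.
\]
My approach is to first show $(\bfD^2_X)^\rmT \cdot \_^* \cong \bfD^2_{X^*}$ as $\calC$-decorated discs, by reversing the orientation of the framed point and using duality in $\calC$. After that, Proposition~\ref{P:product} applies. Naturality in $X$ must be checked with care: the transpose is contravariant, so the resulting functor $X \mapsto \bfD^2_{X^*}$ is contravariant. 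This matches the $(\calC,\calC^\op)$ variance of the end. Feeding this in gives
\[
 \int_X \bfD^2_X \disjun \bfD^2_{X^* \otimes Y} = \bfD^2_{\eend_{[1]}} \disjun \bfD^2_{\eend_{[2]}^*\otimes Y}.
\]
Inverting the composite produces $\Delta_Y$.

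\textbf{Step 4 and the main obstacle.} Naturality in $Y$ holds because every isomorphism used is natural in $Y$; the $Y$-dependence enters only through the disc factor. To match the diagram of the proposition, I would also compare $\bfD^2_{\eend_{[1]}} \disjun \bfD^2_{\eend_{[2]}^*\otimes Y}$ with $(\bfD^2 \disjun \bfD^2)(\eend_{[1]} \sqtimes (\eend_{[2]}^* \otimes Y))$. This follows because $\bfD^2 \disjun \bfD^2 = \bfhatmu \circ (\bfD^2 \sqtimes \bfD^2)$ is left exact (it is an equivalence composed with left exact functors), so it preserves ends, which are finite limits by \cite[Corollary~5.1.8]{KL01}. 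I expect the main obstacle to be Step 3: identifying the transposed disc with a dual-labelled disc compatibly with the orientation convention on $\lcoev_{\bfS^1}$. I would attack it through the isomorphism of Proposition~\ref{P:algebraic_and_skein_model}, computing the relevant Hom spaces via Lemma~\ref{L:split} and Lemma~\ref{L:adjoint}.
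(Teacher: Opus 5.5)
Your proposal is correct and is essentially the paper's own argument. It uses the same decomposition $(\bfP^2)^\dagger \cong Q \circ (\lcoev_{\bfS^1} \disjun \id_{\bfS^1})$, the end/kernel description of $\lcoev_{\bfS^1}$ from Proposition~\ref{P:end} and Remark~\ref{R:end_kernel}, and left exactness of $\bfhatmu$ and $\bfhatA_\eend(Q)$ to carry the end through. Your Step~3 makes explicit the identification $\bfP^2 \circ \bigl((\_^* \cdot (\bfD^2_X)^\rmT) \disjun \bfD^2_Y\bigr) \cong \bfD^2_{X^* \otimes Y}$, natural in $(X,Y) \in (\calC^\op,\calC)$, which the paper uses tacitly. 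Note that this involves the left action $\_^* \cdot (\bfD^2_X)^\rmT$, not a right action. Also, $\varphi$ applied to a disc carrying a negatively oriented point labeled $X$ gives the isomorphism with $\bfD^2_{X^*}$ needed before invoking Proposition~\ref{P:product}.
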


\subsection{Relation between TQFTs}

We finish by proving that, for a modular category $\calC$ with adjoint end $\eend$, the \KL{} TQFT $J_\eend$ is naturally isomorphic to the restriction of the ETQFT $\bfhatA_\eend$ to the category $\CCob \subset \bfACob_\calC(\varnothing,\bfS^1)$, and the renormalized \KL{} TQFT $V_\eend$ is naturally isomorphic to the restriction of $\bfhatA_\eend$ to the category $\ACob_\calC = \bfACob_\calC(\varnothing,\varnothing)$.

First of all, if $A$ and $A'$ are linear categories and $X \in A$ is an object, let us denote by
\[
 E_X : \bfLex_\Bbbk(A,A') \to A'
\]
the evaluation linear functor, that sends every left exact linear functor $F : A \to A'$ to the object $F(X) \in A'$.

We start from the \KL{} TQFT $J_\eend$. For every $g \in \KTan$, let us consider the invertible morphism
\[
 \alpha_g \in \bfA_\eend(\bfS^1) \left( \bfD^2_{\eend^{\otimes g}}, \bfSigma_g \right)
\]
given by
\[
 \bfM(g,\varnothing) = [M(\eta^{{} \bcs g}),S(g,\varnothing),0],
\]
where the invertible morphism $\bfM(g,P) \in \bfA_\eend(\bfS^1) ( \bfD^2_{\eend^{\otimes g} \otimes F_\calC(P)},\bfSigma_g \bcs \bfD^2(P) )$ and the $\calC$-labeled bichrome graph $S(g,P) \subset M(\eta^{{} \bcs g})$ are defined by Equation~\eqref{E:disc_essentially_surjective_1} for every $P \in \calB_\calC$. Let us set $S(g) := S(g,\varnothing)$.

\begin{proposition}\label{P:KL_TQFT_inside_ETQFT}
 The family
 \[
  \alpha = \left\{ \alpha_g \in \bfA_\eend(\bfS^1) \left( \bfD^2_{\eend^{\otimes g}}, \bfSigma_g \right) \Bigm\vert g \in \KTan \right\}
 \]
 of isomorphisms of $\bfA_\eend(\bfS^1)$ defines a natural isomorphism
 \begin{equation}\label{E:commutative_diagram_1}
  \raisebox{-0.5\height+2.5pt}{
  \begin{tikzpicture}[descr/.style={fill=white}] \everymath{\displaystyle}
   \node (P0) at (-1.5,0) {$\KTan$};
   \node (P1) at (0,1.5) {$\calC$};
   \node (P2) at (0,-1.5) {$\bfLex_\Bbbk(\bfhatA_\eend(\varnothing),\bfhatA_\eend(\bfS^1))$};
   \node (P3) at (1.5,0) {$\bfhatA_\eend(\bfS^1)$};
   \node (P4) at (0,0) {$\phantom{\alpha} \Downarrow \alpha$};
   \draw
   (P0) edge[->] node[left,yshift=5pt] {\scriptsize $J_\eend$}(P1)
   (P0) edge[->] node[left,yshift=-5pt] {\scriptsize $\bfhatA_\eend \circ \chi$} (P2)
   (P1) edge[->] node[right,yshift=5pt] {\scriptsize $\bfD^2$} (P3)
   (P2) edge[->] node[right,yshift=-5pt] {\scriptsize $E_{\id_\varnothing}$} (P3);
  \end{tikzpicture}
  }
 \end{equation}
\end{proposition}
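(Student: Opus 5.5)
Each $\alpha_g$ is already known to be invertible: it is the morphism $[\bfM(g,\varnothing)]$ from the proof of Proposition~\ref{P:circle_category}, with inverse $[\bfM'(g,\varnothing)]$. Since $E_{\id_\varnothing} \circ \bfhatA_\eend \circ \chi$ sends $g$ to $\bfhatA_\eend(\bfSigma_g)(\id_\varnothing) = \bfSigma_g$ and a Kirby tangle $T$ to $[\bfM(T)]$, the only thing left is naturality in $g$:
\[
 [\bfM(T)] \ast [\bfM(g,\varnothing)] = [\bfM(g',\varnothing)] \ast [\bfD^2_{J_\eend(T)}]
\]
for every morphism $T : g \to g'$ of $\KTan$. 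This holds as soon as it holds for a top tangle presentation $\tilde T$ of $T$, since both sides are compatible with composition and $\bfD^2$ is a functor. Also, $J_\eend(T)$ is defined by Equation~\eqref{E:KL_TQFT} through $J_\eend(\tilde T)$ followed by $\lambda$ on the last $h$ factors. So I would split the proof into a top-tangle case and a surgery case.

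\emph{Top tangles.} Let $T$ be a top tangle. By Proposition~\ref{P:circle_category}, $\bfD^2$ is fully faithful, and every object $\bfD^2_X$ is detected by the morphism spaces of Proposition~\ref{P:algebraic_and_skein_model}. It is therefore enough to show that the two bichrome graphs $S(g') \ast (\text{graph of } \bfM(T))$ and $S(g) \ast O_{J_\eend(T)}$ in $M(\eta^{\bcs g'}) \cong M(T) \ast M(\eta^{\bcs g})$ are skein equivalent. The plan is to cap each red arc of $S(g')$ with a blue strand labeled by an arbitrary $X_k$, using the bichrome coupon $i_{X_k}$. By the universal property of $\eend$ (dinaturality in $X_1,\ldots,X_{g'}$), the equality then reduces to the defining identity~\eqref{E:KL_TQFT_top_tangle} of $J_\eend(T)$: the handles of $M_g$ become coupons $i_{S_j}$ on the one side, while on the other the red arcs of $S(g)$ feed into $F_\calC(\myuline{T}_{X_1,\ldots,X_{g'}})$. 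This is the same skein manipulation as the one used for the isomorphism $\Psi_{P,g}$, which encodes a morphism into $\eend^{\otimes g}$ through red arcs meeting coupons.

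\emph{Surgery components.} Write $T = (\id_{g'} \bcs \epsilon^{\bcs h}\text{-closure}) \ast \tilde T$. The $2$-surgery along $C(T)$ must be matched with postcomposition by $\lambda^{\otimes h}$. Inside $\bfA_\eend(\bfS^1)$, surgery along a framed link is represented by turning those components into red closed components of the bichrome graph. Conversely, the red component obtained from an arc of $\tilde T$ by closing it off is evaluated through $F_\eend$ of Proposition~\ref{P:KLRT_functor}, and by Equation~\eqref{E:KLRT_functor} this inserts exactly $\lambda$ on the corresponding $\eend$ factor. Two points need checking: that the normalization of $\lambda$ in Proposition~\ref{P:KL_TQFT} agrees with the one used in $F_\eend$ and $V_\eend$ (Remark~\ref{R:normalization}), and that signatures agree. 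The latter holds because both assign $\sigma(C(T))$, and the Maslov corrections vanish since all the Lagrangians involved are of handlebody type.

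The main obstacle is the top-tangle step, specifically identifying the graph of $\bfM(T) \ast \bfM(g,\varnothing)$ with a bottom-top graph to which Equation~\eqref{E:KLRT_functor_pure_top_graph} applies, while keeping track of the handle-to-coupon replacement. I would first verify it on the generators (product, coproduct, antipode, braiding), using Proposition~\ref{P:KL_TQFT}, which gives functoriality of $J_\eend$, together with the functoriality of $\bfA_\eend$ on $2$-morphisms to pass from generators to arbitrary tangles.
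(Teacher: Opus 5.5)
\textbf{Genuine gap: the top-tangle step compares morphisms with different supports.}

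Your reduction to top tangles plus a closing morphism is fine as a reduction. Both legs are functors. By Equation~\eqref{E:KL_TQFT}, the closing morphism is sent to $\id\otimes\lambda^{\otimes h}$. That morphism is the integral form of $\bfSigma_1$ in $\KTan$, a closed component through a handle; it is not a power of the counit $\epsilon$, since $J_\eend(\epsilon)=\varepsilon$. The paper does not decompose $T$ this way. It treats an arbitrary $T$ at once: green-to-red moves turn $C(T)$ into red components, signature-preserving Kirby moves put $A(T)$ inside the cylinder of $M_g$ with each handle traversed by a single closed component, and a chain of admissible skein equivalences through a top tangle presentation finishes the argument.

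The gap is in your top-tangle step. You propose to compare $S(g)$ inside $M(T)\ast M(\eta^{{}\bcs g})$ with $S(g')\ast O_{J_\eend(T)}$ inside $M(\eta^{{}\bcs g'})$ (your formula has $g$ and $g'$ interchanged) by a skein equivalence. This relies on a homeomorphism $M(\eta^{{}\bcs g'})\cong M(T)\ast M(\eta^{{}\bcs g})$, which does not exist in general. Indeed, $M(T)\ast M(\eta^{{}\bcs g})=M(T\ast\eta^{{}\bcs g})$ is the exterior in $D^2\times I$ of the top tangle $T\ast\eta^{{}\bcs g}$, and that tangle can be knotted. Already for $g=0$ and $T:0\to 1$ an arc with a local trefoil, one support is a trefoil exterior and the other is a solid torus. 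So the naturality identity equates morphisms with non-homeomorphic supports. It can only hold in the quotient $\bfA_\eend(\bfS^1)$, and a skein equivalence, which never leaves a fixed cobordism, cannot prove it.

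Full faithfulness of $\bfD^2$ and Proposition~\ref{P:algebraic_and_skein_model} tell you that the relevant morphism space is $\calC(\eend^{\otimes g},\eend^{\otimes g'})$. They do not tell you which element corresponds to $[\bfM(T)]\ast\alpha_g$, and that is the whole content of the statement. The same issue affects your plan to view the graph of $\bfM(T)\ast\bfM(g,\varnothing)$ as a bottom-top graph, since such graphs live in $D^2\times I$.

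\textbf{What is needed, and one possible fix.}

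What is missing is a mechanism for changing the support. The paper trades the topology of $T$ for red (surgery) components via move~\eqref{E:GR} and Kirby moves, so that everything lives in a standard support.

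Within your strategy, a natural fix is to postcompose both sides with $[\bfM'(g',\varnothing)]=\alpha_{g'}^{-1}$. Its support $M(\epsilon^{{}\bcs g'})$ fills in the tubes around the arcs of a top tangle $T$. Hence $\bfM'(g',\varnothing)\ast\bfM(T)\ast\bfM(g,\varnothing)$ is supported on $D^2\times I$, and the knotting of $T$ is carried by the red part of $S'(g')$. By Proposition~\ref{P:circle_category} and the isomorphism $\varphi$, the claim then becomes an identity between $F_\eend$ of an explicit bichrome graph and $J_\eend(T)$. You can check that identity against Equation~\eqref{E:KL_TQFT_top_tangle} using the universal property of $\eend$, Equation~\eqref{E:Drinfeld}, and \cite[Lemma~2.10]{DGGPR19}. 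This is presumably what your ``capping with $i_{X_k}$'' was aiming at.

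\textbf{Two smaller points.}

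In the surgery step, the closed red component is not isolated. It sits in a filled-in tube and interacts with the red part of $S(g'+h)$. So Equation~\eqref{E:KLRT_functor} alone does not show that it ``inserts exactly $\lambda$''; you need identities of slam-dunk type such as \eqref{E:int_slam-dunk}.

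Checking only on generators would also require $\alpha$ to be compatible with $\bcs$, and functoriality of $\bfA_\eend$ on $2$-morphisms does not give that. Your list of generators is also incomplete: nothing in it has source or target $0$.
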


\begin{proof}
 In order to prove naturality, we need to check that, for every $T \in \KTan(g,g')$,
 the vector
 \begin{align*}
  [(M(\eta^{{} \bcs g'}),S(g'),0) \ast \bfD^2_{J_\eend(T)}]
  &= [M(\eta^{{} \bcs g'}),S(g') \ast O_{J_\eend(T)},0]
 \end{align*}
 determined by Equation~\eqref{E:disc_functor_C} coincides with the vector
 \begin{align*}
  [\chi(T) \ast (M(\eta^{{} \bcs g}),S(g),0)]
  &= [M(T \ast \eta^{{} \bcs g}),S(g),\sigma(C(T \ast \eta^{{} \bcs g}))]
 \end{align*}
 determined by Equation~\eqref{E:surgery_functor_morphisms}.

 We start by noticing that, for every Kirby tangle $T \in \KTan(g,g')$, we have $\sigma(C(T \ast \eta^{{} \bcs g})) = \sigma(C(T))$, and a finite sequence of green-to-red moves yields
 \begin{align*}
  [M(T \ast \eta^{{} \bcs g}),S(g),\sigma(C(T))]
  &= [M(A(T) \ast \eta^{{} \bcs g}),C(T) \cup S(g),0].
 \end{align*}
 Furthermore, we can always consider a regular diagram such that
 \begin{itemize}
  \item the subtangle $A(T)$ is contained in the cylinder $D^2 \times I \subset M_g$,
  \item every handle of $M_g$ is traversed by a single component of $C(T)$,
  \item no component of $C(T)$ runs along two distinct handles of $M_g$.
 \end{itemize}
 This can be achieved by a sequence of signature-preserving Kirby moves, since
 \begin{align*}
  \pic{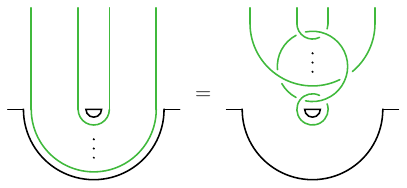}
 \end{align*}
 Under this assumption, up to red signature-preserving Kirby moves, we have
 \begin{align*}
  \pic{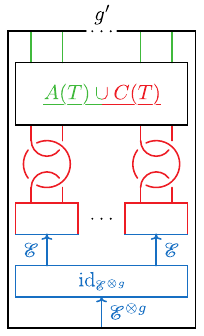}
  &\doteq \pic{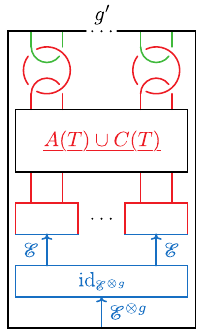}
 \end{align*}
 Therefore, if $T \in \KTan(g,g')$ is a Kirby tangle, and if $\tilde{T} \in \TTan(g,g'+h)$ is a top tangle presentation of $T$, we have a chain of admissible skein equivalences
 \begin{align*}
  \pic{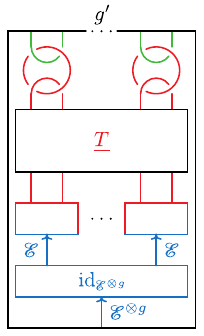}
  &\doteq \pic{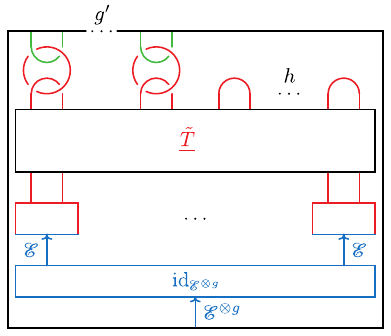} \\
  &\doteq \pic{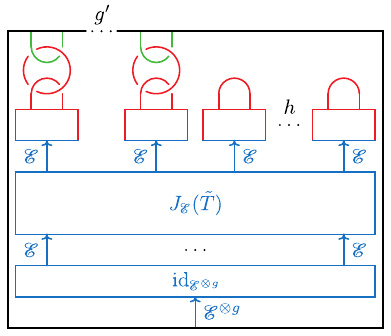} \\
  &\doteq \pic{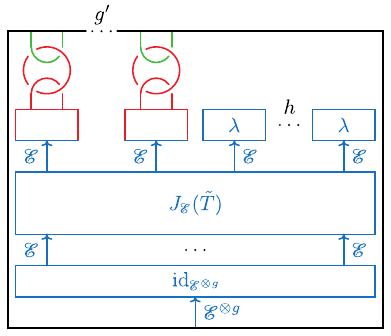} \\
  &\doteq \pic{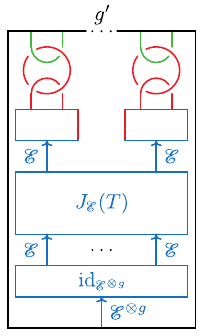}
  \doteq \pic{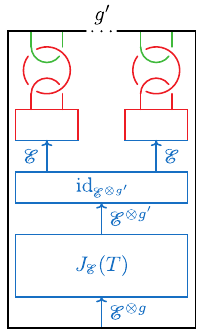} \qedhere
 \end{align*}
\end{proof}

We can move on now to the renormalized \KL{} TQFT $V_\eend$. Equations~\eqref{E:admissible_non-empty_representative_of_empty} and \eqref{E:admissible_connected_representative} yield an isomorphism
\[
 \beta_{\bfSigma} \in \bfhatA_\eend(\varnothing) \left( V_\eend(\bfSigma) \otimes \id_\varnothing,\bfSigma \right)
\]
that is natural in $\bfSigma \in \ACob_\calC$. In other words, we have the following result.

\begin{proposition}\label{P:renormalized_KL_TQFT_inside_ETQFT}
 The family
 \[
  \beta = \left\{ \beta_{\bfSigma} \in \bfhatA_\eend(\varnothing) \left( V_\eend(\bfSigma) \otimes \id_\varnothing,\bfSigma \right) \Bigm\vert \bfSigma \in \ACob_\calC \right\}
 \]
 of isomorphisms of $\bfhatA_\eend(\varnothing)$ defines a natural isomorphism
 \begin{equation}\label{E:commutative_diagram_2}
  \raisebox{-0.5\height+2.5pt}{
  \begin{tikzpicture}[descr/.style={fill=white}] \everymath{\displaystyle}
   \node (P0) at (-1.5,0) {$\ACob_\calC$};
   \node (P1) at (0,1.5) {$\FVect_\Bbbk$};
   \node (P2) at (0,-1.5) {$\bfLex_\Bbbk(\bfhatA_\eend(\varnothing),\bfhatA_\eend(\varnothing))$};
   \node (P3) at (1.5,0) {$\bfhatA_\eend(\varnothing)$};
   \node (P4) at (0,0) {$\phantom{\beta} \Downarrow \beta$};
   \draw
   (P0) edge[->] node[left,yshift=5pt] {\scriptsize $V_\eend$}(P1)
   (P0) edge[->] node[left,yshift=-5pt] {\scriptsize $\bfhatA_\eend$} (P2)
   (P1) edge[->] node[right,yshift=5pt] {\scriptsize $\bfhateta$} (P3)
   (P2) edge[->] node[right,yshift=-5pt] {\scriptsize $E_{\id_\varnothing}$} (P3);
  \end{tikzpicture}
  }
 \end{equation}
\end{proposition}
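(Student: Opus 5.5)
The plan is to build $\beta_{\bfSigma}$ explicitly and then check naturality by reducing everything to the isomorphisms of Lemma~\ref{L:split}. For $\bfSigma \in \ACob_\calC$, choose dual bases $\{[\bfM_{\varnothing,i}]\}_{i \in I}$ of $V_\eend(\bfSigma)$ and $\{[\bfM'_{\varnothing,i}]\}_{i \in I}$ of $V'_\eend(\bfSigma)$, as in the proof of Lemma~\ref{L:connected_objects}. The identities
\[
 \sum_i [\bfM_{\varnothing,i} \ast \bfM'_{\varnothing,i}] = \id_{\bfSigma}, \qquad [\bfM'_{\varnothing,i} \ast \bfM_{\varnothing,j}] = \delta_{i,j}\id_{\id_\varnothing}
\]
show that $\bfSigma$ is a biproduct of $|I|$ copies of $\id_\varnothing$ in $\bfA_\eend(\varnothing)$. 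In the completion $\bfhatA_\eend(\varnothing)$, the copower $V_\eend(\bfSigma) \otimes \id_\varnothing$ is characterized by $\bfhatA_\eend(\varnothing)(V \otimes \id_\varnothing, Y) \cong \Hom_\Bbbk(V,\bfhatA_\eend(\varnothing)(\id_\varnothing,Y))$. So $\beta_{\bfSigma}$ should be the morphism corresponding to the canonical isomorphism $V_\eend(\bfSigma) \to \bfA_\eend(\varnothing)(\id_\varnothing,\bfSigma)$ of Lemma~\ref{L:split}, namely $[\bfM_{\bfSigma}] \mapsto [\bfM_{\bfSigma}]$. Its inverse is $\sum_i [\bfM'_{\varnothing,i}]$ paired with the dual basis. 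This is the same isomorphism obtained from Equations~\eqref{E:admissible_non-empty_representative_of_empty} and \eqref{E:admissible_connected_representative}, and it does not depend on the choice of basis.

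Naturality in $\bfSigma$ means that for every morphism $\bfM : \bfSigma \to \bfSigma'$ of $\ACob_\calC$ we need
\[
 [\bfM] \circ \beta_{\bfSigma} = \beta_{\bfSigma'} \circ (\bfhateta(V_\eend(\bfM))),
\]
where $\bfhateta(f) = f \otimes \id_{\id_\varnothing}$. Note that $E_{\id_\varnothing} \circ \bfhatA_\eend$ sends $\bfM$ to $[\bfM \triangleleft \id_\varnothing] = [\bfM]$. By the universal property of the copower, two morphisms out of $V_\eend(\bfSigma)\otimes\id_\varnothing$ agree as soon as their transposes $V_\eend(\bfSigma) \to \bfA_\eend(\varnothing)(\id_\varnothing,\bfSigma')$ agree.

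Both transposes evaluate on a vector $[\bfM_{\bfSigma}]$. The left side gives $[\bfM \ast \bfM_{\bfSigma}]$ by postcomposition. The right side gives $[\bfM \ast \bfM_{\bfSigma}]$ as well, because $V_\eend(\bfM)[\bfM_{\bfSigma}] = [\bfM\ast\bfM_{\bfSigma}]$ by Equation~\eqref{E:renormalized_KL_time-evolution_operators}. The two identifications $V_\eend(\_) \cong \bfA_\eend(\varnothing)(\id_\varnothing,\_)$ are the same natural map from Lemma~\ref{L:split}, so the transposes coincide.

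It remains to check that $\beta$ respects the relevant structure of the diagram, namely that the vertical composition in $\ACob_\calC$ corresponds to composition of functors evaluated at $\id_\varnothing$. This holds because $\bfhatA_\eend(\bfM)$ restricted to $\bfA_\eend(\varnothing)$ acts by whiskering.

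The main obstacle is bookkeeping, not conceptual. One point is that the signature correction in vertical composition plays no role, since it is built into both $V_\eend$ and $\bfA_\eend$ identically. The other is confirming that the copower structure in the completion agrees with the biproduct decomposition inside $\bfA_\eend(\varnothing)$. That agreement follows from the fully faithful, finite-limit-preserving embedding $K$ into the completion and from the fact that finite copowers are biproducts.
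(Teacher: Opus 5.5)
Your argument is correct, but it takes a more direct route than the paper does. The paper obtains $\beta_{\bfSigma}$ from Equations~\eqref{E:admissible_non-empty_representative_of_empty} and \eqref{E:admissible_connected_representative}, that is, from the chain $\bfSigma \cong (\bfD^2_{P_{\one}})^\dagger \circ \bfD^2_{\one} \circ \bfSigma \cong V_\eend(\bfSigma) \otimes ((\bfD^2_{P_{\one}})^\dagger \circ \bfD^2_{\one}) \cong V_\eend(\bfSigma) \otimes \id_\varnothing$. The steps are:
\begin{enumerate}
 \item insert a disc;
 \item absorb $\bfSigma$ into the label $V_\eend(\bfSigma) \otimes \one$ of the circle category via Lemma~\ref{L:connected_objects};
 \item move the copower out through $\bfhatA_\eend((\bfD^2_{P_{\one}})^\dagger)$ using \eqref{E:canonical_Vect-action_intertwiner};
 \item remove the disc with the isomorphism $\id_\varnothing \cong (\bfD^2_{P_{\one}})^\dagger \circ \bfD^2_{\one}$ built from $\epsilon_{\one}$ and $\eta_{\one}$.
\end{enumerate}
Naturality is then only asserted, resting on the naturality of the isomorphism of Lemma~\ref{L:split}.

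You instead stay inside $\bfA_\eend(\varnothing)$. The dual-basis identities furnished by Lemma~\ref{L:split} already exhibit $\bfSigma$ as a biproduct of $\dim V_\eend(\bfSigma)$ copies of $\id_\varnothing$. You define $\beta_{\bfSigma}$ basis-free as the transpose of the Lemma~\ref{L:split} isomorphism, and you verify the naturality square explicitly by transposing along the copower adjunction.

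Your version is more elementary. It needs neither Proposition~\ref{P:algebraic_and_skein_model} nor move~\eqref{E:CS}, both of which enter the paper's isomorphism \eqref{E:admissible_non-empty_representative_of_empty}, and it makes the naturality check visible. The paper's version reuses equations it needs anyway for Lemma~\ref{L:empty_additive_generator}. It also realizes $V \otimes \id_\varnothing$ by an explicit object $(\bfD^2_{P_{\one}})^\dagger \circ \bfD^2_{V \otimes \one}$ of $\bfA_\eend(\varnothing)$, tied to the canonical action on $\calC$ as in Propositions~\ref{P:unit} and \ref{P:counit}.

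Your closing paragraph about ``respecting the relevant structure'' is unnecessary. Once the square commutes for every $\bfM$ you are done, provided you carry along the coherence isomorphism $\bfhatA_\eend(\bfSigma)(\id_\varnothing) \cong \bfSigma$ coming from the completion.
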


A direct relation between the \KL{} TQFT $J_\eend$ and its renormalized version $V_\eend$ can now be formulated as a family of natural isomorphisms indexed by $P \in \calB_\calC$. Indeed, as a direct consequence of Propositions~\ref{P:algebraic_and_skein_model}, \ref{P:KL_TQFT_inside_ETQFT}, and \ref{P:renormalized_KL_TQFT_inside_ETQFT}, we have the following result.

\begin{corollary}\label{C:natural_isomorphism}
 For every object $P \in \calB_\calC$, the family of linear maps
 \[
  \Psi_P = \{ \Psi_{P,g} : \calC(F_\calC(P),\eend^{\otimes g}) \to V_\eend((\bfD^2(P))^\dagger \circ \bfSigma_g) \mid g \in \KTan \}
 \]
 of Equation~\eqref{E:algebraic_model} defines a natural isomorphism
 \begin{equation}\label{E:commutative_diagram_3}
  \raisebox{-0.5\height+2.5pt}{
  \begin{tikzpicture}[descr/.style={fill=white}] \everymath{\displaystyle}
   \node (P0) at (-1.5,0) {$\KTan$};
   \node (P1) at (0,1.5) {$\calC$};
   \node (P2) at (0,-1.5) {$\ACob_\calC$};
   \node (P3) at (1.5,0) {$\FVect_\Bbbk$};
   \node (P4) at (0,0) {$\phantom{\Psi_P} \Downarrow \Psi_P$};
   \draw
   (P0) edge[->] node[left,yshift=5pt] {\scriptsize $J_\eend$}(P1)
   (P0) edge[->] node[left,yshift=-5pt] {\scriptsize $(\bfD^2(P))^\dagger \circ \chi(\_)$} (P2)
   (P1) edge[->] node[right,yshift=5pt] {\scriptsize $\calC(F_\calC(P),\_)$} (P3)
   (P2) edge[->] node[right,yshift=-5pt] {\scriptsize $V_\eend$} (P3);
  \end{tikzpicture}
  }
 \end{equation}
\end{corollary}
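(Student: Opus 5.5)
We deduce Corollary~\ref{C:natural_isomorphism} by pasting the natural isomorphism $\alpha$ of Proposition~\ref{P:KL_TQFT_inside_ETQFT} with the representable functor $\bfhatA_\eend(\varnothing)(\id_\varnothing,\_)$ composed with $\bfhatA_\eend((\bfD^2(P))^\dagger)$, and then identifying the result with $V_\eend$ via $\beta$ of Proposition~\ref{P:renormalized_KL_TQFT_inside_ETQFT}.

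First, we whisker diagram~\eqref{E:commutative_diagram_1} with the \fnctr{1} $\bfhatA_\eend((\bfD^2(P))^\dagger) : \bfhatA_\eend(\bfS^1) \to \bfhatA_\eend(\varnothing)$. This gives a natural isomorphism between the functors $\KTan \to \bfhatA_\eend(\varnothing)$ sending $g$ to $(\bfD^2(P))^\dagger \circ \bfD^2_{\eend^{\otimes g}}$ and to $(\bfD^2(P))^\dagger \circ \bfSigma_g$, with components $(\bfD^2(P))^\dagger \triangleright \alpha_g$.

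Next, we apply $\bfhatA_\eend(\varnothing)(\id_\varnothing,\_)$. On the target side, the isomorphism $\bfA_\eend(\varnothing)(\id_\varnothing,\bfSigma_\varnothing) \cong V_\eend(\bfSigma_\varnothing)$ of Lemma~\ref{L:split} is natural in $\bfSigma_\varnothing$; equivalently, this is $\beta$ read through the quasi-inverse of $\bfhateta$. It identifies the functor $g \mapsto \bfhatA_\eend(\varnothing)(\id_\varnothing,(\bfD^2(P))^\dagger \circ \bfSigma_g)$ with $V_\eend((\bfD^2(P))^\dagger \circ \chi(\_))$. On the source side, Lemma~\ref{L:adjoint}, applied to the admissible $\bfD^2(P)$ (or rather its natural isomorphism with $\bfD^2_{F_\calC(P)}$ via $\varphi$), together with the full faithfulness of Proposition~\ref{P:circle_category}, identifies
\[
\bfhatA_\eend(\varnothing)(\id_\varnothing,(\bfD^2(P))^\dagger \circ \bfD^2_{\eend^{\otimes g}}) \cong \bfA_\eend(\bfS^1)(\bfD^2(P),\bfD^2_{\eend^{\otimes g}}) \cong \calC(F_\calC(P),\eend^{\otimes g}).
\]
This identification is natural in morphisms $\eend^{\otimes g} \to \eend^{\otimes g'}$ of $\calC$, hence natural in $g$ once composed with $J_\eend$. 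Composing these isomorphisms yields a natural isomorphism filling~\eqref{E:commutative_diagram_3}.

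The main point is to check that the composite component is exactly $\Psi_{P,g}$ of Equation~\eqref{E:algebraic_model}. Unwinding, $f$ goes to $[\bfD^2_f]$ (precomposed with $\varphi_P$), then to $\alpha_g \ast \bfD^2_f$, and finally, through the adjunction isomorphism $h_{\bfD^2(P)}$, to $((\bfD^2(P))^\dagger \triangleright (\alpha_g \ast \bfD^2(O_f \ast \varphi_P))) \ast \radun_{\bfD^2(P)}$. Since $\alpha_g$ is represented by $(M(\eta^{\bcs g}),S(g),0)$, it suffices to verify the admissible skein equivalence $S(g) \ast O_f \ast \varphi_P \doteq T_f$ in $M(\eta^{\bcs g})$. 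This should hold by comparing pictures~\eqref{E:disc_essentially_surjective_1} and~\eqref{E:skein_model_picture}, where both place a coupon $f$ followed by the end structure morphisms feeding the handles. If instead a scalar or Drinfeld-map discrepancy appears, it does not affect the conclusion, since $\Psi_{P,g}$ is already known to be an isomorphism by Proposition~\ref{P:algebraic_and_skein_model}. Naturality in $g$ is then inherited from the naturality of $\alpha$ together with functoriality of the whiskering.
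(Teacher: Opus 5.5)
Your proposal follows essentially the paper's route: whisker the natural isomorphism $\alpha$ of Proposition~\ref{P:KL_TQFT_inside_ETQFT} with $\bfhatA_\eend((\bfD^2(P))^\dagger)$, translate the $\varnothing$-side into $V_\eend$ via $\beta$ (equivalently Lemma~\ref{L:split}), and translate the $\calC$-side via the algebraic model of Proposition~\ref{P:algebraic_and_skein_model}, which you reach through Lemma~\ref{L:adjoint} and the full faithfulness in Proposition~\ref{P:circle_category}; reducing the component check to the skein equivalence $S(g) \ast O_f \ast \varphi_P \doteq T_f$ in $M(\eta^{{} \bcs g})$ is the right verification. Drop your fallback remark, though: if the composite differed from $\Psi_{P,g}$ by a $g$-dependent factor, invertibility of each $\Psi_{P,g}$ would not make the specific family $\Psi_P$ natural, so the statement genuinely rests on that skein equivalence, which is what the definitions of $S(g)$ and $T_f$ are designed to give.
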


Notice that Proposition~\ref{P:algebraic_and_skein_model} and Corollary~\ref{C:natural_isomorphism} imply that $\Psi_{P,g}$ is natural in both $P \in \calB_\calC$ and $g \in \KTan$.

\appendix

\section{Finite completion equivalence criterion}\label{A:fc_equivalence}

In this appendix, we provide a proof of Proposition~\ref{P:fc_equivalence}. In order to do this, we first need to fix our notation.

Let us start by recalling that, for a linear category $A$, the Yoneda embedding implies that, for all $F \in \bfHom_\Bbbk(A,\Vect_\Bbbk)$ and $X \in A$, the linear map
\begin{align*}
 (\phi_A)_{F,X} : \Nat(A(X,\_),F) &\to F(X)
\end{align*}
sending every natural transformation $\alpha : A(X,\_) \Rightarrow F$ to the vector
\begin{equation}\label{E:Yoneda}
 (\phi_A)_{F,X}(\alpha) := \alpha_X(\id_X) \in F(X)
\end{equation}
is an isomorphism, whose inverse
\begin{align*}
 (\psi_A)_{F,X} : F(X) &\to \Nat(A(X,\_),F)
\end{align*}
sends every vector $x \in F(X)$ to the natural transformation
whose component
at $Y \in A$ is given by the linear map
\begin{align}
 ((\psi_A)_{F,X}(x))_Y : A(X,Y) &\to F(Y) \label{E:Yoneda_inverse} \\*
 f &\mapsto F(f)(x). \nonumber
\end{align}
Both isomorphisms are natural in $F \in \bfHom_\Bbbk(A,\Vect_\Bbbk)$ and $X \in A$, see for instance \cite[Section~2.4]{K82}.

Next, let us consider a separate linear category $\myuline{A}$. Thanks to \cite[Theorem~6.11]{K82}, the full subcategory $\SLex_\Bbbk(\myuline{A};\Vect_\Bbbk)$ of $\bfHom_\Bbbk(\myuline{A},\Vect_\Bbbk)$ is reflective, which means that the inclusion admits a left adjoint
\[
 L_{\myuline{A}} : \bfHom_\Bbbk(\myuline{A},\Vect_\Bbbk) \to \SLex_\Bbbk(\myuline{A};\Vect_\Bbbk),
\]
called the \textit{reflector}. Then, as explained in \cite[Chapter~IV, Section~3]{M71}, for every $F \in \bfHom_\Bbbk(\myuline{A},\Vect_\Bbbk)$, the adjunction unit $\eta_F : F \Rightarrow L_{\myuline{A}}(F)$ satisfies the following universal property: for every $\myuline{F'} \in \SLex_\Bbbk(\myuline{A};\Vect_\Bbbk)$, the linear map
\begin{align}
 \_ \ast \eta_F : \Nat(L_{\myuline{A}}(F),\myuline{F'}) &\to \Nat(F,\myuline{F'}) \label{E:universal_property_adjunction_unit} \\*
 \alpha &\mapsto \alpha \ast \eta_F \nonumber
\end{align}
is an isomorphism. Furthermore, for every $\myuline{F} \in \SLex_\Bbbk(\myuline{A};\Vect_\Bbbk)$, the adjunction unit $\eta_{\myuline{F}}$ is a natural isomorphism.

Since the proof of Proposition~\ref{P:fc_equivalence} will be based on several intermediate results, let us outline the strategy first. Let $\myuline{F} : \myuline{A} \to \myuline{A'}$ be a separately left exact separate linear functor between separate linear categories. The idea is to construct a quasi-inverse $Q_{\myuline{F}} : \myuline{\hat{A}'} \to \myuline{\hat{A}}$ of the linear functor $\myuline{\hat{F}} : \myuline{\hat{A}} \to \myuline{\hat{A}'}$ by restricting the opposite of the linear functor
\begin{align*}
 S_{\myuline{F}} : \SLex_\Bbbk(\myuline{A'};\Vect_\Bbbk) &\to \SLex_\Bbbk(\myuline{A};\Vect_\Bbbk) \\*
 \myuline{F'} &\mapsto \myuline{F'} \circ \myuline{F}
\end{align*}
to $\smash{\myuline{\hat{A}'}}$. In order to do this, we first need to show that the opposite of $\smash{S_{\myuline{F}}}$ sends $\smash{\myuline{\hat{A}'}}$ to $\smash{\myuline{\hat{A}}}$. As shown in Lemma~\ref{L:fc_equivalence_5}, if $\myuline{F}$ is fully faithful and separate finite limit dense, then this would follow from right exactness of $S_{\myuline{F}}$. However, this is not immediately clear. Indeed, while the linear functor
\begin{align*}
 H_{\myuline{F}} : \bfHom_\Bbbk(\myuline{A'},\Vect_\Bbbk) &\to \bfHom_\Bbbk(\myuline{A},\Vect_\Bbbk) \\*
 F' &\mapsto F' \circ \myuline{F}
\end{align*}
is exact for every separate linear functor $\myuline{F}$, because both limits and colimits in source and target of $H_{\myuline{F}}$ are computed object-wise, separate left exactness of $\myuline{F}$ only implies left exactness of $S_{\myuline{F}}$. This is because, while limits in source and target of $S_{\myuline{F}}$ are still computed object-wise (inclusion being a left exact right adjoint), object-wise colimits are not separately left exact in general, so they have to be reflected. In particular, in order to show right exactness of $S_{\myuline{F}}$, we need to find a natural isomorphism between $L_{\myuline{A}} \circ H_{\myuline{F}} : \bfHom_\Bbbk(\myuline{A'},\Vect_\Bbbk) \to \SLex_\Bbbk(\myuline{A};\Vect_\Bbbk)$ and $S_{\myuline{F}} \circ L_{\myuline{A'}} : \bfHom_\Bbbk(\myuline{A'},\Vect_\Bbbk) \to \SLex_\Bbbk(\myuline{A};\Vect_\Bbbk)$.

Consider $F' \in \bfHom_\Bbbk(\myuline{A'},\Vect_\Bbbk)$, with adjunction unit $\eta_{F'} : F' \Rightarrow L_{\myuline{A'}}(F')$. Applying to the latter the restriction functor $H_{\myuline{F}}$ yields the natural transformation $\eta_{F'} \triangleleft \myuline{F} : F' \circ \myuline{F} \Rightarrow L_{\myuline{A'}}(F') \circ \myuline{F}$. Then, since $L_{\myuline{A'}}(F') \circ \myuline{F}$ is separately left exact, there exists a unique natural transformation $(\gamma_{\myuline{F}})_{F'} : L_{\myuline{A}}(F' \circ \myuline{F}) \Rightarrow L_{\myuline{A'}}(F') \circ \myuline{F}$ such that
\begin{equation}\label{E:commutator_def}
 \eta_{F'} \triangleleft \myuline{F} = (\gamma_{\myuline{F}})_{F'} \ast \eta_{F' \circ \myuline{F}}.
\end{equation}
This yields a natural transformation
\begin{equation*}
 \gamma_{\myuline{F}} : L_{\myuline{A}} \circ H_{\myuline{F}} \Rightarrow S_{\myuline{F}} \circ L_{\myuline{A'}},
\end{equation*}
because the universal property satisfied by the adjunction unit implies
\begin{equation}\label{E:commutator_nat}
 (L_{\myuline{A'}}(\alpha') \triangleleft \myuline{F}) \ast (\gamma_{\myuline{F}})_{F'} = (\gamma_{\myuline{F}})_{F''} \ast L_{\myuline{A}}(\alpha' \triangleleft \myuline{F})
\end{equation}
for every $\alpha' \in \Nat(F',F'')$. As shown in Lemma~\ref{L:fc_equivalence_4}, if $\myuline{F}$ is fully faithful, and if there exists a separate finite limit dense separate linear subcategory $\myuline{B}$ of $\myuline{A}$ such that $\myuline{A'}(\_,\myuline{F}(\myuline{Y}))$ is separately right exact for every $\myuline{Y} \in \myuline{B}$, then $\gamma_{\myuline{F}}$ is invertible.

We start by establishing Lemmas~\ref{L:fc_equivalence_0}--\ref{L:fc_equivalence_3}, which will prepare the ground for the proof of Proposition~\ref{P:fc_equivalence} through Lemmas~\ref{L:fc_equivalence_4} and \ref{L:fc_equivalence_5}.

\begin{lemma}\label{L:fc_equivalence_0}
 Let $\myuline{B}$ be a separate colimit dense full linear subcategory of a separate linear category $\myuline{A}$, and let $\myuline{L}$ be a cone over a separate diagram $\myuline{D} : J \to \myuline{A}$. If, for every $\myuline{X} \in \myuline{B}$, the cone $\myuline{A}(\myuline{X},\myuline{L})$ is the limit of the diagram $\myuline{A}(\myuline{X},\_) \circ \myuline{D} : J \to \Vect_\Bbbk$, then $\myuline{L}$ is the limit of $\myuline{D} : J \to \myuline{A}$.
\end{lemma}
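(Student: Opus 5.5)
We will show that $\myuline{L}$ has the universal property of the limit by testing it against the closure of $\myuline{B}$. Let $\myuline{A}_0$ be the full subcategory of $\myuline{A}$ consisting of those objects $\myuline{X}$ for which the cone $\myuline{A}(\myuline{X},\myuline{L})$ is a limit of $\myuline{A}(\myuline{X},\_) \circ \myuline{D}$ in $\Vect_\Bbbk$. By Remark~\ref{R:adjoints_and_(co)limits}.$(i)$, the cone $\myuline{L}$ is a limit exactly when $\myuline{A}_0 = \myuline{A}$, since representables jointly reflect limits. By hypothesis $\myuline{B} \subset \myuline{A}_0$. Since $\myuline{B}$ is separate colimit dense, it therefore suffices to show that $\myuline{A}_0$ is replete and closed under separate colimits in $\myuline{A}$.

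Repleteness is immediate. If $\myuline{X} \cong \myuline{X'}$, the isomorphism induces compatible isomorphisms $\myuline{A}(\myuline{X},\_) \cong \myuline{A}(\myuline{X'},\_)$, and these carry one limit cone in $\Vect_\Bbbk$ to the other.

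For closure under colimits, let $\myuline{E} : K \to \myuline{A}$ be a separate diagram with values in $\myuline{A}_0$, and suppose it admits a colimit $\myuline{C}$ in $\myuline{A}$. The contravariant representable turns this colimit into a limit. Naturally in the second variable,
\[
 \myuline{A}(\myuline{C},\_) \cong \lim_{k \in K^\op} \myuline{A}(\myuline{E}(k),\_),
\]
computed pointwise in $\Vect_\Bbbk$. Evaluating on the cone $\myuline{L}$ over $\myuline{D}$ then gives
\[
 \myuline{A}(\myuline{C},\myuline{L}) \cong \lim_{k} \myuline{A}(\myuline{E}(k),\myuline{L}) \cong \lim_k \lim_j \myuline{A}(\myuline{E}(k),\myuline{D}(j)).
\]
The second isomorphism uses that each $\myuline{E}(k)$ lies in $\myuline{A}_0$. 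Interchanging the two limits in $\Vect_\Bbbk$, this becomes
\[
 \lim_j \lim_k \myuline{A}(\myuline{E}(k),\myuline{D}(j)) \cong \lim_j \myuline{A}(\myuline{C},\myuline{D}(j)).
\]
It remains to check that this chain of isomorphisms is the canonical comparison map induced by the cone $\myuline{A}(\myuline{C},\myuline{L})$. This holds because every map in the chain is induced by the cone legs and the colimit coprojections, so it follows from naturality. Hence $\myuline{C} \in \myuline{A}_0$.

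The main obstacle is to make sure the closure argument respects the separate setting. The colimits we use are only separate colimits, and the closure of $\myuline{B}$ is built by iterating them. The computation above uses only the universal property of the colimit $\myuline{C}$ in $\myuline{A}$, which holds whether or not the diagram is separate. So the argument applies verbatim to separate diagrams, and an induction along the construction of the closure, or a smallest-subcategory argument, finishes the proof. The diagram $\myuline{D}$ itself plays no special role beyond being a diagram in $\myuline{A}$, so being separate imposes no extra condition on it.
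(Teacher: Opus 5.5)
Your proposal is correct and takes essentially the same approach as the paper. The paper also takes the full subcategory of objects $\myuline{X}$ for which $\myuline{A}(\myuline{X},\myuline{L})$ is a limit, shows it is closed under separate colimits by the same interchange-of-limits computation, and concludes using the fact that representables jointly reflect limits. You additionally check repleteness and that the chain of isomorphisms is the canonical comparison map, which the paper leaves implicit.
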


\begin{proof}
 Let $\myuline{C}$ be the full linear subcategory of $\myuline{A}$ whose objects $\myuline{X}$ satisfy
 \begin{align*}
  \myuline{A} \left( \myuline{X},\myuline{L} \right)
  &\cong \lim_{j \in J} \myuline{A}(\myuline{X},\myuline{D}(j)).
 \end{align*}
 Then, the linear category $\myuline{C}$ is closed under separate colimits, because, if a separate diagram $\myuline{D'} : J' \to \myuline{C}$ admits a colimit, then
 \begin{align*}
  \myuline{A} \left( \colim_{j' \in J'} \myuline{D'}(j'),\myuline{L} \right)
  &\cong \lim_{j' \in (J')^\op} \myuline{A} \left( (\myuline{D'})^\op(j'),\myuline{L} \right) \\*
  &\cong \lim_{j' \in (J')^\op} \lim_{j \in J} \myuline{A} \left( (\myuline{D'})^\op(j'),\myuline{D}(j) \right) \\*
  &\cong \lim_{j \in J} \lim_{j' \in (J')^\op} \myuline{A} \left( (\myuline{D'})^\op(j'),\myuline{D}(j) \right) \\*
  &\cong \lim_{j \in J} \myuline{A} \left( \colim_{j' \in J'} \myuline{D'}(j'),\myuline{D}(j) \right).
 \end{align*}
 Furthermore, $\myuline{C}$ contains $\myuline{B}$, by assumption. Therefore, $\myuline{C} = \myuline{A}$. Thanks to Remark~\ref{R:adjoints_and_(co)limits}.$(i)$, representable linear functors jointly reflect limits. \qedhere
\end{proof}

\begin{lemma}\label{L:fc_equivalence_1}
 Let $\myuline{F} : \myuline{A} \to \myuline{A'}$ be a separate linear functor between separate linear categories $\myuline{A}$ and $\myuline{A'}$. If $\myuline{F}$ is fully faithful and separate colimit dense, then $\myuline{F}$ preserves all limits.
\end{lemma}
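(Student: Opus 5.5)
The plan is to show that, for a separate diagram $\myuline{D} : J \to \myuline{A}$ with limit $\myuline{L}$, the cone $\myuline{F}(\myuline{L})$ is a limit of $\myuline{F} \circ \myuline{D}$ in $\myuline{A'}$. I would prove this with Lemma~\ref{L:fc_equivalence_0}, applied in $\myuline{A'}$ to the full subcategory given by the image of $\myuline{F}$. This subcategory is separate colimit dense by hypothesis.

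By Lemma~\ref{L:fc_equivalence_0}, it is enough to check the limit property against representables from objects of the image. Concretely, for every $\myuline{X} \in \myuline{A}$, the cone $\myuline{A'}(\myuline{F}(\myuline{X}),\myuline{F}(\myuline{L}))$ must be a limit of the diagram $\myuline{A'}(\myuline{F}(\myuline{X}),\myuline{F}(\myuline{D}(\_)))$ in $\Vect_\Bbbk$. Full faithfulness of $\myuline{F}$ gives isomorphisms
\[
 \myuline{A'}(\myuline{F}(\myuline{X}),\myuline{F}(\myuline{Y})) \cong \myuline{A}(\myuline{X},\myuline{Y}),
\]
natural in $\myuline{Y}$. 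Under these isomorphisms the cone in question becomes $\myuline{A}(\myuline{X},\myuline{L})$ over $\myuline{A}(\myuline{X},\_) \circ \myuline{D}$. That cone is a limit by Remark~\ref{R:adjoints_and_(co)limits}.$(i)$, since representable functors preserve limits.

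Two preliminary points need checking. First, $\myuline{F} \circ \myuline{D}$ must itself be a separate diagram, which holds because $\myuline{F}$ is a separate linear functor. Second, the image of $\myuline{F}$ should be replaced by its repletion so that it is a full replete subcategory. This changes nothing, since representables at isomorphic objects are isomorphic.

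The main obstacle is a gap in Lemma~\ref{L:fc_equivalence_0}: it is stated for a separate diagram and a cone in a single separate category. Its proof only uses that the class of objects $\myuline{X}$ where the comparison map is an isomorphism is closed under those separate colimits that exist. I would re-read that argument to confirm that colimit density is used only in this way, so that it applies verbatim to a diagram that is not in the image of $\myuline{F}$. If "limits" in the statement is meant to include non-separate diagrams, the same argument goes through provided Lemma~\ref{L:fc_equivalence_0} holds for arbitrary diagrams, which its proof supports.
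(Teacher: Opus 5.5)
Your proposal is correct and follows the paper's proof. The paper likewise applies Lemma~\ref{L:fc_equivalence_0} inside $\myuline{A'}$, taking the image of $\myuline{F}$ as the separate colimit dense subcategory and $\myuline{F}(\lim_{j \in J} \myuline{D}(j))$ as the cone over $\myuline{F} \circ \myuline{D}$, and it checks the representable condition using full faithfulness and Remark~\ref{R:adjoints_and_(co)limits}.$(i)$. The ``obstacle'' you flag is not real: the separate diagram $\myuline{F} \circ \myuline{D}$ and its cone both live in the single separate category $\myuline{A'}$, so Lemma~\ref{L:fc_equivalence_0} applies verbatim, and its hypotheses never require the diagram to lie in $\myuline{B}$.
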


\begin{proof}
 If a separate diagram $\myuline{D} : J \to \myuline{A}$ admits a limit, then the full linear subcategory of $\myuline{A'}$ given by the image of the separate linear functor $\myuline{F}$, together with the cone
 \[
  \myuline{F}\left( \lim_{j \in J} \myuline{D}(j) \right)
 \]
 over the separate diagram $\myuline{F} \circ \myuline{D} : J \to \myuline{A'}$, satisfies the hypotheses of Lemma~\ref{L:fc_equivalence_0}. Indeed, if $\myuline{X} \in \myuline{A}$, then
 \begin{align*}
  \myuline{A'} \left( \myuline{F}(\myuline{X}),\myuline{F}\left( \lim_{j \in J} \myuline{D}(j) \right) \right)
  &\cong \myuline{A} \left( \myuline{X},\lim_{j \in J} \myuline{D}(j) \right) \\*
  &\cong \lim_{j \in J} \myuline{A} ( \myuline{X}, \myuline{D}(j) ) \\*
  &\cong \lim_{j \in J} \myuline{A'} \left( \myuline{F}(\myuline{X}),\myuline{F}(\myuline{D}(j)) \right).
 \end{align*}
 Therefore,
 \[
  \myuline{F}\left( \lim_{j \in J} \myuline{D}(j) \right) \cong \lim_{j \in J} \myuline{F} (\myuline{D}(j)). \qedhere
 \]
\end{proof}

\begin{lemma}\label{L:fc_equivalence_2}
 Let $\myuline{Y}$ be an object of a small separate linear category $\myuline{A}$. If the linear functor
 \begin{align*}
  \myuline{A}(\_,\myuline{Y}) : \myuline{A}^\op &\to \Vect_\Bbbk \\*
  \myuline{X} &\mapsto \myuline{A}(\myuline{X},\myuline{Y})
 \end{align*}
 is separately right exact, then for every $F \in \bfHom_\Bbbk(\myuline{A},\Vect_\Bbbk)$ the component of the adjunction unit $(\eta_F)_{\myuline{Y}} : F(\myuline{Y}) \to (L_{\myuline{A}}(F))(\myuline{Y})$ is a linear isomorphism.
\end{lemma}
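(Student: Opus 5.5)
The plan is to build an explicit inverse of $(\eta_F)_{\myuline{Y}}$ out of the separately left exact functor $\myuline{A}(\_,\myuline{Y})^{\text{op-twisted}}$. Concretely, consider the linear functor
\[
 G := \Hom_\Bbbk(\myuline{A}(\_,\myuline{Y}),\Bbbk)\quad\text{or rather}\quad F \mapsto \Nat(\myuline{A}(\_,\myuline{Y})^\vee,\ldots),
\]
but the cleaner route is via the co-Yoneda/tensor formula: for $F \in \bfHom_\Bbbk(\myuline{A},\Vect_\Bbbk)$ we have the natural isomorphism
\[
 F(\myuline{Y}) \cong \int^{\myuline{X} \in \myuline{A}} \myuline{A}(\myuline{X},\myuline{Y}) \otimes F(\myuline{X}),
\]
that is, evaluation at $\myuline{Y}$ is the functor $\myuline{A}(\_,\myuline{Y}) \otimes_{\myuline{A}} \_$. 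First I would record that evaluation at $\myuline{Y}$, viewed as a functor $E_{\myuline{Y}} : \bfHom_\Bbbk(\myuline{A},\Vect_\Bbbk) \to \Vect_\Bbbk$, is cocontinuous and exact, and that $E_{\myuline{Y}} \cong \myuline{A}(\_,\myuline{Y}) \otimes_{\myuline{A}} \_$.

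The key step is to show that $E_{\myuline{Y}}$ inverts every reflection unit, i.e.\ sends $\eta_F$ to an isomorphism. I would use the description of the reflector from \cite[Theorem~6.11]{K82}: $L_{\myuline{A}}(F)$ is obtained as a (transfinite, but here it suffices to know the class of maps) localization which inverts the canonical comparison maps $\colim_{j} \myuline{A}(\myuline{D}(j),\_) \to \myuline{A}(\lim_j \myuline{D}(j),\_)$ for separate finite diagrams $\myuline{D}$ admitting limits; equivalently $L_{\myuline{A}}$ is the left Kan-type localization at the set $\Theta$ of these maps, and $\eta_F$ lies in the saturation (closure under colimits, pushouts, composition, retracts) of $\Theta$. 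Since $E_{\myuline{Y}}$ preserves all colimits, the class of maps it inverts is saturated, so it suffices to check that $E_{\myuline{Y}}$ inverts each generator. Applying $E_{\myuline{Y}}$ to the comparison map gives $\colim_{j} \myuline{A}(\myuline{D}(j),\myuline{Y}) \to \myuline{A}(\lim_j \myuline{D}(j),\myuline{Y})$, which is exactly the statement that $\myuline{A}(\_,\myuline{Y}) : \myuline{A}^\op \to \Vect_\Bbbk$ preserves the separate finite colimit $\lim_j \myuline{D}(j)$ of $\myuline{D}^\op$, i.e.\ the hypothesis of separate right exactness.

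Alternatively, to avoid the saturation machinery, I would argue directly: let $K \subset \bfHom_\Bbbk(\myuline{A},\Vect_\Bbbk)$ be the full subcategory of those $F$ for which $(\eta_F)_{\myuline{Y}}$ is invertible; it contains $\SLex_\Bbbk(\myuline{A};\Vect_\Bbbk)$, and one checks it contains representables, then closes it under colimits using that $L_{\myuline{A}}$ is a left adjoint and that colimits in $\SLex$ are reflections of pointwise colimits. The main obstacle is exactly this last point: colimits in $\SLex_\Bbbk(\myuline{A};\Vect_\Bbbk)$ are not pointwise, so one needs that evaluation at $\myuline{Y}$ on $\SLex$ still commutes with colimits. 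I would handle it by showing $E_{\myuline{Y}}|_{\SLex}$ is corepresented: the functor $\myuline{A}(\_,\myuline{Y})^* $-dual description, namely $\Nat(\myuline{F'},\ldots)$; concretely, use that $\myuline{A}(\_,\myuline{Y})$ separately right exact means $\Hom_\Bbbk(\myuline{A}(\_,\myuline{Y}),V)$ is separately left exact for every $V$, giving a right adjoint $V \mapsto \Hom_\Bbbk(\myuline{A}(\_,\myuline{Y}),V)$ to $E_{\myuline{Y}}$ landing in $\SLex$. Then by \eqref{E:universal_property_adjunction_unit},
\[
 \Hom_\Bbbk(L_{\myuline{A}}(F)(\myuline{Y}),V) \cong \Nat(L_{\myuline{A}}(F),\Hom_\Bbbk(\myuline{A}(\_,\myuline{Y}),V)) \cong \Nat(F,\Hom_\Bbbk(\myuline{A}(\_,\myuline{Y}),V)) \cong \Hom_\Bbbk(F(\myuline{Y}),V),
\]
naturally in $V$ and compatibly with $\eta_F$, so Yoneda in $\Vect_\Bbbk$ shows $(\eta_F)_{\myuline{Y}}$ is an isomorphism. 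Checking that the adjunction $E_{\myuline{Y}} \dashv \Hom_\Bbbk(\myuline{A}(\_,\myuline{Y}),\_)$ holds (a co-Yoneda computation) and that the composite isomorphism is precomposition with $(\eta_F)_{\myuline{Y}}$ are routine.
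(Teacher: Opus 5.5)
Your final argument is correct and is essentially the paper's proof: evaluation at $\myuline{Y}$ has right adjoint $V \mapsto \Hom_\Bbbk(\myuline{A}(\_,\myuline{Y}),V)$. This right adjoint lands in separately left exact functors exactly because of the hypothesis, so the universal property of $\eta_F$ together with Yoneda in $\Vect_\Bbbk$ forces $(\eta_F)_{\myuline{Y}}$ to be invertible. The paper makes the same moves, citing Kelly for the adjunction and checking by explicit formulas that the composite is precomposition with $(\eta_F)_{\myuline{Y}}$. The saturation/localization sketch and the abandoned subcategory-closure attempt that precede it are unnecessary and can be dropped.
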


\begin{proof}
 Thanks to \cite[Theorem~4.50]{K82}, the evaluation functor
 \[
  E_{\myuline{Y}} : \bfHom_\Bbbk(\myuline{A},\Vect_\Bbbk) \to \Vect_\Bbbk
 \]
 sending every linear functor $F : \myuline{A} \to \Vect_\Bbbk$ to the vector space $F(\myuline{Y})$ has a right adjoint
 \[
  R_{\myuline{Y}} : \Vect_\Bbbk \to \bfHom_\Bbbk(\myuline{A},\Vect_\Bbbk)
 \]
 sending every vector space $V$ to the linear functor $R_{\myuline{Y}}(V) : \myuline{A} \to \Vect_\Bbbk$ defined by
 \[
  R_{\myuline{Y}}(V)(\myuline{X}) := \Hom_\Bbbk(\myuline{A}(\myuline{X},\myuline{Y}),V),
 \]
 see \cite[Equation~(4.9)]{K82}. In particular, the linear map
 \begin{align*}
  \Psi_{F,V} : \Nat(F,R_{\myuline{Y}}(V)) &\to \Hom_\Bbbk(F(\myuline{Y}),V)
 \end{align*}
 that sends every natural transformation $\alpha : F \Rightarrow R_{\myuline{Y}}(V)$ to the linear map
 \begin{align}
  \Psi_{F,V}(\alpha) : F(\myuline{Y}) &\to V \label{E:evaluation_adjunction_1} \\*
  y &\mapsto \alpha_{\myuline{Y}}(y)(\id_{\myuline{Y}}) \nonumber
 \end{align}
 is invertible and natural in $F \in \bfHom_\Bbbk(\myuline{A},\Vect_\Bbbk)$ and $V \in \Vect_\Bbbk$, and its inverse
 \begin{align*}
  \Theta_{F,V} : \Hom_\Bbbk(F({\myuline{Y}}),V) &\to \Nat(F,R_{\myuline{Y}}(V))
 \end{align*}
 sends every linear map $f : F(\myuline{Y}) \to V$ to the natural transformation whose component at $\myuline{X} \in \myuline{A}$ sends every vector $x \in F({\myuline{X}})$ to the linear map
 \begin{align}
  (\Theta_{F,V}(f))_{\myuline{X}}(x) : \myuline{A}(\myuline{X},\myuline{Y}) &\to V \label{E:evaluation_adjunction_2} \\*
  \myuline{h} &\mapsto f(F(\myuline{h})(x)). \nonumber
 \end{align}
 Now, $R_{\myuline{Y}}(V)$ is separately left exact because, if a separate finite diagram $\myuline{D} : J \to \myuline{A}$ admits a limit, then we have the following chain of linear isomorphisms:
 \begin{align*}
  R_{\myuline{Y}}(V) \left( \lim_{j \in J} \myuline{D}(j) \right)
  &= \Hom_\Bbbk \left( \myuline{A} \left( \lim_{j \in J} \myuline{D}(j),\myuline{Y} \right),V \right) \\*
  &\cong \Hom_\Bbbk \left( \colim_{j \in J^\op} \myuline{A}(\myuline{D}^\op(j),\myuline{Y}),V \right) \\*
  &\cong \lim_{j \in J} \Hom_\Bbbk(\myuline{A}(\myuline{D}(j),\myuline{Y}),V) \\*
  &= \lim_{j \in J} R_{\myuline{Y}}(V)(\myuline{D}(j)).
 \end{align*}
 Since $R_{\myuline{Y}}(V) \in \SLex_\Bbbk(\myuline{A};\Vect_\Bbbk)$, Equation~\eqref{E:universal_property_adjunction_unit} defines an isomorphism
 \begin{align*}
  \_ \ast \eta_F : \Nat(L_{\myuline{A}}(F),R_{\myuline{Y}}(V)) &\to \Nat(F,R_{\myuline{Y}}(V)).
 \end{align*}
 Thanks to Equations~\eqref{E:evaluation_adjunction_1} and \eqref{E:evaluation_adjunction_2}, for every linear map $f : (L_{\myuline{A}}(F))(\myuline{Y}) \to V$, the linear map $\Psi_{F,V}(\Theta_{L_{\myuline{A}}(F),V}(f) \ast \eta_F) : F(\myuline{Y}) \to V$ evaluated at $y \in F(\myuline{Y})$ yields
 \begin{align*}
  \Psi_{F,V}(\Theta_{L_{\myuline{A}}(F),V}(f) \ast \eta_F)(y)
  &= (\Theta_{L_{\myuline{A}}(F),V}(f) \ast \eta_F)_{\myuline{Y}}(y)(\id_{\myuline{Y}}) \\*
  &= f(L_{\myuline{A}}(F)(\id_{\myuline{Y}})((\eta_F)_{\myuline{Y}}(y))) \\*
  &= f((\eta_F)_{\myuline{Y}}(y)).
 \end{align*}
 Therefore,
 \begin{align*}
  \_ \circ (\eta_F)_{\myuline{Y}} : \Hom_\Bbbk((L_{\myuline{A}}(F))(\myuline{Y}),V) &\to \Hom_\Bbbk(F(\myuline{Y}),V)
 \end{align*}
 is an isomorphism for every vector space $V$, which means that
 \[
  \Hom_\Bbbk((\eta_F)_{\myuline{Y}},\_) : \Hom_\Bbbk((L_{\myuline{A}}(F))(\myuline{Y}),\_) \Rightarrow \Hom_\Bbbk(F(\myuline{Y}),\_)
 \]
 is a natural isomorphism. Then, the Yoneda embedding implies that
 \[
  (\eta_F)_{\myuline{Y}} : F(\myuline{Y}) \to (L_{\myuline{A}}(F))(\myuline{Y})
 \]
 is a linear isomorphism. \qedhere
\end{proof}

\begin{lemma}\label{L:fc_equivalence_3}
 Let $\myuline{F} : \myuline{A} \to \myuline{A'}$ be a separately left exact separate linear functor between separate linear categories $\myuline{A}$ and $\myuline{A'}$, and let $\myuline{Y}$ be an object of $\myuline{A}$. If $\myuline{F}$ is fully faithful, and if $\myuline{A'}(\_,\myuline{F}(\myuline{Y})) : (\myuline{A'})^\op \to \Vect_\Bbbk$ is separately right exact, then $\myuline{A}(\_,\myuline{Y}) : \myuline{A}^\op \to \Vect_\Bbbk$ is separately right exact.
\end{lemma}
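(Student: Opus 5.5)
The plan is to transport each relevant separate finite colimit in $\myuline{A}^\op$ to $(\myuline{A'})^\op$ along $\myuline{F}$. Then we use full faithfulness to identify the two representable functors on the image of $\myuline{F}$.

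Separate right exactness of $\myuline{A}(\_,\myuline{Y}) : \myuline{A}^\op \to \Vect_\Bbbk$ concerns separate finite colimits in $\myuline{A}^\op$. These are exactly separate finite limits in $\myuline{A}$. So I would start with a separate finite diagram $\myuline{D} : J \to \myuline{A}$ admitting a limit $\myuline{L} = \lim_{j \in J} \myuline{D}(j)$ in $\myuline{A}$. The goal is to show that the canonical map
\[
 \colim_{j \in J^\op} \myuline{A}(\myuline{D}^\op(j),\myuline{Y}) \to \myuline{A}(\myuline{L},\myuline{Y})
\]
is a linear isomorphism.

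First, since $\myuline{F}$ is a separately left exact separate linear functor, $\myuline{F} \circ \myuline{D}$ is a separate finite diagram in $\myuline{A'}$. Moreover, $\myuline{F}(\myuline{L})$ together with the image cone is its limit in $\myuline{A'}$. Second, I apply the hypothesis that $\myuline{A'}(\_,\myuline{F}(\myuline{Y}))$ is separately right exact. Viewed in $(\myuline{A'})^\op$, $\myuline{F}(\myuline{L})$ is the colimit of the separate finite diagram $(\myuline{F} \circ \myuline{D})^\op$, so the canonical map
\[
 \colim_{j \in J^\op} \myuline{A'}(\myuline{F}(\myuline{D}(j)),\myuline{F}(\myuline{Y})) \to \myuline{A'}(\myuline{F}(\myuline{L}),\myuline{F}(\myuline{Y}))
\]
is an isomorphism. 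Third, full faithfulness of $\myuline{F}$ provides linear isomorphisms $\myuline{A}(\myuline{X},\myuline{Y}) \to \myuline{A'}(\myuline{F}(\myuline{X}),\myuline{F}(\myuline{Y}))$. These are natural in $\myuline{X} \in \myuline{A}^\op$, because they are given by $\myuline{h} \mapsto \myuline{F}(\myuline{h})$ and $\myuline{F}$ is a functor. They therefore assemble into an isomorphism between the diagram $\myuline{A}(\_,\myuline{Y}) \circ \myuline{D}^\op$ together with its cocone to $\myuline{A}(\myuline{L},\myuline{Y})$, and the corresponding diagram and cocone in $\myuline{A'}$.

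The main point requiring care is this compatibility of cocones in the third step, namely that the canonical comparison map in $\myuline{A}$ corresponds to the one in $\myuline{A'}$ under these isomorphisms. This holds because the cone legs of $\myuline{F}(\myuline{L})$ are by definition $\myuline{F}$ applied to the legs of $\myuline{L}$, and precomposition commutes with $\myuline{F}$. Given this, the two comparison maps form a commutative square whose vertical arrows are isomorphisms and whose bottom arrow is an isomorphism. Hence the top arrow is an isomorphism as well, which proves that $\myuline{A}(\_,\myuline{Y})$ is separately right exact.
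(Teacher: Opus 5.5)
Your proof is correct and is essentially the paper's own argument. The paper writes the same chain of isomorphisms from $\myuline{A}(\lim_j \myuline{D}(j),\myuline{Y})$ to $\colim_j \myuline{A}(\myuline{D}(j),\myuline{Y})$, using in turn full faithfulness, separate left exactness of $\myuline{F}$, the right exactness hypothesis on $\myuline{A'}(\_,\myuline{F}(\myuline{Y}))$, and full faithfulness again; your extra check that these identifications respect the canonical comparison maps is a detail the paper leaves implicit, and you handle it correctly.
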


\begin{proof}
 If a separate finite diagram $\myuline{D} : J \to \myuline{A}$ admits a limit, then we have the following chain of linear isomorphisms:
 \begin{align*}
  \myuline{A} \left( \lim_{j \in J} \myuline{D}(j), \myuline{Y} \right)
  &\cong \myuline{A'} \left( \myuline{F} \left( \lim_{j \in J} \myuline{D}(j) \right), \myuline{F}(\myuline{Y}) \right) \\*
  &\cong \myuline{A'} \left( \lim_{j \in J} \myuline{F}(\myuline{D}(j)), \myuline{F}(\myuline{Y}) \right) \\*
  &\cong \colim_{j \in J^\op} \myuline{A'} (\myuline{F}(\myuline{D}(j)), \myuline{F}(\myuline{Y})) \\*
  &\cong \colim_{j \in J^\op} \myuline{A} (\myuline{D}(j),\myuline{Y}). \qedhere
 \end{align*}
\end{proof}

We are now ready to prove Proposition~\ref{P:fc_equivalence}. We will do this in two steps.

\begin{lemma}\label{L:fc_equivalence_4}
 Let $\myuline{F} : \myuline{A} \to \myuline{A'}$ be a separately left exact separate linear functor between small separate linear categories $\myuline{A}$ and $\myuline{A'}$. If $\myuline{F}$ is fully faithful, and if there exists a separate finite limit dense full separate subcategory $\myuline{B}$ of $\myuline{A}$ such that, for every $\myuline{Y} \in \myuline{B}$, the linear functor $\myuline{A'}(\_,\myuline{F}(\myuline{Y})) : (\myuline{A'})^\op \to \Vect_\Bbbk$ is separately right exact, then the restriction functor $S_{\myuline{F}}$ is right exact.
\end{lemma}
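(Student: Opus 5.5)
The plan is to show that the natural transformation $\gamma_{\myuline{F}} : L_{\myuline{A}} \circ H_{\myuline{F}} \Rightarrow S_{\myuline{F}} \circ L_{\myuline{A'}}$ of Equation~\eqref{E:commutator_def} is invertible. Right exactness of $S_{\myuline{F}}$ then follows formally. Indeed, finite colimits in $\SLex_\Bbbk(\myuline{A'};\Vect_\Bbbk)$ are computed by applying $L_{\myuline{A'}}$ to object-wise colimits, since $L_{\myuline{A'}}$ is a left adjoint and is isomorphic to the identity on separately left exact functors. Given a finite diagram $D'$ in $\SLex_\Bbbk(\myuline{A'};\Vect_\Bbbk)$ with object-wise colimit $C'$, we get
\[
S_{\myuline{F}}(L_{\myuline{A'}}(C')) \cong L_{\myuline{A}}(H_{\myuline{F}}(C')) \cong L_{\myuline{A}}\bigl(\colim (H_{\myuline{F}} \circ D')\bigr).
\]
The second isomorphism uses that $H_{\myuline{F}}$ is exact. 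The right-hand side is exactly the colimit of $S_{\myuline{F}} \circ D'$ in $\SLex_\Bbbk(\myuline{A};\Vect_\Bbbk)$.

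To prove that $(\gamma_{\myuline{F}})_{F'}$ is invertible, I first check its components at objects $\myuline{Y} \in \myuline{B}$. By Lemma~\ref{L:fc_equivalence_3}, the functor $\myuline{A}(\_,\myuline{Y})$ is separately right exact. Lemma~\ref{L:fc_equivalence_2}, applied in $\myuline{A}$ to $F' \circ \myuline{F}$, then shows that $(\eta_{F' \circ \myuline{F}})_{\myuline{Y}}$ is an isomorphism. By hypothesis, $\myuline{A'}(\_,\myuline{F}(\myuline{Y}))$ is separately right exact, so Lemma~\ref{L:fc_equivalence_2}, applied in $\myuline{A'}$ to $F'$ at the object $\myuline{F}(\myuline{Y})$, shows that $(\eta_{F'})_{\myuline{F}(\myuline{Y})}$ is an isomorphism. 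Evaluating Equation~\eqref{E:commutator_def} at $\myuline{Y}$ gives
\[
(\eta_{F'})_{\myuline{F}(\myuline{Y})} = ((\gamma_{\myuline{F}})_{F'})_{\myuline{Y}} \circ (\eta_{F'\circ\myuline{F}})_{\myuline{Y}},
\]
so $((\gamma_{\myuline{F}})_{F'})_{\myuline{Y}}$ is invertible for every $\myuline{Y} \in \myuline{B}$.

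Next I extend invertibility to all of $\myuline{A}$. Both $L_{\myuline{A}}(F'\circ\myuline{F})$ and $L_{\myuline{A'}}(F')\circ\myuline{F}$ are separately left exact; for the latter, this uses separate left exactness of $\myuline{F}$. Let $\myuline{C} \subseteq \myuline{A}$ be the full subcategory of objects at which the component of $(\gamma_{\myuline{F}})_{F'}$ is invertible. This subcategory contains $\myuline{B}$ and is replete. It is also closed under separate finite limits: at such a limit, both functors send the limit cone to a limit cone in $\Vect_\Bbbk$, and a natural transformation that is invertible on the diagram induces an isomorphism between the limits, by naturality. Since $\myuline{B}$ is separate finite limit dense, $\myuline{C} = \myuline{A}$.

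The step I expect to need the most care is the closure argument. The separate diagrams in $\myuline{B}$ have only one varying factor, so I need to confirm that the separate finite limit closure is built from separate limits of separate diagrams whose objects already lie in $\myuline{C}$. An induction on the closure construction handles this. I also need to check that $\gamma$ is defined for arbitrary small $F'$, and the reflectivity result quoted from Kelly guarantees this. The formal identification of colimits in $\SLex_\Bbbk$ through the reflector is standard and should be routine.
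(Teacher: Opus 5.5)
Your proposal is correct and follows essentially the same route as the paper. You show $((\gamma_{\myuline{F}})_{F'})_{\myuline{Y}}$ is invertible on $\myuline{B}$ via Lemmas~\ref{L:fc_equivalence_3} and \ref{L:fc_equivalence_2} and Equation~\eqref{E:commutator_def}, extend to all of $\myuline{A}$ by closure under separate finite limits, and finish with the formal colimit computation through the reflectors. The induction you were worried about is unnecessary: by the definition of separate finite limit density, any replete full subcategory that contains $\myuline{B}$ and is closed under separate finite limits is already all of $\myuline{A}$.
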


\begin{proof}
 Since, for every $\myuline{Y} \in \myuline{B}$, the linear functor $\myuline{A'}(\_,\myuline{F}(\myuline{Y})) : (\myuline{A'})^\op \to \Vect_\Bbbk$ is separately right exact, then the linear functor $\myuline{A}(\_,\myuline{Y}) : \myuline{A}^\op \to \Vect_\Bbbk$ is separately right exact too, thanks to Lemma~\ref{L:fc_equivalence_3}. Therefore, thanks to Lemma~\ref{L:fc_equivalence_2},
 for every $F' \in \bfHom_\Bbbk(\myuline{A'},\Vect_\Bbbk)$ both components of the adjunction units
 \begin{align*}
  (\eta_{\smash{\myuline{F'} \circ \myuline{F}}})_{\myuline{Y}} : \myuline{F'}(\myuline{F}(\myuline{Y})) &\to L_{\myuline{A}}(\myuline{F'} \circ \myuline{F})(\myuline{Y}), \\*
  (\eta_{\smash{\myuline{F'}}})_{\myuline{F}(\myuline{Y})} : \myuline{F'}(\myuline{F}(\myuline{Y})) &\to L_{\myuline{A'}}(\myuline{F'})(\myuline{F}(\myuline{Y}))
 \end{align*}
 are linear isomorphisms. Then, Equation~\eqref{E:commutator_def} implies that
 \[
  ((\gamma_{\myuline{F}})_{F'})_{\myuline{Y}} = (\eta_{\smash{\myuline{F'}}})_{\myuline{F}(\myuline{Y})} \circ ((\eta_{\smash{\myuline{F'} \circ \myuline{F}}})_{\myuline{Y}})^{-1}
 \]
 is a linear isomorphism too.

 Now, let $\myuline{C}$ be the full linear subcategory of $\myuline{A}$ whose objects $\myuline{X}$ have invertible component
 \[
  ((\gamma_{\myuline{F}})_{F'})_{\myuline{X}} : L_{\myuline{A}}(\myuline{F'} \circ \myuline{F})(\myuline{X}) \to L_{\myuline{A'}}(\myuline{F'})(\myuline{F}(\myuline{X})).
 \]
 Then, the linear category $\myuline{C}$ is closed under separate finite limits, because both $L_{\myuline{A}}(\myuline{F'} \circ \myuline{F}) : \myuline{A} \to \Vect_\Bbbk$ and $L_{\myuline{A'}}(\myuline{F'}) \circ \myuline{F} : \myuline{A} \to \Vect_\Bbbk$ are separately left exact. Furthermore, $\myuline{C}$ contains $\myuline{B}$, by assumption. Therefore, $\myuline{C} = \myuline{A}$. This implies that $\gamma_{\myuline{F}} : L_{\myuline{A}} \circ H_{\myuline{F}} \Rightarrow S_{\myuline{F}} \circ L_{\myuline{A'}}$ is a natural isomorphism.

 Thus, if a finite diagram $D : J \to \bfHom_\Bbbk(\myuline{A'},\Vect_\Bbbk)$ admits a colimit, we have the following chain of natural isomorphisms:
 \begin{align*}
  S_{\myuline{F}} \left( \colim_{j \in J} L_{\myuline{A'}}(D(j)) \right)
  &\cong S_{\myuline{F}} \left( L_{\myuline{A'}} \left( \colim_{j \in J} D(j) \right) \right) \\*
  &\cong L_{\myuline{A}} \left( H_{\myuline{F}} \left( \colim_{j \in J} D(j) \right) \right) \\*
  &\cong \colim_{j \in J} L_{\myuline{A}} (H_{\myuline{F}}(D(j))) \\*
  &\cong \colim_{j \in J} S_{\myuline{F}}(L_{\myuline{A'}}(D(j))). \qedhere
 \end{align*}
\end{proof}

\begin{lemma}\label{L:fc_equivalence_5}
 Let $\myuline{F} : \myuline{A} \to \myuline{A'}$ be a separately left exact separate linear functor between small separate linear categories $\myuline{A}$ and $\myuline{A'}$. If $\myuline{F}$ is fully faithful and separate finite limit dense, and if $S_{\myuline{F}}$ is right exact, then the restriction $Q_{\myuline{F}}$ of its opposite to $\myuline{\hat{A}'}$ is a quasi-inverse of $\myuline{\hat{F}}$.
\end{lemma}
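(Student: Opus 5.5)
We first show that the opposite of $S_{\myuline{F}}$ sends $\myuline{\hat{A}'}$ into $\myuline{\hat{A}}$, so that $Q_{\myuline{F}}$ is well defined. On representables this is immediate: $S_{\myuline{F}}(\myuline{A'}(\myuline{F}(\myuline{X}),\_)) = \myuline{A'}(\myuline{F}(\myuline{X}),\myuline{F}(\_)) \cong \myuline{A}(\myuline{X},\_)$, by full faithfulness. For a general object $\myuline{X'} \in \myuline{A'}$, separate finite limit density of $\myuline{F}$ lets us argue by induction along the closure. Consider the full subcategory of $\myuline{A'}$ of those $\myuline{X'}$ for which $S_{\myuline{F}}(\myuline{A'}(\myuline{X'},\_)) \in \myuline{\hat{A}}^\op$. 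If $\myuline{X'} = \lim_j \myuline{D'}(j)$ is a separate finite limit in $\myuline{A'}$, then $K_{\myuline{A'}}$ preserves it, so in $\SLex_\Bbbk(\myuline{A'};\Vect_\Bbbk)$ we have $\myuline{A'}(\myuline{X'},\_) \cong \colim_j \myuline{A'}(\myuline{D'}(j),\_)$. Right exactness of $S_{\myuline{F}}$ then gives $S_{\myuline{F}}(\myuline{A'}(\myuline{X'},\_)) \cong \colim_j S_{\myuline{F}}(\myuline{A'}(\myuline{D'}(j),\_))$, which is a finite limit in $\myuline{\hat{A}} \subseteq \SLex_\Bbbk(\myuline{A};\Vect_\Bbbk)^\op$. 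Hence this subcategory is closed under separate finite limits, contains $\myuline{F}(\myuline{A})$, and is therefore everything.

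The same argument, now applied inside $\myuline{\hat{A}'}$ (the smallest subcategory closed under finite limits and containing representables), shows that $S_{\myuline{F}}^\op$ maps all of $\myuline{\hat{A}'}$ into $\myuline{\hat{A}}$. It also shows that $Q_{\myuline{F}}$ is left exact, because finite limits in $\myuline{\hat{A}'}$ are finite colimits in $\SLex_\Bbbk(\myuline{A'};\Vect_\Bbbk)$, and $S_{\myuline{F}}$ preserves these.

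Next we compare the composites with identities. For $Q_{\myuline{F}} \circ \myuline{\hat{F}}$, both functors are left exact, and restricting along $K_{\myuline{A}}$ gives $Q_{\myuline{F}}(\myuline{\hat{F}}(K_{\myuline{A}}\myuline{X})) \cong Q_{\myuline{F}}(K_{\myuline{A'}}\myuline{F}(\myuline{X})) \cong K_{\myuline{A}}(\myuline{X})$, naturally in $\myuline{X}$. The universal property of $\myuline{\hat{A}}$ then yields $Q_{\myuline{F}} \circ \myuline{\hat{F}} \cong \id$. For $\myuline{\hat{F}} \circ Q_{\myuline{F}}$, we restrict to $K_{\myuline{A'}}$ and need $\myuline{\hat{F}}(Q_{\myuline{F}}(K_{\myuline{A'}}\myuline{X'})) \cong K_{\myuline{A'}}\myuline{X'}$ for all $\myuline{X'} \in \myuline{A'}$. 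This holds on $\myuline{F}(\myuline{A})$ by the previous computation. Both sides are separately left exact in $\myuline{X'}$: the left side because $K_{\myuline{A'}}$, $Q_{\myuline{F}}$ and $\myuline{\hat{F}}$ are, and the right side is $K_{\myuline{A'}}$ itself. The locus where the natural comparison map is invertible is therefore closed under separate finite limits, so by density it is all of $\myuline{A'}$. The universal property of $\myuline{\hat{A}'}$ then gives $\myuline{\hat{F}} \circ Q_{\myuline{F}} \cong \id$.

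The main obstacle is producing a \emph{natural} comparison map between $\myuline{\hat{F}} \circ Q_{\myuline{F}} \circ K_{\myuline{A'}}$ and $K_{\myuline{A'}}$ defined on all of $\myuline{A'}$, not only on the image of $\myuline{F}$, so that the density argument applies. The candidate we plan to use is the following: since $\myuline{\hat{F}}$ is left exact, it sends the finite limit $Q_{\myuline{F}}(\myuline{A'}(\myuline{X'},\_))$ to the representable of its defining diagram. Alternatively, one can take the counit of the adjunction induced by restriction along $\myuline{F}$, using Yoneda: $\Nat(\myuline{A'}(\myuline{X'},\_),G) \to \Nat(\myuline{A'}(\myuline{X'},\myuline{F}(\_)),G \circ \myuline{F})$. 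We also need to check that the limits involved (for instance those coming from the $\Vect_\Bbbk$-action) are computed objectwise, as the paper notes.
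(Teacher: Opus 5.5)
Your treatment of the first half is correct and essentially follows the paper; underlines are suppressed throughout. This covers three points: the well-definedness of $Q_F$ (closure under separate finite limits starting from $F(A)$, then from representables), the left exactness of $Q_F$, and the isomorphism $Q_F \circ \hat F \cong \id_{\hat A}$. You obtain the last one from $\theta_F$, the full-faithfulness isomorphism $Q_F K_{A'}F(X) \cong K_A(X)$, and the universal property of $K_A$. The paper gets it as the counit of an adjunction, but your direct argument is equally valid.

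The genuine gap is the one you flag yourself, in the second composite. The density argument needs a \emph{specific} natural transformation $K_{A'} \Rightarrow \hat F \circ Q_F \circ K_{A'}$ defined on all of $A'$. It \emph{also} needs this particular map to be invertible at every $F(X)$. Neither is supplied. Your sentence ``this holds on $F(A)$ by the previous computation'' only gives some abstract isomorphism $\hat F Q_F K_{A'}F(X) \cong K_{A'}F(X)$. That says nothing about invertibility of a globally defined comparison map. Your first candidate cannot produce a natural map at all, since there is no functorial choice of presentation of $Q_F(K_{A'}X')$ as a finite limit of representables.

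The missing idea, which is how the paper proceeds, is to prove $Q_F \dashv \hat F$ and use the unit. Your second candidate is the right ingredient once specialized to $X' = F(X)$. For $G \in \hat{A'}$, the map $\hat{A'}(G,K_{A'}F(X)) \to \hat A(Q_F G, Q_F K_{A'}F(X)) \cong \hat A(Q_F G, K_A(X))$ induced by $Q_F$ is an isomorphism by Yoneda, since both sides are $G(F(X))$. It is natural in $G$ and $X$. Transported along $\theta_F$, it becomes $\hat{A'}(G,\hat F(K_A X)) \cong \hat A(Q_F G, K_A X)$. Both sides are left exact in the second variable, so the universal property of $K_A$ extends this to all of $\hat A$.

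Restricted along $K_A$, the counit $\hat\epsilon$ is the full-faithfulness isomorphism, hence invertible. The triangle identity $(\hat F \triangleright \hat\epsilon) \ast (\hat\eta \triangleleft \hat F) = \id_{\hat F}$ then forces the unit $\hat\eta$ to be invertible on the image of $\hat F$, and so at each $K_{A'}F(X)$. Your closure argument now runs verbatim on the invertibility locus of $\hat\eta \triangleleft K_{A'}$.

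Alternatively, you can bypass the comparison map altogether:
\begin{itemize}
\item By density, a natural transformation between separately left exact functors $A' \to \Vect_\Bbbk$ that vanishes on $F(A)$ vanishes everywhere, because projections out of limits in the target are jointly monic. So $S_F$, hence $Q_F$, is faithful.
\item Combined with $Q_F \circ \hat F \cong \id_{\hat A}$, this makes $\hat F$ fully faithful.
\item Since $\hat F$ is also left exact, its essential image is closed under finite limits and contains $K_{A'}(F(A))$. It is therefore all of $\hat{A'}$.
\end{itemize}
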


\begin{proof}
 First of all, $Q_{\myuline{F}}$ is left exact, because $S_{\myuline{F}}$ is right exact. Furthermore, the natural transformation $\alpha_{\myuline{F}} : Q_{\myuline{F}} \circ K_{\myuline{A'}} \circ \myuline{F} \Rightarrow K_{\myuline{A}}$ whose component at $\myuline{X} \in \myuline{A}$ is given by the natural transformation
whose component at $\myuline{Y} \in \myuline{A}$ is given by the linear map
 \begin{align}
  ((\alpha_{\myuline{F}})_{\myuline{X}})_{\myuline{Y}} : \myuline{A}(\myuline{X},\myuline{Y}) &\to \myuline{A'}(\myuline{F}(\myuline{X}),\myuline{F}(\myuline{Y})) \label{E:fully_faithful_iso} \\*
  \myuline{h} &\mapsto \myuline{F}(\myuline{h}) \nonumber
 \end{align}
 is an isomorphism, because $\myuline{F}$ is fully faithful.

 Therefore, the image of $Q_{\myuline{F}}$ is contained in $\myuline{\hat{A}}$. Indeed, let $\myuline{B'}$ be the full linear subcategory of $\myuline{A'}$ whose objects $\myuline{X'}$ satisfy
 \begin{align*}
  Q_{\myuline{F}}(K_{\myuline{A'}}(\myuline{X'})) \in \myuline{\hat{A}}.
 \end{align*}
 Then, the linear category $\myuline{B'}$ is closed under separate finite limits, because both $Q_{\myuline{F}}$ and $K_{\myuline{A'}}$ are separately left exact and $\myuline{\hat{A}}$ is finitely complete. Furthermore, $\myuline{B'}$ contains the image of $\myuline{F}$, because, for every $\myuline{X} \in \myuline{A}$, we have
 \begin{align*}
  Q_{\myuline{F}} ( K_{\myuline{A'}} ( \myuline{F}(\myuline{X}) ) )
  &\cong K_{\myuline{A}}(\myuline{X}).
 \end{align*}
 Therefore, $\myuline{B'} = \myuline{A'}$. Similarly, let $\myuline{C'}$ be the full linear subcategory of $\myuline{\hat{A}'}$ whose objects $\myuline{F'}$ satisfy
 \begin{align*}
  Q_{\myuline{F}}(\myuline{F'}) \in \myuline{\hat{A}}.
 \end{align*}
 Then, the linear category $\myuline{C'}$ is closed under finite limits, because $Q_{\myuline{F}}$ is left exact and $\myuline{\hat{A}}$ is finitely complete. Furthermore, $\myuline{C'}$ contains all the representable linear functors $\myuline{A'}(\myuline{X'},\_)$ for $\myuline{X'} \in \myuline{A'}$, as we just showed. Therefore, $\myuline{C'} = \myuline{\hat{A}'}$.

 Now $Q_{\myuline{F}}$ is left adjoint to $\myuline{\hat{F}}$. Indeed, thanks to the Yoneda embedding, the linear map
 \[
  (\Phi_{\myuline{F\smash{'}}})_{\myuline{X}} : \myuline{\hat{A}'}(\myuline{F'},\myuline{\hat{F}}(K_{\myuline{A}}(\myuline{X}))) \to \myuline{\hat{A}}(Q_{\myuline{F}}(\myuline{F'}),K_{\myuline{A}}(\myuline{X}))
 \]
 defined by
 \[
  (\Phi_{\myuline{F\smash{'}}})_{\myuline{X}} := (\psi_{\myuline{A}})_{\myuline{F'} \circ \myuline{F},\myuline{X}} \circ (\phi_{\myuline{A'}})_{\myuline{F'},\myuline{F}(\myuline{X})} \circ (\theta_{\myuline{F}})_{\myuline{X}}
 \]
 is an isomorphism that is natural in $\myuline{F'} \in \myuline{\hat{A}'}$ and $\myuline{X} \in \myuline{A}$.
 In particular, since both $\myuline{\hat{A}'}(\myuline{F'},\myuline{\hat{F}}(\_)) : \myuline{\hat{A}} \to \Vect_\Bbbk$ and $\myuline{\hat{A}}(Q_{\myuline{F}}(\myuline{F'}),\_) : \myuline{\hat{A}} \to \Vect_\Bbbk$ are left exact, the universal property satisfied by finite completions yields a unique natural isomorphism
 \[
  \hat{\Phi}_{\myuline{F\smash{'}}} : \myuline{\hat{A}'}(\myuline{F'},\myuline{\hat{F}}(\_)) \Rightarrow \myuline{\hat{A}}(Q_{\myuline{F}}(\myuline{F'}),\_)
 \]
 satisfying
 \[
  \hat{\Phi}_{\myuline{F\smash{'}}} \triangleleft K_{\myuline{A}} = \Phi_{\myuline{F\smash{'}}}.
 \]
 Full faithfulness of the restriction along $K_{\myuline{A}}$ implies that $\hat{\Phi}_{\myuline{F\smash{'}}}$ is natural in $\myuline{F'} \in \myuline{\hat{A}'}$.

 Notice that, for every natural transformation $\alpha' \in \Nat(\myuline{A'}(\myuline{F}(\myuline{X}),\_),\myuline{F'})$, Equation~\eqref{E:fully_faithful_iso} yields
 \begin{equation}\label{E:adjunction_computation}
  (\Phi_{\myuline{F\smash{'}}})_{\myuline{X}}(\alpha') = (\alpha' \triangleleft \myuline{F}) \ast (\alpha_{\myuline{F}})_{\myuline{X}} \in \Nat(\myuline{A}(\myuline{X},\_),\myuline{F'} \circ \myuline{F}).
 \end{equation}
 Indeed, thanks to Equations~\eqref{E:Yoneda} and \eqref{E:Yoneda_inverse}, $(\Phi_{\myuline{F\smash{'}}})_{\myuline{X}}$ sends every natural transformation $\alpha' : \myuline{A'}(\myuline{F}(\myuline{X}),\_) \Rightarrow \myuline{F'}$ to the natural transformation whose component at $\myuline{Y} \in \myuline{A}$ is given by the linear map
 \begin{align*}
  ((\Phi_{\myuline{F\smash{'}}})_{\myuline{X}}(\alpha'))_{\myuline{Y}} : \myuline{A}(\myuline{X},\myuline{Y}) &\to \myuline{F'}(\myuline{F}(\myuline{Y})) \\*
  \myuline{h} &\mapsto \myuline{F'}(\myuline{F}(\myuline{h}))(\alpha'_{\myuline{F}(\myuline{X})}(\id_{\myuline{F}(\myuline{X})})) = \alpha'_{\myuline{F}(\myuline{Y})}(\myuline{F}(\myuline{h})).
 \end{align*}

 Then, the adjunction counit $\hat{\epsilon} : Q_{\myuline{F}} \circ \myuline{\hat{F}} \Rightarrow \id_{\myuline{\hat{A}}}$ is a natural isomorphism. Indeed, the universal property of finite completions implies that, since both $Q_{\myuline{F}} \circ \myuline{\hat{F}} : \myuline{\hat{A}} \to \myuline{\hat{A}}$ and $\id_{\myuline{\hat{A}}} : \myuline{\hat{A}} \to \myuline{\hat{A}}$ are left exact, then $\hat{\epsilon}$ is a natural isomorphism if and only if $\epsilon := \hat{\epsilon} \triangleleft K_{\myuline{A}}$ is. Now, Equations~\eqref{E:fully_faithful_iso} and \eqref{E:adjunction_computation} yield $\epsilon = \alpha_{\myuline{F}}$, because its component at $\myuline{X} \in \myuline{A}$ is the natural transformation whose component at $\myuline{Y} \in \myuline{A}$ is the linear map
 \begin{align*}
  (\epsilon_{\myuline{X}})_{\myuline{Y}} : \myuline{A}(\myuline{X},\myuline{Y}) &\to \myuline{A'}(\myuline{F}(\myuline{X}),\myuline{F}(\myuline{Y})) \\*
  \myuline{h} &\mapsto ((\Phi_{\myuline{\hat{F}}(K_{\myuline{A}}(\myuline{X}))})_{\myuline{X}}(\id_{\myuline{\hat{F}}(K_{\myuline{A}}(\myuline{X}))}))_{\myuline{Y}}(\myuline{h})
  = \myuline{F}(\myuline{h}).
 \end{align*}

 Finally, the adjunction unit $\hat{\eta} : \id_{\myuline{\hat{A}'}} \Rightarrow \myuline{\hat{F}} \circ Q_{\myuline{F}}$ is a natural isomorphism too. Indeed, the universal property of finite completions implies again that, since both $\smash{\id_{\myuline{\hat{A}'}} : \myuline{\hat{A}'} \to \myuline{\hat{A}'}}$ and $\myuline{\hat{F}} \circ Q_{\myuline{F}} : \myuline{\hat{A}'} \to \myuline{\hat{A}'}$ are left exact, then $\hat{\eta}$ is a natural isomorphism if and only if $\eta := \hat{\eta} \triangleleft K_{\myuline{A'}}$ is.
 Now, let $\myuline{D'}$ be the full linear subcategory of $\myuline{A'}$ whose objects $\myuline{X'}$ have invertible component
 \[
  \eta_{\myuline{X'}} : K_{\myuline{A'}}(\myuline{X'}) \to \myuline{\hat{F}}(Q_{\myuline{F}}(K_{\myuline{A'}}(\myuline{X'}))).
 \]
 Then, the linear category $\myuline{D'}$ is closed under separate finite limits, because both $K_{\myuline{A'}} : \myuline{A'} \to \myuline{\hat{A}'}$ and $\myuline{\hat{F}} \circ Q_{\myuline{F}} \circ K_{\myuline{A'}} : \myuline{A'} \to \myuline{\hat{A}'}$ are separately left exact. Furthermore, $\myuline{D'}$ contains the image of $\myuline{F}$, because, since $\hat{\epsilon}$ is invertible, then also $\myuline{\hat{F}} \triangleright \hat{\epsilon}$ is, and so the zig-zag identity
 \[
  (\myuline{\hat{F}} \triangleright \hat{\epsilon}) \ast (\hat{\eta} \triangleleft \myuline{\hat{F}}) = \id_{\myuline{\hat{F}}}
 \]
 implies that $\hat{\eta} \triangleleft \myuline{\hat{F}}$ is invertible too. Therefore, $\myuline{D'} = \myuline{A'}$. It follows then that $\eta : K_{\myuline{A'}} \Rightarrow \myuline{\hat{F}} \circ Q_{\myuline{F}} \circ K_{\myuline{A'}}$ is a natural isomorphism. \qedhere
\end{proof}

This completes the proof of Proposition~\ref{P:fc_equivalence}.

\section{Dualities, adjunctions, and actions in cobordism \texorpdfstring{$2$-categories}{2-categories}}\label{A:dualities_adjunctions}

In this appendix, we fix some notations for operations in $\bfCob_\calC$ that are used throughout the construction.

Let us start from dualities. The \textit{dual} $\bfGamma^*$ of an object $\bfGamma$ of $\bfCob_\calC$ is the \dmnsnl{1} manifold $\Gamma^*$ obtained from $\Gamma$ by reversing its orientation. Clearly $(\bfGamma^*)^* = \bfGamma$.

The \textit{right evaluation}
\[
 \rev_{\bfGamma} : \bfGamma \disjun \bfGamma^* \to \varnothing
\]
of an object $\bfGamma$ of $\bfCob_\calC$ is
\begin{equation}\label{E:right_evaluation}
 \rev_{\bfGamma} := (\rev_\Gamma,H_1(I \times \Gamma)),
\end{equation}
where $\rev_\Gamma$ is the \dmnsnl{2} cobordism obtained from $I \times \Gamma$ by declaring all boundary components to be incoming. The \textit{right coevaluation}
\[
 \rcoev_{\bfGamma} : \varnothing \to \bfGamma^* \disjun \bfGamma
\]
of an object $\bfGamma$ of $\bfCob_\calC$ is
\begin{equation}\label{E:right_coevaluation}
 \rcoev_{\bfGamma} := (\rcoev_\Gamma,H_1(I \times \Gamma)),
\end{equation}
where $\rcoev_\Gamma$ is the \dmnsnl{2} cobordism obtained from $I \times \Gamma$ by declaring all boundary components to be outgoing. The \textit{left evaluation}
\[
 \lev_{\bfGamma} : \bfGamma^* \disjun \bfGamma \to \varnothing
\]
of an object $\bfGamma$ of $\bfCob_\calC$ is
\begin{equation}\label{E:left_evaluation}
 \lev_{\bfGamma} := \rev_{\bfGamma^*}.
\end{equation}
The \textit{left coevaluation}
\[
 \lcoev_{\bfGamma} : \varnothing \to \bfGamma \disjun \bfGamma^*
\]
of an object $\bfGamma$ of $\bfCob_\calC$ is
\begin{equation}\label{E:left_coevaluation}
 \lcoev_{\bfGamma} := \rcoev_{\bfGamma^*}.
\end{equation}
Right evaluation and coevaluation make $\bfGamma^*$ into a right dual of $\bfGamma$, and left evaluation and coevaluation make $\bfGamma^*$ into a left dual of $\bfGamma$. Indeed, we have
\begin{align*}
 (\id_{\bfGamma} \disjun \lev_{\bfGamma}) \circ (\lcoev_{\bfGamma} \disjun \id_{\bfGamma}) &= \id_{\bfGamma}, &
 (\lev_{\bfGamma} \disjun \id_{\bfGamma^*}) \circ (\id_{\bfGamma^*} \disjun \lcoev_{\bfGamma}) &= \id_{\bfGamma^*}, \\*
 (\rev_{\bfGamma} \disjun \id_{\bfGamma}) \circ (\id_{\bfGamma} \disjun \rcoev_{\bfGamma}) &= \id_{\bfGamma}, &
 (\id_{\bfGamma^*} \disjun \rev_{\bfGamma}) \circ (\rcoev_{\bfGamma} \disjun \id_{\bfGamma^*}) &= \id_{\bfGamma^*}.
\end{align*}
Notice that we can formulate these identities as equalities, instead of isomorphisms, because we can assume $\bfCob_\calC$ to be quasi-strict, see \cite[Definition~2.28 \& Theorem~2.96]{S11}.

The \textit{dual}
\[
 \bfSigma^* : (\bfGamma')^* \to \bfGamma^*
\]
of a \mrphsm{1} $\bfSigma : \bfGamma \to \bfGamma'$ of $\bfCob_\calC$ is
\begin{equation}\label{E:dual_1-morphism}
 \bfSigma^* := (\Sigma^*,P,\Lagr),
\end{equation}
where $\Sigma^*$ is the \dmnsnl{2} cobordism from $(\Gamma')^*$ to $\Gamma^*$ obtained from $\Sigma$ by exchanging incoming and outgoing boundary identifications. Right evaluation and coevaluation make $\bfSigma^*$ into a right dual of $\bfSigma$, and left evaluation and coevaluation make $\bfSigma^*$ into a left dual of $\bfSigma$. Indeed, we have
\begin{align*}
 \bfSigma^* &= (\lev_{\bfGamma'} \disjun \id_{\bfGamma^*}) \circ (\id_{(\bfGamma')^*} \disjun \bfSigma \disjun \id_{\bfGamma^*}) \circ (\id_{(\bfGamma')^*} \disjun \lcoev_{\bfGamma}) \\*
 &= (\id_{\bfGamma^*} \disjun \rev_{\bfGamma'}) \circ (\id_{\bfGamma^*} \disjun \bfSigma \disjun \id_{(\bfGamma')^*}) \circ (\rcoev_{\bfGamma} \disjun \id_{(\bfGamma')^*}).
\end{align*}

The \textit{dual}
\[
 \bfM^* : \bfSigma^* \to (\bfSigma')^*
\]
of a \mrphsm{2} $\bfM : \bfSigma \to \bfSigma'$ of $\bfCob_\calC$ is the equivalence class of
\begin{equation}\label{E:dual_2-morphism}
 \bfM^* := (M^*,T,\sig),
\end{equation}
where $M^*$ is the \dmnsnl{3} cobordism with corners from $\Sigma^*$ to $(\Sigma')^*$ obtained from $M$ by exchanging incoming and outgoing vertical boundary identifications. Right evaluation and coevaluation make $\bfM^*$ into a right dual of $\bfM$, and left evaluation and coevaluation make $\bfM^*$ into a left dual of $\bfM$. Indeed, we have
\begin{align*}
 \bfM^* &= (\id_{\lev_{\bfGamma'}} \disjun \id_{\id_{\bfGamma^*}}) \circ (\id_{\id_{(\bfGamma')^*}} \disjun \bfM \disjun \id_{\id_{\bfGamma^*}}) \circ (\id_{\id_{(\bfGamma')^*}} \disjun \id_{\lcoev_{\bfGamma}}) \\*
 &= (\id_{\id_{\bfGamma^*}} \disjun \id_{\rev_{\bfGamma'}}) \circ (\id_{\id_{\bfGamma^*}} \disjun \bfM \disjun \id_{\id_{(\bfGamma')^*}}) \circ (\id_{\rcoev_{\bfGamma}} \disjun \id_{\id_{(\bfGamma')^*}}).
\end{align*}

The functor
\[
 \_^* : \bfCob_\calC(\bfGamma,\bfGamma') \to \bfCob_\calC((\bfGamma')^*,\bfGamma^*).
\]
is an equivalence, whose inverse is the functor
\[
 \_^* : \bfCob_\calC((\bfGamma')^*,\bfGamma^*) \to \bfCob_\calC(\bfGamma,\bfGamma').
\]

Next, let us move on to adjunctions. The \textit{adjoint}
\[
 \bfSigma^\dagger : \bfGamma' \to \bfGamma
\]
to a \mrphsm{1} $\bfSigma : \bfGamma \to \bfGamma'$ of $\bfCob_\calC$ is
\begin{equation}\label{E:adjoint_1-morphism}
 \bfSigma^\dagger := (\Sigma^\dagger,P^\dagger,\Lagr),	
\end{equation}
where $\Sigma^\dagger$ is the \dmnsnl{2} cobordism from $\Gamma'$ to $\Gamma$ obtained from $\Sigma$ by reversing its orientation and exchanging incoming and outgoing boundary identifications, and $P^\dagger \subset \Sigma^\dagger$ is the $\calC$-labeled blue set obtained from $P \subset \Sigma$ by reversing its orientation. Clearly $(\bfSigma^\dagger)^\dagger = \bfSigma$.

The \textit{right adjunction unit}
\[
 \radun_{\bfSigma} : \id_{\bfGamma} \Rightarrow \bfSigma^\dagger \circ \bfSigma
\]
of a \mrphsm{1} $\bfSigma : \bfGamma \to \bfGamma'$ of $\bfCob_\calC$ is the equivalence class of
\begin{equation}\label{E:right_adjunction_unit}
 \radun_{\bfSigma} := (\radun_\Sigma,\radun_P,0),
\end{equation}
where $\radun_\Sigma$ is the \dmnsnl{3} cobordism with corners from $I \times \Gamma$ to $\Sigma^\dagger \circ \Sigma$ obtained from $\Sigma \times I$ by appropriately decomposing and rearranging boundary identifications, and $\radun_P$ denotes the $\calC$-labeled blue framed tangle from $\varnothing$ to $P \cup P^\dagger$ obtained from the identity $P \times I$ by declaring all boundary vertices to be outgoing. The \textit{right adjunction counit}
\[
 \radcoun_{\bfSigma} : \bfSigma \circ \bfSigma^\dagger \Rightarrow \id_{\bfGamma'}
\]
of a \mrphsm{1} $\bfSigma : \bfGamma \to \bfGamma'$ of $\bfCob_\calC$ is the equivalence class of
\begin{equation}\label{E:right_adjunction_counit}
 \radcoun_{\bfSigma} := (\radcoun_\Sigma,\radcoun_P,0),
\end{equation}
where $\radcoun_\Sigma$ is the \dmnsnl{3} cobordism with corners from $\Sigma \circ \Sigma^\dagger$ to $I \times \Gamma'$ obtained from $\Sigma \times I$ by appropriately decomposing and rearranging boundary identifications, and $\radcoun_P$ denotes the $\calC$-labeled blue framed tangle from $P^\dagger \cup P$ to $\varnothing$ obtained from the identity $P \times I$ by declaring all boundary vertices to be incoming. The \textit{left adjunction unit}
\[
 \ladun_{\bfSigma} : \id_{\bfGamma'} \Rightarrow \bfSigma \circ \bfSigma^\dagger
\]
of a \mrphsm{1} $\bfSigma : \bfGamma \to \bfGamma'$ of $\bfCob_\calC$ is
\begin{equation}\label{E:left_adjunction_unit}
 \ladun_{\bfSigma} := \radun_{\bfSigma^\dagger}.
\end{equation}
The \textit{left adjunction counit}
\[
 \ladcoun_{\bfSigma} : \bfSigma^\dagger \circ \bfSigma \Rightarrow \id_{\bfGamma}
\]
of a \mrphsm{1} $\bfSigma : \bfGamma \to \bfGamma'$ of $\bfCob_\calC$ is
\begin{equation}\label{E:left_adjunction_counit}
 \ladcoun_{\bfSigma} := \radcoun_{\bfSigma^\dagger}.
\end{equation}
Right adjunction unit and counit make $\bfSigma^\dagger$ right adjoint to $\bfSigma$, and left adjunction unit and counit make $\bfSigma^\dagger$ left adjoint to $\bfSigma$. Indeed, we have
\begin{align*}
 (\bfSigma \triangleright \ladcoun_{\bfSigma}) \ast (\ladun_{\bfSigma} \triangleleft \bfSigma) &= \id_{\bfSigma}, &
 (\ladcoun_{\bfSigma} \triangleleft \bfSigma^\dagger) \ast (\bfSigma^\dagger \triangleright \ladun_{\bfSigma}) &= \id_{\bfSigma^\dagger}, \\*
 (\radcoun_{\bfSigma} \triangleleft \bfSigma) \ast (\bfSigma \triangleright \radun_{\bfSigma}) &= \id_{\bfSigma}, &
 (\bfSigma^\dagger \triangleright \radcoun_{\bfSigma}) \ast (\radun_{\bfSigma} \triangleleft \bfSigma^\dagger) &= \id_{\bfSigma^\dagger}.
\end{align*}

The \textit{mate}
\[
 \bfM^\dagger : (\bfSigma')^\dagger \to \bfSigma^\dagger
\]
of a \mrphsm{2} $\bfM : \bfSigma \to \bfSigma'$ of $\bfCob_\calC$ is the equivalence class of
\begin{equation}\label{E:adjoint_2-morphism}
 (M^\dagger,T^\dagger,\sig),
\end{equation}
where $M^\dagger$ is the \dmnsnl{3} cobordism with corners from $(\Sigma')^\dagger$ to $\Sigma^\dagger$ obtained from $M$ by exchanging incoming and outgoing horizontal and vertical boundary identifications, and $T^\dagger \subset M^\dagger$ is the $\calC$-labeled bichrome graph obtained from $T \subset M$ by exchanging incoming and outgoing vertices. Right adjunction unit and counit make $\bfM^\dagger$ into a right mate of $\bfM$, and left adjunction unit and counit make $\bfM^\dagger$ into a left mate of $\bfM$. Indeed, we have
\begin{align*}
 \bfM^\dagger
 &= (\ladcoun_{\bfSigma'} \triangleleft \bfSigma^\dagger) \ast ((\bfSigma')^\dagger \triangleright \bfM \triangleleft \bfSigma^\dagger) \ast ((\bfSigma')^\dagger \triangleright \ladun_{\bfSigma}) \\*
 &= (\bfSigma^\dagger \triangleright \radcoun_{\bfSigma'}) \ast (\bfSigma^\dagger \triangleright \bfM \triangleleft (\bfSigma')^\dagger) \ast (\radun_{\bfSigma} \triangleleft (\bfSigma')^\dagger).
\end{align*}
Furthermore, we have
\begin{align*}
 \radun_{\bfSigma^*} &= (\ladun_{\bfSigma})^*, &
 \radcoun_{\bfSigma^*} &= (\ladcoun_{\bfSigma})^*.
\end{align*}

The functor
\[
 \_^\dagger : \bfCob_\calC(\bfGamma,\bfGamma')^\op \to \bfCob_\calC(\bfGamma',\bfGamma).
\]
is an equivalence, whose inverse is the opposite of the functor
\[
 \_^\dagger : \bfCob_\calC(\bfGamma',\bfGamma)^\op \to \bfCob_\calC(\bfGamma,\bfGamma').
\]

Combining dualities and adjunctions, we obtain transpositions. The \textit{transpose}
\[
 \bfSigma^\rmT : \bfGamma^* \to (\bfGamma')^*
\]
of a \mrphsm{1} $\bfSigma : \bfGamma \to \bfGamma'$ of $\bfCob_\calC$ is
\begin{equation}\label{E:transpose_1-morphism}
 \bfSigma^\rmT := (\bfSigma^*)^\dagger = (\bfSigma^\dagger)^*.
\end{equation}

The \textit{transpose}
\[
 \bfM^\rmT : (\bfSigma')^\rmT \to \bfSigma^\rmT
\]
of a \mrphsm{2} $\bfM : \bfSigma \to \bfSigma'$ of $\bfCob_\calC$ is
\begin{equation}\label{E:transpose_2-morphism}
 \bfM^\rmT := (\bfM^*)^\dagger = (\bfM^\dagger)^*.
\end{equation}

The functor
\[
 \_^\rmT : \bfCob_\calC(\bfGamma,\bfGamma')^\op \to \bfCob_\calC(\bfGamma^*,(\bfGamma')^*).
\]
is an equivalence, whose inverse is the opposite of the functor
\[
 \_^\rmT : \bfCob_\calC(\bfGamma^*,(\bfGamma')^*)^\op \to \bfCob_\calC(\bfGamma,\bfGamma').
\]

Let us finish with actions. The \textit{left action}
\[
 f \cdot \bfSigma : \bfGamma \to \bfGamma''
\]
of a diffeomorphism $f : \bfGamma' \to \bfGamma''$ on a \mrphsm{1} $\bfSigma : \bfGamma \to \bfGamma'$ of $\bfCob_\calC$ is
\begin{equation}\label{E:left_action_1-morphism}
 f \cdot \bfSigma := (f \cdot \Sigma,P,\Lagr),	
\end{equation}
where $f \cdot \Sigma$ is the \dmnsnl{2} cobordism from $\Gamma$ to $\Gamma''$ obtained from $\Sigma$ by precomposing its outgoing boundary identification with $f^{-1}$.

The \textit{right action}
\[
 \bfSigma \cdot f : \bfGamma \to \bfGamma''
\]
of a diffeomorphism $f : \bfGamma \to \bfGamma'$ on a \mrphsm{1} $\bfSigma : \bfGamma' \to \bfGamma''$ of $\bfCob_\calC$ is
\begin{equation}\label{E:right_action_1-morphism}
 \bfSigma \cdot f := (\Sigma \cdot f,P,\Lagr),	
\end{equation}
where $\Sigma \cdot f$ is the \dmnsnl{2} cobordism from $\Gamma$ to $\Gamma''$ obtained from $\Sigma$ by precomposing its incoming boundary identification with $f$.

If $\bfSigma, \bfSigma' : \bfGamma \to \bfGamma'$ are \mrphsms{1} of $\bfCob_\calC$, a \textit{compatible diffeomorphism} $f : \bfSigma \to \bfSigma'$ is an isomorphism of cobordisms $f : \Sigma \to \Sigma'$ satisfying $f(P) = P'$ and $f_*(\Lagr) = \Lagr'$.

The \textit{left action}
\[
 f \cdot \bfM : \bfSigma \to \bfSigma''
\]
of a compatible diffeomorphism $f : \bfSigma' \to \bfSigma''$ on a \mrphsm{2} $\bfM : \bfSigma \to \bfSigma'$ of $\bfCob_\calC$ is
\begin{equation}\label{E:left_action_2-morphism}
 f \cdot \bfM := (f \cdot M,T,n),	
\end{equation}
where $f \cdot M$ is the \dmnsnl{3} cobordism with corners from $\Sigma$ to $\Sigma''$ obtained from $M$ by precomposing its outgoing horizontal boundary identification with $f^{-1}$.

The \textit{right action}
\[
 \bfM \cdot f : \bfSigma \to \bfSigma''
\]
of a compatible diffeomorphism $f : \bfSigma \to \bfSigma'$ on a \mrphsm{2} $\bfM : \bfSigma' \to \bfSigma''$ of $\bfCob_\calC$ is
\begin{equation}\label{E:right_action_2-morphism}
 \bfM \cdot f := (M \cdot f,T,n),	
\end{equation}
where $M \cdot f$ is the \dmnsnl{3} cobordism with corners from $\Sigma$ to $\Sigma''$ obtained from $M$ by precomposing its incoming horizontal boundary identification with $f$.

\section{Proof of Proposition~\ref{P:KL_TQFT}}\label{A:proof_KL_TQFT}

\begin{proof}[Proof of Proposition~\ref{P:KL_TQFT}]
 Before showing that $J_\eend : \KTan \to \calC$ is a well-defined functor, let us start by showing that every Kirby tangle $T : g \to g'$ admits a top tangle presentation $\tilde{T} : g \to g''$. In order to define it, let $q_1, \ldots, q_h$ denote a family of points uniformly distributed to the right of the outgoing boundary of $T$ on $D^1 \times \{ (0,1) \} \subset D^2 \times I$, ordered from left to right, where $g'' = g' + h$. Let us choose an ordering of the components of $C(T)$ (from $C(T)_1$ to $C(T)_h$), and let us choose an orientation and a basepoint $p_j$ on the $j$th component $C(T)_j$ of $C(T)$ for every integer $1 \leqs j \leqs h$. Then, let us consider a family $\gamma$ of pairwise disjoint framed arcs $\gamma_j$ joining $p_j$ to $q_j$ for every integer $1 \leqs j \leqs h$, whose interiors are disjoint from $T$. We denote by $\tilde{T}$ the green tangle obtained from $T$ by pulling $C(T)_j$ along $\gamma_j$ and above $q_j$ for every $1 \leqs j \leqs h$, as shown
 \[
  \pic{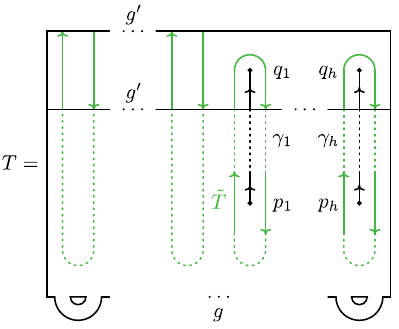}
 \]
 Then, by construction, $\tilde{T}$ is a top tangle presentation of $T$. In order to prove that $J_\eend$ is well-defined, we need to show that it is independent of all the choices made (paths, order, orientations and basepoints), and that it is invariant under signature-preserving Kirby moves (slam-dunks and slides).

 \textit{Independence of paths.} First, we claim that $J_\eend$ is independent of the choice of the framed path $\gamma_j$ for every integer $1 \leqs j \leqs h$. In order to prove this, it is sufficient to show that $\gamma_j$ can be exchanged for another framed path $\gamma'_j$. Up to isotopy, the exchange of $\gamma_j$ with $\gamma'_j$ is represented by the following picture.
 \[
  \pic{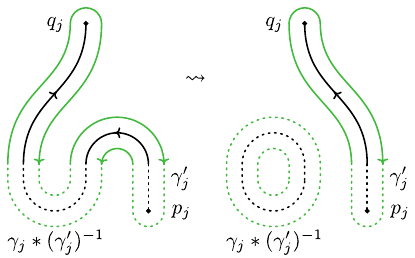}
 \]
 This means that, on one hand,
 \[
  \pic{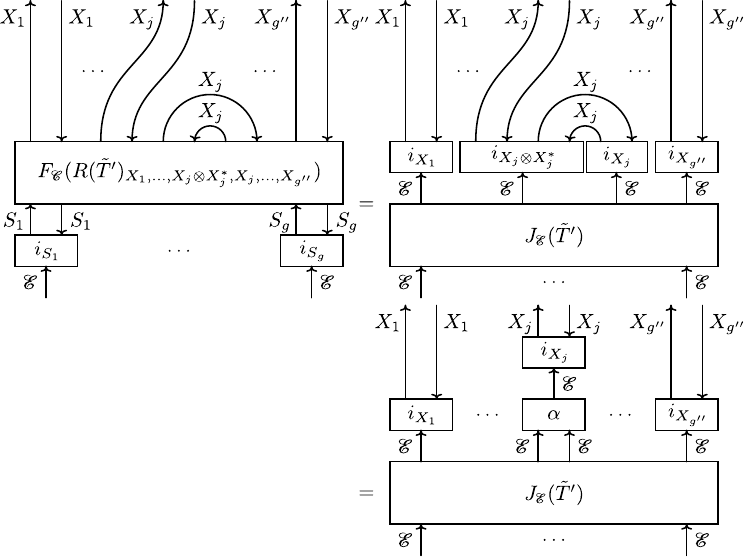}
 \]
 while, on the other hand,
 \[
  \pic{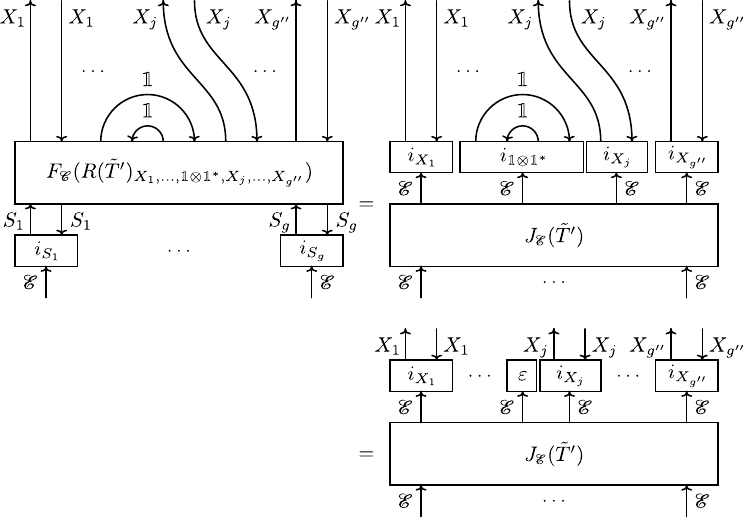}
 \]
 Then the claim follows from the fact that the integral $\lambda$ intertwines the adjoint action $\alpha$ of $\eend$, which implies
 \begin{equation}\label{E:path}
  \pic{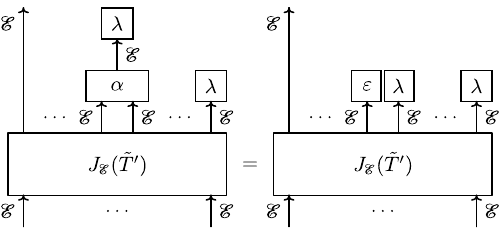}
 \end{equation}

 \textit{Independence of order.} Next, we claim that $J_\eend$ is independent of the choice of the ordering of the components of $C(T)$. In order to prove this, it is sufficient to show that the components $C(T)_j$ and $C(T)_{j+1}$ can be transposed for every integer $1 \leqs j < h$. Up to isotopy, this operation is represented by the following picture.
 \[
  \pic{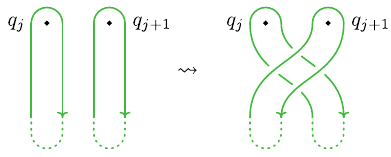}
 \]
 This means that
 \[
  \pic{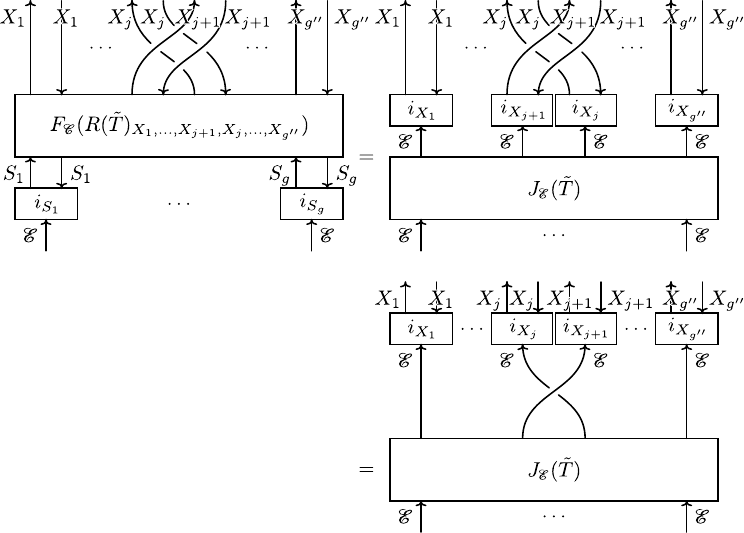}
 \]
 Then the claim follows from the fact that $\lambda$ is a form on $\eend$, which implies
 \begin{equation}\label{E:order}
  \pic{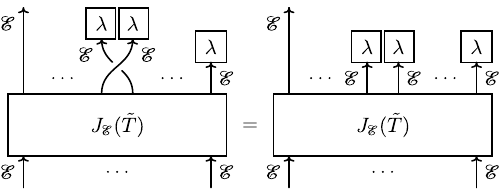}
 \end{equation}

 \textit{Independence of orientations.} Next, we claim that $J_\eend$ is independent of the choice of the orientation of $C(T)_j$ for every integer $1 \leqs j \leqs h$. Indeed, up to isotopy, an orientation reversal on $C(T)_j$ is represented by the following picture.
 \[
  \pic{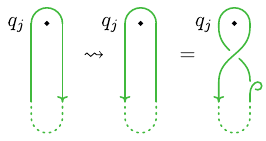}
 \]
 This means that
 \[
  \pic{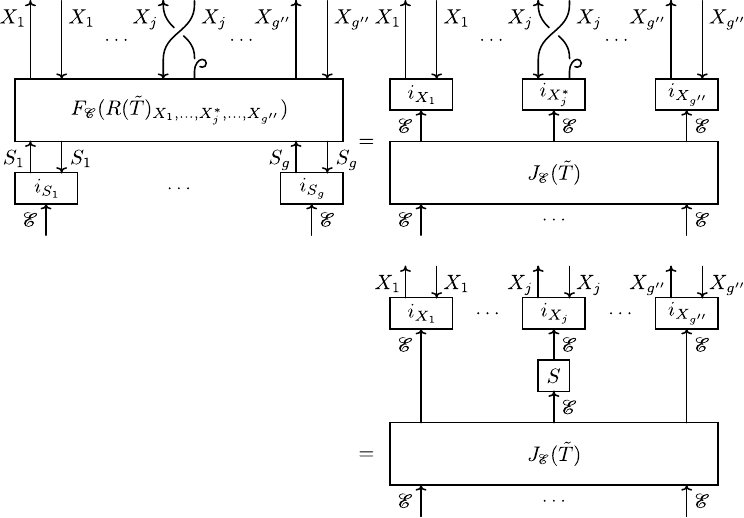}
 \]
 Then the claim follows from the fact that $\lambda$ is an\-ti\-pode-in\-var\-i\-ant, which implies
 \begin{equation}\label{E:orientation}
  \pic{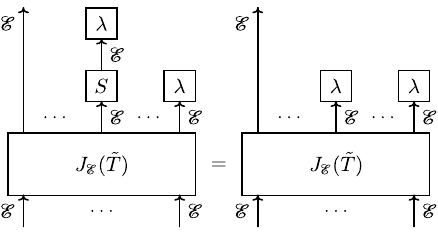}
 \end{equation}

 \textit{Independence of basepoints.} Next, we claim that $J_\eend$ is independent of the choice of the basepoint $p_j$ for every integer $1 \leqs j \leqs h$. Indeed, let $\tilde{p}_j$ be another possible choice. Up to isotoping portions of $C(T)_j$ containing $p_j$ and $\tilde{p}_j$ to the top of the diagram, making sure the one containing $\tilde{p}_j$ is nested inside the one containing $p_j$ with opposite orientation, this operation is represented by the following picture.
 \[
  \pic{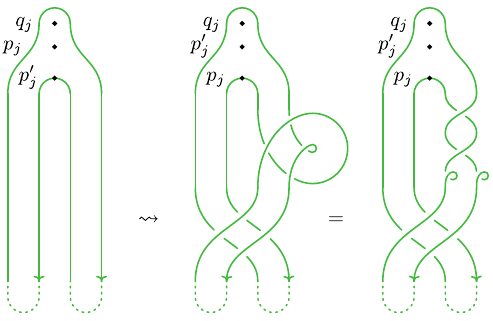}
 \]
 This means that, on one hand,
 \[
  \pic{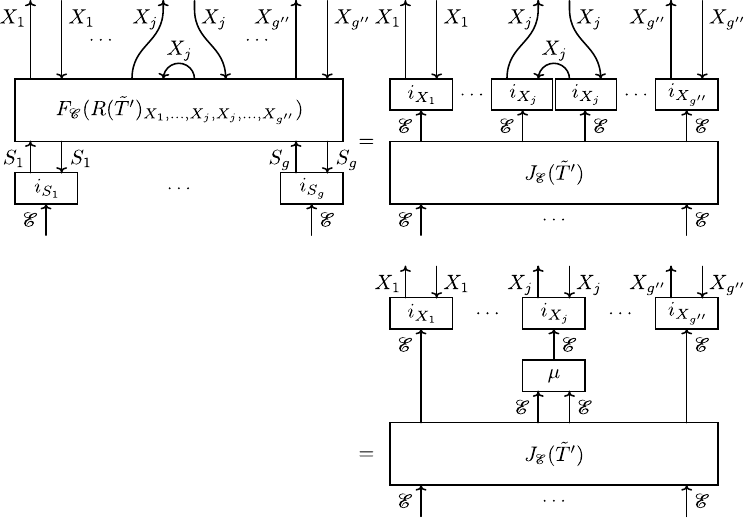}
 \]
 while, on the other hand,
 \[
  \pic{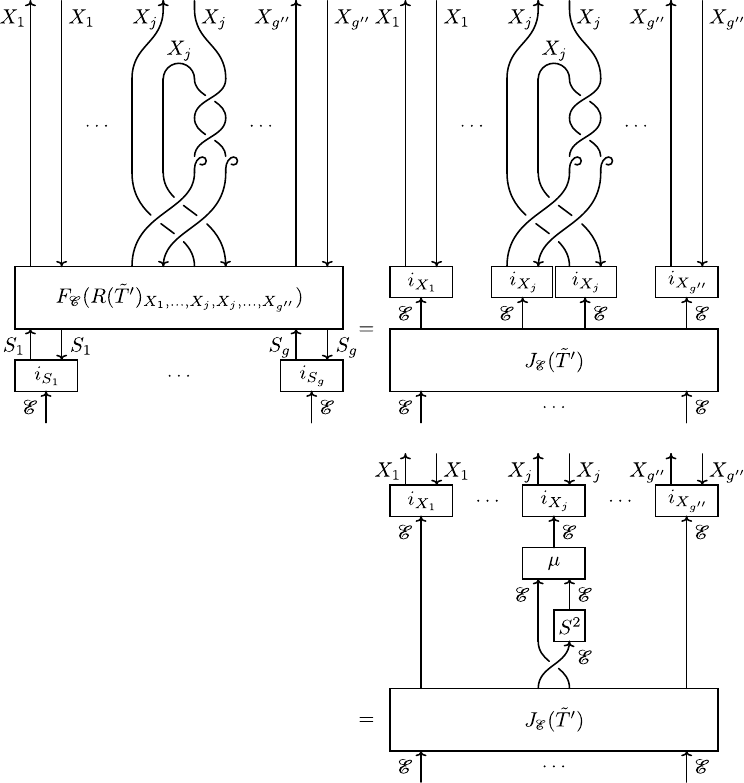}
 \]
 Then the claim follows from the fact that $\lambda$ is a quantum character on $\eend$, which implies
 \begin{equation}\label{E:basepoint}
  \pic{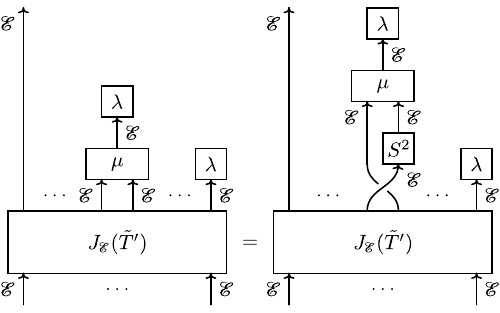}
 \end{equation}

 \textit{Invariance under slam-dunks.} Next, we claim that $J_\eend$ is invariant under the removal of a pair of closed components $C(T)_j$ and $C(T)_{j+1}$, if $C(T)_j$ is a $0$-framed meridian of $C(T)_{j+1}$. Up to isotopy, this operation is represented by the following picture.
 \[
  \pic{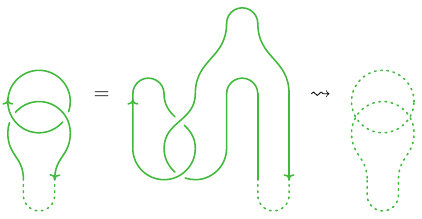}
 \]
 This means that, on one hand,
 \[
  \pic{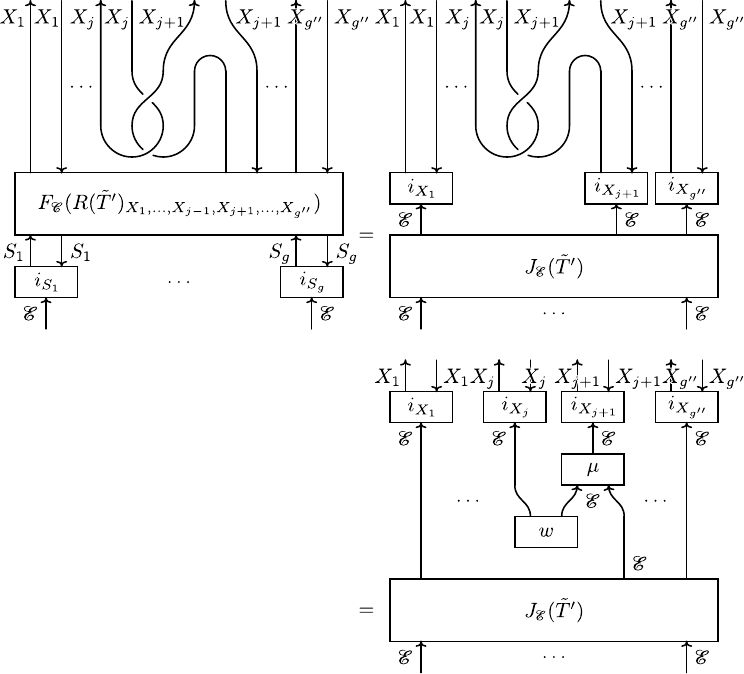}
 \]
 while, on the other hand,
 \[
  \pic{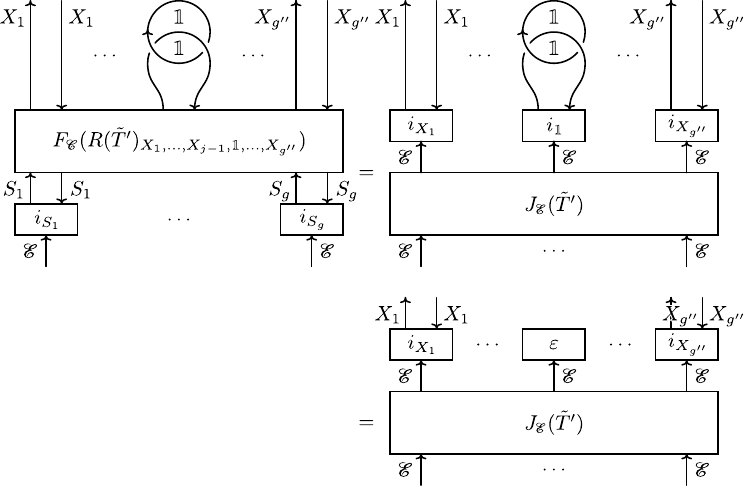}
 \]
 Then the claim follows from the fact that $\calC$ is factorizable, which implies
 \begin{equation}\label{E:slam-dunk}
  \pic{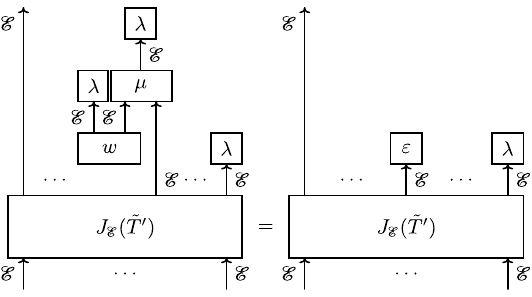}
 \end{equation}

 \textit{Invariance under slides.} Next, we claim that $J_\eend$ is invariant under the slide of an arbitrary component $T_j$ over a closed component $C(T)_{j+1}$. Up to isotopy, this operation is represented by the following picture.
 \[
  \pic{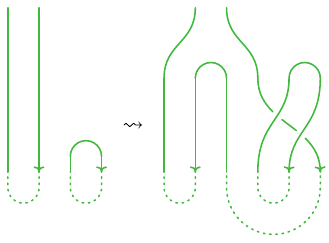}
 \]
 This means that
 \[
  \pic{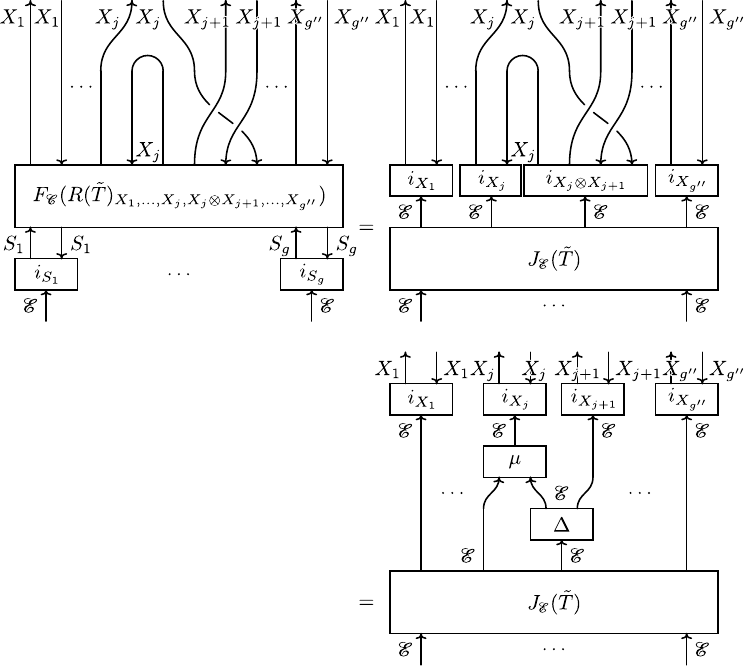}
 \]
 Then the claim follows from the fact that $\lambda$ is an integral on $\eend$, which implies
 \begin{equation}\label{E:slide}
  \pic{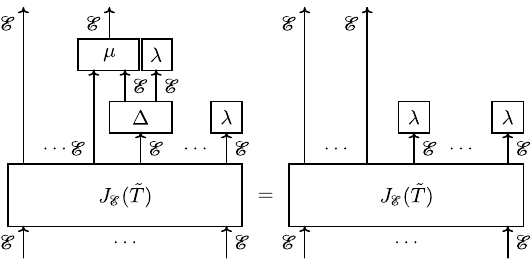}
 \end{equation}

 \textit{Functoriality.} If $g$ is an object of $\KTan$, then we clearly have
 \[
  J_\eend(\id_g) = \id_{\eend^{\otimes g}} = \id_{J_\eend(g)}.
 \]
 If $T : g \to g'$ and $T' : g' \to g''$ are morphisms of $\KTan$, if $\tilde{T} : g \to g'+h$ is a top tangle presentation of $T$, and if $\tilde{T}' : g' \to g''+h'$ is a top tangle presentation of $T'$, then a top tangle presentation of $T' \circ T$ is represented by the following picture.
 \[
  \pic{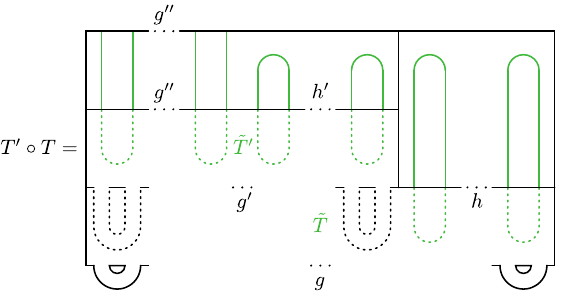}
 \]
 Then the claim follows from the functoriality of $J_\eend : \TTan \to \calC$, together with the fact that $\lambda$ is a form on $\eend$, which implies
 \begin{equation}\label{E:functoriality}
  \pic{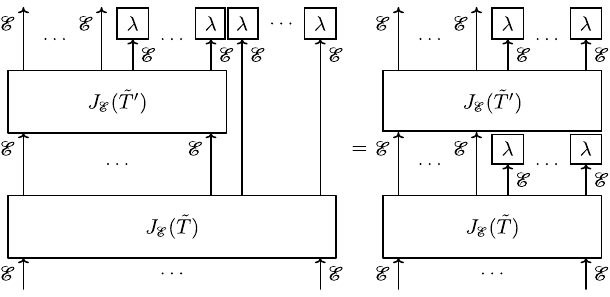}
 \end{equation}

 \textit{Monoidality.} If $g$ and $g'$ are objects of $\KTan$, then we clearly have
 \[
  J_\eend(g \bcs g') = \eend^{\otimes (g+g')} = J_\eend(g) \otimes J_\eend(g').
 \]
 If $T : g \to g''$ and $T' : g' \to g'''$ are morphisms of $\KTan$, if $\tilde{T} : g \to g''+h$ is a top tangle presentation of $T$, and if $\tilde{T}' : g' \to g'''+h'$ is a top tangle presentation of $T'$, then a top tangle presentation of $T \bcs T'$ is represented by the following picture.
 \[
  \pic{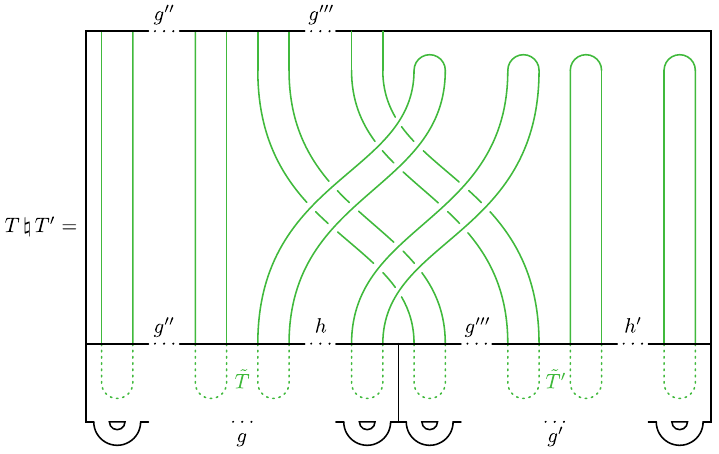}
 \]
 Then the claim follows from the monoidality of $J_\eend : \TTan \to \calC$, together with the fact that $\lambda$ is a form on $\eend$, which implies
 \begin{equation}\label{E:monoidality}
  \pic{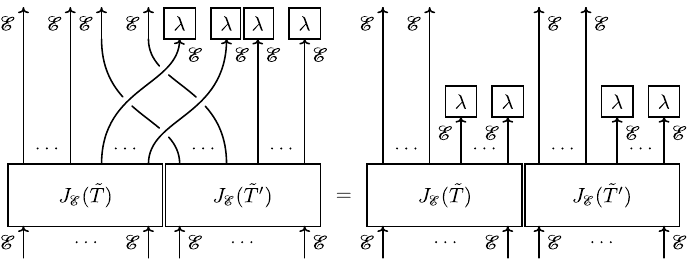}
 \end{equation}

 \textit{Braidings.} Since the braided structure of $\KTan$ coincides with that of $\TTan$, and since $J_\eend : \TTan \to \calC$ is a braided monoidal functor, then $J_\eend$ is a braided monoidal functor too. \qedhere
\end{proof}

\end{document}